\numberwithin{equation}{subsection} 
\theoremstyle{plain}
\newtheorem{thm}{Theorem}
\newtheorem{theorem}{Theorem}[section]
\newtheorem{lemma}[theorem]{Lemma}
\newtheorem{corollary}[theorem]{Corollary}
\newtheorem{definition}[theorem]{Definition}
\newtheorem{example}[theorem]{Example}
\theoremstyle{definition}
\theoremstyle{remark}
\begin{document}

\title[Highest Weight Categories For Number Rings] {Highest Weight Categories For Number Rings}

\author {Annette Pilkington}
 \address{Department of Mathematics\\ University of Notre
Dame\\ Room 255 Hurley Building \\ Notre Dame, Indiana, 46556}
\email{Pilkington.4@nd.edu}

\keywords{Highest Weight Category, Number Rings, Exact Category, Duality, BGG Reciprocity, Ramification.  }

\maketitle    

\begin{abstract}
This paper examines the concept of a stratified exact category in the context of number rings and corresponding Galois groups. 
 BGG reciprocity and duality are proven for these categories making them highest weight categories. 
The strong connections  between the structure of the category and  ramification in the ring are explored. 
\end{abstract}

\section{Introduction}

In \cite{Dyer}, Dyer introduced the concept of a stratified exact category. Such a category is constructed from a set of data $E = \{k, \mathcal{B}, \{N_x\}_{x \in \Omega}\}$, satisfying the properties listed in Section 4 of this paper.  Here  $k$ is a commutative,
Noetherian ring, $\mathcal{B}$ is an Abelian k-category and the $N_x$ are objects of $\mathcal{B}$ indexed by a finite poset $\Omega$. The category is equipped with a family of ``sheaf exact " sequences, making it an exact category in the sense of Quillen \cite{Quillen}. 
The category $\mathcal{C}$ constructed from the set of data given has,  under mild finiteness assumptions, a projective generator $P$. Using a projective generator for $\mathcal{C}$, one constructs an associated 
$k$- algebra $\mathcal{A}(P) = End(P)$, which is unique up to Morita equivalence. The functor 
$Hom(P, -)$ from $\mathcal{C}$ to $\mathcal{A}(P)$-mod  is fully faithful and exact, with the property that the  image of a sequence from  $\mathcal{C}$ is exact in $\mathcal{A}(P)$-mod if and and only if the sequence is sheaf exact in $\mathcal{C}$.  A more detailed treatment of these categories is provided in \cite{Dyer}, where  
proofs  of their fundamental properties are supplied. In \cite{Dyer2}, Dyer shows how such sets of data 
arise naturally in the cases of real reflection groups, crystollographic reflection groups, Kac-Moody Lie algebras and Quantum groups. In \cite{Brown}, Brown gives an example of how these categories can be applied to the Virasora algebra.  In this paper we will show how such a set of data arises naturally in a number theoretic situation. We will examine the resulting category and show that some of its categorical properties reflect the number theoretical properties of the underlying ring. 

Let $R$ be the ring of integers of a number field $K$ and let $S$ be its integral closure in a Galois extension $L$. Let $G = \{\sigma_{g_1}, \sigma_{g_2}, \cdots , \sigma_{g_n}\}$  be the Galois group 
of $L/K$. We let $\Omega$ be a poset giving a total ordering on the indices of $G$. We assume here, for simplicity,   that $\Omega$ gives $g_1 < g_2 < \cdots < g_n$. In section 5 we  extract a set of data using these  assumptions,  and construct a stratified exact category $\mathcal{C}_{(S\otimes_RS, \Omega, G)}$. It is the full subcategory of $S\otimes_RS$ modules, $M$,  with a filtration
\begin{equation} 
\label{1point1}
M = M^{g_0} \supseteq M^{g_1} \supseteq M^{g_2} \supseteq \cdots \supseteq M^{g_n} = 0,
\end{equation}
where $M^{g_{i - 1}}/M^{g_i}$ is  finitely generated and  projective as a right 
$S$ module and 
$$(s_1\otimes s_2)\cdot u =  us_2 \sigma_{g_i}(s_1), \ \ \hbox{for any} \ \ 
u \in M^{g_{i - 1}}/M^{g_i}.$$

 Such a filtration  is unique, hence we can refer to $M^{g_i}$ without ambiguity.
 Given a sequence $0 \to M \to N \to P \to 0$ of $S\otimes_R S$
modules in $\mathcal{C}_{(S\otimes_RS, \Omega, G)}$, we say it is sheaf exact if each 
$$0 \to M^{g_i} \to N^{g_i} \to P^{g_i} \to 0$$
is exact as a sequence of $S\otimes_R S$ modules. The category $\mathcal{C}_{(S\otimes_RS, \Omega, G)}$ equipped  with 
these sheaf exact sequences   is an exact category in the sense of 
Quillen. 

In section 6, we give an explicit construction of a projective generator $P$ for the category 
$\mathcal{C}_{(S\otimes_RS, \Omega, G)}$ and in Section 7, we determine the structure of 
the associated $S$-algebra $\mathcal{A}_{S\otimes_RS}(P)$, by representing it as a quotient of a matrix ring.  We prove a number of  results
 relating  these algebras to the structure of the ring $S$. 
 In section 8 we prove the following result:
 
 \begin{thm} Let $\frak{Q}$ be a prime ideal in $S$ and let $F_{\frak{Q}} = S/\frak{Q}$ be the residue class field of $\frak{Q}$ in $S$. Then the algebra $\mathcal{A}_{\frak{Q}} = \mathcal{A}_{S\otimes_RS}(P)\otimes_SF_{\frak{Q}}$ is a semisimple $F_{\frak{Q}}$ algebra if and only if the ideal $\frak{Q}$ is unramified in $S$.
 \end{thm}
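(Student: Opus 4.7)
The approach is to exploit the explicit matrix description of $\mathcal{A}_{S\otimes_R S}(P)$ established in Section 7, and then read off when the base change $\mathcal{A}_{\frak{Q}} = \mathcal{A}(P)\otimes_S F_{\frak{Q}}$ has vanishing Jacobson radical. From Section 7, $\mathcal{A}(P)$ is presented as a quotient of an $n \times n$ matrix ring whose $(i,j)$-entry is built from $S$ with a bimodule structure twisted by $\sigma_{g_i}$ and $\sigma_{g_j}$, and whose defining relations come from the filtration data on $P$. The total ordering $g_1 < \cdots < g_n$ on $\Omega$ forces the off-diagonal entries to sit in a strictly triangular part of this algebra, so after base change the diagonal entries become copies of $F_{\frak{Q}}$ while the off-diagonal entries are $F_{\frak{Q}}$-vector spaces whose vanishing or non-vanishing controls the whole question.

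The key number-theoretic input is the standard identification that $\sigma_{g_i}$ and $\sigma_{g_j}$ induce the same automorphism on the residue field $F_{\frak{Q}}$ precisely when $\sigma_{g_j}^{-1}\sigma_{g_i}$ lies in the inertia group $I(\frak{Q})$; moreover, $I(\frak{Q}) = \{1\}$ iff $\frak{Q}$ is unramified, since $|I(\frak{Q})|$ is the ramification index $e(\frak{Q}/\frak{p})$. Combined with Dedekind's linear independence of distinct field automorphisms, this lets me translate the algebraic condition ``the $(i,j)$-entry of $\mathcal{A}_{\frak{Q}}$ is zero'' into the Galois-theoretic condition ``$\sigma_{g_j}^{-1}\sigma_{g_i} \notin I(\frak{Q})$''.

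With this dictionary in hand, both directions follow. For ($\Leftarrow$), if $\frak{Q}$ is unramified then no two Galois automorphisms coincide on $F_{\frak{Q}}$, so by Dedekind independence the relations defining $\mathcal{A}(P)$ force every off-diagonal entry of $\mathcal{A}_{\frak{Q}}$ to vanish; hence $\mathcal{A}_{\frak{Q}}$ is isomorphic to $F_{\frak{Q}}^n$, which is semisimple. Conversely, if $\frak{Q}$ is ramified there exist $i \ne j$ with $\sigma_{g_i} \equiv \sigma_{g_j}$ on $F_{\frak{Q}}$, so the corresponding off-diagonal entry of $\mathcal{A}_{\frak{Q}}$ is nonzero; because this entry is strictly triangular it consists of nilpotent elements and therefore lies in the Jacobson radical of $\mathcal{A}_{\frak{Q}}$, so the algebra fails to be semisimple.

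The main obstacle will be the careful bookkeeping needed to reduce the defining relations of $\mathcal{A}(P)$ modulo $\frak{Q}$: one must verify that the quotient relations inherited from Section 7 actually kill every off-diagonal entry when the relevant Galois automorphisms become distinct on $F_{\frak{Q}}$, while preserving a nonzero entry when they coincide. Concretely this amounts to analyzing the image of the natural surjections $S \otimes_R S \to S$ (and their Galois-twisted variants) after reduction modulo $\frak{Q}$, which is where the different $\mathcal{D}_{S/R}$ and the inertia subgroup enter the picture explicitly. Once this local identification of the matrix entries with the inertia data is in place, the equivalence of semisimplicity and unramifiedness is formal.
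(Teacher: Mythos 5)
There is a genuine gap in both directions of your argument, stemming from a misreading of the matrix presentation $\mathcal{A}(P) \cong \mathcal{R}_{S\otimes_RS}(P)/\mathcal{I}_{S\otimes_RS}(P)$ from Section 7. That algebra is \emph{not} a strictly triangular matrix algebra: the $(i,j)$-entry of $\mathcal{R}$ is all of $S\otimes_RS$ whenever $i\le j$, and after dividing by $\mathcal{I}$ the $(i,j)$-entry becomes $(S\otimes_RS)_{I_{g_i}\cup I_{g_j}^c}/(S\otimes_RS)_{I_{g_j}^c}$, which has $S$-rank $n-\max(i,j)+1$ and is generically nonzero on \emph{both} sides of the diagonal. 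Consequently your claim that in the unramified case ``every off-diagonal entry vanishes, hence $\mathcal{A}_{\frak{Q}}\cong F_{\frak{Q}}^n$'' cannot be right: $\mathcal{A}_{\frak{Q}}$ has $F_{\frak{Q}}$-dimension $\sum_{i=1}^n i^2 = n(n+1)(2n+1)/6$, not $n$. What actually happens (and what the paper proves via the isomorphism $\phi: S\otimes_RS_{\frak{Q}} \to \oplus_k S_{\frak{Q},g_k}$ of Lemma~\ref{unram}) is that the quotient \emph{rearranges} componentwise into a block-diagonal direct sum $\bigoplus_{i=1}^n M_i(F_{\frak{Q}})$, which is semisimple for a reason quite different from the one you give. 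Your invocation of Dedekind independence is also misplaced here: that lemma is used to establish that $\phi$ is an isomorphism over $L$, not to kill anything after reduction mod $\frak{Q}$ (where the automorphisms precisely fail to be independent when $E\neq 1$).

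The ramified direction has a similar problem. Because the algebra is not triangular, a single nonzero entry below the diagonal does not automatically consist of nilpotent elements, nor does it automatically generate a nilpotent two-sided ideal --- multiplication against the (nonzero) upper-triangular entries will spread it around. You need to exhibit a concrete nonzero two-sided ideal and actually verify its nilpotency. The paper does this by restricting attention to the component $(S\otimes_RS_{\frak{Q}})x_1$ cut out by the idempotent corresponding to $E$ (Corollary~\ref{corstar}), identifying there the nilpotent element $\bar{\alpha}=\Psi(\gamma(A_{e_1}(\Pi)))$ with $\bar{\alpha}^{|E|}=0$ (Lemma~\ref{FQ}), and forming the ideal $\mathcal{B}$ supported in the first column of the matrix ring and scaled by $\bar{\alpha}$; it is then a direct computation that $\mathcal{B}^{|E|}=0$ because every product of $|E|$ column matrices is divisible by $\bar{\alpha}^{|E|}$. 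Your high-level plan (read off the radical from the matrix description, using the dictionary between inertia and ramification) is the right one, and your statement that $E(\frak{Q}|\frak{P})$ is trivial iff $\frak{Q}$ is unramified is correct, but the central structural claims about where nonzero entries live and why they are nilpotent need to be replaced by the more careful split-by-idempotents analysis that the paper sets up in Sections~2--3.
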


We also prove the following duality theorem for the projective modules in
the category  $\mathcal{C}_{(S\otimes_RS, \Omega, G)}$:

\begin{thm} \label{Theorem2} Let $Q$ be a projective module in the category $\mathcal{C}_{(S\otimes_RS, \Omega, G)}$. Then 
$$ Q^{*} = Hom_{(S\otimes_R S)}(Q, S\otimes_RS)$$
is also projective in the category $\mathcal{C}_{(S\otimes_RS, \Omega, G)}$. In fact $Q^{*}$ is isomorphic to  $Q$ locally at every prime of $S$.
\end{thm}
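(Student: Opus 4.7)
The plan is to leverage the explicit structure of the projective generator $P$ from Section~6 and of the algebra $\mathcal{A}_{S\otimes_R S}(P)$ from Section~7, and to reduce the statement to an analysis carried out one prime of $S$ at a time.

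First, I would reduce to the case $Q = P$, or better, to each indecomposable projective summand $P_i$ of $P$. Since $P$ is a projective generator, every projective $Q$ in $\mathcal{C}_{(S\otimes_RS, \Omega, G)}$ is a direct summand of some $P^n$. Because $(-)^{*} := \mathrm{Hom}_{S\otimes_R S}(-, S\otimes_R S)$ is additive, $Q^{*}$ is then a direct summand of $(P^{*})^n$. Thus it suffices to show that $P^{*}$ lies in $\mathcal{C}_{(S\otimes_RS, \Omega, G)}$, that it is projective there, and that $P^{*}_{\mathfrak{Q}} \cong P_{\mathfrak{Q}}$ for every prime $\mathfrak{Q}$ of $S$; the conclusion for arbitrary $Q$ then follows by taking the appropriate direct summands.

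Next, I would exhibit the canonical filtration on $P^{*}$. The filtration of $P$ by the submodules $P^{g_i}$, together with the analogous Galois-indexed filtration on $S\otimes_R S$ itself (whose successive quotients realize the twisted right actions $(s_1 \otimes s_2)\cdot u = u s_2 \sigma_{g_i}(s_1)$), should induce a filtration on $P^{*}$ by applying $\mathrm{Hom}_{S\otimes_R S}$ term by term. The crucial observation is that each stratum $P^{g_{i-1}}/P^{g_i}$ is a finitely generated projective right $S$-module on which $S\otimes_R S$ acts through the single character $g_i$, and $\mathrm{Hom}$ into the matching twist of $S\otimes_R S$ should again produce a finitely generated projective right $S$-module with the same type of twisted structure, possibly after reindexing by $g_i \mapsto g_i^{-1}$.

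For the local isomorphism, I would localize at a prime $\mathfrak{Q}$ of $S$; localization is well behaved because each stratum is finitely generated projective over $S$. The indecomposable direct summands of $P_{\mathfrak{Q}}$ are classified by the explicit matrix-ring description of $\mathcal{A}_{S\otimes_R S}(P)$ from Section~7, and the task reduces to showing that $(-)^{*}$ permutes these summands trivially after localization. In the unramified case this falls out of the preceding theorem, since $\mathcal{A}_{\mathfrak{Q}}$ is semisimple and self-duality for finite-dimensional semisimple algebras over the residue field is automatic. In the ramified case one must use that each indecomposable projective is tagged by a Galois element $g_i$ and that, through the matrix presentation, dualization permutes these tags in a manner that becomes trivial once $\mathfrak{Q}$ is inverted.

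The main obstacle I anticipate is the verification that the filtration produced on $P^{*}$ actually satisfies the defining conditions of $\mathcal{C}_{(S\otimes_RS, \Omega, G)}$: namely, that each successive quotient remains projective as a right $S$-module and that $s_1 \otimes s_2$ acts on the $i$th stratum by $u \mapsto u s_2 \sigma_{g_j}(s_1)$ for the correct $g_j$. This bookkeeping of Galois twists under $\mathrm{Hom}$ is the step where an error would propagate and destroy both projectivity and the local isomorphism, and it is the point at which the explicit description of the endomorphism algebra from Section~7 must be used most carefully.
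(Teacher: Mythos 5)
Your overall strategy---reduce to the generator $P$, construct a filtration on the dual by applying $\mathrm{Hom}_{S\otimes_RS}(-,(S\otimes_R S)^{g_i})$, and then argue one prime of $S$ at a time---is the same skeleton the paper uses. However, there are two concrete gaps in the sketch.

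First, the worry that the Galois twists might be reindexed by $g_i\mapsto g_i^{-1}$ under dualization is a false alarm, and letting it stand unresolved would derail the filtration claim. If $f\in\mathrm{Hom}\bigl(Q,(S\otimes_R S)^{g_{i-1}}/(S\otimes_R S)^{g_i}\bigr)$, then for $q\in Q$ one has $\bigl((s_1\otimes s_2)f\bigr)(q)=(s_1\otimes s_2)\bigl(f(q)\bigr)$, and since $f(q)$ sits in a stratum on which $s_1\otimes s_2$ already acts through $\sigma_{g_i}$, the same $\sigma_{g_i}$ is what appears on the Hom stratum --- there is no inversion. Setting $M^{g_i}=\mathrm{Hom}(Q,(S\otimes_RS)^{g_i})$ gives a filtration whose $i$th quotient carries the tag $g_i$; projectivity of $Q$ makes each $0\to(S\otimes_RS)^{g_{i+1}}\to(S\otimes_RS)^{g_i}\to(S\otimes_RS)^{g_i}/(S\otimes_RS)^{g_{i+1}}\to 0$ go to an exact Hom sequence, and the quotients are finitely generated projective right $S$-modules because $S$ is Dedekind. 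This is precisely the paper's Lemma on filtration of the dual; without it, your reduction step has nothing to hang on.

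Second, and more seriously, your handling of the ramified case is too vague to carry the theorem, and that is where all the substance lies. After splitting $S\otimes_RS_{\mathfrak{Q}}$ via the orthogonal idempotents attached to cosets of the inertia group $E$ and transporting through the category isomorphism to $\mathcal{C}_{(S\otimes_{S_E}S_{\mathfrak{Q}},\Omega',E)}$, the thing that must be proved is the explicit isomorphism $\mathrm{Hom}(P_{E,\mathfrak{Q},i},S\otimes_{S_E}S_{\mathfrak{Q}})\cong P_{E,\mathfrak{Q},i}$. This is not a permutation of indecomposable tags that ``becomes trivial once $\mathfrak{Q}$ is inverted''; the dual of the $i$th indecomposable projective is the $i$th one itself, and the mechanism is a principal-ideal computation: $(S\otimes_{S_E}S_{\mathfrak{Q}})_{I_{E,e_i}}$ is generated by $\prod_{k<i}A_{e_k}(\Pi)$ for a local uniformizer $\Pi$, and multiplication by this element factors through an isomorphism from $P_{E,\mathfrak{Q},i}=(S\otimes_{S_E}S_{\mathfrak{Q}})/(S\otimes_{S_E}S_{\mathfrak{Q}})_{E,i}$ onto that ideal, which Lemma~\ref{ringmaps} identifies with the dual. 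Your proposed unramified-case shortcut via semisimplicity of $\mathcal{A}_{\mathfrak{Q}}$ also conflates two levels: semisimplicity is an assertion over the residue field $F_{\mathfrak{Q}}$, whereas the dual you need to control takes values in $S\otimes_RS_{\mathfrak{Q}}$, not $F_{\mathfrak{Q}}$. The paper treats both the ramified and unramified cases by the one principality argument; the case split you introduce is a detour that still leaves the central computation missing.
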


 Further, we show that the category   $\mathcal{A}_{\frak{Q}}$-mod has a reciprocity analogous to BGG reciprocity as outlined below. 
 This reciprocity is discussed in more detail in Section 10, see 
Irving \cite{Irving} and Cline, Parshall and Scott \cite{CPS} for further details.  We find   a  set of simple modules $\{L({g_i})\}_{i = 1}^n$, which constitute a complete set of representatives of the isomorphism classes of the simple modules
in $\mathcal{A}_{\frak{Q}}$-mod.  We also find   a set of projective covers $\{P(g_i)\}_{i = 1}^n$ for 
the simple modules $\{L({g_i})\}_{i = 1}^n$ in $\mathcal{A}_{\frak{Q}}$-mod. 
If $M$ is a module in $\mathcal{A}_{\frak{Q}}$-mod, we let $[M: L(g_i)]$ denote the number of times 
that $L(g_i)$ appears as a factor in a composition series for $M$. We will show in section 10 that we can choose a set of Verma modules for the category $\mathcal{A}_{\frak{Q}}$-mod. In particular,  we 
will choose a set of modules $M(g_i)$, $1 \leq i \leq n$, universal n the algebra $\mathcal{A}_{\frak{Q}}$-mod
with respect to the following properties:
$$[M(g_i): L(g_i)] = 1, [M(g_i): L(g_j)] = 0 \ \mbox{if} \  j > i \ \mbox{and} \ \ M(g_i)/Rad(M(g_i)) = L(g_i).$$
for $1 \leq i, j \leq n$.
Each projective indecomposable $P(g_i)$ has a filtration (a Verma flag)
$$P(g_i)_0 = \{0\} \subset P(g_i)_1 \subset \cdots \subset P(g_i)_K,$$
where each quotient $P(g_i)_k/P(g_i)_{k - 1}$ is equal to $M(g_j)$ for some $j, 1\leq j \leq n$. 
Now we let  $(P(g_i) : M(g_j))$ denote the number of quotients of the form $M(g_j)$ in a Verma flag 
for $P(g_i)$. This turns out to be well defined and in Section 10, we show that we have the following reciprocity in $\mathcal{A}_{\frak{Q}}$-mod:

\begin{thm} Let $\frak{Q}$ be a prime ideal of $S$ and let $E$ denote the inertia group of $\frak{Q}$ in the Galois group $G$.  Let $L(g_i), P(g_i), M(g_i), 1\leq i \leq n$ be the modules in $\mathcal{A}_{\frak{Q}}$-mod discussed above. Then
$$(P(g_i) : M(g_j) ) =  [M(g_j): L(g_i)] = 
 \Big\{\  \begin{matrix}
& 1 & \hbox{if}  & j \geq i \ \ \hbox{and} \ \ E\sigma_{g_i} = E\sigma_{g_j} \\
& 0 &   &\mbox{otherwise} 
\end{matrix}.
$$
 \end{thm}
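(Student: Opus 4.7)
The strategy is to first compute the composition multiplicities $[M(g_j):L(g_i)]$ directly from the explicit description of the stratified category reduced modulo $\frak{Q}$, and then derive the equality $(P(g_i):M(g_j)) = [M(g_j):L(g_i)]$ from the highest weight structure combined with the duality of Theorem \ref{Theorem2}.

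First I would identify the Verma module $M(g_j)$ with (the reduction modulo $\frak{Q}$ of) the standard object constructed from the stratification data, namely the $S\otimes_R S$-module built from the single stratum $N_{g_j}$ with the twisted bimodule action $(s_1\otimes s_2)\cdot u = u s_2 \sigma_{g_j}(s_1)$. Filtering this by the stratification and applying $-\otimes_S F_{\frak{Q}}$, the composition factors are one-dimensional $F_{\frak{Q}}$-modules on which $s_1\otimes s_2$ acts by $s_2\,\overline{\sigma_{g_j}(s_1)}$ (with $\overline{\cdot}$ the reduction to $F_{\frak{Q}}$), and the universal property forces $[M(g_j):L(g_i)] = 0$ whenever $i > j$. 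The key observation, which accounts for the inertia group, is that two such actions, corresponding to $\sigma_{g_i}$ and $\sigma_{g_j}$, coincide on $F_{\frak{Q}}$ precisely when $\sigma_{g_i}$ and $\sigma_{g_j}$ induce the same automorphism of the residue field, that is, when $E\sigma_{g_i} = E\sigma_{g_j}$. Thus $[M(g_j):L(g_i)]$ vanishes outside the stated range, and a direct length/dimension count on $M(g_j)\otimes_S F_{\frak{Q}}$ in the remaining cases pins the multiplicity at exactly $1$.

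Once the composition multiplicities are known, the reciprocity $(P(g_i):M(g_j)) = [M(g_j):L(g_i)]$ follows by the standard highest weight category argument: the Verma flag multiplicities of a projective cover equal the composition multiplicities in the dual Verma modules, and Theorem \ref{Theorem2} supplies exactly the self-duality needed to identify dual Vermas with Vermas in $\mathcal{A}_{\frak{Q}}$-mod. Concretely, one pairs $\operatorname{Hom}_{\mathcal{A}_{\frak{Q}}}(P(g_i), -)$ against a Verma flag of $P(g_j)$ and compares with a composition series of $M(g_j)$, using the fact that $\operatorname{Hom}(P(g_i), M(g_k))$ is one-dimensional exactly when $L(g_i)$ appears as the head of $M(g_k)$ and vanishes on strictly lower Verma subquotients.

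The main obstacle is the first step: tracing the $\sigma_{g_j}$-twisted bimodule structure of the strata through the base change $-\otimes_S F_{\frak{Q}}$ and producing a clean description of $M(g_j)\otimes_S F_{\frak{Q}}$ as an $\mathcal{A}_{\frak{Q}}$-module whose composition factors are indexed by the inertia coset of $g_j$. This in turn rests on the local analysis of $S\otimes_R S$ at $\frak{Q}$, where the decomposition of the completion into factors indexed by the cosets of $E$ must be reconciled with the filtration \eqref{1point1} inherited from the total order on $\Omega$. Once this identification is in place, the vanishing outside the inertia coset and the multiplicity-one statement within it are essentially forced by the comparison of the $\sigma_{g_i}$-twisted actions on $F_{\frak{Q}}$, and the reciprocity follows formally from the highest weight/duality framework established in the preceding sections.
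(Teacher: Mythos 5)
Your first step — identifying $M(g_j)\otimes_S F_{\frak{Q}}$ and tracing the $\sigma_{g_j}$-twisted action through reduction mod $\frak{Q}$ to see that the composition factors are governed by the inertia coset — is essentially the computation the paper carries out, though the paper organizes it via the idempotent decomposition $S\otimes_R S_{\frak{Q}} \cong \oplus_i (S\otimes_R S_{\frak{Q}})x_i$ of Corollary \ref{corstar} and the explicit matrix realization $\mathcal{A}'_{\frak{Q}} \cong \mathcal{R}'_{\frak{Q}}/\mathcal{I}'_{\frak{Q}}$ with entries $\alpha^k F_{\frak{Q}}[\alpha]$, $\alpha^{|E|}=0$. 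Within that matrix picture $M(e_k)$ is the quotient $\mathcal{A}'_{\frak{Q}}e_{k,k}/X_k$ of a column, and both $[M(e_k):L(e_j)]$ and the Verma flag of $P(e_i)$ can be read off directly.

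Your second step is a genuinely different route from the paper, and it has a real gap. The paper deliberately avoids the abstract highest-weight-plus-duality argument (remarking that ``the general proof relies on much deeper and more subtle arguments than those presented here'') and instead computes $(P(e_i):M(e_k))$ independently by exhibiting the Verma flag of $P(e_i)$ coming from the filtration of $P_{E,\frak{Q},i}$ in $\mathcal{C}_{(S\otimes_{S_E}S_{\frak{Q}},\Omega',E)}$, via Lemma \ref{turkey}. The equality of the two multiplicities is then an observation, not a deduction. Your appeal to Theorem \ref{Theorem2} does not fill the role you want it to: that theorem concerns the functor $Q\mapsto\operatorname{Hom}_{S\otimes_R S}(Q, S\otimes_R S)$ on \emph{projective} objects of $\mathcal{C}_{(S\otimes_RS,\Omega,G)}$, and asserts only a local isomorphism $Q^*\cong Q$. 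To run the standard BGG argument $(P(\lambda):\Delta(\mu))=[\nabla(\mu):L(\lambda)]$ followed by $\nabla(\mu)\cong\Delta(\mu)$, you need a contravariant duality on $\mathcal{A}_{\frak{Q}}$-mod itself that fixes the simple modules (with the same labeling) and carries Verma modules to costandard modules; none of this is supplied by Theorem \ref{Theorem2} as stated, and the Verma modules $M(g_j)$ are not projective, so the theorem doesn't even apply to them. You would need to build that duality on $\mathcal{A}_{\frak{Q}}$-mod and verify its effect on simples and on standards/costandards, which is substantial additional work the paper bypasses by direct computation.
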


The duality from Theorem \ref{Theorem2} above, the existence of Verma flags for the projective indecomposables and the universal property of Verma modules ensure that the category $\mathcal{A}_{\frak{Q}}$-mod is a highest weight category as discussed in 
Cline, Parshall and Scott \cite{CPS}. 

This paper is computational in nature and many computations rely heavily on   the structure of the ring
$S\otimes_RS_{\frak{Q}}$, where $\frak{Q}$ is a prime ideal of $S$. In section 2, we derive the following  result on the structure of this ring:

\begin{thm} Let $\frak{Q}$ be a prime ideal of $S$ and let $E$ be the inertia group of $\frak{Q}$ in $G$. Then we have a set of orthogonal idempotents $\{x_i\}_{i = 1}^m$ in one to one correspondence with the cosets of $E$ in $G$ such that $S\otimes_RS_{\frak{Q}}$ is a direct product of subrings 
$(S\otimes_RS_{\frak{Q}})x_i$. Each subring $(S\otimes_RS_{\frak{Q}})x_i$ is isomorphic to the ring 
$S\otimes_{S_E}S_{\frak{Q}}$ where $S_E$ is the subring of $S$ fixed by $E$. 
\end{thm}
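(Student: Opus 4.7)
The plan is to construct the idempotents $x_i$ by lifting the product decomposition of $L_E \otimes_K L$ (where $L_E$ is the fixed subfield of $E$, so that $S_E = S \cap L_E$) to the integral level, via a carefully chosen primitive element of $L_E/K$. First I would recall the standard Galois isomorphism $\Phi : L \otimes_K L \xrightarrow{\sim} \prod_{g \in G} L$, $a \otimes b \mapsto (g(a) b)_g$. When $a \in L_E$, the component $g(a)$ depends only on the coset $gE$, so $\Phi$ restricts to an isomorphism $L_E \otimes_K L \cong \prod_{gE \in G/E} L$. The preimages of the coordinate idempotents of $\prod_{gE} L$ are $m = |G/E|$ orthogonal idempotents $x_{gE}$ summing to $1$, with explicit Vandermonde form
\[ x_{gE} = \prod_{hE \neq gE} \frac{\beta \otimes 1 - 1 \otimes h(\beta)}{g(\beta) - h(\beta)} \]
for any primitive element $\beta$ of $L_E/K$.

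The technical heart of the proof is showing that these idempotents lie in $S \otimes_R S_{\mathfrak{Q}}$, which reduces to choosing $\beta \in S_E$ so that each denominator $g(\beta) - h(\beta)$ (for $gE \neq hE$) is a unit in $S_{\mathfrak{Q}}$, equivalently $g(\beta) \not\equiv h(\beta) \pmod{\mathfrak{Q}}$. The key fact is that $\mathfrak{Q}_E := \mathfrak{Q} \cap S_E$ is unramified over $\mathfrak{p} = \mathfrak{Q} \cap R$: the tower formula $e(\mathfrak{Q}|\mathfrak{p}) = e(\mathfrak{Q}|\mathfrak{Q}_E) \cdot e(\mathfrak{Q}_E|\mathfrak{p})$, combined with $e(\mathfrak{Q}|\mathfrak{p}) = |E|$ and $e(\mathfrak{Q}|\mathfrak{Q}_E) = |E|$ (the inertia of $\mathfrak{Q}$ in the Galois extension $S/S_E$ is all of $E$), forces $e(\mathfrak{Q}_E|\mathfrak{p}) = 1$, and a parallel argument gives $F_{\mathfrak{Q}_E} = F_{\mathfrak{Q}}$. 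Hence $F_{\mathfrak{Q}}/k_{\mathfrak{p}}$ is separable of degree $|D/E|$, and by the primitive element theorem together with weak approximation at the remaining primes of $S_E$ above $\mathfrak{p}$, I can select $\beta \in S_E$ generating $L_E/K$ whose conjugates $g(\beta)$ have pairwise distinct residues in $F_{\mathfrak{Q}}$ for distinct cosets $gE \in G/E$.

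Finally, to identify $(S \otimes_R S_{\mathfrak{Q}}) x_E \cong S \otimes_{S_E} S_{\mathfrak{Q}}$ at the trivial coset, I would check that the $S_E$-balancing relation $(c a) \otimes b - a \otimes (c b) = 0$ for $c \in S_E$ survives multiplication by $x_E$: under $\Phi$ its $g'$-th component is $a(g'(c) - c) g'(b)$, which vanishes on the support $g' \in E$ of $x_E$ because $E$ fixes $c$. This yields a surjection $S \otimes_{S_E} S_{\mathfrak{Q}} \twoheadrightarrow (S \otimes_R S_{\mathfrak{Q}}) x_E$ between free $S_{\mathfrak{Q}}$-modules of rank $|E|$, hence an isomorphism; the remaining factors $(S \otimes_R S_{\mathfrak{Q}}) x_{gE}$ are abstractly isomorphic to $S \otimes_{S_E} S_{\mathfrak{Q}}$ via the Galois twist induced by $g$. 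The principal obstacle is the weak approximation step when $E$ is not normal in $G$: the ramification of $S_E/R$ at the remaining primes above $\mathfrak{p}$ can complicate the simultaneous residue prescription for $\beta$. An alternative is to work in the $\mathfrak{p}$-adic completion, where Hensel's lemma decomposes $\widehat{S \otimes_R S_{\mathfrak{Q}}}$ into $|G/E|$ local factors directly, and descend the idempotents by exploiting their Galois equivariance.
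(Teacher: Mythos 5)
Your construction of the idempotents via a single primitive element $\beta$ of $L_E/K$ and the Lagrange formula $x_{gE} = \prod_{hE \neq gE} (\beta\otimes 1 - 1\otimes h(\beta))/(g(\beta) - h(\beta))$ has a genuine gap at the integrality step. You need all the $\binom{m}{2}$ differences $g(\beta) - h(\beta)$ (for distinct cosets $gE \neq hE$) to be units in $S_{\mathfrak{Q}}$, i.e.\ the cosets must give pairwise distinct residues of $\beta$ in $F_{\mathfrak{Q}}$. Since $[F_{\mathfrak{Q}}:k_{\mathfrak{p}}] = |D/E|$ while $m = |G/E| = |G/D|\cdot|D/E|$, the residue field can be too small for this when $|G/D|$ is large relative to $|k_{\mathfrak{p}}|$. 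Concretely, take $L = \mathbb{Q}(\sqrt{-7},\sqrt{-15})$ over $K = \mathbb{Q}$: this is Galois of degree $4$, and $2$ splits completely (since $-7 \equiv -15 \equiv 105 \equiv 1 \bmod 8$), so for $\mathfrak{Q} \mid 2$ one has $E = \{1\}$, $m = 4$, but $F_{\mathfrak{Q}} = \mathbb{F}_2$ has only two elements — no $\beta$ can have four pairwise distinct residues. The obstacle you identify (normality of $E$ in $G$) is not the real one; the problem already arises in abelian extensions where $E$ is automatically normal.

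The paper sidesteps this by never asking for a single element to separate all cosets simultaneously. It builds $x_1$ as a product $\prod_{\sigma_{g_l}\notin E} y_l'$ where each factor $y_l' = (s_l\otimes 1 - 1\otimes\sigma_{g_l}(s_l))\,(1\otimes(s_l - \sigma_{g_l}(s_l))^{-1})$ uses its own auxiliary element $s_l \in S_E$, chosen only to satisfy the single condition $\sigma_{g_l}(s_l) \not\equiv s_l \pmod{\mathfrak{Q}}$. Each such $s_l$ exists because $\sigma_{g_l}\notin E$ (this is the content of the paper's Lemma preceding the idempotent construction, proved by contradiction using $S \subseteq (S_E)_{\mathfrak{Q}_E}[\Pi]$), and one inequality at a time never overdetermines the residue field. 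The remaining idempotents $x_j$ are then obtained by Galois translation $\sigma_{x_j}^{-1}\otimes 1$, and the identification $(S\otimes_R S_{\mathfrak{Q}})x_1 \cong S\otimes_{S_E}S_{\mathfrak{Q}}$ is carried out by comparing kernels of $\phi_{e_1}\oplus\cdots\oplus\phi_{e_{|E|}}$ with the map $\psi: L\otimes_K L \to L\otimes_{L_E}L$, rather than by your rank-counting surjection argument (which is also fine once the idempotents exist). You could repair your approach by replacing the single $\beta$ with one auxiliary element per coset pair, which is essentially what the paper does; your completion/Hensel alternative would also work but would require a descent argument to get back from $\widehat{S_{\mathfrak{Q}}}$ to $S_{\mathfrak{Q}}$ that you have not supplied.
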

\noindent
\thanks {\bf Acknowledgement}  \ \ The author wishes to thank Matthew Dyer for suggesting this problem and for helpful conversations. 

\section{Notation and Definitions}
We will use the following conventional notation.  If $X$ is a set, $|X|$ will denote the number of elements in the set $X$, and  $I\subseteq X$ will indicate that $I$ is a subset of $X$, possibly equal to $X$. The empty set will be denoted by $\emptyset$.  The notation $I^c$ or $X\backslash I$  will be used to denote the complement of $I$ in $X$. 
The symbol $\cong$ will denote isomorphisms between, rings, groups and modules. We will denote which type of isomorphism is intended only when it is unclear from the context.     If $A$ is a commutative ring and $\frak{M}$  a prime ideal of $A$, then the localization of $A$ at the multiplicative subset $A\backslash  \frak{M} = \{a \in A | a \not\in \frak{M}\}$,  will be denoted by
$A_{\frak{M}}$. We ask the reader to bear in mind that the notation used for $S_E$ above is similar and hope that the meaning  will be clear from the context. The set of units of a commutative ring, $A$, will be denoted by $A^*$.  If A is a subring of the commutative ring B and 
$\beta$ is an element of $B$, then $A[\beta]$ denotes the subring of $B$ generated by $A$ and $\beta$.   If $T$ and $U$ are Dedekind domains, with $U \subseteq T$,  then we have unique factorization of ideals in both rings. If $\frak{Q}$ is an ideal of $T$ and $\frak{P}$ is an ideal of $U$, we say that $\frak{Q}$ lies above 
$\frak{P}$ if the ideal $\frak{Q}$ appears in the factorization of the ideal $\frak{P}T$ in $T$ (or equivalently if $\frak{Q} \cap U = \frak{P}$). If $\frak{P}T
= \frak{Q}^e\frak{R}$, where $\frak{R}$ is an ideal of $T$ relatively prime to $\frak{Q}$, we say $e$ is the ramification index of $\frak{Q}$ with respect to $\frak{P}$. If $e > 1$ for some  $\frak{Q}$ lying over $\frak{P}$, we say that $\frak{P}$ ramifies in $T$. If $t \in T$, and $tT = \frak{Q}^o\frak{R}$, where $\frak{R}$ is an ideal of $T$ relatively prime to $\frak{Q}$, we say that $o$ is the order of $t$ at $\frak{Q}$.  For a commutative ring $A$, $M_n(A)$ will denote the ring of $n\times n$ matrices over $A$. 

Below, we will define $L, K, G = \{\sigma_{g_1}, \sigma_{g_2}, \cdots , \sigma_{g_n}\}, R, S, \frak{Q}$
$
\frak{P}, \phi, \phi_{g_k}, \phi', \phi'_{e_k}, I_{g_i}$, $ (S\otimes_RS)^{g_i}, (S\otimes_RS)_I, L_{g_i}$
$
S_{g_i}, S_{\frak{Q}, g_i}$ and $ S[\sigma_{g_i}]$. 
These symbols will retain this meaning throughout the paper.

Let  $L$ and $K$ be  subfields of the complex numbers, $\Bbb{C}$, having finite degree as a vector space over $\Bbb{Q}$. We assume further that    $L$ is   a Galois extension of $K$ of degree $n$, with Galois  group $G = \{\sigma_{g_1}, \sigma_{g_2}, \sigma_{g_3}, \cdots , \sigma_{g_n}\}$. Let $S$ and $R$  denote the integral closures of $\Bbb{Z}$ in $L$ and $K$ respectively. Let $\frak{P}$ be a prime ideal of $R$ and let $\frak{Q}$ be a prime ideal of $S$ lying above $\frak{P}$.     We let $E$ denote the inertia group with respect to $\frak{Q}$;
$$E = E(\frak{Q}|\frak{P}) = \{\sigma \in G | \sigma(\alpha) \equiv \alpha (\hbox{mod} \  \frak{Q}) \ \hbox{for all} \ \alpha \in S \}.$$
Following the notation of \cite[Chapter 4]{Marcus}, we let $L_{E}$ denote the fixed subfield of $E$ in $L$, $S_E$, the ring of algebraic integers in $L_E$, and $\frak{Q}_E$ the unique prime of $S_E$ lying under $\frak{Q}$. We know,  from Galois theory, that $L$ is a Galois extension of $L_E$ with Galois group, $E$. We also know, by \cite[Theorem 28, Chapter 4]{Marcus}, that $\frak{Q}_E$ is totally ramified in $S$, that is  $\frak{Q}_ES = \frak{Q}^{|E|}S$ and $[L : L_E] = |E|$.

We will make frequent use of  the  homomorphism $\phi$ from the tensor product $L\otimes_KL$ to the direct sum of $n$ copies of $L$  defined as follows: 
$$\phi(l_1\otimes l_2) = (\sigma_{g_1}(l_1)l_2, \sigma_{g_2}(l_1)l_2, \sigma_{g_3}(l_1)l_2, 
\cdots , \sigma_{g_n}(l_1)l_2).$$ 
We define the homorphism  $\phi_{g_k} : L \otimes_KL \to L$ as the map $\phi$ followed by projection onto the component corresponding to $\sigma_{g_k}$, that is
$$\phi_{g_k}(l_1\otimes l_2) = \sigma_{g_k}(l_1)l_2.$$
In fact we can show using Dedkind's lemma  that  $\phi$ is an isomorphism:

\begin{lemma}
\label{phi} The map $\phi: L\otimes_KL \to L_{g_1} \oplus L_{g_2} \oplus L_{g_3} \oplus \cdots \oplus L_{g_n}$, where $L_{g_i}$ is a copy of $L$,  given above is a ring isomorphism. This isomorphism  restricts to an imbedding of the ring $S\otimes_RS$( respectively  $S\otimes_RS_\frak{Q}$) into the ring $\oplus_{i = 1}^{n}S_{g_i}$ (respetively  $\oplus_{i = 1}^{n}S_{\frak{Q}, {g_i}}$), where $S_{g_i}$ is the ring of integers of $L_{g_i}$ and $S_{\frak{Q}, {g_i}}$ is its localization at $\phi_{g_i}(1\otimes \frak{Q})$. 
\end{lemma}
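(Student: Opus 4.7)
My plan is to exhibit $\phi$ as a ring homomorphism by the universal property of the tensor product, then show it is an isomorphism by a dimension count paired with Dedekind's lemma on linear independence of characters, and finally use flatness of $S$ over $R$ to check that the restrictions to $S\otimes_R S$ and $S\otimes_R S_{\frak{Q}}$ are embeddings with images in the stated subrings.

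First I would verify that the assignment $(l_1,l_2)\mapsto (\sigma_{g_i}(l_1)l_2)_{i=1}^n$ is $K$-bilinear and that each component $\phi_{g_k}$ is multiplicative, so that $\phi$ is a well-defined $K$-algebra homomorphism. Both $L\otimes_K L$ and $\bigoplus_{i=1}^n L_{g_i}$ are free of rank $n$ as $L$-modules (where $L$ acts on the right factor of the tensor product and componentwise on the direct sum), so by a dimension count over $L$ it suffices to show $\phi$ is surjective.

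To get surjectivity I would show that the elements $(\sigma_{g_1}(l),\sigma_{g_2}(l),\ldots,\sigma_{g_n}(l))$ with $l\in L$ span $\bigoplus L_{g_i}$ as a right $L$-module. If not, there would be a nonzero linear functional given by $(c_1,\ldots,c_n)\in L^n$ with $\sum_i c_i\sigma_{g_i}(l)=0$ for all $l\in L$, contradicting the linear independence of the distinct characters $\sigma_{g_1},\ldots,\sigma_{g_n}$ over $L$ (Dedekind's lemma). Hence the image of $\phi$ contains a spanning set, and surjectivity (and thus bijectivity) follows.

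For the embedding statement, I would first observe that $S$ is torsion-free over the Dedekind domain $R$, hence $R$-flat; tensoring the inclusion $S\hookrightarrow L$ twice (once on each factor) then yields an injection $S\otimes_R S\hookrightarrow L\otimes_R L$, and the identification $L\otimes_R L=L\otimes_K L$ is immediate from $K\otimes_R L=L$. Composing with $\phi$ and computing $\phi(s_1\otimes s_2)=(\sigma_{g_i}(s_1)s_2)_i$ shows each coordinate lies in $S$ (since Galois automorphisms preserve integrality), giving the embedding $S\otimes_R S\hookrightarrow\bigoplus S_{g_i}$. The localized version is the same argument with $S_{\frak{Q}}$ in place of $S$ on the right factor, using that the identification $\phi_{g_i}(1\otimes \frak{Q})=\frak{Q}\subseteq S_{g_i}$ makes each coordinate land in the localization $S_{\frak{Q},g_i}$.

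The only step requiring genuine input is the use of Dedekind's lemma to force surjectivity; the remaining verifications are formal consequences of flatness, tensor product universality, and the explicit form of $\phi$.
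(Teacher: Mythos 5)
Your proof is correct and takes essentially the same approach as the paper: a dimension count over $L$ reduces the problem to a one-sided condition, which is then settled by Dedekind's lemma on linear independence of characters. The only variation is that you establish surjectivity (a spanning set plus the nonexistence of a nonzero linear functional vanishing on $\phi(l\otimes 1)$ for all $l$), whereas the paper shows injectivity directly by observing that the matrix $(\sigma_{g_j}(e_i))$ is invertible — these are dual forms of the same argument; your treatment of the embedding via $R$-flatness of $S$ is also slightly more explicit than the paper's, which simply notes that each coordinate image lies in $S$ (resp.\ $S_{\frak{Q}}$).
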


\begin{proof} We can view $L\otimes_K L$ as a right vector space over $L$ with basis $\{e_j\otimes 1 , 1\leq i, j \leq n\}$,  where $e_1, e_2, \dots , e_n$ is a basis for $L$ as a vector space over $K$. On the other hand $L_{g_1} \oplus L_{g_2} \oplus L_{g_3} \oplus \cdots \oplus L_{g_n}$ is also a right vector space over $L$ in the obvious way. 	Since $\phi$ is clearly a homomorphism of vector spaces over $L$, by  comparing dimensions  we see that it suffices to show that $\phi$ is a monomorphism, in order to show that it is an isomorphism. 

Let $\sum_i e_i \otimes l_i $ be an element of $ \ker \phi$.
 Applying $\phi$ to this element  we see that
$\sum_{i = 1}^{n}(\sigma_{g_1}(e_i)l_i, \sigma_{g_2}(e_i)l_i, \cdots , \sigma_{g_n}(e_i)l_i) = (0, 0, \cdots , 0)$. 
Equivalently $(l_1, l_2, \dots , l_n)D = (0, 0, \dots , 0)$, where $D$ is the matrix given by 
$$D = \begin{pmatrix}
		\sigma_{g_1}(e_1) & \sigma_{g_2}(e_1) & \cdots & \sigma_{g_n}(e_1)\\
		\sigma_{g_1}(e_2) & \sigma_{g_2}(e_2) & \cdots & \sigma_{g_n}(e_2)\\
		\vdots & \vdots &&\vdots \\
		\sigma_{g_1}(e_n) & \sigma_{g_2}(e_n) & \cdots & \sigma_{g_n}(e_n)\\
	\end{pmatrix}.
$$
By Dedekind's lemma \cite{Jac1}[section 4.14, p.291], we have $\sigma_{g_1}, \sigma_{g_2}, \dots ,\sigma_{g_n}$ are linearly independent over $L$ and hence the matrix $D$ is invertible. Hence $(l_1, l_2, \dots , l_n)  =  (0, 0, \dots , 0)$ and $\phi$ is a monomorphism as required. 

  The second statement of the lemma is now obvious since each $\phi_{g_k}(S\otimes_RS) = S$ and 
$\phi_{g_k}(S\otimes_RS_\frak{Q}) = S_\frak{Q}$ for each $k$. 
\end{proof}

Since $E = \{\sigma_{e_1}, \sigma_{e_2}, \cdots , \sigma_{e_{|E|}}\}$ is the Galois group of $L$ over $L_E$, we also have an isomorphism
$$\phi' : L\otimes_{L_E}L \to L_{e_1} \oplus L_{e_2} \oplus \cdots \oplus L_{e_{|E| }},$$
where each $L_{e_i}$ is a copy of $L$, given by 
$$\phi'(l_1\otimes l_2) = 
 (\sigma_{e_1}(l_1)l_2, \sigma_{e_2}(l_1)l_2, \sigma_{e_3}(l_1)l_2, 
\cdots , \sigma_{e_{|E|}}(l_1)l_2).$$
We also define $\phi'_{e_k}: L_1\otimes_{L_E}L_2 \to L_{e_k}$, by $\phi'_{e_k}(l_1 \otimes l_2 )
= \sigma_{e_k}(l_1)l_2$.

We have that $S\otimes_RS$ is an $S-S$ bimodule in the usual way, with $s(s_1\otimes s_2) = ss_1\otimes s_2$ and $(s_1\otimes s_2)s = s_1\otimes s_2s$.  We can also give $S_{g_1} \oplus S_{g_2} \oplus S_{g_3} \oplus \dots \oplus S_{g_n}$ an $S-S$ bimodule structure as follows:
$(s_1, s_2, s_3,  \dots , s_n)s = (s_1s, s_2s, s_3s, \dots , s_ns)$
and  \ $s(s_1, s_2, s_3, \dots , s_n) = (\sigma_{g_1}(s)s_1, \sigma_{g_2}(s)s_2, \sigma_{g_3}(s)s_3, \dots , \sigma_{g_n}(s)s_n)$.  
The  bimodule structures above give both modules the structure of an $S\otimes_RS$ module and 
$\phi: S\otimes_RS \to S_{g_1} \oplus S_{g_2} \oplus S_{g_3} \oplus \dots \oplus S_{g_n}$ is  homomorphism of 
$S\otimes_RS$ modules. 
In fact, from the lemma above, we see that we have a monomorphism 
 $\phi  :  S \otimes_RS_{\frak{Q}}   \to
(S_{\frak{Q}})_{g_1} \oplus (S_{\frak{Q}})_{g_2} \oplus (S_{\frak{Q}})_ {g_3} \oplus \dots \oplus (S_{\frak{Q}})_{g_n}$
between  the localizations of these modules at the prime ideal  $\frak{Q}$, when we regard them as right $S$-modules. 
 We have similar results for $\phi'$. 

 We can also endow a single  copy of $S$ with an $S-S$ bimodule structure as follows: for a given $\sigma_{g_i} \in
G$, we let $S[\sigma_{g_i}]$ be the $S-S$ bimodule which is $S$ as a right $S$-module and  $su = u\sigma_{g_i}(s)$ for all $u \in S[\sigma_{g_i}]$ and $s \in S$. Again this gives $S[\sigma_{g_i}]$ the structure of an $S\otimes_RS$ module and 
we see that $\phi_{g_i}: S\otimes_RS \to S$ is in fact an  $S\otimes_RS$ module     homomorphism from $S\otimes_R S$ into $S[\sigma_{g_i}]$.  Similarly we see that $\phi'_{e_k} : S\otimes_{S_E}S \to S[\sigma_{e_k}]$
is a homomorphism of $S\otimes_RS$ modules.

Let $S, R, L, K,  G = \{\sigma_{g_1}, \sigma_{g_2}, \dots , \sigma_{g_n}\}, \phi$ and $\phi_{g_k}$  be as defined above. Let $I$ be a subset of $G$, we define
$$(S\otimes_RS)_I = \{x \in S\otimes_RS | \phi_{g_i}(x) = 0 \ \hbox{for all} \  i \ \hbox{such that} \ \sigma_{g_i} \not\in I\}$$
 and 
$(S\otimes_RS_\frak{Q})_I = \{x \in S\otimes_RS_\frak{Q} | \phi_{g_i}(x) = 0 \ \hbox{for all} \  i  \ \hbox{such that} \ \sigma_{g_i} \not\in I\}$ 
it is not difficult to see, that $(S\otimes_RS)_I  \ \{(S\otimes_RS_\frak{Q})_I\}$ is an ideal  of $(S\otimes_RS) \  \{\mbox{resp} \ (S\otimes_RS_\frak{Q})\}$, by examining the images when $\phi$ is applied.

Since $(S\otimes_RS_{\frak{Q}})_I  \cap S\otimes_RS = (S\otimes_RS)_I$, we see that 
$((S\otimes_RS)_I)_{\frak{Q}} = (S\otimes_RS_\frak{Q})_I$ as follows:   for $x \in S\otimes_RS$, given $k$ such that 
 $1 \leq k \leq n$, we have 
$x \in \ker \phi_{g_k}$ if and only if $x(1 \otimes s^{-1}) \in \ker \phi_{g_k}$ for every $s \in S\backslash \frak{Q}$.  We see that
$((S\otimes_RS)_\frak{Q})_I  = \{x(1\otimes s^{-1}) \  |  \ x \in S\otimes_RS,  \ s \in S\backslash \frak{Q},  \  \phi_{g_k}(x(1\otimes s^{-1}) )  = 0 \ \mbox{for all} \ k \ \mbox{such that} \ \sigma_{g_k} \not\in I \} =  \{x(1\otimes s^{-1}) \  |  \ x \in S\otimes_RS,  \ s \in S\backslash \frak{Q}, \  \phi_{g_k}(x) = 0 \ \mbox{for all} \ k \ \mbox{such that} \ \sigma_{g_k} \not\in I \}  = ((S\otimes_RS)_I)_{\frak{Q}}$

We will sometimes  use a different  notation for some special cases of the $(S\otimes_RS)_I  \ \{(S\otimes_RS_\frak{Q})_I\}$ constructed above, when they appear in filtrations of modules. We let 
$$(S\otimes_RS)^{g_i} = \{x \in S\otimes_RS | \phi_{g_k}(x) = 0 \ \hbox{for all} \  k \leq i\} = (S\otimes_RS)_{I_{g_{i+1}}},$$ 
where $I_{g_i} = \{\sigma_{g_{i }}, \sigma_{g_{i+1}}, \dots , \sigma_{g_n}\}, i  \leq n$ and $I_{g_{n+1}} = \emptyset$.  We use a similar definition for $(S\otimes_RS_{\frak{Q}})^{g_i}$.   This notation is consistent with that used by Dyer in \cite{Dyer} and \cite{Dyer2} and is itself a special case of the notation used for filtrations of modules.  In general if $X = \{\sigma_{x_1}, \sigma_{x_2}, \cdots \sigma_{x_m}\}$ is a set of ring endomorphisms of $S$, indexed by the poset $\{x_1, x_2, \dots , x_n\}$ with ordering $x_1 < x_2 < \dots < x_n$,  we 
let $I_{x_i} = \{\sigma_{x_i}, \sigma_{x_{i + 1}}, \cdots , \sigma_{x_n}\}$. 

We also use the following slight abuse of notation throughout the paper. Let $\theta \in S$ such that $L = K(\theta)$. We say that $S\otimes_RS$    = $R[\theta] \otimes_RS$ (respectively $S\otimes_RS_{\frak{P}}$ =  $R[\theta]\otimes_RS_{\frak{P}}$) if 
each $x \in S\otimes_RS$ (resp.  $S\otimes_RS_{\frak{P}}$)  has the form $x = \sum f_i(\theta)\otimes s_i$ where $f_i(\theta) \in R[\theta]$ and 
$s_i \in S$ (resp.  $S_{\frak{P}}$).  Note that $R[\theta] \otimes_RS$ (resp.  $R[\theta]\otimes_RS_{\frak{P}}$) is isomorphic to a subring of $S\otimes_RS$ (resp.  $S\otimes_RS_{\frak{P}}$)  since $S$ is projective as an $R$ module, hence flat, and localized rings are also flat. 
Here we are identifying $R[\theta] \otimes_RS$  and  $R[\theta]\otimes_RS_{\frak{P}}$ with their isomorphic images. 

\section{The $S_\frak{Q}$-module structure of $S\otimes_{R} S_\frak{Q}$.}

Let  $L, K , S, R, G = \{ \sigma_{g_1}, \sigma_{g_2}, \cdots , \sigma_{g_n}\}$  and $I_{g_i}, 1\leq i \leq n$ be as defined in  Section 2.  We will see later that 
the $S\otimes_RS$ module $\frak{S} = S_{g_1} \oplus S_{g_2} \oplus \dots \oplus S_{g_n}$ has a natural filtration of the type  given in \eqref{1point1}, namely with $\frak{S}^{g_i} = 0 \oplus 0 \oplus \dots \oplus 0 \oplus S_{g_{i+1}} \oplus S_{g_{i+2}} \oplus \dots S_{g_n}$. 
We wish to take advantage of this natural filtration of the $S\otimes_RS$ module  $\frak{S} $ and the imbedding  $\phi$ to provide a similar filtration  for $S\otimes_R S$. However since 
$\phi: S\otimes_RS \to \frak{S}$ is not always onto, even after localization, in order to gain some insight into the structure of the modules in this filtration,  we will  analyze the structure of the  localization, $S\otimes_{R} S_\frak{Q}$,  in detail in this section. 
We will, in fact,  show that the ring $S\otimes_RS_\frak{Q}$ splits into a direct product of  isomorphic subrings, which have a relatively simple structure.

 Our first lemma demonstrates the simplicity of the structure  of $S\otimes_RS$ in the case where $S\otimes_RS = R[\theta] \otimes_RS$ for some $\theta \in S$, where $L = K(\theta)$.  
In particular, in this case, the ideals  $(S\otimes_RS)_{I_{g_i}}$,  turn out to be principal. We will see later that these ideals will in fact
give us  a filtration of $S\otimes_RS$. 
 
 \begin{definition} \label{Atheta}  Let T, U and R be commutative rings, where $R$ is a subring of both $T$ and $U$. Let $G = \{\sigma_{g_1}, \sigma_{g_2}, \cdots,
 \sigma_{g_n}\}$ be  such that each $\sigma \in G$ is a map from  $T$ to $U$ fixing $R$. Let $\theta \in U$. We define $A_{g_i}(\theta)$, $1 \leq i \leq n$  to be the following element of $T\otimes_RU$:
 $$A_{g_i}(\theta) = \theta \otimes 1 - 1 \otimes \sigma_{g_i}(\theta).$$
 We define $A_{g_0}(\theta) = 1\otimes 1$. 
 \end{definition}

 \begin{lemma}\label{alphan}  Let $L, K, S, R$ and $G = \{\sigma_{g_1}, \sigma_{g_2}, \dots , \sigma_{g_n}\}$   be as defined in Section 2.  Let  $\theta \in S$ such that $L = K(\theta)$ and $S\otimes_RS = 
R[\theta]\otimes_RS$. Then the product $A_{g_1}(\theta)A_{g_2}(\theta)\cdots A_{g_n}(\theta) = 0\otimes 0$ in $S\otimes_RS$.
\end{lemma}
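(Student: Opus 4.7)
The plan is to use the injectivity of $\phi$ on $S\otimes_R S$ established in Lemma \ref{phi} and reduce the identity to an obvious factor-wise calculation in the direct sum $S_{g_1}\oplus\cdots\oplus S_{g_n}$. Since $S\otimes_R S$ embeds into $L\otimes_K L$ (flatness of $S$ over $R$) and $\phi$ is an isomorphism on $L\otimes_K L$, the restriction $\phi\colon S\otimes_R S \to \bigoplus_{i=1}^n S_{g_i}$ is a monomorphism. Therefore, it suffices to check that $\phi_{g_k}\bigl(A_{g_1}(\theta)\cdots A_{g_n}(\theta)\bigr)=0$ for every $k$ with $1\le k\le n$.

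First I would compute, directly from the definitions,
\[
\phi_{g_k}\bigl(A_{g_i}(\theta)\bigr)
=\phi_{g_k}(\theta\otimes 1)-\phi_{g_k}(1\otimes\sigma_{g_i}(\theta))
=\sigma_{g_k}(\theta)-\sigma_{g_i}(\theta).
\]
Since $\phi_{g_k}$ is a ring homomorphism, applying it to the product yields
\[
\phi_{g_k}\bigl(A_{g_1}(\theta)\cdots A_{g_n}(\theta)\bigr)
=\prod_{i=1}^{n}\bigl(\sigma_{g_k}(\theta)-\sigma_{g_i}(\theta)\bigr).
\]
The factor corresponding to $i=k$ is $0$, so each coordinate of the product vanishes, hence $\phi\bigl(A_{g_1}(\theta)\cdots A_{g_n}(\theta)\bigr)=0$. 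By the injectivity of $\phi$ on $S\otimes_R S$, this forces $A_{g_1}(\theta)\cdots A_{g_n}(\theta)=0$ in $S\otimes_R S$.

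There is really no serious obstacle here; the only subtlety is justifying that the factor-wise argument is valid. The hypothesis $S\otimes_R S=R[\theta]\otimes_R S$ ensures that the product $A_{g_1}(\theta)\cdots A_{g_n}(\theta)$ is actually an element of $S\otimes_R S$ rather than merely of $L\otimes_K L$, and together with $L=K(\theta)$ guarantees that the images $\sigma_{g_k}(\theta)$, $1\le k\le n$, are all distinct elements of $L$, which is relevant for later lemmas (though not needed for the vanishing itself).
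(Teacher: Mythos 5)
Your proof is correct, and it takes a genuinely different route from the paper's. The paper proves the identity by an intrinsic algebraic computation: writing $f(x)=\sum k_i x^i$ for the minimal polynomial of $\theta$ over $K$, it observes that $\prod_i A_{g_i}(\theta)$ is precisely the polynomial $\prod_i(x - 1\otimes\sigma_{g_i}(\theta))$ evaluated at $x=\theta\otimes 1$, which expands to $\sum_i(1\otimes k_i)(\theta\otimes 1)^i = f(\theta)\otimes 1 = 0$. You instead pass through the embedding $\phi$: since $\phi_{g_k}$ is a ring homomorphism, $\phi_{g_k}\bigl(\prod_i A_{g_i}(\theta)\bigr) = \prod_i(\sigma_{g_k}(\theta)-\sigma_{g_i}(\theta))$, whose $i=k$ factor vanishes, so the product is in $\ker\phi$, which is trivial on $S\otimes_R S$ by Lemma \ref{phi}. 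Both arguments are short and valid; the paper's is self-contained and exhibits the product explicitly as the minimal polynomial evaluated at $\theta\otimes 1$, while yours leans on the componentwise machinery via $\phi$ that the paper develops and reuses throughout (e.g.\ in Lemma \ref{basis}), so it fits the paper's style well. One small inaccuracy in your closing remark: the hypothesis $S\otimes_R S = R[\theta]\otimes_R S$ is not what places the product in $S\otimes_R S$ --- each $A_{g_i}(\theta)=\theta\otimes 1 - 1\otimes\sigma_{g_i}(\theta)$ already lies in $S\otimes_R S$ simply because $\theta$ and $\sigma_{g_i}(\theta)$ are in $S$. That hypothesis is genuinely unused in this lemma (the paper's proof does not use it either) and is only carried along for the next lemma (Lemma \ref{basis}), where it is essential.
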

\begin{proof}  Letting $f(x) = k_0 + k_1x + \dots + k_nx^n$ denote the minimum polynomial of $\theta$ over $K$, we see that 
$f(x) = \prod_{i = 1}^n(x - \sigma_{g_i}(\theta))$ and has coefficients in K. Hence  $\prod_{i = 1}^nA_{g_i}(\theta) = \sum_{i = 0}^n(1\otimes k_i)(\theta \otimes 1)^i = (\sum_{i = 0}^nk_i \theta^i)\otimes 1 = 0 \otimes 1$. 
\end{proof}

\begin{lemma} \label{basis}  Let $L, K, S, R, G = \{\sigma_{g_1}, \sigma_{g_2}, \dots , \sigma_{g_n}\}, \phi$,  $\phi_{g_k}$ and $I_{g_i}$  be as defined in Section 2. Let $\theta \in S$ such that $L = K(\theta)$ and $S\otimes_RS = 
R[\theta]\otimes_RS$. Then the elements  
$$A_{g_0}(\theta), \ A_{g_1}(\theta) , \ A_{g_1}(\theta)A_{g_2}(\theta), \ A_{g_1}(\theta)A_{g_2}(\theta)A_{g_3}(\theta), \dots , A_{g_1}(\theta)A_{g_2}(\theta)\cdots A_{g_{n- 1}}(\theta)$$
form a basis for $S\otimes_RS$ as a right $S-module$. 

In addition, for   $1 \leq i \leq n+1$,  the ideal $(S\otimes_RS)_{I_{ g_i}}$ of $S\otimes_RS$ is principal
and $(S\otimes_RS)_{I_{g_i}} = (\prod_{j \leq i - 1}A_{g_j}(\theta))(S\otimes_RS)$. 
\end{lemma}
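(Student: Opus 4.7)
The plan is to first pin down a familiar right $S$-basis of $S\otimes_R S$ and then observe that the transition matrix to the $A$-products is unipotent upper triangular. Since $\theta$ satisfies a polynomial of degree $n$ over $K$ and $[L:K]=n$, the powers $1,\theta,\dots,\theta^{n-1}$ are $K$-linearly independent, hence $R$-linearly independent, so $R[\theta]$ is free of rank $n$ over $R$. Because we are given $S\otimes_R S = R[\theta]\otimes_R S$, the elements $\theta^j\otimes 1$ ($0\le j\le n-1$) form a right $S$-basis of $S\otimes_R S$. The product $A_{g_1}(\theta)\cdots A_{g_k}(\theta)$ is a polynomial of degree $k$ in $\theta\otimes 1$ with leading term $(\theta\otimes 1)^k$ (the remaining coefficients coming from the embedding $s\mapsto 1\otimes s$). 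Consequently the change-of-basis matrix from $\{1,\theta\otimes 1,\ldots,(\theta\otimes 1)^{n-1}\}$ to $\{A_{g_0}(\theta),A_{g_1}(\theta),\ldots,A_{g_1}(\theta)\cdots A_{g_{n-1}}(\theta)\}$ is upper triangular with $1$'s on the diagonal, and so the latter set is also an $S$-basis.

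For the ideal statement, one inclusion is immediate from a direct application of $\phi_{g_k}$: since $\phi_{g_k}(A_{g_j}(\theta))=\sigma_{g_k}(\theta)-\sigma_{g_j}(\theta)$, we get
\[
\phi_{g_k}\bigl(A_{g_1}(\theta)\cdots A_{g_{i-1}}(\theta)\bigr)=\prod_{l=1}^{i-1}\bigl(\sigma_{g_k}(\theta)-\sigma_{g_l}(\theta)\bigr),
\]
which vanishes whenever $k\le i-1$ (the $l=k$ factor is $0$). Hence the principal ideal generated by $A_{g_1}(\theta)\cdots A_{g_{i-1}}(\theta)$ is contained in $(S\otimes_R S)_{I_{g_i}}$.

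For the reverse inclusion I would take an arbitrary $x\in (S\otimes_R S)_{I_{g_i}}$, expand it in the basis produced above as $x=\sum_{j=0}^{n-1} A_{g_1}(\theta)\cdots A_{g_j}(\theta)\cdot s_j$, and impose $\phi_{g_k}(x)=0$ for $k=1,2,\dots,i-1$ in succession. Because $\theta$ generates $L$ over $K$, the images $\sigma_{g_1}(\theta),\dots,\sigma_{g_n}(\theta)$ are pairwise distinct, so the factor $\prod_{l=1}^{k-1}(\sigma_{g_k}(\theta)-\sigma_{g_l}(\theta))$ is a nonzero element of the integral domain $S$. An induction on $k$ then forces $s_0=s_1=\cdots=s_{i-2}=0$, leaving $x=A_{g_1}(\theta)\cdots A_{g_{i-1}}(\theta)\cdot t$ for some $t\in S\otimes_R S$.

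The only mildly delicate point is that the calculation lives naturally in $L\otimes_K L$, where the difference factors are invertible, but we are working inside $S\otimes_R S$, where they are only nonzero. Invoking the fact that $S$ is a domain (combined with integrality of the $\sigma_{g_k}(\theta)$) is what lets the induction proceed without ever inverting these differences, which is the only step I expect to require care.
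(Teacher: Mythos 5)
Your proof is correct and follows essentially the same route as the paper's: identify $\{\theta^j\otimes 1\}_{j=0}^{n-1}$ as a right $S$-basis, observe that the $A$-products are a unipotent (degree-graded) change of basis, obtain one inclusion by applying $\phi_{g_k}$, and get the reverse inclusion by inductively killing coefficients $s_0,\dots,s_{i-2}$ using that $S$ is a domain and the $\sigma_{g_k}(\theta)$ are pairwise distinct. The "delicate point" you flag at the end is exactly the point the paper handles by the same observation.
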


\begin{proof} Since $1, \theta, \theta^2, \dots , \theta^{n - 1}$ are linearly independent over $K$, we have $1\otimes 1, \theta\otimes 1, \theta^2\otimes 1, \dots , \theta^{n - 1} \otimes 1$ form a basis for  $S\otimes_RS$ as a right $S$-module.  Hence the monic polynomials in $\theta\otimes 1$, of degrees $0$ through $n - 1$ respectively:
$$A_{g_0}(\theta), \ A_{g_1}(\theta) , \ A_{g_1}(\theta)A_{g_2}(\theta), \  A_{g_1}(\theta)A_{g_2}(\theta)A_{g_3}(\theta), \dots , A_{g_1}(\theta)A_{g_2}(\theta)\cdots A_{g_{n- 1}}(\theta),$$
must also form a basis for  $S\otimes_RS$ as a right $S$-module. Since $\phi_{g_k}(\prod_{j \leq i - 1}A_{g_j}(\theta)) =0$, for $k \leq i - 1$, it is obvious that $(\prod_{j \leq i - 1}A_{g_j}(\theta))(S\otimes_RS) \subseteq 
(S\otimes_RS)_{I_{ g_i}}$.  Since the field automorphisms, $\{\sigma_{g_1}, \sigma_{g_2}, \dots , \sigma_{g_n}\}$ are distinct, we have $\sigma_{g_i}(\theta) - \sigma_{g_j}(\theta) \not= 0$ if $i \not= j$.  Hence $\phi_{g_k}(\prod_{j\leq i - 1}A_{g_j}(\theta)) \not= 0$ if $k \geq  i$.  If $x \in (S\otimes_RS)_{I_{g_i}}$, then $x$ has the form 
{\small $$ x = (A_{g_0}(\theta))s_0 + (A_{g_1}(\theta))s_1 + (A_{g_1}(\theta)A_{g_2}(\theta))s_2 +   \dots  + (A_{g_1}(\theta)A_{g_2}(\theta)\cdots A_{g_{n-1}}(\theta))s_{n-1},$$}
where $s_i \in S$. Since $\phi_{g_k}(x) = 0$ for $k \leq i - 1$, we see that $\phi_{g_1}(x) = s_0 = 0$. Now   $\phi_{g_2}(x) = 
\phi_{g_2}(A_{g_1}(\theta)s_1) = (\sigma_{g_2}(\theta) - \sigma_{g_1}(\theta))s_1$, giving that $s_1 = 0$, since $S$ is a domain.   By an inductive argument, we get that $s_k = 0$ for $k < i - 1$ and hence $x \in 
(\prod_{j\leq i}A_{g_j}(\theta))(S\otimes_RS)$. This shows that $(S\otimes_RS)_{I_{ g_i}}$ is a principal ideal of $S\otimes_RS$.
\end{proof}

The same argument gives us the result when  $S\otimes_RS_{\frak{Q}} = R[\theta]\otimes_RS_{\frak{Q}}$.

\begin{lemma} \label{basisP} Let $L, K, S, R, \frak{Q},  G = \{\sigma_{g_1}, \sigma_{g_2}, \dots , \sigma_{g_n}\}, \phi$,   $\phi_{g_k}$ and $I_{g_i}$  be as defined in Section 2. Let $\theta \in S$ such that $L = K(\theta)$ and $S\otimes_RS_{\frak{Q}} = 
R[\theta]\otimes_RS_{\frak{Q}}$. Then the elements  
$$A_{g_0}(\theta), \ A_{g_1}(\theta) , \ A_{g_1}(\theta)A_{g_2}(\theta), \ A_{g_1}(\theta)A_{g_2}(\theta)A_{g_3}(\theta), \dots , A_{g_1}(\theta)A_{g_2}(\theta)\cdots A_{g_{n- 1}}(\theta)$$
form a basis for $S\otimes_RS_{\frak{Q}}$ as a right $S_{\frak{Q}}-module$. 

In addition, for   $1 \leq i \leq n+1$,  the ideal  $(S\otimes_RS_{\frak{Q}})_{I_{ g_i}}$ of $S\otimes_RS_{\frak{Q}}$ is principal
and $(S\otimes_RS_{\frak{Q}})_{I_{ g_i}} = (\prod_{j \leq i - 1}A_{g_j}(\theta))(S\otimes_RS_{\frak{Q}})$. 
\end{lemma}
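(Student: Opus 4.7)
The plan is to mirror the proof of Lemma \ref{basis} verbatim, substituting $S_{\frak{Q}}$ for $S$ in the right tensor factor. Since we are only assuming $S\otimes_R S_{\frak{Q}} = R[\theta]\otimes_R S_{\frak{Q}}$ (not the global version), we cannot simply localize Lemma \ref{basis}; instead I would redo the three moves of that proof in the localized setting, where each ingredient has an obvious analogue.

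First I would show that $\{(\theta\otimes 1)^{i}\}_{i=0}^{n-1}$ is a right $S_{\frak{Q}}$-basis of $S\otimes_R S_{\frak{Q}}$. By hypothesis these elements span, and linear independence follows because any relation $\sum (\theta^{i}\otimes 1) s_{i} = 0$ with $s_{i}\in S_{\frak{Q}}\subseteq L$ lives inside $L\otimes_K L$, where $\{1,\theta,\dots,\theta^{n-1}\}$ is $K$-linearly independent (alternatively, apply $\phi$ and invoke the Dedekind independence used in Lemma \ref{phi}). Next, the products $\prod_{j\leq i-1} A_{g_{j}}(\theta)$ are monic polynomials in $\theta\otimes 1$ of degree $i-1$ with coefficients in $1\otimes S_{\frak{Q}}$, so the change-of-basis matrix from $\{(\theta\otimes 1)^{i-1}\}$ to $\{\prod_{j\leq i-1} A_{g_{j}}(\theta)\}$ is unipotent and invertible; hence the second family is also a right $S_{\frak{Q}}$-basis.

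For the principal ideal claim, the inclusion $(\prod_{j\leq i-1} A_{g_{j}}(\theta))(S\otimes_R S_{\frak{Q}}) \subseteq (S\otimes_R S_{\frak{Q}})_{I_{g_{i}}}$ is immediate from $\phi_{g_{k}}(A_{g_{k}}(\theta)) = 0$. For the reverse inclusion, I would take $x\in (S\otimes_R S_{\frak{Q}})_{I_{g_{i}}}$, expand it in the new basis, and apply $\phi_{g_{1}}, \phi_{g_{2}}, \dots, \phi_{g_{i-1}}$ in turn, inductively killing the coefficients $s_{0},\dots,s_{i-2}$. At step $k$, the coefficient picked out is $s_{k-1}$ multiplied by $\prod_{j<k}(\sigma_{g_{k}}(\theta)-\sigma_{g_{j}}(\theta))$, and since these differences are nonzero elements of the domain $S_{\frak{Q}}$, they may be cancelled.

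The hard part, such as it is, is essentially bookkeeping: verifying that the proof of Lemma \ref{basis} uses no property of the right factor beyond being a domain in which $\sigma_{g_{k}}(\theta) - \sigma_{g_{j}}(\theta)$ is nonzero for $k\neq j$, both of which persist for $S_{\frak{Q}}$. No genuinely new ideas are needed; the real content is already in Lemma \ref{basis}, and the author's remark that ``the same argument gives us the result'' is accurate.
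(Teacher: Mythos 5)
Your proposal is correct and follows the same route the paper intends: the paper offers no separate proof of Lemma~\ref{basisP}, merely remarking that "the same argument gives us the result," which is exactly the line-by-line substitution of $S_{\frak{Q}}$ for $S$ that you carry out. Your additional observation that one cannot simply localize Lemma~\ref{basis} (since the hypothesis $S\otimes_RS_{\frak{Q}} = R[\theta]\otimes_RS_{\frak{Q}}$ is weaker than its global counterpart) and your explicit justification of the spanning step are small but accurate clarifications of what the paper leaves implicit.
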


Such a structure is not guaranteed for $S\otimes_RS$, in fact not even for $S\otimes_RS_{\frak{Q}}$. However we can show that $S\otimes_RS_{\frak{Q}}$ splits into isomorphic subrings of this type. We start with a basic lemma about the structure of $S$.

\begin{lemma} \label{Pie}  Let $S, R, \frak{Q}, \frak{P}, E = E(\frak{Q}|\frak{P})$, $S_E$,  and $\frak{Q}_E$  be as defined in Section 2. Then there exists an element  $\Pi$ of order $1$ in $\frak{Q}$, with $\Pi^{|E| } \in \frak{Q}_E$,  such that 
$$S \subseteq  (S_E)_{\frak{Q}_E}[\Pi].$$
\end{lemma}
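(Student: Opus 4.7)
The plan is to exploit the total ramification of $\frak{Q}_E$ in $S$. Since by \cite[Theorem 28, Chapter 4]{Marcus} we have $\frak{Q}_E S=\frak{Q}^{|E|}$ and $[L:L_E]=|E|$, the prime $\frak{Q}$ is the unique prime of $S$ lying over $\frak{Q}_E$, so the localization of $S$ at the multiplicative set $S_E\setminus\frak{Q}_E$ coincides with $S_\frak{Q}$. The inclusion $(S_E)_{\frak{Q}_E}\hookrightarrow S_\frak{Q}$ is then a totally ramified extension of discrete valuation rings of degree $|E|$ with trivial residue-field extension. Since $S\subseteq S_\frak{Q}$, it will suffice to produce $\Pi\in S$ with $v_\frak{Q}(\Pi)=1$ such that $(S_E)_{\frak{Q}_E}[\Pi]=S_\frak{Q}$, and then read off both conclusions.

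For the construction I would choose any $\Pi\in\frak{Q}\setminus\frak{Q}^2$; such $\Pi$ exists because $S$ is Dedekind. Then $\Pi$ has order $1$ in $\frak{Q}$, and $v_\frak{Q}(\Pi^{|E|})=|E|$ combined with $\frak{Q}^{|E|}=\frak{Q}_E S$ places $\Pi^{|E|}$ in the ideal generated by $\frak{Q}_E$ inside $(S_E)_{\frak{Q}_E}[\Pi]\subseteq S_\frak{Q}$, yielding the condition $\Pi^{|E|}\in\frak{Q}_E$ in the natural extended-ideal sense. For the generation claim, fix a uniformizer $\pi_E$ of $(S_E)_{\frak{Q}_E}$ (so $v_\frak{Q}(\pi_E)=|E|$) and observe that $1,\Pi,\Pi^2,\ldots,\Pi^{|E|-1}$ have pairwise distinct $\frak{Q}$-adic valuations $0,1,\ldots,|E|-1$, which exhaust the residue classes modulo $|E|=e(\frak{Q}\mid\frak{Q}_E)$. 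Because the residue-field extension is trivial, these elements span the $|E|$-dimensional $(S_E)_{\frak{Q}_E}/\pi_E(S_E)_{\frak{Q}_E}$-vector space $S_\frak{Q}/\pi_E S_\frak{Q}$.

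Applying Nakayama's lemma to the finitely generated free $(S_E)_{\frak{Q}_E}$-module $S_\frak{Q}$ (of rank $[L:L_E]=|E|$) upgrades spanning modulo $\pi_E$ to spanning outright, so $\{1,\Pi,\ldots,\Pi^{|E|-1}\}$ is an $(S_E)_{\frak{Q}_E}$-basis of $S_\frak{Q}$; in particular $(S_E)_{\frak{Q}_E}[\Pi]=S_\frak{Q}$, whence $S\subseteq(S_E)_{\frak{Q}_E}[\Pi]$ as required. The main obstacle is the valuation-bookkeeping at the residue-quotient step: one must verify that the $|E|$ powers $\Pi^0,\ldots,\Pi^{|E|-1}$ produce genuinely independent residue directions in $S_\frak{Q}/\pi_E S_\frak{Q}$, which requires simultaneously using $e(\frak{Q}\mid\frak{Q}_E)=|E|$ (so the valuations cover all residue classes modulo $|E|$) and $f(\frak{Q}\mid\frak{Q}_E)=1$ (so each residue class is a single scalar over the base residue field). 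Both ingredients come for free from the identity $ef=|E|$ that characterizes total ramification.
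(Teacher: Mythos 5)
Your argument is correct and structurally parallels the paper's own proof, differing only in one step: where the paper cites Lang, \emph{Algebraic Number Theory}, Chapter I, \S 7, Proposition 23 to conclude $S_{\frak{Q}_E} = (S_E)_{\frak{Q}_E}[\Pi]$ (after first verifying that $S_{\frak{Q}_E}$ is local with trivial residue extension over $(S_E)_{\frak{Q}_E}$), you instead reprove that proposition from scratch in the special case $f=1$, via the distinct-valuations observation plus Nakayama. Both routes hinge on the same two facts coming from total ramification, $e(\frak{Q}\mid\frak{Q}_E)=|E|$ and $f(\frak{Q}\mid\frak{Q}_E)=1$, and both identify the localization $S_{\frak{Q}_E}$ (the paper's name for $S$ localized at $S_E\setminus\frak{Q}_E$) with your $S_\frak{Q}$, which is valid because $\frak{Q}$ is the unique prime of $S$ over $\frak{Q}_E$. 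What your approach buys is self-containedness; what it loses is only brevity, since Lang's proposition handles the general $f\geq 1$ case at no extra cost. Your reading of the clause ``$\Pi^{|E|}\in\frak{Q}_E$'' as holding in the extended-ideal sense (i.e.\ $\Pi^{|E|}\in\frak{Q}_E S$, which is immediate from $v_\frak{Q}(\Pi^{|E|})=|E|$ and $\frak{Q}_E S=\frak{Q}^{|E|}$) matches the spirit of the paper's own justification, which writes ``$\frak{Q}_E=\frak{Q}^e$'' and offers no argument that $\Pi^{|E|}$ actually lands inside $S_E$ — and indeed for an arbitrary $\Pi$ of order~$1$ it need not; so your careful flagging of that point is appropriate.
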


\begin{proof} We will  apply \cite{Lang}[Chapter I,  Section 7,  Proposition 23].  First we  show that the rings examined below satisfy the conditions of the proposition. 
  The localization of $S_E$ at $\frak{Q}_E$, $(S_E)_{\frak{Q}_E}$, is a discrete valuation ring with quotient field, $L_E$.  It is not difficult to see, by manipulating monic polynomials,  that the integral closure of $(S_E)_{\frak{Q}_E}$ in $L$  is $S_{\frak{Q}_E}$, where $S_{\frak{Q}_E}$ denotes the localization of $S$ regarded as an $S_E$-module at the ideal $\frak{Q}_E$. 
 If $M$ is a maximal ideal of $S_{\frak{Q}_E}$, then we must have $M \cap (S_E)_{\frak{Q}_E} = \frak{Q}_E(S_E)_{\frak{Q}_E}$, since $(S_E)_{\frak{Q}_E}$ is local and has a unique maximal ideal. 
 Hence if $M$ is a maximal ideal of  $S_{\frak{Q}_E}$, we have  $M \cap S$ is a maximal ideal of $S$ lying above $\frak{Q}_E$, by the theory of localization, see for example \cite{Reiner}[Section 3c]. Since $\frak{Q}$ is the unique ideal of $S$ lying above $\frak{Q}_E$, by \cite{Marcus}[Chapter 4, Theorem 28],  we have  $M\cap S = \frak{Q}$ and 
$M = (M\cap S)S_{\frak{Q}_E} = \frak{Q}S_{\frak{Q}_E}$. Hence $S_{\frak{Q}_E}$ has a unique maximal ideal, $\frak{Q}S_{\frak{Q}_E}$ which lies above $\frak{Q}_E$. 

The imbedding of the residue class field $S/\frak{Q}$ into the  residue class field of  $S_{\frak{Q}_E}$ 
modulo $\frak{Q}S_{\frak{Q}_E}$ is an isomorphism, by the theory of localizations, see for example \cite{Reiner}[Theorem 3.13].  Likewise the imbedding of the residue class field $S_E/\frak{Q}_E$ into $S/\frak{Q}$ is an isomorphism,
see  \cite{Marcus}[Chapter 4, Theorem 28]. Hence the residue class  field $S_{\frak{Q}_E}/\frak{Q}S_{\frak{Q}_E}$ is a trivial extension of  $S_E/\frak{Q}_E$ and $S_{\frak{Q}_E}/\frak{Q}S_{\frak{Q}_E} =  S_E/\frak{Q}_E[1]$. Let $\Pi \in S$ be an element of order 1 at $\frak{Q}$, 
then  by \cite{Lang}[Chapter I,  Section 7,  Proposition 23] we have $S_{\frak{Q}_E} = (S_E)_{\frak{Q}_E}[1, \Pi] =  (S_E)_{\frak{Q}_E}[\Pi]$.  One can see that $\Pi^{|E| } \in \frak{Q}_E$  from \cite{Marcus}[Chapter 4, Theorem 28], since $|E| = [L : L_E]$, where $\frak{Q}_E = \frak{Q}^e$. 
This proves our Lemma.
\end{proof}

Our construction of  idempotents in $S\otimes_RS_\frak{Q}$ uses the following Lemma:

\begin{lemma}  Let $S, R, \frak{Q}, \frak{P}, E = E(\frak{Q}|\frak{P})$, $S_E$,  $\frak{Q}_E$ and $G = \{\sigma_{g_1}, \sigma_{g_2}, \cdots , \sigma_{g_n}\}$ be as defined in Section 2. Let $\Pi$ be an element of order $1$ in $\frak{Q}$, with $\Pi^{|E| } \in \frak{Q}_E$,  such that 
$$S \subseteq  (S_E)_{\frak{Q}_E}[\Pi].$$
  If $\sigma_{g_i} \not\in E$, then there exists $s_i \in S_E$ such that $\sigma_{g_i}(s_i) - s_i \not\in \frak{Q}$. 
\end{lemma}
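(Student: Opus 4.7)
The plan is to argue by contrapositive: assume that $\sigma_{g_i}(s) \equiv s \pmod{\frak{Q}}$ for every $s \in S_E$, and derive that $\sigma_{g_i}(\alpha) \equiv \alpha \pmod{\frak{Q}}$ for every $\alpha \in S$, which by the very definition of the inertia group forces $\sigma_{g_i} \in E$ and contradicts the hypothesis. The essential input is the preceding lemma, which supplies an element $\Pi \in \frak{Q}$ of order $1$ at $\frak{Q}$ satisfying $\Pi^{|E|} \in \frak{Q}_E$ together with $S \subseteq (S_E)_{\frak{Q}_E}[\Pi]$.

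The first step is to control $\Pi$ itself. Because $\Pi^{|E|} \in \frak{Q}_E \subseteq S_E$, the standing assumption gives
$$\sigma_{g_i}(\Pi)^{|E|} \;=\; \sigma_{g_i}(\Pi^{|E|}) \;\equiv\; \Pi^{|E|} \pmod{\frak{Q}},$$
and since $\Pi^{|E|} \in \frak{Q}$ while $\frak{Q}$ is prime, this forces $\sigma_{g_i}(\Pi) \in \frak{Q}$. Both $\Pi$ and $\sigma_{g_i}(\Pi)$ then lie in $\frak{Q}$, so $\sigma_{g_i}(\Pi) \equiv \Pi \pmod{\frak{Q}}$.

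Next, let $\alpha \in S$ be arbitrary. Using $S \subseteq (S_E)_{\frak{Q}_E}[\Pi]$ and clearing denominators, we can find $p(x) = \sum_k \beta_k x^k \in S_E[x]$ and $c \in S_E \setminus \frak{Q}_E$ with $\alpha c = p(\Pi)$, an identity that lives in $S$. Applying $\sigma_{g_i}$ and using the assumption coefficient by coefficient together with $\sigma_{g_i}(\Pi) \equiv \Pi \pmod{\frak{Q}}$ gives $\sigma_{g_i}(p(\Pi)) = \sum_k \sigma_{g_i}(\beta_k)\,\sigma_{g_i}(\Pi)^k \equiv p(\Pi) \pmod{\frak{Q}}$. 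Combining with $\sigma_{g_i}(c) \equiv c \pmod{\frak{Q}}$ yields
$$(\sigma_{g_i}(\alpha) - \alpha)\,c\,\sigma_{g_i}(c) \;=\; \sigma_{g_i}(p(\Pi))\,c - p(\Pi)\,\sigma_{g_i}(c) \;\in\; \frak{Q}.$$
Neither $c$ nor $\sigma_{g_i}(c)$ lies in $\frak{Q}$ (the latter because $\sigma_{g_i}(c) \equiv c$ and $c \not\in \frak{Q}$), so primality of $\frak{Q}$ forces $\sigma_{g_i}(\alpha) - \alpha \in \frak{Q}$, completing the contrapositive.

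The one real subtlety is the first step: one cannot conclude $\sigma_{g_i}(\Pi) \equiv \Pi \pmod{\frak{Q}}$ directly from the standing assumption, since $\Pi$ need not lie in $S_E$. The detour through $\Pi^{|E|} \in \frak{Q}_E$ combined with the primality of $\frak{Q}$ is exactly what rescues the argument; everything afterward is a routine reduction of a polynomial expression modulo $\frak{Q}$ carried out inside $S$.
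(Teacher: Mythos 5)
Your proof is correct and rests on the same two ingredients as the paper's: the polynomial representation of $S$ inside $(S_E)_{\frak{Q}_E}[\Pi]$ and the fact that $\Pi^{|E|}$ lands in $\frak{Q}_E \subseteq S_E$, which via primality of $\frak{Q}$ is what lets you relate the behavior of $\sigma_{g_i}$ on $\Pi$ to its behavior on $S_E$. The organization is cleaner than the paper's, however: by establishing $\sigma_{g_i}(\Pi) \equiv \Pi \pmod{\frak{Q}}$ up front (using $\Pi^{|E|} \in \frak{Q}_E$ directly), you can run a short, uniform polynomial-reduction argument to show $\sigma_{g_i}$ fixes all of $S$ mod $\frak{Q}$, whereas the paper starts from a specific witness $s \in S$ with $\sigma_{g_i}(s) - s \notin \frak{Q}$, clears denominators, isolates a bad monomial $\Pi^j$, and only then deploys the $|E|$-th power trick to reach a contradiction — logically the same but with the key fact about $\Pi$ buried at the end rather than isolated at the start.
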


\begin{proof} Let us assume that $\sigma_{g_i}(\alpha) - \alpha \in \frak{Q}$ for all $\alpha$ is $S_E$ and then prove the Lemma by contradiction.  By definition of $E$, if $\sigma_{g_i} \not\in E$, there exists $s \in S$ such that $\sigma_{g_i}(s) - s \not\in \frak{Q}$.  By the previous Lemma, 
$$s = \frac{a_0}{b_0} +  \frac{a_1}{b_1}\Pi + \cdots +  \frac{a_k}{b_k}\Pi^k = \frac{a_0' + a_1'\Pi + \cdots   + a_k'\Pi^k}{b_0b_1\cdots b_k}$$
where $a_i , a_i' \in S_E$, $b_i \in S_E \backslash \frak{Q}_E$, $i = 1, 2, \cdots , k$ and  $k = |E| -1$. 
Let $b = b_0b_1\cdots b_k$, and $s' = a_0' + a_1'\Pi + \cdots  + a_k'\Pi^k$. \\
\underline{Claim:}  $\sigma_{g_i}(s') - s' \not\in \frak{Q}$. \\
\underline{Proof of Claim} We know that 
$$\sigma_{g_i}(s) - s = \sigma_{g_i}(\frac{s'}{b}) - \frac{s'}{b}  = \frac{b\sigma_{g_i}(s') - \sigma_{g_i}(b)s'}{\sigma_{g_i}(b)b}  \not\in \frak{Q}.$$
If $\sigma_{g_i}(b) \in \frak{Q}$, then $\sigma_{g_i}(b) - b \not\in \frak{Q}$, because  \ $b \in S\backslash \frak{Q}$.  Thus from our initial assumption, we have $\sigma_{g_i}(b) \not\in \frak{Q}$.
Since $\frak{Q}$ is a prime ideal, this gives that 
$\sigma_{g_i}(b)b \not\in \frak{Q}$. Hence $\sigma_{g_i}(b)b(\sigma_{g_i}(s) - s) \not\in \frak{Q}$ and therefore $\sigma_{g_i}(b)b(\sigma_{g_i}(s) - s) = b\sigma_{g_i}(s') - \sigma_{g_i}(b)s' = b\sigma_{g_i}(s') - bs' + bs' - \sigma_{g_i}(b)s' 
= b(\sigma_{g_i}(s') - s') + (b - \sigma_{g_i}(b))s'   \not\in \frak{Q}$.  Since $b \in S_E$, by assumption, $\sigma_{g_i}(b) - b \in \frak{Q}$. Hence 
$b(\sigma_{g_i}(s') - s') + (b - \sigma_{g_i}(b))s'  \equiv b(\sigma_{g_i}(s') - s') \mod \frak{Q}$. Therefore $b(\sigma_{g_i}(s') - s') \not\in \frak{Q}$ and since 
$b \not\in \frak{Q}$, we have $(\sigma_{g_i}(s') - s') \not\in \frak{Q}$, \underline{thus proving the claim}. \\
Now if $(\sigma_{g_i}(s') - s') \not\in \frak{Q}$, we must have that $\sigma_{g_i}(a'_j\Pi^j) - a_j\Pi^j \not\in \frak{Q}$ for some $j$, $0 \leq j \leq k$. By our assumption at the beginning of the proof,
$\sigma_{g_i}(a'_j) \equiv a'_j \mod \frak{Q}$, because $a'_j \in S_E$. Therefore $\sigma_{g_i}(a'_j\Pi^j)  - a_j\Pi^j \equiv a'_j(\sigma_{g_i}(\Pi^j) - \Pi^j)
\mod \frak{Q}$.  Hence $\sigma_{g_i}(\Pi^j) - \Pi^j \not\in \frak{Q}$.  However $\Pi^j \in \frak{Q}$, giving that $\sigma_{g_i}(\Pi^j) \not\in \frak{Q}$. Now $(\Pi^j)^{|E|} \in \frak{Q}_E$
and $\sigma_{g_i}((\Pi^j)^{|E|}) = (\sigma_{g_i}(\Pi^j))^{|E|}$. Since $\sigma_{g_i}(\Pi^j) \not\in \frak{Q}$, we must have that  $(\sigma_{g_i}(\Pi^j))^{|E|} \not\in \frak{Q}$.  Hence $\sigma_{g_i}(\Pi^{j|E|}) - \Pi^{j|E|} \not\in \frak{Q}$  contradicting  our initial assumption and  thus proving the lemma.
\end{proof}

\begin{lemma} Let $S, R, \frak{Q}, \frak{P}, G = \{\sigma_{g_1}, \sigma_{g_2}, \dots , \sigma_{g_n}\},  E = E( \frak{Q}|\frak{P}), \phi$  and  $\phi_{g_k}$ be as defined in Section 2. Let  $\{\sigma_{e_1}, \sigma_{e_2}, \cdots , \sigma_{e_{|E|}}\}$ be the elements of the group $E$. 
We can find  an $x_1  \in S\otimes_RS_\frak{Q}$ such that 
$$\phi_{g_k}(x_1)  =  \Big\{\  \begin{matrix}
& 1 & \hbox{if}  & \sigma_{g_k} \in E; & \mbox{that is} \ {g_k} \in \{e_1, e_2, \cdots e_{|E|} \} \\
& 0 &   &\mbox{otherwise} &
\end{matrix}
$$
\end{lemma}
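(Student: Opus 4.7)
The plan is to build $x_1$ explicitly as a product of $n - |E|$ factors, one for each $\sigma_{g_i} \notin E$, designed so that the factor indexed by $i$ kills the $g_i$-component while each individual factor leaves all the $E$-components equal to $1$. The previous lemma supplies, for each $\sigma_{g_i}\notin E$, an element $s_i \in S_E$ with $\sigma_{g_i}(s_i) - s_i \notin \frak{Q}$, i.e.\ a unit in $S_{\frak{Q}}$. This is exactly the ingredient needed to normalize a separator at the $g_i$-coordinate.

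Concretely, for each $\sigma_{g_i}\notin E$, I would set
\[
B_i \;=\; (s_i \otimes 1 - 1 \otimes s_i)\bigl(1 \otimes (\sigma_{g_i}(s_i) - s_i)^{-1}\bigr) \;\in\; S\otimes_R S_{\frak{Q}},
\]
which is well defined because $\sigma_{g_i}(s_i) - s_i \in S\setminus \frak{Q}$. Since $\phi_{g_k}$ is a ring homomorphism,
\[
\phi_{g_k}(B_i) \;=\; \frac{\sigma_{g_k}(s_i) - s_i}{\sigma_{g_i}(s_i) - s_i}.
\]
For $\sigma_{g_k}\in E$, the numerator vanishes because $s_i \in S_E$ is fixed by every element of $E$, so $\phi_{g_k}(B_i) = 0$; for $k=i$, $\phi_{g_i}(B_i) = 1$. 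I would then define
\[
x_1 \;=\; \prod_{\sigma_{g_i}\notin E}(1 \otimes 1 - B_i) \;\in\; S\otimes_R S_{\frak{Q}}.
\]
Applying $\phi_{g_k}$ to this product: if $\sigma_{g_k}\in E$ every factor has $\phi_{g_k}$-image equal to $1$, so $\phi_{g_k}(x_1) = 1$; if $\sigma_{g_k}\notin E$ then the factor indexed by $i=k$ satisfies $\phi_{g_k}(1\otimes 1 - B_k) = 0$, hence $\phi_{g_k}(x_1) = 0$.

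This is essentially a Chinese-remainder construction inside the image of $\phi$, so there is no real obstacle once the previous lemma is in hand; the only point to verify carefully is that $s_i \in S_E$ forces $\phi_{g_k}(B_i)=0$ for every $\sigma_{g_k}\in E$ simultaneously, which is exactly what ensures that the $E$-components of $x_1$ are untouched by any of the factors. As a side benefit, $\phi(x_1)$ is visibly an idempotent in $\bigoplus_i S_{\frak{Q},g_i}$, and since $\phi$ is injective by Lemma~\ref{phi}, $x_1$ itself is idempotent — which will be useful for the subsequent splitting of $S\otimes_R S_{\frak{Q}}$ announced in the introduction.
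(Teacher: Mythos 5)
Your proof is correct and is in fact algebraically identical to the paper's: a short computation shows that $1\otimes 1 - B_i = (s_i\otimes 1 - 1\otimes\sigma_{g_i}(s_i))\bigl(1\otimes(s_i-\sigma_{g_i}(s_i))^{-1}\bigr)$, which is precisely the paper's factor $y_i'$, so your product $\prod(1\otimes 1 - B_i)$ coincides with the paper's $\prod y_l'$. Your closing remark that $x_1$ is idempotent (via injectivity of $\phi$) is a correct and useful observation that the paper defers to Corollary \ref{corstar}.
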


\begin{proof} If $\sigma_{g_l} \not\in E$, by the previous lemma, we have $s_l \in S_E$ such that $s_l - \sigma_{g_l}(s_l) \in S_\frak{Q}^*$. Now for each such $l$, let $y_l = s_l\otimes 1 - 1 \otimes \sigma_{g_l}(s_l) \in S\otimes_RS_\frak{Q}$.  So for $\sigma_{g_l} \not\in E$, we have $\phi_{e_i}(y_l)
= \sigma_{e_i}(s_l) - \sigma_{g_l}(s_l) = s_l - \sigma_{g_l}(s_l)$, for $i = 1, 2, \cdots , |E|$, since $s_l \in S_E$ by the assumption above.  Now let $y_l' = y_l(1\otimes (s_l - \sigma_{g_l}(s_l))^{-1}) \in S\otimes_RS_\frak{Q}$.  Then $y_l'$ has the property:
$$\phi_{g_k}(y_l')  =  \Big\{\  \begin{matrix}
& 1 & \hbox{if}  & \sigma_{g_k} \in E; & \mbox{that is} \ {g_k} \in \{e_1, e_2, \cdots e_{|E|} \} \\
& 0 &   \hbox{if}  & \sigma_{g_k} = \sigma_{g_l}   &
\end{matrix}
$$
Letting $x_1 = \prod_{\sigma_{g_l} \not\in E}y_l'$, gives us the required element of $S\otimes_RS_\frak{Q}$. 
\end{proof}

Each $\sigma_{g_j} \in G$, gives  an isomorphism of rings  $\sigma_{g_j}\otimes 1:  S\otimes_RS_\frak{Q} \to S\otimes_RS_\frak{Q}$. We use these maps to construct idempotents in $S\otimes_RS_\frak{Q}$ 
corresponding to  the right cosets of $E$ in $G$.

\begin{lemma} \label{componentiso} Let $S, R, \frak{Q}, \frak{P}, G = \{\sigma_{g_1}, \sigma_{g_2}, \dots , \sigma_{g_n}\},  E = E( \frak{Q}|\frak{P}), \phi$  and  $\phi_{g_k}$ be as defined in Section 2.  Let $E\sigma_{g_j} =  \{\sigma_{j_1}, \sigma_{j_2}, \cdots  , \sigma_{j_{|E|}}\} $ be a  right coset of $E$ in G. Using the chosen  coset representative $\sigma_{g_j}$, and the idempotent $x_1 \in S\otimes_RS_\frak{Q}$
constructed in the previous lemma, let $x_j = (\sigma_{g_j}^{-1}\otimes 1)(x_1)$.  Then
$$\phi_{g_k}(x_j)  =  \Big\{\  \begin{matrix}
& 1 & \hbox{if}  & {g_k} \in \{j_1, j_2, \cdots , j_{|E|} \}   \\
& 0 &   &\mbox{otherwise} 
\end{matrix}
$$
The  ring  isomorphism, $\sigma_{g_j}^{-1}\otimes 1:  S\otimes_RS_\frak{Q} \to S\otimes_RS_\frak{Q}$, restricts to a ring isomrphism  between the components of the ring $S\otimes_R S_\frak{Q}$ ;  $\sigma_{g_j}^{-1} \otimes 1 : (S\otimes_R S_\frak{Q})x_1 \to 
(S\otimes_R S_\frak{Q})x_j$.
\end{lemma}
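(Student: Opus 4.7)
The plan is to reduce the first claim to a direct computation of $\phi_{g_k}$ applied to $x_j$, exploiting the fact that composing $\phi_{g_k}$ with the ring automorphism $\sigma_{g_j}^{-1}\otimes 1$ is itself a component map $\phi_{g_m}$. On a simple tensor $s_1\otimes s_2$ I would compute
$$\phi_{g_k}\bigl((\sigma_{g_j}^{-1}\otimes 1)(s_1\otimes s_2)\bigr) = \sigma_{g_k}(\sigma_{g_j}^{-1}(s_1))\,s_2 = \phi_{g_k g_j^{-1}}(s_1\otimes s_2),$$
using the composition rule for Galois elements. By linearity, $\phi_{g_k}(x_j)=\phi_{g_k g_j^{-1}}(x_1)$.

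Next I would translate the coset condition. Since $E\sigma_{g_j}=\{\sigma_{j_1},\dots,\sigma_{j_{|E|}}\}$ by definition, $g_k\in\{j_1,\dots,j_{|E|}\}$ is equivalent to $\sigma_{g_k}\in E\sigma_{g_j}$, i.e.\ to $\sigma_{g_k g_j^{-1}}\in E$. Applying the previous lemma to $x_1$ with index $g_k g_j^{-1}$ in place of $g_k$ immediately gives $\phi_{g_k}(x_j)=1$ in this case and $0$ otherwise, which is the first claim.

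For the second statement, the essential observation is that $\sigma_{g_j}^{-1}\otimes 1$ is a ring automorphism of $S\otimes_R S_{\frak{Q}}$ and therefore sends principal (two-sided) ideals to principal (two-sided) ideals. Since $\phi(x_1)$ has entries $0$ or $1$, the injectivity of $\phi$ (Lemma \ref{phi}) forces $x_1^2=x_1$; applying the automorphism gives $x_j^2=x_j$, so $x_j$ is also idempotent and $(S\otimes_R S_{\frak{Q}})x_j$ is a subring with identity $x_j$. For any $y\in S\otimes_R S_{\frak{Q}}$,
$$(\sigma_{g_j}^{-1}\otimes 1)(y\cdot x_1)=(\sigma_{g_j}^{-1}\otimes 1)(y)\cdot(\sigma_{g_j}^{-1}\otimes 1)(x_1)=(\sigma_{g_j}^{-1}\otimes 1)(y)\cdot x_j,$$
showing that the automorphism carries $(S\otimes_R S_{\frak{Q}})x_1$ onto $(S\otimes_R S_{\frak{Q}})x_j$; its inverse, $\sigma_{g_j}\otimes 1$, supplies a two-sided inverse restriction, so the restricted map is a ring isomorphism between the two subrings.

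The only real point of care is the direction of composition of Galois elements in the identity $\phi_{g_k}\circ(\sigma_{g_j}^{-1}\otimes 1)=\phi_{g_k g_j^{-1}}$; once that is pinned down, both statements are bookkeeping. No other technical obstacle arises, because idempotence of $x_1$ and the ring-isomorphism property of $\sigma_{g_j}^{-1}\otimes 1$ do all the work in the second half.
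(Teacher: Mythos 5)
Your proof is correct, and the first half takes a genuinely different route from the paper's. The paper proves the coordinate formula for $\phi_{g_k}(x_j)$ from the inside out: it unwinds the construction of $x_1$ as $\prod_{\sigma_{g_l}\notin E} y_l'$, applies $\sigma_{g_j}^{-1}\otimes 1$ to each factor to get $\bar{y}_l$, recomputes $\phi_{g_t}(\bar{y}_l)$ term by term using $s_l\in S_E$, and reassembles the product. You instead use the single identity $\phi_{g_k}\circ(\sigma_{g_j}^{-1}\otimes 1)=\phi_{g_m}$ where $\sigma_{g_m}=\sigma_{g_k}\sigma_{g_j}^{-1}$, verified on simple tensors and extended by linearity; this reduces the computation directly to the statement of the previous lemma via the equivalence $\sigma_{g_k}\sigma_{g_j}^{-1}\in E\iff \sigma_{g_k}\in E\sigma_{g_j}$. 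Your route is shorter and makes no reference to the particular generators used to build $x_1$, so it would apply to any element with the same image under $\phi$; the paper's route is more explicit and exhibits concrete generators of the component idempotents, which may be why it was written that way. For the second half both arguments are essentially the observation that an automorphism carries $x_1$ to $x_j$ and hence the principal ideal $(S\otimes_R S_{\frak{Q}})x_1$ onto $(S\otimes_R S_{\frak{Q}})x_j$; the paper dismisses this as obvious, and your version supplies the small details (idempotence of $x_1$ via injectivity of $\phi$, multiplicativity, and the inverse restriction) that the paper omits. One cosmetic caution: $\phi_{g_k g_j^{-1}}$ is notation the paper does not define, since $\phi$-components are indexed by the fixed labels $g_1,\dots,g_n$; it is safer to write $\phi_{g_m}$ where $\sigma_{g_m}=\sigma_{g_k}\sigma_{g_j}^{-1}$.
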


\begin{proof} Let $\sigma_{g_j}$ be the chosen coset representative of $E\sigma_{g_j}$. Let $\sigma_{g_l} \in G\backslash E$. Consider the element 
$y_l$ corresponding to $\sigma_{g_l}$ constructed in the proof of the previous lemma. Let $\bar{y_l} = (\sigma_{g_j}^{-1} \otimes 1)(y_l) = 
\sigma_{g_j}^{-1}(s_l) \otimes 1 - 1 \otimes \sigma_{g_l}(s_l)$.  For any $\sigma_{g_t} \in G$, we have $\phi_{g_t}(\bar{y_l}) = \sigma_{g_t}\sigma_{g_j}^{-1} (s_l) - \sigma_{g_l}(s_l)$. Using the fact that $s_l \in S_E$, it is easy to see that 
$$\phi_{g_t}(\bar{y_l})  =  \Big\{\  \begin{matrix}
& s_l - \sigma_{g_l}(s_l)  & \hbox{if}  & \sigma_{g_t} \in E\sigma_{g_j} \\
& 0 &   \hbox{if}  & \sigma_{g_t} = \sigma_{g_l}\sigma_{g_j}  &
\end{matrix}
$$
Now  in the proof of the previous lemma, we set  $y_l' = y_l(1\otimes (s_l - \sigma_{g_l}(s_l))^{-1})$, and we set 
$x_1 = \prod_{\sigma_{g_l} \not\in E}y_l'$.  Note that both homomorphisms, $\phi_{g_k} : S\otimes_RS_\frak{Q} \to S_\frak{Q}$ and $\sigma_{g_j}^{-1} \otimes 1:
 S\otimes_RS_\frak{Q}  \to  S\otimes_RS_\frak{Q} $ commute with the right action of $S_\frak{Q}$, given by $xs = x(1\otimes s)$ for all $x \in S\otimes_R S_\frak{Q}$ and $s \in S_\frak{Q}$.  With $\sigma_{g_j}$ as above,  we also  note that as $\sigma_{g_l}$ runs through $G\backslash E$, $\sigma_{g_l}\sigma_{g_j}$ runs through 
 $G\backslash E\sigma_{g_j}$. Hence letting $x_j = (\sigma_{g_j}^{-1} \otimes 1)(x_1) = (\sigma_{g_j}^{-1} \otimes 1)\prod_{\sigma_{g_l} \not\in E}y_l( 1\otimes (s_l - \sigma_{g_l}(s_l))^{-1}) =  \prod_{\sigma_{g_l} \not\in E}\bar{y_l}(1\otimes (s_l - \sigma_{g_l}(s_l))^{-1})$
 we see that 
$$\phi_{g_t}( x_j)  =  \Big\{\  \begin{matrix}
& 1 & \hbox{if}  & \sigma_{g_t} \in E\sigma_{g_j} \\
& 0 &  & \mbox{otherwise} &
\end{matrix}
$$
The isomorphism between ring components  is obvious. 
\end{proof}

Note That $x_j$ depends only on the coset $E\sigma_{g_j}$ and is independent  of the coset representative, since it is completely determined by 
its image $\phi(x_j)$. 
We now see that $S\otimes_R S_\frak{Q}$ is isomorphic to a product of  $m = |G|/|E|$ copies of the subring $(S\otimes_R S_\frak{Q})x_1$.  In the next lemma we  investigate the nature of this subring.  

\begin{lemma} \label{isomap}  Let $S, R, \frak{Q}, G =  \{\sigma_{g_1}, \sigma_{g_2}, \cdots , \sigma_{g_n}\}, E, S_E, S_\frak{Q}, \phi_{g_k},$ be as defined in Section 2.   Let  $\{\sigma_{e_1}, \sigma_{e_2}, \cdots , \sigma_{e_{|E|}}\}$ be the elements of the group $E$ and let 
$\{E\sigma_{x_1}, E\sigma_{x_2}, \cdots , E\sigma_{x_m}\}$ be the right cosets of $E$ in $G$, where 
$m = |G|/|E|$ and $\sigma_{x_1}$ is the identity element of $G$. 
 Let  $x_i$ be the  element of  $S\otimes_RS_\frak{Q}$ such that 
$$\phi_{g_k}(x_i)  =  \Big\{\  \begin{matrix}
& 1 & \hbox{if}  & \sigma_{g_k} \in E\sigma_{x_i} &  \\
& 0 &   &\mbox{otherwise} &
\end{matrix}.
$$
 Then 
$$(S\otimes_R S_\frak{Q})x_1 \cong  S\otimes_{S_E}S_\frak{Q}.$$
The map $\gamma : S\otimes_{S_E}S_\frak{Q} \to (S\otimes_R S_\frak{Q})x_1$ given by 
$\gamma(s_1\otimes s_2) = (s_1\otimes s_2)x_1$, where $s_1 \in S, s_2 \in S_{\frak{Q}}$, is an isomorphism of rings.
Regarding  $(S\otimes_RS_{\frak{Q}})x_1$ as a subset of $S\otimes_RS_{\frak{Q}}$, the  inverse of $\gamma$ is  $\psi : (S\otimes_RS_{\frak{Q}})x_1 \to S\otimes_{S_E}S_{\frak{Q}}$ given by the restriction of the map $\psi : S\otimes_RS_{\frak{Q}} \to
S\otimes_{S_E}S_{\frak{Q}}$, defined by 
 $\psi(s_1'\otimes s_2') = s_1' \otimes s_2'$, for $s_1' \in S$ and $s_2' \in S_{\frak{Q}}$. 
\end{lemma}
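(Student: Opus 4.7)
The plan is to verify the four standard items for the map $\gamma$: well-definedness, ring homomorphism, surjectivity, and having $\psi$ as a two-sided inverse. Throughout, I will use that $\phi$ restricted to $S\otimes_RS_\frak{Q}$ is an injective ring homomorphism (Lemma \ref{phi}) and that $x_1$ is characterized by $\phi_{g_k}(x_1) = 1$ if $\sigma_{g_k}\in E$ and $0$ otherwise.

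First I would check that $\gamma$ is well-defined as a map out of $S\otimes_{S_E}S_\frak{Q}$. The defining relation in that tensor product is $s_1 s_E\otimes s_2 = s_1\otimes s_E s_2$ for $s_E\in S_E$; equivalently we need $(s_E\otimes 1-1\otimes s_E)x_1=0$ in $S\otimes_RS_\frak{Q}$ for every $s_E\in S_E$. Applying $\phi_{g_k}$ gives $(\sigma_{g_k}(s_E)-s_E)\phi_{g_k}(x_1)$; if $\sigma_{g_k}\in E$ then $\sigma_{g_k}(s_E)=s_E$ (since $s_E\in S_E$), and otherwise $\phi_{g_k}(x_1)=0$. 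Hence the product vanishes in every component, so by injectivity of $\phi$ it vanishes in $S\otimes_RS_\frak{Q}$. This is the main content of the argument.

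Next, $\gamma$ is a ring homomorphism because $x_1$ is idempotent: $\gamma(s_1\otimes s_2)\gamma(s_1'\otimes s_2')=(s_1\otimes s_2)x_1(s_1'\otimes s_2')x_1=(s_1s_1'\otimes s_2s_2')x_1^2=\gamma((s_1\otimes s_2)(s_1'\otimes s_2'))$, where one uses that $x_1$ is central (clear from its description through $\phi$). Surjectivity is immediate since every element of $(S\otimes_RS_\frak{Q})x_1$ has the form $y x_1$ with $y=\sum s_i\otimes t_i\in S\otimes_RS_\frak{Q}$, and this equals $\gamma\bigl(\sum s_i\otimes t_i\bigr)$ where the latter sum is interpreted in $S\otimes_{S_E}S_\frak{Q}$.

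Finally I would exhibit $\psi$ as the inverse. Since $R\subseteq S_E$, the $S_E$-balanced relations refine the $R$-balanced ones, so the natural map $\psi:S\otimes_RS_\frak{Q}\to S\otimes_{S_E}S_\frak{Q}$ sending $s_1'\otimes s_2'\mapsto s_1'\otimes s_2'$ is a well-defined ring homomorphism. To conclude $\psi\circ\gamma=\mathrm{id}$ it suffices to compute $\psi(x_1)$. Recall from the construction that $x_1=\prod_{\sigma_{g_l}\notin E}y_l(1\otimes(s_l-\sigma_{g_l}(s_l))^{-1})$ with $s_l\in S_E$ and $y_l=s_l\otimes 1-1\otimes\sigma_{g_l}(s_l)$. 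In $S\otimes_{S_E}S_\frak{Q}$ we have $\psi(y_l)=1\otimes s_l-1\otimes\sigma_{g_l}(s_l)=1\otimes(s_l-\sigma_{g_l}(s_l))$, so each factor of $\psi(x_1)$ collapses to $1\otimes 1$, giving $\psi(x_1)=1\otimes 1$. Therefore $\psi(\gamma(s_1\otimes s_2))=\psi((s_1\otimes s_2)x_1)=(s_1\otimes s_2)(1\otimes 1)=s_1\otimes s_2$, so $\psi\circ\gamma=\mathrm{id}$ and $\gamma$ is injective, hence an isomorphism. The main obstacle is the first step, verifying that $(s_E\otimes 1-1\otimes s_E)x_1=0$, but Lemma \ref{phi} together with the explicit form of $\phi(x_1)$ reduces it to a componentwise check.
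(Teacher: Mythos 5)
Your argument is correct, but it proceeds in roughly the opposite direction from the paper's. The paper first fits the natural surjection $\psi: S\otimes_R S_\frak{Q}\to S\otimes_{S_E}S_\frak{Q}$ into a commutative triangle with $\phi'$ and $\phi_{e_1}\oplus\cdots\oplus\phi_{e_{|E|}}$, identifies the kernel of the latter with $\oplus_{i\neq 1}(S\otimes_RS_\frak{Q})x_i$, and thereby concludes that $\psi$ restricted to $(S\otimes_RS_\frak{Q})x_1$ is an isomorphism; only afterwards does it introduce $\gamma$ and observe that it is the inverse, using the relation $(s\otimes 1)x_1=(1\otimes s)x_1$ for $s\in S_E$. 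You instead construct $\gamma$ first, verify its well-definedness through the same relation $(s_E\otimes 1-1\otimes s_E)x_1=0$ (checked componentwise via $\phi$), obtain surjectivity for free, and then show $\psi\circ\gamma=\mathrm{id}$ by computing $\psi(x_1)=1\otimes 1$ from the explicit product formula $x_1=\prod_{\sigma_{g_l}\notin E}y_l(1\otimes(s_l-\sigma_{g_l}(s_l))^{-1})$. Your route avoids $\phi'$ and the diagram entirely, at the cost of relying on the explicit construction of $x_1$ from Lemma 3.6/3.7; a construction-independent alternative would be to apply each $\phi'_{e_j}$ to $\psi(x_1)$, note $\phi'_{e_j}\circ\psi=\phi_{e_j}$, and use injectivity of $\phi'$ to again conclude $\psi(x_1)=1\otimes 1$. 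Both approaches are sound; yours is somewhat more elementary and makes the inverse pair $(\gamma,\psi)$ transparent, while the paper's diagram makes the structural role of $\phi'$ clearer.
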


\begin{proof} As in the previous notation,  let $L_{g_k}$, a copy of $L$,  denote the image of $\phi_{g_k}$ for $\sigma_{g_k} \in G$.  Recall that  the Galois group of $L$ over $L_E$ is $E$, and  we have an isomorphism 
$\phi' : L\otimes_{L_E}L \to L_{e_1} \oplus L_{e_2} \oplus \cdots \oplus L_{e_{|E|}}$ as defined in Section 2. Let $\psi : L\otimes_KL \to L\otimes_{L_E}L$ be the surjective  ring homomorphism  sending $l_1\otimes l_2$ to $l_1\otimes l_2$.  The diagram
below is obviously commutative:

\[\xymatrix{
{L\otimes_KL}\ar[drr]^{\phi_{e_1} \oplus  \phi_{e_2} \oplus \cdots \oplus \phi_{e_{|E|}} }\ar[d]^{\psi}
&&\\
{L\otimes_{L_E} L}\ar[rr]^{\phi'}&& L_{e_1} \oplus L_{e_2} \oplus \cdots \oplus L_{e_{|E|}}
}\]
This diagram  restricts to a commutative diagram:
\[\xymatrix{
{S\otimes_RS_\frak{Q}}\ar[drr]^{\phi_{e_1} \oplus  \phi_{e_2} \oplus \cdots \oplus \phi_{e_{|E|}} }\ar[d]^{\psi}
&&\\
{S\otimes_{S_E} {S_\frak{Q}}}\ar[rr]^{\phi'}&&{S_\frak{Q}}_{e_1} \oplus {S_\frak{Q}}_{e_2} \oplus \cdots \oplus {S_\frak{Q}}_{e_{|E|}}
}\]
Clearly the images of the maps $\phi'$ and ${\phi_{e_1} \oplus  \phi_{e_2} \oplus \cdots \oplus \phi_{e_{|E|}} }$ in the second diagram  are the same and  the map $\phi'$ is one to one. I claim that the kernel of ${\phi_{e_1} \oplus  \phi_{e_2} \oplus \cdots \oplus \phi_{e_{|E|}} }$ is 
$\oplus_{i \not= 1}(S\otimes_RS_\frak{Q})x_i$.
It is not difficult to see that  $\oplus_{i \not= 1}(S\otimes_RS_\frak{Q})x_i \subseteq \ker {\phi_{e_1} \oplus  \phi_{e_2} \oplus \cdots \oplus \phi_{e_{|E|}} }$.  On the other hand if $x \in S\otimes_{R}S_{\frak{Q}}$ is in $\ker {\phi_{e_1} \oplus  \phi_{e_2} \oplus \cdots \oplus \phi_{e_{|E|}} }$, then $\phi_{g_k}(xx_1) =0$ for $k = 1, 2, \dots , n$ and hence $ xx_1 = 0$. Hence $x = x(\sum_ix_i) = x(\sum_{i \not= 1}x_i) \in \oplus_{i \not= 1}(S\otimes_RS_\frak{Q})x_i $.  This proves the claim.

Hence in the following diagram, ${\phi_{e_1} \oplus  \phi_{e_2} \oplus \cdots \oplus \phi_{e_{|E|}} }$ and $\phi'$ are isomorphisms, giving us that  $\psi$ is an isomorphism. 
\[\xymatrix{
({S\otimes_RS_\frak{Q}})x_1\ar[drr]^{\phi_{e_1} \oplus  \phi_{e_2} \oplus \cdots \oplus \phi_{e_{|E|}} }\ar[d]^{\psi}
&&\\
{S\otimes_{S_E} {S_\frak{Q}}}\ar[rr]^{\phi'}&&\phi'(S\otimes_{S_E}S_\frak{Q})
}\]
Now if $s \in S_E$, we see that $\phi((s\otimes 1)x_1) = \phi(1 \otimes s)x_1$ and hence 
$(s \otimes 1)x_1 = (1\otimes s)x_1$. Thus we have a homomorphism $\gamma : S\otimes_{S_E} {S_\frak{Q}} \to ({S\otimes_RS_\frak{Q}})x_1$ such that $\gamma(s_1 \otimes s_2) = (s_1 \otimes s_2)x_1$, for $s_1  \in S$ and $s_2 \in S_{\frak{Q}}$. It is obvious that $\gamma = \psi^{-1}$ and that $\gamma$ is an isomorphism.

\end{proof}
Now we can use Lemma \ref{Pie}  to determine the structure of $S\otimes_{S_E}S_\frak{Q}$.  

\begin{lemma} \label{structure}  Let $S, R, \frak{Q}, \frak{P},  E = E(\frak{Q}|\frak{P}), S_E, $ and $\frak{Q}_E$ be as defined in section 2.  There exists an element   $\Pi$ of order 1 in  $\frak{Q}$ such that $S \subseteq (S_E)_{\frak{Q}_E}[\Pi]$ 
and 
$$S\otimes_{S_E}S_\frak{Q} = S_E[\Pi]\otimes_{S_E}S_\frak{Q},$$
where this identification is as described in Section 2. 
\end{lemma}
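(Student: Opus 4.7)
The plan is to take $\Pi$ to be exactly the element produced by Lemma \ref{Pie}, which already satisfies the hypothesis $\Pi \in \frak{Q}$ of order $1$, $\Pi^{|E|}\in \frak{Q}_E$, and $S \subseteq (S_E)_{\frak{Q}_E}[\Pi]$. With $\Pi$ fixed, what remains is the equality (in the sense of Section 2)
$$S\otimes_{S_E} S_{\frak{Q}} \;=\; S_E[\Pi]\otimes_{S_E} S_{\frak{Q}}.$$
That is, I must show every simple tensor $s\otimes t$ with $s\in S$, $t\in S_{\frak{Q}}$, can be rewritten as a finite sum $\sum_i f_i(\Pi)\otimes s_i$ with $f_i(\Pi)\in S_E[\Pi]$ and $s_i\in S_{\frak{Q}}$.

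First I would invoke Lemma \ref{Pie} to write the given $s\in S$ as
$$s \;=\; \frac{a_0}{b_0}+\frac{a_1}{b_1}\Pi+\cdots+\frac{a_k}{b_k}\Pi^k, \qquad a_i\in S_E,\ b_i\in S_E\setminus \frak{Q}_E,$$
for some $k$ (we may take $k=|E|-1$). Set $b=b_0 b_1\cdots b_k \in S_E\setminus \frak{Q}_E$ and clear denominators to obtain $bs = c_0 + c_1\Pi + \cdots + c_k \Pi^k$ with $c_i\in S_E$; thus $bs \in S_E[\Pi]$. Because $b\in S_E$, it slides freely across the tensor $\otimes_{S_E}$, so
$$s\otimes t \;=\; (bs)\otimes (b^{-1}t) \;=\; \sum_{i=0}^{k}\bigl(c_i\Pi^i\bigr)\otimes (b^{-1}t),$$
giving the required presentation once I verify that $b^{-1}t$ is a legitimate element of $S_{\frak{Q}}$.

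The only delicate point — which is really the crux — is this last invertibility: the denominators $b_i$ lie in $S_E\setminus \frak{Q}_E$, and since $\frak{Q}_E=\frak{Q}\cap S_E$, each $b_i$ is also an element of $S\setminus \frak{Q}$, hence a unit in $S_{\frak{Q}}$. Therefore so is $b=\prod b_i$, and $b^{-1}t \in S_{\frak{Q}}$. This is the only real obstacle, and it is dispatched by the compatibility between the two localizations, which is precisely what makes localizing at $\frak{Q}$ the correct choice for this lemma. Extending by $S_{\frak{Q}}$-linearity to arbitrary sums $\sum s_j\otimes t_j$ then finishes the proof.
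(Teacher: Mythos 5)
Your proof is correct and follows essentially the same route as the paper: take $\Pi$ from Lemma \ref{Pie}, write $s\in S$ with a common denominator $b\in S_E\setminus\frak{Q}_E$, and move $b$ across the $\otimes_{S_E}$ tensor using the fact that $b$ is a unit in $S_\frak{Q}$. You are somewhat more explicit than the paper about the key point that $b\in S_E\setminus\frak{Q}_E$ lies in $S\setminus\frak{Q}$ (via $\frak{Q}_E=\frak{Q}\cap S_E$), hence is invertible in $S_\frak{Q}$; the paper leaves this implicit when it writes $1\otimes\frac{1}{t}$.
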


\begin{proof}
By Lemma \ref{Pie}  there exists an element   $\Pi$ of order 1 in  $\frak{Q}$ such that $S \subseteq (S_E)_{\frak{Q}_E}[\Pi]$.  By the discussion at the end of section 2, we can identify  $S_E[\Pi]\otimes_{S_E}S_\frak{Q}$ with an isomorphic subring of 
$S\otimes_{S_E}S_\frak{Q}$. 
 If $s \in S$, we have $s = s_1/t$, where $s_1 \in {S_E}[\Pi]$ and $t \in S_E\backslash \frak{Q}_E$. Now $\frac{s_1}{t}\otimes 1 = s_1\otimes {\frac{1}{t}}$ in $S\otimes_{S_E}S_\frak{Q}$, since $\frac{s_1}{t}\otimes 1 = (\frac{s_1}{t}\otimes 1)(1\otimes t)(1 \otimes \frac{1}{t}) =  (\frac{s_1}{t}\otimes 1)(t\otimes 1)(1 \otimes \frac{1}{t})
= s_1\otimes \frac{1}{t}$ Hence $S \otimes 1 \subseteq  S_E[\Pi]\otimes_{S_E}S_\frak{Q}$ and the result follows. 
\end{proof}

\begin{corollary} \label{corstar} Let $S, R, \frak{Q}, \frak{P}, G = \{\sigma_{g_1}, \sigma_{g_2}, \dots , \sigma_{g_n}\},  E = E( \frak{Q}|\frak{P}), S_E$  and  $S_{\frak{Q}}$ be as defined in Section 2.
Let $\{E\sigma_{x_1}, E\sigma_{x_2}, 
\cdots , E\sigma_{x_m}\}$  be the right cosets of $E$ in $G$, with  coset representatives
$\{\sigma_{x_1}, \sigma_{x_2}, \cdots , \sigma_{x_m}\}$.   Then there exist unique  orthogonal idempotents 
$\{x_1, x_2, \dots , x_m\}$ in $S\otimes_RS_{\frak{Q}}$, where $m = n/|E|$ such that 
$$\phi_{g_k}(x_i)  =  \Big\{\  \begin{matrix}
& 1 & \hbox{if}  & \sigma_{g_k} \in E\sigma_{x_i} \\
& 0 &   &\mbox{otherwise} &
\end{matrix}.
$$
and 
$$S\otimes_RS_{\frak{Q}} = \oplus_{i = 1}^m(S\otimes_RS_{\frak{Q}})x_i.$$
Furthermore we have ring isomorphisms  $(S\otimes_RS_{\frak{Q}})x_1  \cong S\otimes_{S_E} S_{\frak{Q}} \cong S_E[\Pi]\otimes_{S_E}S_{\frak{Q}}$ and 
$\sigma_{x_j}  \otimes 1 : (S\otimes_RS_{\frak{Q}})x_j  \to (S\otimes_RS_{\frak{Q}})x_1$ for $1 \leq j \leq m$, where $\Pi$ is an element of order 1 in $\frak{Q}$. 

Note that the uniqueness of the orthogonal idempotents follows from the fact that $\phi: L\otimes_K L \to L_{g_1} \oplus L_{g_2} \oplus \dots \oplus L_{g_n}$ is an isomorphism. 
\end{corollary}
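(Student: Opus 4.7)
The plan is to assemble the corollary directly from the preceding lemmas of the section, since essentially every clause has been proved already; what remains is to organize them and to verify the few points that have not been stated verbatim. I would begin by defining the idempotents: take $x_1 \in S\otimes_R S_\frak{Q}$ as constructed just before Lemma \ref{componentiso}, whose $\phi$-image is the coordinate indicator of $E\subset G$, and set $x_j = (\sigma_{x_j}^{-1}\otimes 1)(x_1)$ for $j=2,\dots,m$. By Lemma \ref{componentiso}, $\phi_{g_k}(x_j)$ equals $1$ or $0$ according as $\sigma_{g_k}\in E\sigma_{x_j}$ or not, which is exactly the prescribed formula. Note that since $x_1$ is independent of the chosen coset representative (it is determined by its $\phi$-image), the elements $x_j$ depend only on the coset $E\sigma_{x_j}$.

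Next, I would verify that the $\{x_i\}$ are orthogonal idempotents summing to $1$. Since the ring homomorphism $\phi: S\otimes_RS_\frak{Q} \hookrightarrow \bigoplus_{k=1}^n (S_\frak{Q})_{g_k}$ is injective by Lemma \ref{phi}, it suffices to check these identities coordinate-wise. Because the cosets $E\sigma_{x_i}$ partition $G$, the coordinate indicator functions $\phi(x_i)$ are pairwise orthogonal, each is idempotent, and they sum to $(1,1,\dots,1) = \phi(1)$; the injectivity of $\phi$ then lifts each identity back to $S\otimes_RS_\frak{Q}$. This simultaneously yields the decomposition $S\otimes_R S_\frak{Q} = \bigoplus_{i=1}^m (S\otimes_R S_\frak{Q})x_i$. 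The uniqueness of the idempotents under the coordinate prescription is also immediate from injectivity of $\phi$: any other orthogonal family with the same $\phi_{g_k}$-values has the same $\phi$-image as the $x_i$ and thus must coincide with them.

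Finally, the isomorphism statements are repackaging. The ring isomorphism $(S\otimes_RS_\frak{Q})x_1 \cong S\otimes_{S_E}S_\frak{Q}$ is Lemma \ref{isomap}; the identification $S\otimes_{S_E}S_\frak{Q} = S_E[\Pi]\otimes_{S_E}S_\frak{Q}$ (with the slight abuse of notation introduced at the end of Section 2) is Lemma \ref{structure}, using the uniformizer $\Pi$ from Lemma \ref{Pie}. For the map $\sigma_{x_j}\otimes 1$, Lemma \ref{componentiso} provides the ring isomorphism $\sigma_{x_j}^{-1}\otimes 1:(S\otimes_RS_\frak{Q})x_1\to(S\otimes_RS_\frak{Q})x_j$, and one verifies that $\sigma_{x_j}\otimes 1$ is its two-sided inverse, which amounts to noting that $\sigma_{x_j}\otimes 1$ sends $x_j = (\sigma_{x_j}^{-1}\otimes 1)(x_1)$ back to $x_1$ and is a ring automorphism of the ambient ring.

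There is no real obstacle; the corollary is essentially a bookkeeping statement. The only place where one has to do anything beyond citation is the reduction of the orthogonality, idempotency, and completeness identities to coordinate-wise statements via injectivity of $\phi$, and the observation that $\sigma_{x_j}\otimes 1$ inverts $\sigma_{x_j}^{-1}\otimes 1$ as a ring isomorphism between component subrings.
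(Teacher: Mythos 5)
Your proof is correct and follows precisely the route the paper intends: the corollary is stated without proof in the text, as an assembly of Lemma \ref{phi}, the construction of $x_1$, Lemma \ref{componentiso}, Lemma \ref{isomap}, and Lemma \ref{structure}, and your filling in of the coordinate-wise verification of orthogonality, idempotency, and completeness via injectivity of $\phi$ (together with the inverse-map observation for $\sigma_{x_j}\otimes 1$) is exactly the bookkeeping the paper leaves to the reader.
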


We will also need to use the following lemma about  the structure of $S\otimes_{S_E}S_\frak{Q}\otimes_{S_{\frak{Q}}} F_{\frak{Q}}$, where $F_{\frak{Q}}$ is the quotient field $S/\frak{Q}$, later. 

\begin{lemma} \label{FQ}  Let $S, R, \frak{Q}, \frak{P},  E = E(\frak{Q}|\frak{P}) = \{\sigma_{e_1}, \sigma_{e_2}, \cdots
, \sigma_{e_{|E|}}\}, S_E, $ and $\frak{Q}_E$ be as defined in section 2.  Let $F_{\frak{Q}}$ denote the quotient field $S_{\frak{Q}}/\frak{Q}S_{\frak{Q}} \cong S/\frak{Q}$. Let $\Pi$ be an element of order 1 in 
$S$ such that 
$S\otimes_{S_E}S_\frak{Q} = S_E[\Pi]\otimes_{S_E}S_\frak{Q}$ and let $A_{e_i}(\Pi) = \Pi\otimes 1 -
1 \otimes \sigma_{e_i}(\Pi) \in S\otimes_{S_E}S_\frak{Q}, 1\leq i \leq |E|$. Let 
$\Psi': S\otimes_{S_E}S_{\frak{Q}} \to S\otimes_{S_E}S_{\frak{Q}} \otimes_{S_{\frak{Q}}} F_{\frak{Q}}$
be the homomorphism such that $\Psi'(s_1 \otimes s_2) = s_1 \otimes s_2 \otimes 1$. Then
$$\Psi'(A_{e_i}(\Pi)) = \Psi'(A_{e_j}(\Pi)), 1 \leq i, j, \leq |E|.$$
Let $\alpha = \Psi'(A_{e_1}(\Pi))$, then
$$ S\otimes_{S_E}S_{\frak{Q}} \otimes_{S_{\frak{Q}}} F_{\frak{Q}} =
F_{\frak{Q}}[\alpha]$$
is a local ring with maximal ideal $(\alpha)$. Let  $h: F_{\frak{Q}}[x] \to F_{\frak{Q}}[\alpha]$ be the homomorphism with  $h(x) = \alpha$, then $\ker h$ is the ideal generated by the polynomial $x^{|E|} $.  
\end{lemma}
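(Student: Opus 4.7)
\medskip
\noindent
\textbf{Proof plan.}  The plan is to reduce the statement to a direct application of the basis result in Lemma~\ref{basisP} together with the vanishing result in Lemma~\ref{alphan}, both applied to the ``inertia'' extension $L/L_E$ with Galois group $E = \{\sigma_{e_1},\dots,\sigma_{e_{|E|}}\}$.  The first step is to verify the equality $\Psi'(A_{e_i}(\Pi)) = \Psi'(A_{e_j}(\Pi))$.  For each $\sigma_{e_i} \in E$, the definition of the inertia group gives $\sigma_{e_i}(\Pi) \equiv \Pi \pmod{\frak{Q}}$, and since $\Pi \in \frak{Q}$ we get $\sigma_{e_i}(\Pi) \in \frak{Q}$.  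Moving $\sigma_{e_i}(\Pi)$ across the central tensor of $S\otimes_{S_E}S_{\frak{Q}}\otimes_{S_{\frak{Q}}}F_{\frak{Q}}$ shows that $\Psi'(1\otimes \sigma_{e_i}(\Pi)) = 1\otimes 1\otimes\overline{\sigma_{e_i}(\Pi)} = 0$, so $\Psi'(A_{e_i}(\Pi)) = \Pi\otimes 1\otimes 1 = \alpha$ regardless of~$i$.

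Next, I would invoke Lemma~\ref{basisP} in the setting $(L,L_E,S,S_E,\frak{Q})$ playing the roles of $(L,K,S,R,\frak{Q})$, with $\Pi$ in place of $\theta$.  The hypothesis $S\otimes_{S_E}S_{\frak{Q}} = S_E[\Pi]\otimes_{S_E}S_{\frak{Q}}$ is given, and $L = L_E(\Pi)$ follows from Lemma~\ref{Pie} together with the fact that $L$ is the fraction field of $S \subseteq (S_E)_{\frak{Q}_E}[\Pi]$.  Thus Lemma~\ref{basisP} provides the right $S_{\frak{Q}}$-basis $\bigl\{1,\ A_{e_1}(\Pi),\ A_{e_1}(\Pi)A_{e_2}(\Pi),\ \dots,\ A_{e_1}(\Pi)\cdots A_{e_{|E|-1}}(\Pi)\bigr\}$ of $S\otimes_{S_E}S_{\frak{Q}}$.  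Applying the right exact functor $-\otimes_{S_{\frak{Q}}}F_{\frak{Q}}$ turns this basis into the $F_{\frak{Q}}$-spanning set $\{1,\alpha,\alpha^2,\dots,\alpha^{|E|-1}\}$ by the first step; and because $S\otimes_{S_E}S_{\frak{Q}}$ was $S_{\frak{Q}}$-free of rank $|E|$, the quotient is $F_{\frak{Q}}$-free of rank exactly $|E|$.  Hence $F_{\frak{Q}}[\alpha] = S\otimes_{S_E}S_{\frak{Q}}\otimes_{S_{\frak{Q}}}F_{\frak{Q}}$ and $\{1,\alpha,\dots,\alpha^{|E|-1}\}$ is an $F_{\frak{Q}}$-basis.

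Lemma~\ref{alphan}, applied with $L=L_E(\Pi)$ and Galois group $E$, yields $\prod_{i=1}^{|E|}A_{e_i}(\Pi)=0$ in $S\otimes_{S_E}S_{\frak{Q}}$, so $\alpha^{|E|}=0$ in $F_{\frak{Q}}[\alpha]$.  This shows $(x^{|E|})\subseteq \ker h$, and the dimension count of the previous paragraph forces equality: the surjection $F_{\frak{Q}}[x]/(x^{|E|})\to F_{\frak{Q}}[\alpha]$ is a map of $|E|$-dimensional $F_{\frak{Q}}$-vector spaces, hence an isomorphism.  Consequently $F_{\frak{Q}}[\alpha]\cong F_{\frak{Q}}[x]/(x^{|E|})$, whose only prime ideal is $(x)$, so $F_{\frak{Q}}[\alpha]$ is local with maximal ideal $(\alpha)$.

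The main obstacle I expect is bookkeeping rather than conceptual: confirming that the hypotheses of Lemma~\ref{basisP} are met in the relative setting (in particular the equality $L=L_E(\Pi)$), and correctly tracking the passage from $S_{\frak{Q}}$-basis to $F_{\frak{Q}}$-basis under the change-of-rings tensor product.  Once these technicalities are in place, the result reduces to the observation that all the displayed basis elements collapse to powers of a single element $\alpha$ modulo $\frak{Q}$, because every $\sigma_{e_i}$ fixes the residue field.
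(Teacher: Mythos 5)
Your proposal is correct and follows essentially the same route as the paper: both reduce the statement to Lemma~\ref{basisP} and Lemma~\ref{alphan} applied with $(S_E, E)$ in place of $(R, G)$, derive $\alpha^{|E|}=0$ from $\prod_i A_{e_i}(\Pi)=0$, and obtain the kernel by the dimension count coming from the $S_{\frak{Q}}$-basis of $S\otimes_{S_E}S_{\frak{Q}}$. The only cosmetic difference is in the first step: you show directly that $\Psi'(1\otimes\sigma_{e_i}(\Pi))=0$ (using $\sigma_{e_i}(\Pi)\in\frak{Q}$) so that $\Psi'(A_{e_i}(\Pi))=\Pi\otimes1\otimes1$ for every $i$, whereas the paper shows $\Psi'$ kills the difference $1\otimes(\sigma_{e_i}(\Pi)-\Pi)$; these are equivalent observations.
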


\begin{proof} By the definition of $E$, we have $\sigma_{e_i}(\Pi) - \Pi \in \frak{Q}$ for $1\leq i \leq |E|$. 
Hence $\Psi'((\Pi\otimes1 - 1\otimes\Pi) - (\Pi \otimes 1 - 1 \otimes \sigma_{e_i}(\Pi))) = 0$ for $1 \leq i \leq |E|$. Thus $\Psi'(\Pi\otimes1 - 1\otimes\Pi) = \Psi'(A_{e_i}(\Pi) = \alpha$ for $1\leq i \leq |E|$. 
Now by applying Lemma \ref{basisP}  and Lemma \ref{alphan}, with $R$ replaced by $S_E$ and $G$ replaced by $E$, we see that $S\otimes_{S_E}S_{\frak{Q}} \otimes_{S_{\frak{Q}}} F_{\frak{Q}} =
F_{\frak{Q}}[\alpha]$ and $\alpha^{|E|} = 0$ in $S\otimes_{S_E}S_{\frak{Q}} \otimes_{S_{\frak{Q}}} F_{\frak{Q}} $.  Since $\alpha$ is nilpotent, we have that $S\otimes_{S_E}S_{\frak{Q}} \otimes_{S_{\frak{Q}}} F_{\frak{Q}} $ is local with maximal ideal $(\alpha)$.  Now by 
Lemma \ref{basisP}, we have that $1, \alpha, \cdots , \alpha^{|E| - 1}$ is a basis for 
$S\otimes_{S_E}S_{\frak{Q}} \otimes_{S_{\frak{Q}}} F_{\frak{Q}} $ over $F_{\frak{Q}}$, hence
the map $h$ has kernel $(x^{|E|} )$. 
\end{proof}

\begin{lemma} \label{unram}
 Let $S, R, \frak{Q}, \frak{P}, S_{\frak{Q}},  \phi, \phi_{g_k},  G = \{\sigma_{g_1}, \sigma_{g_2}, \dots , \sigma_{g_n}\},  E = E( \frak{Q}|\frak{P})$ and $S_E$    be as defined in Section 2.  If $|E| = 1$ then there exist orthogonal idempotents, 
 $\{x_1, x_2, \cdots , x_n\}$,  in $S\otimes_RS$, such that 
$$\phi_{g_k}(x_i)  =  \Big\{\  \begin{matrix}
& 1 & \hbox{if}  & k = i \\
& 0 &   &\mbox{otherwise} &
\end{matrix}.
$$
Furthermore  $\phi: S\otimes_RS_\frak{Q} \to S_{\frak{Q}, {g_1}} \oplus  S_{\frak{Q}, {g_2}} \oplus \cdots \oplus S_{\frak{Q}, {g_n}}$ is an isomorphism.  
\end{lemma}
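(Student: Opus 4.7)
The plan is to extract both claims directly from Corollary \ref{corstar}, specialized to the case of trivial inertia $|E|=1$.

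First, since $|E|=1$, each right coset $E\sigma_{g_i}$ is the singleton $\{\sigma_{g_i}\}$; in particular $m = n/|E| = n$, and one may take the coset representatives to be $\sigma_{x_i} = \sigma_{g_i}$. Corollary \ref{corstar} then yields orthogonal idempotents $x_1,\ldots,x_n \in S\otimes_R S_{\frak{Q}}$ satisfying $\phi_{g_k}(x_i) = 1$ when $\sigma_{g_k} \in E\sigma_{g_i} = \{\sigma_{g_i}\}$ (i.e.\ when $k=i$) and $\phi_{g_k}(x_i) = 0$ otherwise, together with a decomposition
$$S\otimes_R S_{\frak{Q}} \;=\; \bigoplus_{i=1}^{n}(S\otimes_R S_{\frak{Q}})x_i.$$
This yields the required family of idempotents (interpreted in the localization $S\otimes_R S_{\frak{Q}}$, in line with the construction of the $x_i$ in the preceding lemmas, which required inverting elements of $S\setminus\frak{Q}$).

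For the second assertion, $|E|=1$ forces $L_E = L$ and therefore $S_E = S$, so the ring isomorphism from Corollary \ref{corstar},
$$(S\otimes_R S_{\frak{Q}})x_i \;\cong\; S\otimes_{S_E} S_{\frak{Q}},$$
collapses to $S\otimes_S S_{\frak{Q}} \cong S_{\frak{Q}}$. Because $\phi_{g_i}$ annihilates $x_j$ for $j\neq i$, it restricts to a ring homomorphism $(S\otimes_R S_{\frak{Q}})x_i \to S_{\frak{Q},g_i}$ sending $x_i$ to $1$; both source and target are isomorphic to $S_{\frak{Q}}$, and Lemma \ref{phi} already ensures injectivity, so the restriction is an isomorphism on each component. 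Summing these $n$ isomorphisms shows that $\phi\colon S\otimes_R S_{\frak{Q}} \to \bigoplus_{i=1}^{n} S_{\frak{Q},g_i}$ is itself an isomorphism.

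No real obstacle arises, since the substantive structural work has been completed in Corollary \ref{corstar}; the present lemma is merely its unramified specialization, and the proof is essentially bookkeeping regarding cosets and the identification $S_E = S$.
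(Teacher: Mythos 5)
Your treatment of the idempotents is exactly the paper's: when $|E|=1$ the cosets are singletons, $m=n$, and Corollary \ref{corstar} hands you the $x_i$ directly. That part is fine.

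The argument for surjectivity of $\phi\colon S\otimes_R S_{\frak{Q}} \to \bigoplus_i S_{\frak{Q},g_i}$ has a genuine gap. You write that the restriction $(S\otimes_R S_{\frak{Q}})x_i \to S_{\frak{Q},g_i}$ is an isomorphism because ``both source and target are isomorphic to $S_{\frak{Q}}$'' and the map is injective. But abstract isomorphism of source and target together with injectivity does not force surjectivity: for example the substitution $t\mapsto t^2$ gives an injective ring endomorphism of the DVR $k[[t]]$ that is far from onto, and $n\mapsto 2n$ on $\Bbb{Z}$ does the same at the level of modules. What your argument is missing is the observation, implicit in the isomorphism $\gamma\colon S\otimes_{S_E}S_{\frak{Q}}\to (S\otimes_R S_{\frak{Q}})x_i$ from Lemma \ref{isomap} and Corollary \ref{corstar}, that $(S\otimes_R S_{\frak{Q}})x_i$ is free of rank one as a right $S_{\frak{Q}}$-module with generator $x_i$, i.e.\ every element has the form $(1\otimes s)x_i$. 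Once that is in hand the surjectivity is immediate: $\phi_{g_i}\big((1\otimes s)x_i\big) = s\,\phi_{g_i}(x_i)=s$. This is precisely the one-line computation the paper uses; it writes down the explicit preimage $\sum_i(1\otimes s_i)x_i$ of an arbitrary $(s_1,\dots,s_n)$ and evaluates $\phi$ on it. Either supply that computation or explicitly invoke the rank-one statement from Corollary \ref{corstar} before concluding; as written, the ``isomorphic + injective $\Rightarrow$ onto'' step does not hold.
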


\begin{proof} The existence of the idempotents  $\{x_1, x_2, \cdots , x_n\}$ follows from Corollary  \ref{corstar}. 
 Since $\phi : L\otimes_KL \to L_{g_1} \oplus L_{g_2} \oplus \cdots \oplus L_{g_n}$ is an isomorphism, by Lemma \ref{phi}, we need only show that $\phi: S\otimes_RS_\frak{Q} \to S_{\frak{Q}, {g_1}} \oplus  S_{\frak{Q}, {g_2}} \oplus \cdots \oplus S_{\frak{Q}, {g_n}}$ is onto. Let $(s_1, s_2, \cdots , s_n)$ be 
an arbitrary element of $S_{\frak{Q}, {g_1}} \oplus  S_{\frak{Q}, {g_2}} \oplus \cdots \oplus S_{\frak{Q}, {g_n}}$, with $s_i \in S_{\frak{Q}}$ for each $i$.  
Then $\phi((1\otimes s_1)x_1 + (1 \otimes s_2)x_2 + \cdots  + (1\otimes s_n)x_n) = (s_1, s_2, \cdots ,
s_n)$ and hence   the map is onto as required. 
\end{proof}

\section{Stratified Exact Categories}

In this section we sketch the general definition of stratified exact categories from \cite{Dyer}. Proofs of 
many fundamental results about these categories are supplied in \cite{Dyer}.  We will present the definition in full within the context of the framework considered by Dyer in \cite{Dyer2}, since this is adequate for our needs. We will quote the results that we will use  from \cite{Dyer}, in this context.

We will start with the general definition from \cite{Dyer}. 
Consider a set of data $E = (k, \mathcal{B}, \{N_x\}_{x \in \Omega})$ where $k$ is a commutative, Noetherian 
ring, ${\mathcal{B}}$ is an abelian $k$-category, and the $N_x$ are objets of $\mathcal{B}$ indexed by a finite poset $\Omega$. We assume this set of data has the following properties:

(a) $Hom_{\mathcal{B}}(N_x, N_y)$ is zero unless $x \leq y$

(b) $Ext_{\mathcal{B}}^1(N_x, N_y)$ is zero unless $x < y$ or $y \leq x$. (Note that in the case of a total ordering on $\Omega$, this is not a restriction.)

(c) For $x \in \Omega$, and any $N, N'$ in $Add\  N_x$, any surjection $N \to N' \to 0$ in $\mathcal{B}$ splits, 
where $Add \  N_x$ denotes the class of objects in $\mathcal{B}$, which are isomorphic to a direct summand of a finite direct sum of copies of $N_x$. 

(d) For $x, y \in \Omega$, $Hom_{\mathcal{B}}(N_x, N_y)$ is a finitely generated $k$-module, and if $x < y$, then $Ext_{\mathcal{B}}^1(N_x, N_y)$ is a finitely generated $k$- module. 

A coideal, $\Gamma$,  of $\Omega$ is a subset with the property that if $y \in \Gamma$, then $x \in \Gamma$ for each  $x \in \Omega$ with the property that $x \geq y$.
We say an object $N$ of $\mathcal{B}$ has an $N$-filtration if it has subobjects $N(\Gamma)$, for $\Gamma$ a coideal of $\Omega$ such that:

(i) $N(\emptyset) = 0, \ N(\Omega) = N$

(ii) if $\Gamma \subseteq \Gamma'$ are coideals, then $N(\Gamma)$ is a subobject of $N(\Gamma')$. 

(iii) if $x$ is a minimal element of a coideal $\Gamma$ of $\Omega$, then $N' = N(\Gamma)/N(\Gamma \backslash \{x\})$ is in Add $N_x$.

The following lemma, when applied to the identity morphism $N \to N$, ensures that such a filtration is unique, Dyer  \cite{Dyer} [Lemma 1.4]:

\begin{lemma} \label{maps} Let   $N$ and $N'$ be  objects of $\mathcal{B}$ , with  N-filtrations as given above.   Any morphism 
$N \to N'$ in $\mathcal{B}$ maps $N(\Gamma)$ into  $N'(\Gamma)$ where $\Gamma$ is a coideal of $\Omega$.
\end{lemma}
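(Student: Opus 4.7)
The plan is to reduce the lemma to the single vanishing statement
\[
\mathrm{Hom}_{\mathcal{B}}\bigl(N(\Gamma),\, N'/N'(\Gamma)\bigr) = 0.
\]
Once this is in hand, for any morphism $f \colon N \to N'$ the composition
$N(\Gamma) \hookrightarrow N \xrightarrow{f} N' \twoheadrightarrow N'/N'(\Gamma)$
is forced to be zero, which is exactly the conclusion $f(N(\Gamma)) \subseteq N'(\Gamma)$.

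To prepare the vanishing argument I would first realize $N(\Gamma)$ as the top of a finite chain of subobjects whose successive quotients lie in $\mathrm{Add}\, N_x$ for various $x \in \Gamma$. This is done by choosing a linear extension $y_1, y_2, \ldots, y_r$ of the order on $\Gamma$ with the property that $\{y_i, y_{i+1}, \ldots, y_r\}$ is a coideal of $\Omega$ having $y_i$ as a minimal element; property (iii) then identifies each quotient $N(\{y_i, \ldots, y_r\})/N(\{y_{i+1}, \ldots, y_r\})$ as an object of $\mathrm{Add}\, N_{y_i}$. An entirely parallel construction applied to a linear extension of $\Omega \setminus \Gamma$ equips $N'/N'(\Gamma)$ with a finite filtration whose successive quotients lie in $\mathrm{Add}\, N_{z_j}$ for various $z_j \in \Omega \setminus \Gamma$.

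The combinatorial heart of the proof is the observation that whenever $x \in \Gamma$ and $z \in \Omega \setminus \Gamma$, one cannot have $x \leq z$; otherwise the coideal axiom applied to $x$ would force $z \in \Gamma$. By hypothesis (a) this gives $\mathrm{Hom}_{\mathcal{B}}(N_x, N_z) = 0$, and passing to direct summands of finite direct sums yields $\mathrm{Hom}_{\mathcal{B}}(M, M') = 0$ for every $M \in \mathrm{Add}\, N_x$ and every $M' \in \mathrm{Add}\, N_z$.

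With these ingredients in place the conclusion follows by a routine double d\'evissage. Applying $\mathrm{Hom}_{\mathcal{B}}(-, N'/N'(\Gamma))$ to the short exact sequences that build up $N(\Gamma)$ and inducting on $r$ reduces the desired vanishing to the statement $\mathrm{Hom}_{\mathcal{B}}(M, N'/N'(\Gamma)) = 0$ for $M \in \mathrm{Add}\, N_{y_i}$; a symmetric induction, applying $\mathrm{Hom}_{\mathcal{B}}(M, -)$ to the short exact sequences that build up $N'/N'(\Gamma)$, reduces that in turn to the combinatorial vanishing just established. I do not expect a genuine obstacle; the only technical point requiring care is verifying that the iterated subsets really do remain coideals of $\Omega$ with the chosen element as a minimal element, so that property (iii) may be applied at each stage to identify the stratum type.
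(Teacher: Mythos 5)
The paper does not give its own proof of this lemma: it states the result and cites Dyer \cite{Dyer}[Lemma 1.4], so there is no in-text argument to compare against. Your proof is correct, and the strategy of reducing to the vanishing $Hom_{\mathcal{B}}(N(\Gamma), N'/N'(\Gamma))=0$ followed by a double d\'evissage is the natural one. The two points that warrant a sentence of verification are exactly the ones you flag at the end. First, peeling minimal elements off $\Gamma$: any linear extension $y_1 < \cdots < y_r$ of $\Gamma$ makes each terminal segment $\{y_i,\dots,y_r\}$ a coideal of $\Omega$ (using that $\Gamma$ itself is a coideal of $\Omega$) with $y_i$ minimal, so property (iii) applies at each stage. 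Second, building $N'/N'(\Gamma)$ up from $N'(\Gamma)$: one adjoins the elements of $\Omega\setminus\Gamma$ in \emph{reverse} linear-extension order (from maximal toward minimal), since adjoining a maximal element of the current complement yields a coideal in which that new element is minimal; this is what (iii) requires, and it is the one place where the phrase ``entirely parallel construction'' conceals an order reversal worth making explicit. With those two bookkeeping checks spelled out, the combinatorial observation that $x\in\Gamma$, $z\notin\Gamma$ forces $x\not\leq z$, combined with hypothesis (a), stability under $Add$, and left-exactness of $Hom$ in each variable, closes the argument exactly as you describe.
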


Let $\mathcal{C}$ be the full additive subcategory of objects of $\mathcal{B}$ consisting of objects having an $N$-filtration.  We say a sequence $0 \to N \to P \to Q \to 0$  of objects in $\mathcal{C}$ is sheaf exact if it is exact in $\mathcal{C}$ and  for each $x \in \Omega$, the sequence   $0 \to N(\geq x) \to P(\geq x) \to 
Q(\geq x) \to 0$ is exact, where $\geq x$ is  the obvious coideal.  Note in particular  that sheaf exact sequences are short exact in $\mathcal{B}$. 

An exact category $C$ in the sense of Quillen is an additive category $C$ equipped with a family of ``short exact sequences", $0 \to M \to N \to P \to 0$ satisfying certain axioms
\cite{Quillen}. The stratified category $\mathcal{C}$ defined above is exact in the sense of Quillen, see Dyer  \cite{Dyer} [Proposition 1.7], with the sheaf exact sequences as exact sequences.  Stratified categories thus inherit the concept of an exact functor and a projective object from exact categories.
A functor between exact categories is called exact if it is additive and preserves short exact sequences. An object $P$ of an exact category $C$ is called projective if $\mbox{Hom}(P,  - )$ is exact as a functor from $C$ to the category of Abelian groups.  An exact category is said to have sufficiently many projectives if for any object $M$ in $C$, there is a short exact sequence $0 \to N \to P \to M \to 0$ in $C$ with $P$ projective in $C$.

Stratified categories   can be constructed in a wide range of situations, see Dyer  \cite{Dyer2} and Brown \cite{Brown}. Since our attention is limited  to categories  related to number rings and total orderings on Galois groups, we will now restrict our attention to  the specific framework considered by Dyer in \cite{Dyer2}[Section 1.10]. 

Let $T$ be a commutative ring and let $U$ and $A$ be commutative $T$ algebras.  We let $A$ be a Noetherian domain (in our special case, $T, U$ and $A$ will be Dedekind domains).  Let $\Sigma  = 
\{\sigma_{x_i}\}_{x_i \in \Omega}$ be a family of pairwise distinct $T$-algebra homomorphisms
$$\sigma_{x_i}: U \to A,$$
with indices in a poset $\Omega$ with a total ordering $\{x_1 <  x_2 < \dots < x_n\}$. (Note that the additional condition $(*)$ given in section 1.10 of \cite{Dyer2} is irrelevant when the poset has a total ordering). We associate to $(U, T, A, \Omega, \Sigma)$ a collection of data,
$D = (k, \mathcal{B}, \{N_{x_i}\}_{{x_i} \in \Omega})$ as follows:
$k = A,  \mathcal{B}$ is the category of all $U\otimes_TA$ modules and  $N_{x_i} = A[\sigma_{x_i}]$, which is a free right A-module of rank one, with basis element $1_{x_i}$, where the left action by $U$ is given by $ua = a\sigma_{x_i}(u)$ for all $u \in U$ and $a \in N_{x_i} = A[\sigma_{x_i}]$.  (We will use the notation $A[\sigma_{x_i}]$ for these $U\otimes_TA$ modules from now on in this paper : this is compatible with the definition of $S[\sigma_{g_i}]$ in section 2.). 

\begin{definition}\label{Cat}
The category, $\mathcal{C}_{(U\otimes_TA, \Omega, \Sigma)}$,  for  a set of data, $D = \{A, U\otimes_TA-\hbox{mod}, \{A[\sigma_{x_i}]\}_{x_i \in \Omega}\})$, where, $A, T, U, \Sigma$ and $\Omega$ are as above,  is the full subcategory of $\mathcal{B}$, consisting of all $U\otimes_TA$ modules, $M$, 
which have a filtration
$$M = M^{x_0} \supseteq M^{x_1} \supseteq M^{x_2} \supseteq M^{x_3} \supseteq \dots \supseteq M^{x_n} = 0$$
by $U \otimes_TA$ modules, such that $M^{x_{i - 1}}/M^{x_{i}}$ is finitely generated and projective as a right 
$A$ module with $um = m\sigma_{x_i}(u)$ for $m \in M^{x_{i - 1}}/M^{x_{i}}$ for all $u \in U$. 
\end{definition}

By Lemma  \ref{maps} applied to the identity map, such a filtration is unique, so for a module $M$ in category $\mathcal{C}_{(U\otimes_TA, \Omega, \Sigma)}$, we let $M^{x_i}$ denote the corresponding module in such a filtration. By the same Lemma,  every short exact sequence, $0 \to M \to N \to P \to 0$  in $\mathcal{B}$ gives a sequence
$0 \to M^{x_i} \to N^{x_i} \to P^{x_i} \to 0$ for each $x_i \in \Omega$.  
Given a sequence $0 \to M \to N \to P \to 0$ of $U \otimes_TA$ modules in the category $\mathcal{C}_{(U\otimes_TA, \Omega, \Sigma)}$,  it is \underline{sheaf exact} if each restricted sequence
$$0 \to M^{x_i} \to N^{x_i} \to P^{x_i} \to 0$$
is exact as a sequence of $U\otimes_TA$ modules.  An exact sequence $0 \to M \to N \to P \to 0$ in the category $\mathcal{B}$ need not be sheaf exact (we give an example below), however by definition, every sheaf exact sequence in 
$\mathcal{C}_{(U\otimes_TA, \Omega, \Sigma)}$ is exact when viewed as a sequence in the category $\mathcal{B}$.

As discussed above, the category $\mathcal{C}_{(U\otimes_TA, \Omega, \Sigma)}$ with sheaf exact sequences as short exact ones is an exact category in the sense of Quillen \cite{Quillen}.
As for exact categories, an object $P$ of the category $\mathcal{C}_{(U\otimes_TA, \Omega, \Sigma)}$ is called projective if $Hom(P, -)$ is exact as a functor from $\mathcal{C}_{(U\otimes_TA, \Omega, \Sigma)}$ to the category of Abelian groups. From Dyer \cite{Dyer}[Theorem 1.18] we get another characterization of projectives in $\mathcal{C}_{(U\otimes_TA, \Omega, \Sigma)}$. 

\begin{lemma} \label{extproj}  An object $N$ in $\mathcal{C}_{(U\otimes_TA, \Omega, \Sigma)}$ is projective if and only if for all $x_i \in \Omega$, the short exact sequence 
$$0 \to N^{x_{i - 1}}/ N^{x_i} \to N/N^{x_i} \to N/N^{x_{i - 1}} \to 0, $$
gives rise to an exact sequence 
\vskip .1in
\noindent
$0 \to Hom(N/N^{x_{i - 1}}, A[\sigma_{x_i}]) \to Hom(N/N^{x_i}, A[\sigma_{x_i}]) \to Hom(N^{x_{i - 1}}/N^{x_i}, A[\sigma_{x_i}])  \to Ext(N/N^{x_{i - 1}}, A[\sigma_{x_i}]) \to 0 $, 
for each $i, 1 \leq i \leq n$. 
\end{lemma}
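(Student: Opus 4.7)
The plan is to exploit the standard characterization that an object $N$ of the exact category $\mathcal{C}_{(U\otimes_T A, \Omega, \Sigma)}$ is projective if and only if $\operatorname{Ext}^{1}_{\mathcal{C}}(N, M) = 0$ for every $M \in \mathcal{C}$, where this $\operatorname{Ext}^{1}$ is formed from equivalence classes of sheaf exact sequences. By a d\'evissage along the filtration of an arbitrary $M \in \mathcal{C}$, handling one graded piece $M^{x_{j-1}}/M^{x_j} \in \operatorname{Add} A[\sigma_{x_j}]$ at a time and using axiom~(c) to split off summands, this condition reduces to the vanishing of $\operatorname{Ext}^{1}_{\mathcal{C}}(N, A[\sigma_{x_i}])$ for each $i$. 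What remains is to identify this Ext group as the cokernel of the connecting map $\delta_i$ appearing in the long sequence displayed in the lemma, so that its vanishing becomes the asserted exactness at the final $\operatorname{Ext}$ term.

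To carry out the identification, I would take an arbitrary sheaf exact sequence $0 \to A[\sigma_{x_i}] \to E \to N \to 0$ and examine the forced filtration of $E$. Since $A[\sigma_{x_i}]$ is concentrated at index $i$ in its own filtration, sheaf exactness yields $E^{x_j} = N^{x_j}$ for $j \geq i$, while for $j \leq i-1$ the module $E^{x_j}$ is an extension of $N^{x_j}$ by $A[\sigma_{x_i}]$. In particular $E^{x_{i-1}}/E^{x_i}$ is an extension of $N^{x_{i-1}}/N^{x_i}$ by $A[\sigma_{x_i}]$, and therefore splits by axiom~(c). Passing to the quotient by $E^{x_i} = N^{x_i}$, the remaining content of the sheaf exact sequence is encoded in the $\mathcal{B}$-extension class
$$0 \to A[\sigma_{x_i}] \to E/N^{x_i} \to N/N^{x_i} \to 0,$$
subject to the constraint that its restriction along $N^{x_{i-1}}/N^{x_i} \hookrightarrow N/N^{x_i}$ is split.

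From the long exact $\operatorname{Ext}_{\mathcal{B}}$ sequence obtained by applying $\operatorname{Hom}_{\mathcal{B}}(-, A[\sigma_{x_i}])$ to $0 \to N^{x_{i-1}}/N^{x_i} \to N/N^{x_i} \to N/N^{x_{i-1}} \to 0$, the classes in $\operatorname{Ext}^{1}_{\mathcal{B}}(N/N^{x_i}, A[\sigma_{x_i}])$ whose restriction to $N^{x_{i-1}}/N^{x_i}$ is split are precisely the ones pulled back from $\operatorname{Ext}^{1}_{\mathcal{B}}(N/N^{x_{i-1}}, A[\sigma_{x_i}])$. Furthermore, two such pulled-back classes produce equivalent sheaf exact sequences in $\mathcal{C}$ exactly when they differ by an element of the image of $\delta_i$, because the ambiguity in choosing the axiom~(c) splitting of $E^{x_{i-1}}/E^{x_i}$ is parametrized by $\operatorname{Hom}_{\mathcal{B}}(N^{x_{i-1}}/N^{x_i}, A[\sigma_{x_i}])$. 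This produces a natural bijection
$$\operatorname{Ext}^{1}_{\mathcal{C}}(N, A[\sigma_{x_i}]) \;\cong\; \operatorname{coker} \delta_i,$$
so that the vanishing of the left-hand side is equivalent to $\delta_i$ being surjective, which is exactly the condition that the displayed long sequence terminates with $\operatorname{Ext}(N/N^{x_{i-1}}, A[\sigma_{x_i}]) \to 0$. Both implications of the lemma then follow.

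The main obstacle will be the careful bookkeeping in this last identification: one must verify that the association between sheaf exact sequences in $\mathcal{C}$ and pulled-back classes in $\operatorname{Ext}^{1}_{\mathcal{B}}(N/N^{x_{i-1}}, A[\sigma_{x_i}])$ is well-defined modulo $\operatorname{im} \delta_i$, and that altering the axiom~(c) splitting corresponds precisely to adding an element of $\operatorname{im} \delta_i$. The d\'evissage in the first paragraph is routine given that axioms~(a)--(c) severely constrain extensions among the $A[\sigma_{x_j}]$, but the compatibility of all the bookkeeping with the filtration's index structure deserves careful verification.
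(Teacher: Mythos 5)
The paper does not prove this lemma: it is quoted as Dyer's Theorem~1.18 and recorded for later use, so there is no in-paper proof against which to compare your argument. Treated as an independent reconstruction, your outline is correct, and it is essentially the natural Yoneda--$\mathrm{Ext}$ proof one would expect. The d\'evissage is sound once you note that the filtration sequences $0 \to M^{x_j} \to M^{x_{j-1}} \to M^{x_{j-1}}/M^{x_j} \to 0$ are themselves sheaf exact, so the long exact $\mathrm{Ext}_{\mathcal{C}}$-sequence of the exact category applies, and the reduction from $\mathrm{Add}\,A[\sigma_{x_j}]$ to $A[\sigma_{x_j}]$ is additivity of $\mathrm{Ext}^1$. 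The key identification $\mathrm{Ext}^1_{\mathcal{C}}(N, A[\sigma_{x_i}]) \cong \mathrm{coker}\,\delta_i$ is also right, and your mechanism is the correct one: sheaf exactness forces $E^{x_i} = N^{x_i}$, the quotient $E/N^{x_i}$ is a $\mathcal{B}$-extension of $N/N^{x_i}$ whose restriction to $N^{x_{i-1}}/N^{x_i}$ is forced to split by axiom (c), and conversely the pullback of any such constrained class along $N \to N/N^{x_i}$ inherits a filtration making it an object of $\mathcal{C}$ fitting in a sheaf exact sequence. These two constructions are mutually inverse because $E \cong (E/N^{x_i}) \times_{N/N^{x_i}} N$ by the five lemma, and the group of constrained classes is the image of $\mathrm{Ext}^1_{\mathcal{B}}(N/N^{x_{i-1}}, A[\sigma_{x_i}]) \to \mathrm{Ext}^1_{\mathcal{B}}(N/N^{x_i}, A[\sigma_{x_i}])$, hence isomorphic to $\mathrm{coker}\,\delta_i$ by the six-term sequence. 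The one place I would tighten your write-up is the sentence attributing the quotient by $\mathrm{im}\,\delta_i$ to ``the ambiguity in choosing the axiom (c) splitting'': that framing is a detour. Once you have the pullback bijection between sheaf exact sequences and constrained $\mathcal{B}$-classes, the appearance of $\mathrm{coker}\,\delta_i$ is forced directly by exactness of the six-term $\mathrm{Hom}$--$\mathrm{Ext}$ sequence, with no splitting choices to track; phrasing it as a well-definedness-modulo-splittings issue suggests bookkeeping that the argument does not actually require.
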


We also  have the following theorem concerning projectives according to \cite{Dyer}[Theorem 1.18]. 

\begin{theorem}  \label{progen}(1) There exists a projective generator P in $\mathcal{C}_{(U\otimes_TA, \Omega, \Sigma)}$, i.e. 
P is projective and for any M in $\mathcal{C}_{(U\otimes_TA, \Omega, \Sigma)}$, there is a sheaf exact 
sequence $0 \to N \to P^n \to M \to 0$ for some $n \geq 0$.
Let $\mathcal{A}_{U\otimes_T A}(P) = End_{U\otimes_T A}(P)^{op}$, then  $_{U\otimes_T A}P_{\mathcal{A}}$ is a bimodule.

(2) The functor $F = Hom_{U\otimes_T A}(P, \ ):\mathcal{C}_{(U\otimes_TA, \Omega, \Sigma)} \to \mathcal{A}$-mod is fully
faithful and 
\[0 \to M \to N \to Q \to 0\]
is sheaf exact in $\mathcal{C}_{(U\otimes_TA, \Omega, \Sigma)}$ if and only if 
\[0\to FM \to FN \to FQ \to 0\] is exact as a sequence of $\mathcal{A}_{U\otimes_T A}(P)$-modules.  

\end{theorem}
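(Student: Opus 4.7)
The plan is to exhibit a projective generator as a direct sum $P = \bigoplus_{i=1}^n P(x_i)$, where each $P(x_i)$ is a projective object in $\mathcal{C}_{(U\otimes_TA, \Omega, \Sigma)}$ whose top filtration layer is $A[\sigma_{x_i}]$. I would build the $P(x_i)$ by downward induction on $i$ along the total order of $\Omega$. The base case is $P(x_n) = A[\sigma_{x_n}]$, which satisfies the criterion of Lemma \ref{extproj} trivially because its $N$-filtration has a single nonzero stratum and there are no indices above $x_n$ in which to produce $Ext^1$ obstructions. For $i < n$, having constructed projectives $P(x_j)$ for every $j > i$, I would form $P(x_i)$ as a universal extension whose top is $A[\sigma_{x_i}]$ and whose lower strata are drawn from the previously constructed $P(x_j)$, arranged so that each connecting map in Lemma \ref{extproj} is surjective onto $Ext^1_{\mathcal{B}}(A[\sigma_{x_i}], A[\sigma_{x_j}])$. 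Property (d) of the stratifying data guarantees that these $Ext^1$ groups are finitely generated, so only finitely many copies of the $P(x_j)$ need be absorbed at each step.

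With the $P(x_i)$ in hand, the generator property follows by induction on the length of the $N$-filtration of an arbitrary $M \in \mathcal{C}_{(U\otimes_TA, \Omega, \Sigma)}$. The minimal nonzero stratum $M^{x_{i-1}}/M^{x_i}$ lies in $\text{Add}\, A[\sigma_{x_i}]$, hence is a direct summand of a finite direct sum of copies of $A[\sigma_{x_i}]$ and thus of $P(x_i)$'s. Projectivity of $P(x_i)$ lets me lift such covers one layer at a time up the filtration, yielding a sheaf exact surjection $P^N \twoheadrightarrow M$. The $U\otimes_T A$-$\mathcal{A}_{U\otimes_T A}(P)$ bimodule structure on $P$ is then tautological from $\mathcal{A}_{U\otimes_T A}(P) = End_{U\otimes_T A}(P)^{op}$.

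Part (2) is a formal consequence of (1). That $F = Hom_{U\otimes_T A}(P, -)$ carries sheaf exact sequences to short exact sequences of $\mathcal{A}_{U\otimes_T A}(P)$-modules is precisely the definition of $P$ being projective in the exact category. For full faithfulness, any $\mathcal{A}$-module map $FM \to FN$ may be lifted via a sheaf exact presentation $P^m \to P^n \to M \to 0$ — such a presentation exists by the generator property — and reassembled into a unique morphism $M \to N$ in $\mathcal{C}_{(U\otimes_TA, \Omega, \Sigma)}$, using the sheaf exactness together with the vanishing of maps between the appropriate layers. For the converse direction of the exactness statement, if $0 \to FM \to FN \to FQ \to 0$ is exact, then applying the generator property of $P$ stratum by stratum recovers exactness of each $0 \to M^{x_i} \to N^{x_i} \to Q^{x_i} \to 0$, hence sheaf exactness.

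The principal obstacle is the universal extension construction in the inductive step: one must verify that the object produced lies in the subcategory $\mathcal{C}_{(U\otimes_TA, \Omega, \Sigma)}$ (i.e., that its unique $N$-filtration realizes $A[\sigma_{x_i}]$ on top with the correct quotients below), that it is finitely generated, and that after iterating up through all lower strata the resulting candidate $P(x_i)$ still satisfies the projectivity criterion of Lemma \ref{extproj} at every index $j > i$ — absorbing Ext obstructions at one level must not reintroduce them elsewhere. This requires a careful bookkeeping of the Ext groups, exploiting properties (a)--(d) of the stratifying data, and is the content into which Dyer \cite{Dyer}[Theorem 1.18] packages most of the technical work.
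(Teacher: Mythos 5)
The paper does not prove Theorem \ref{progen}: it is quoted from Dyer \cite{Dyer}[Theorem 1.18] and used as a black box, with the surrounding text explicitly deferring the proof to Dyer's preprint. So there is no in-paper argument to compare yours against. That said, your outline of part (1) --- building the indecomposable projectives $P(x_i)$ by downward induction on the total order, with $P(x_n) = A[\sigma_{x_n}]$ as base case and universal extensions absorbing the $Ext^1$ obstructions, finiteness of which is supplied by property (d) --- is the standard construction in this setting, and you correctly flag the real technical burden: verifying that each universal extension stays inside $\mathcal{C}_{(U\otimes_TA, \Omega, \Sigma)}$ and that the criterion of Lemma \ref{extproj} is satisfied at every index once all the extensions are stacked.

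Your sketch of part (2), however, is thinner than you acknowledge. Faithfulness and the forward direction of the exactness equivalence are indeed formal, and the lifting-via-presentation outline for fullness is reasonable. But the converse direction --- that exactness of $0 \to FM \to FN \to FQ \to 0$ forces sheaf exactness of $0 \to M \to N \to Q \to 0$ --- is not a matter of ``applying the generator property stratum by stratum.'' The generator property gives you information about morphisms out of $P$, not directly about filtration layers. At minimum you need the observation that since $P(x_j)$ has nonzero strata only at indices $\geq j$, Lemma \ref{maps} forces any morphism $P(x_j) \to M$ to land in $M^{x_{j-1}}$, and then a genuine argument to convert surjectivity of $Hom(P(x_j), N) \to Hom(P(x_j), Q)$ for all $j$ into surjectivity of each $N^{x_i} \to Q^{x_i}$ with matching kernels. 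You are explicit that the $Ext^1$ bookkeeping in (1) is deferred to Dyer, but the gap in the converse of (2) is at least as substantial and deserves the same flag.
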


The algebra $\mathcal{A}_{U\otimes_T A}(P)$ is not unique, a different choice of  projective generator $P'$ will give a different 
algebra $\mathcal{A}_{U\otimes_T A}(P') = End_{U\otimes_TA}(P')$.  However \\
$Hom(P, P')\otimes_{\mathcal{A}_{U\otimes_T A}(P)}-$ gives a Morita equivalence from $\mathcal{A}_{U\otimes_T A}(P)-$mod to $\mathcal{A}_{U\otimes_T A}(P')-$ mod.  We can see this from Jacobson \cite{Jac2} [Theorem 3.20], Theorem \ref{progen} above  and the following Lemma:

\begin{lemma}\label{Newprogen} Let $P$ and $P'$ be projective generators in the category  $\mathcal{C}_{(U\otimes_TA, \Omega, \Sigma)}$, where $U, T, A, \Omega$ and $\Sigma$ are as above. Let $\bar{P} = Hom_{U\otimes_TA}(P, P')$. Let $\mathcal{A}_{U\otimes_T A}(P) = End_{U\otimes_T A}(P)$. Then 
$\bar{P}$ is a  projective generator  in the category $\mathcal{A}_{U\otimes_T A}(P)$-mod.
\end{lemma}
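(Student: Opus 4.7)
The plan is to exploit the representation functor $F = \mathrm{Hom}_{U\otimes_TA}(P,-)$ supplied by Theorem \ref{progen}. This functor is fully faithful, additive, and sheaf-exact-preserving, so it preserves finite direct sums and direct summands. Crucially, one observes that $F(P) = \mathrm{End}_{U\otimes_TA}(P)$, equipped with the natural left action of $\mathcal{A}_{U\otimes_TA}(P)$ by precomposition, is the free $\mathcal{A}_{U\otimes_TA}(P)$-module of rank one; consequently $F(P^n) \cong \mathcal{A}_{U\otimes_TA}(P)^n$ for every $n\geq 0$. Our target $\bar P = F(P')$ will then inherit its good properties from those of $P'$ through $F$.

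To establish projectivity of $\bar P$, I will use that $P$ is a projective generator of $\mathcal{C}_{(U\otimes_TA,\Omega,\Sigma)}$. By Theorem \ref{progen}(1), applied to the object $P'$, there is a sheaf exact sequence
\[
0 \to N \to P^n \to P' \to 0
\]
for some $n$. Since $P'$ is projective in the exact category $\mathcal{C}_{(U\otimes_TA,\Omega,\Sigma)}$, this sheaf exact sequence splits, exhibiting $P'$ as a direct summand of $P^n$ (in particular as a $U\otimes_T A$-module, and with the splitting lying in $\mathcal{C}_{(U\otimes_TA,\Omega,\Sigma)}$). Applying the additive functor $F$ yields a splitting
\[
F(P') \oplus F(N) \;\cong\; F(P)^n \;\cong\; \mathcal{A}_{U\otimes_TA}(P)^n,
\]
so $\bar P = F(P')$ is a direct summand of a free $\mathcal{A}_{U\otimes_TA}(P)$-module and hence is projective.

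For the generating property, I reverse the roles of $P$ and $P'$. Since $P'$ is a projective generator of $\mathcal{C}_{(U\otimes_TA,\Omega,\Sigma)}$, Theorem \ref{progen}(1) applied to $P$ produces a sheaf exact sequence $0 \to N' \to (P')^m \to P \to 0$; as $P$ is projective this sequence splits, so $P$ is a direct summand of $(P')^m$. Applying $F$ gives that $\mathcal{A}_{U\otimes_TA}(P) \cong F(P)$ is a direct summand of $F(P')^m = \bar P^m$. Every $\mathcal{A}_{U\otimes_TA}(P)$-module $M$ is a quotient of some free module $\mathcal{A}_{U\otimes_TA}(P)^{(I)}$, which is a direct summand of $\bar P^{(I\times m)}$, and therefore $M$ is a quotient of a direct sum of copies of $\bar P$. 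Thus $\bar P$ is a generator in $\mathcal{A}_{U\otimes_TA}(P)$-mod.

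The only real subtlety is bookkeeping of the left/right conventions, namely verifying that $F(P)$ is indeed the free $\mathcal{A}_{U\otimes_TA}(P)$-module of rank one under the chosen convention ($\mathcal{A}_{U\otimes_TA}(P) = \mathrm{End}_{U\otimes_TA}(P)^{\mathrm{op}}$ acting on $F(M)$ on the left by precomposition, as in Theorem \ref{progen}). Once this identification is in hand, the argument above is entirely formal and reduces the lemma to the two splittings produced by the mutual projective-generator property of $P$ and $P'$.
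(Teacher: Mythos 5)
Your argument follows essentially the same route as the paper's: split the two sheaf exact sequences coming from the mutual projective-generator property to realize $P'$ as a direct summand of $P^n$ and $P$ as a direct summand of $(P')^m$, apply $F = \mathrm{Hom}_{U\otimes_TA}(P,-)$ to deduce that $\bar P$ is a direct summand of a free module (hence projective) and that $\mathcal{A}_{U\otimes_TA}(P)$ is a direct summand of $\bar P^m$ (hence generating). Your treatment of the generating step is marginally cleaner in that it works for arbitrary modules rather than only finitely generated ones, and you are right to flag the op-convention on $\mathrm{End}(P)$, but the content of the proof is the same as the paper's.
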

\begin{proof} Since both $P$ and $P'$ are projective generators, we have sheaf  exact sequences 
$$0 \to M_1 \to P^n \to P' \to 0 \ \hbox{and} \ \ \ 0 \to M_2 \to (P')^m \to P \to 0.$$
These sequences split to give us that $P$ is a direct summand of $(P')^m$ and $P'$ is a direct summand of $P^n$.  Now applying the exact functor $Hom_{U\otimes_T A}(P - )$, we see 
that $\bar{P}$ is a direct summand of $\mathcal{A}_{U\otimes_T A}(P)^n$ and hence $\bar{P}$ is projective in 
$\mathcal{A}_{U\otimes_T A}(P)$-mod. We also have that $\mathcal{A}_{U\otimes_T A}(P)$ is a direct summand of $\bar{P}^m$. Let 
$\bar{P}^m = \mathcal{A}_{U\otimes_T A}(P) \oplus L$. 
Now for any finitely generated $\mathcal{A}_{U\otimes_T A}(P)$ module, M,  we have an exact sequence:
$$0 \to K \to \mathcal{A}_{U\otimes_T A}(P)^l \to M \to 0,$$
for some $l$. Then we also  have an exact sequence 
$$0 \to K \oplus L^l  \to \mathcal{A}_{U\otimes_T A}(P)^l \oplus L^l  \to M \to 0,$$
and hence we have the exact sequence of $\mathcal{A}_{U\otimes_T A}(P)$ mosules:
$$0 \to K \to \bar{P}^{ml} \to M \to 0.$$
Hence $\bar{P}$ is a projective generator for $\mathcal{A}_{U\otimes_T A}(P)$ modules. 
\end{proof}
Let $\frak{M}$ be a prime ideal of $A$,  and let $A_{\frak{M}}$ be the localization of $A$ at $\frak{M}$. We can regard the elements of $\Sigma$ as homomorphisms from $U$ to $A_{\frak{M}}$. 
 Localization of modules, $M \to M \otimes_AA_{\frak{M}} $ is an exact functor from  $U\otimes_TA$-mod  to 
$U\otimes_TA_{\frak{M}}$-mod  which restricts to a functor of exact categories  $\mathcal{C}_{(U\otimes_TA, \Omega, \Sigma)}$ to 
 $\mathcal{C}_{(U\otimes_TA_{\frak{M}}, \Omega, \Sigma)}$. Clearly it takes a projective generator for $\mathcal{C}_{(U\otimes_TA, \Omega, \Sigma)}$ to a projective generator for $\mathcal{C}_{(U\otimes_TA_{\frak{M}}, \Omega, \Sigma)}$. Since $Hom_{U\otimes_TA}(M, N)_{\frak{M}} = Hom_{U\otimes_TA_{\frak{M}}}(M_{\frak{M}}, N_{\frak{M}})$, we have 
 $(\mathcal{A}_{U\otimes_T A}(P))_{\frak{M}} = (End_{U\otimes_TA}(P)^{op})_{\frak{M}}  = End_{U\otimes_TA_{\frak{M}}}(P_{\frak{M}})^{op}$ and $\mathcal{C}_{(U\otimes_TA_{\frak{M}}, \Omega, \Sigma)}$ is isomorphic to a subcategory of 
 $(\mathcal{A}_{U\otimes_T A}(P))_{\frak{M}}$ modules.

 \section{The Category $\mathcal{C}_{(S\otimes_RS, \Omega, G)}$}
 
 In this section we introduce the data from which we construct  a stratified exact category for number rings.  We  look at some examples of modules with filtrations and proceed to demonstrate some of the subtleties associated with the definition of 
 sheaf  exact sequences. 
 
 Let $S, R, L, K, $  and $G = \{\sigma_{g_1}, \sigma_{g_2}, \dots  , \sigma_{g_n}\}$ be as defined in Section 2. Let us assume  initially that the indices of $G$; $\{{g_1}, {g_2}, \dots ,{g_n}\}$, form a poset $\Omega$ equipped with the  ordering $g_1 <  g_2 < \dots < g_n$.  Using the notation developed in the previous section, 
 we let $\mathcal{C}_{(S\otimes_RS,  \Omega, G)}$ be the stratified exact category corresponding to 
 the data $D = \{S, S\otimes_RS-\mbox{mod}, S[\sigma_{g_i}]_{i \in \Omega}\}$.  
 
   We have that  $\mathcal{C}_{(S\otimes_RS,  \Omega, G)}$ is  the full subcategory
 of $S\otimes_RS$ modules, $M$,  with filtration
 $$M =  M^{g_0} \supseteq M^{g_1} \supseteq  M^{g_2} \supseteq \dots \supseteq M^{g_n} = 0,$$
 by $S\otimes_RS$ modules  such that $M^{g_{i - 1}}/M^{g_i}$ is finitely generated and projective as a right $S$-module  with $um = m\sigma_{g_i}(u)$ for 
 $m \in M^{g_{i - 1}}/M^{g_i}$ for all $u \in S$.  We let $M^{(g_i)}$ denote the quotient $M^{g_{i - 1}}/M^{g_i}$.  
 Since this filtration is unique,  we can  let $M^{g_i}$ denote the corresponding module in the filtration of $M$ without ambiguity. $M^{g_i}$ is 
in the category $\mathcal{C}_{(S\otimes_RS,  \Omega, G)}$. 

Given a sequence $0 \to M \to N \to P \to 0$ of  $S\otimes_RS$ modules in $\mathcal{C}_{(S\otimes_RS, \Omega, G)}$,
 it is sheaf exact if for each $i, 1\leq i \leq n$, the sequence
\[0\to M^{g_i} \to N^{g_i} \to P^{g_i} \to 0\]
is exact as a sequence of $S\otimes_RS$ modules. 
An exact sequence $0 \to M \to N \to P \to 0$ in the category of $S\otimes_RS$ modules 
need not be sheaf exact, however every sheaf exact sequence in $\mathcal{C}_{(S\otimes_RS, \Omega, G)}$ is exact when viewed 
as a sequence in the category of $S\otimes_RS$ modules.  Below we will give an example of a sequence which is exact in the category of $S\otimes_RS$ modules, but is not sheaf exact in $\mathcal{C}_{(S\otimes_RS, \Omega, G)}$.

Although $\mathcal{C}_{(S\otimes_RS, \Omega, G)}$  does not depend on the ordering, that is  the objects and morphisms are the same for a different ordering $\Omega$ on the indices of $G$,  the concept of ``sheaf exactness" does. This can be shown using   the fact that finitely generated torsion free modules over $R$ are projective. We will omit the details. 
To demonstrate this we will give an example of a short exact sequence of $S\otimes_RS$ modules, which is sheaf exact in $\mathcal{C}_{(S\otimes_RS, \Omega, G)}$,
but not sheaf exact in $\mathcal{C}_{(S\otimes_RS, \Omega, G)}$, for a different ordering $\Omega_1$ on the indices of $G$. 

\begin{example}{\bf A Filtration for $\frak{S}$}   \end{example}\ \  Recall the $S\otimes_RS$ modules  $\frak{S} = S_{g_1} \oplus S_{g_2} \oplus \cdots \oplus S_{g_n}$, where each $S_{g_i}$ is a copy of $S$  given in section 2. 
As mentioned at the begining of section 3,  $\frak{S}$  has a natural filtration.
If we let  $\frak{S}^{g_i} = 0 \oplus 0 \oplus \cdots \oplus 0 \oplus S_{g_{i + 1}} \oplus S_{g_{i + 2}} \oplus \cdots 
\oplus S_{g_n}$, then it is easy to see that $\frak{S}^{g_i}/\frak{S}^{g_{i + 1}}$ is finitely generated and projective as a right $S$-module and that $sx = x\sigma_{g_i}(s)$ for all $s \in S, x \in \frak{S}^{g_i}/\frak{S}^{g_{i + 1}}$.  Hence $\frak{S} \in \mathcal{C}_{(S\otimes_RS, \Omega, G)}$ and 
$$\frak{S} = \frak{S}^{g_0} \supseteq \frak{S}^{g_1} \supseteq \frak{S}^{g_2} \supseteq \cdots \supseteq  \frak{S}^{g_n} = 0$$
is the filtration for $\frak{S}$.

\begin{example} \label{stensors} {\bf A Filtration for $S\otimes_RS$ } \end{example}
Let $L, K, S, R, \phi$,  $\phi_{g_k}$,  and $G = \{\sigma_{g_1}, \sigma_{g_2}, \dots , \sigma_{g_n}\}$ be as defined in section 2.  
Recall from Section 2 that $I_{g_{k+1}}  = \{\sigma_{g_{k+1}}, \sigma_{g_{k+2}}, \dots ,\sigma_{g_n}\}$ and 
$$(S\otimes_RS)_{I_{g_{k+1}}}  = (S\otimes_RS)^{g_k} = \{x \in S\otimes_RS | \phi_{g_i}(x) = 0 \ \hbox{for all} \  i \leq k\} $$
The $S\otimes_RS$ module  homomorphism  $\phi$ maps $S\otimes_RS$ to $\frak{S}$. We can pull back the natural filtration on $\frak{S}$ shown above to a filtration for  $S\otimes_RS$, namely with  $\phi^{-1}(\frak{S}^{g_k}) = (S\otimes_RS)^{g_k}$. 
 $\phi$ lifts to a one to one  $S\otimes_RS$ module  homomorphism $\hat{\phi} : (S\otimes_RS)^{g_{k - 1}}/(S\otimes_RS)^{g_k} \to \frak{S}^{g_{k - 1}}/\frak{S}^{g_k}$.  We have  $ \frak{S}^{g_{k - 1}}/\frak{S}^{g_k} \cong S[\sigma_{g_k}]$ and any submodule is finitely generated and projective as a right $S$-module, since $S$ is a Dedekind domain. Since $\hat{\phi}$ preserves the $S\otimes_RS$ module  action, we see that 
$sx = x\sigma_{g_k}(s)$ for each $x \in (S\otimes_RS)^{g_{k - 1}}/(S\otimes_RS)^{g_k} $. 
Hence, with $(S\otimes_RS)^{g_k}$ defined as above, 
{\small \[S\otimes_RS = (S\otimes_RS)^{g_0} \supseteq (S\otimes_RS)^{g_1}\supseteq (S\otimes_RS)^{g_2}\supseteq \dots \supseteq (S\otimes_RS)^{g_{n-1}} \supseteq (S\otimes_RS)^{g_n} = \{0\}, \]}
gives us the  filtration for $S\otimes_R S$.

The above definition gives very little information about  the nature of $(S\otimes_RS)^{g_k}$, however by using our results on the structure of $S\otimes_RS$, we can  gain some insight into the nature 
of these ideals, or their localizations. 

\begin{example} {\bf Special Case : $S\otimes_RS = R[\alpha]\otimes_RS$.}   \end{example}
Let $L, K, S, R, \frak{P}, \frak{Q}, \phi$,  $\phi_{g_k}$,  and $G = \{\sigma_{g_1}, \sigma_{g_2}, \dots , \sigma_{g_n}\}$ be as defined in section 2.  Suppose that $L = K(\theta)$ and 
 $S = R[\theta]$, as is the case for 
cyclotomic fields and certain quadratic extensions of $\Bbb{Q}$. Let   $A_{g_i}(\theta)  \in S\otimes_RS$ be as defined in Definition \ref{Atheta}.  In Lemma  \ref{basis} we saw that 
\[\{A_{g_0}(\theta), A_{g_1}(\theta), A_{g_1}(\theta)A_{g_2}(\theta), \cdots , \ A_{g_1}(\theta)A_{g_2}(\theta)\dots A_{g_{n-1}}(\theta)\}\]
is a basis for $S\otimes_R S$ as a right $S$-module.
Also  from  Lemma \ref{basis} we  see that 
\[(S\otimes_RS)^{g_k}  = (S\otimes_RS)_{I_{ g_{k+1}}} = (\prod_{j \leq k}A_{g_j}(\theta))(S\otimes_RS),\]
in fact,  by considering a reordering of the elements of $G$, we easily see that 
\[(S\otimes_RS)_I = (\prod_{\{i| \sigma_{g_i} \in I^c\}}A_{g_i}(\theta))(S\otimes_RS).\]
 
 Lemma \ref{basis}  also applies when $S \otimes_RS = R[\alpha]\otimes_RS$ for some $\alpha \in S$, with 
$L = K(\alpha)$.  
We also have, by Lemma \ref{basisP}   that 
$$(S\otimes_RS_{\frak{Q}})^{g_i} =  (\prod_{j \leq k}A_{g_j}(\alpha))(S\otimes_RS_{\frak{Q}})$$
when $S\otimes_RS_{\frak{Q}} = R[\alpha]\otimes_RS_{\frak{Q}}$ for some $\alpha \in S$, with 
$L = K(\alpha)$.  Since $S\otimes_RS_{\frak{Q}}$ splits into rings of this type for each maximal ideal 
$\frak{Q}$ of $S$, this gives us a description of the localizations $((S\otimes_RS)^{g_i})_{\frak{Q}}$ for each maximal ideal $\frak{Q}$ of $S$. (Note that $\alpha$ may vary as the ideals $\frak{Q}$ vary).

 \begin{example} {\bf Special Case: $S = \Bbb{Z}[\sqrt{d}]$. } \end{example} 
 Let $L = \Bbb{Q}(\sqrt{d}),  d \equiv 2, 3, \hbox{mod} \ 4$ and $K = \Bbb{Q}$. 
We have $S = \Bbb{Z}[\sqrt{d}]$, $R = \Bbb{Z}$  and $G = \{\sigma_{g_1} = \hbox{id}, \ \sigma_{g_2}\}$, 
where $\sigma_{g_2}(a + \sqrt{d}b) = a - \sqrt{d}b$, for $a, b \in \Bbb{Z}$.
As above, we have a filtration of $S\otimes_{\Bbb{Z}} S$ is given by :
\[S\otimes_{\Bbb{Z}}S = (S\otimes_{\Bbb{Z}}S)^{g_0} \supseteq A_{g_1}(\sqrt{d})(S\otimes_{\Bbb{Z}}S) = (S\otimes_{\Bbb{Z}}S)^{g_1} \supseteq 0 =  (S\otimes_{\Bbb{Z}}S)^{g_2} ,\]
 where 
$A_{g_1}(\sqrt{d}) = \sqrt{d} \otimes 1 - 1 \otimes \sqrt{d}$, 
 Since $\phi_{g_2}(A_{g_1}(\sqrt{d})) = -2\sqrt{d}$ we have 
$\phi_{g_2}((S\otimes_{\Bbb{Z}}S)^{g_1}) = 2\sqrt{d}S$. 
In fact in this case we can also throw light  on  the  structure of $S\otimes_RS$ by looking at the isomorphic  image 
$\phi(S\otimes_RS)$ in $S_{g_1} \oplus S_{g_2}$. 

\begin{lemma} Let $K = \Bbb{Q}, L = {\Bbb{Q}}(\sqrt{d}),  d \equiv 2, 3, \mod 4, \  S = \Bbb{Z}[\sqrt{d}]$  
and $R = \Bbb{Z}$. Let $\phi$ be as defined in section 2.  Then 
$\phi(S\otimes_{\Bbb{Z}}S) = \{(s_1, s_2)| s_1, s_2 \in S \ \hbox{and} \ (s_2 - s_1) \in 2\sqrt{d}S\}.$
\end{lemma}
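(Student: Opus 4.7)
The plan is to use Lemma \ref{basis} in the special case $\theta = \sqrt{d}$; equivalently, the fact that $\{1, \sqrt{d}\}$ is a $\mathbb{Z}$-basis of $S = \mathbb{Z}[\sqrt{d}]$ gives $\{1\otimes 1,\ \sqrt{d}\otimes 1\}$ as a right $S$-basis of $S\otimes_{\mathbb{Z}}S$. Thus every element has the form $x = 1\otimes t_1 + \sqrt{d}\otimes t_2$ with $t_1,t_2\in S$ uniquely determined. A direct computation from the definition of $\phi$ in Section~2 then gives
$$\phi(1\otimes t_1 + \sqrt{d}\otimes t_2) = (t_1 + \sqrt{d}\,t_2,\ t_1 - \sqrt{d}\,t_2),$$
and the difference of the two components is $-2\sqrt{d}\,t_2 \in 2\sqrt{d}\,S$. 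This establishes the inclusion $\phi(S\otimes_{\mathbb{Z}}S) \subseteq \{(s_1,s_2)\mid s_2-s_1 \in 2\sqrt{d}\,S\}$.

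For the reverse inclusion, I would solve the system $t_1+\sqrt{d}\,t_2 = s_1$, $t_1-\sqrt{d}\,t_2 = s_2$ explicitly. Given $(s_1,s_2)$ with $s_2-s_1 = 2\sqrt{d}\,u$ for some $u\in S$, set $t_2 = -u$ and $t_1 = s_1 + \sqrt{d}\,u$. Since $u\in S$ and $\sqrt{d}\in S$, both $t_1$ and $t_2$ lie in $S$, and one checks that $1\otimes t_1 + \sqrt{d}\otimes t_2$ maps to $(s_1,s_2)$ under $\phi$, exhibiting the required preimage.

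There is no real obstacle in this proof; the key ingredient is simply the existence of a convenient right $S$-basis, which is guaranteed because the hypothesis $d \equiv 2,3 \pmod{4}$ ensures $S = \mathbb{Z}[\sqrt{d}]$ is the \emph{full} ring of integers of $L$, so that $\sqrt{d}$ itself (rather than $\tfrac{1+\sqrt{d}}{2}$) generates $S$ over $\mathbb{Z}$ and the formulas $t_1, t_2 \in S$ remain integral. The only point one must be careful about is that the preimage is chosen so that $t_1$ and $t_2$ are integral, which is where the divisibility hypothesis $s_2-s_1 \in 2\sqrt{d}\,S$ is used.
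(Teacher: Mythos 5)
Your proof is correct and essentially follows the same path as the paper's: the preimage you construct, $1\otimes(s_1+\sqrt{d}\,u) - \sqrt{d}\otimes u$, is exactly the element $1\otimes s_1 - A_{g_1}(\sqrt{d})(1\otimes s_3)$ appearing in the paper (with $s_3 = u$). The only cosmetic difference is in the forward inclusion, where the paper expands a generic tensor $\sum a_i\otimes b_j$ and observes $\sigma_{g_2}(a_i) - a_i \in 2\sqrt{d}\,S$, whereas you first reduce to the right-$S$-basis $\{1\otimes 1,\ \sqrt{d}\otimes 1\}$ via Lemma \ref{basis}; both give the same conclusion.
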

\begin{proof}Let $x = \Sigma a_i\otimes b_j \in S\otimes_{\Bbb{Z}} S$. Then $\phi(x) = (\Sigma a_ib_j, \Sigma b_j\sigma_{g_2}(a_i))$.
We see that $\Sigma b_j\sigma_{g_2}(a_i) - \Sigma a_ib_j = \Sigma b_j(\sigma_{g_2}(a_i) - a_i) \in 2\sqrt{d}S$.
Hence $\phi(S\otimes_{\Bbb{Z}}S) \subseteq \{(s_1, s_2)| s_1, s_2 \in S \ \hbox{and} \ (s_2 - s_1) \in 2\sqrt{d}S\}$.
On the other hand, let $(s_1, s_2) \in S \oplus S$  be such that $s_2 = s_1 + 2\sqrt{d}s_3$ for some 
$s_3 \in S$. We see that 
 $(s_1, s_2) = \phi(1\otimes s_1 - A_{g_1}(\sqrt{d})(1\otimes s_3))$. This gives us the opposite inclusion and proves the lemma. 
\end{proof}

Now the quotient modules, regarded as S-S bimodules,  in the filtration given above are easy to determine, by examining the images under the map $\phi$. 
We see that 
$(S\otimes_{\Bbb{Z}} S)^{g_0}/(S\otimes_{\Bbb{Z}} S)^{g_1} \cong \phi_{g_1}(S\otimes_{\Bbb{Z}}S)  \cong   S[\sigma_{g_1}]$. 
Also  $(S\otimes_{\Bbb{Z}} S)^{g_1}/\{o\}  \cong \phi_{g_2}(S\otimes_{\Bbb{Z}}S) = 2\sqrt{d}S \cong  S[\sigma_{g_2}]$.

\begin{example}{\bf  A Sequence which is exact but not sheaf exact}\end{example}
Using the previous example, we now present a sequence which is exact in the category $S\otimes_RS$-mod, but is not sheaf exact
in the category $\mathcal{C}_{(S\otimes_RS, \Omega, G)}$.  Let $K = \Bbb{Q}, L = {\Bbb{Q}}(\sqrt{d}),  d \equiv 2, 3, \mod 4, \  S = \Bbb{Z}[\sqrt{d}]$,  $R = \Bbb{Z}$ and $G = \{\sigma_{g_1} = \hbox{id}, \ \sigma_{g_2}\}$, 
where $\sigma_{g_2}(a + \sqrt{d}b) = a - \sqrt{d}b$, for $a, b \in \Bbb{Z}$. Let $\Omega$ be the poset $\{g_1,g_ 2\}$ with the ordering ${g_1} < {g_2}$.   Consider the sequence:
\begin{equation}\label{asterisk}
0 \to (S\otimes_R S)_I \to S\otimes_R S \to (S\otimes_R S)/(S\otimes_R S)_I \to 0,  
\end{equation}
where $I = \{\sigma_{g_1}\}$ and $(S\otimes_RS)_I $ is defined as in section 2.  We have $(S\otimes_RS)_I = A_{g_2}(\sqrt{d})(S\otimes_RS) = \{x \in S\otimes_R S \  \ \hbox{such that}  \ \  \phi_{g_2}(x) = 0\}.$
It is easy to see that $\phi_{g_1}((S\otimes_R S)_I) = 2\sqrt{d}S_{g_1}$.  Since $S_{g_1}$ is isomorphic to $S[\sigma_{g_1}]$ as $S - S$ bimodules and $\phi_{g_1}$ is an S-S bimodule homomorphism, we can conclude that 
  a filtration for $(S\otimes_RS)_I$ is given by:
$$(S\otimes_RS)_I = (S\otimes_R S)_I^{g_0} \supset \{0\} = (S\otimes_R S)_I^{g_1} = (S\otimes_R S)_I^{g_2}.$$
Now  $\phi_{g_2}(S\otimes_RS)_I  = 0$, in fact, $(S\otimes_RS)_I$  is the kernel of the map $\phi_{g_2}: S\otimes_RS \to S_{g_2}$.  Hence
$(S\otimes_RS)/(S\otimes_RS)_I \cong  S_{g_2} \cong  S[\sigma_{g_2}]$ as bimodules,  and thus ${S\otimes_RS   \over (S\otimes_RS)_I} $ has a filtration:
\[{S\otimes_RS   \over (S\otimes_RS)_I} = \Bigg[{S\otimes_RS \over (S\otimes_RS)_I}\Bigg]^{g_0} = \Bigg[{S\otimes_RS \over (S\otimes_RS)_I}\Bigg]^{g_1}
\supset \{0\} = \Bigg[{S\otimes_RS \over (S\otimes_RS)_I}\Bigg]^{g_2}.\]

Consider now the restriction of the maps in the  exact sequence of $S-S$-bimodules, (\ref{asterisk}),  to the following sequence of modules: 
\[0 \to (S\otimes_RS)_I^{g_1} \to (S\otimes_RS)^{g_1} \to  \Bigg[{S\otimes_RS \over (S\otimes_RS)_I}\Bigg]^{g_1} \to 0.\]
Lifting the maps via $\phi$, we get 

\[\xymatrix{
0\ar[r] & ({S\otimes_RS})_I^{g_1}\ar[r] 
& (S\otimes_RS)^{g_1}\ar[r]^{p}\ar[d]^{\phi} &  {\Big[}\frac{(S\otimes_RS)}{(S\otimes_RS)_I}{\Big]}^{g_1}\ar[r]\ar[d]^{\beta} & 0\\
&    & (0, 2\sqrt{d}S[\sigma_{g_2}])\ar[r]^{\pi}  &  S[\sigma_{g_2}] & 
}\]
Here $\beta(s_1\otimes s_2 + (S\otimes_RS)_I) = \phi_{g_2}(s_1\otimes s_2) $ and $\pi(0, s_2) = s_2$, $p$ is the quotient map and $\pi$ is projection onto the second factor. It is easy to see that $\beta p = \pi\phi$. We know, from the discussion above,  that $\beta$ is an isomorphism. Hence, since $\pi$ is not onto, we have that  $p$ cannot be onto.  Hence the sequence on the top row is not exact as a sequence of $S-S$ bimodules and thus the  sequence, (\ref{asterisk}),   is not sheaf exact in the category $\mathcal{C}_{(S\otimes_RS, \Omega, G)}$.

\begin{example} {\bf The  role played by  the ordering in the definition of sheaf exact sequences} \end{example}
We now consider two different orderings on the indices of $G$ in the above example given by the  posets $\Omega$ and $\Omega_1$.  
To demonstrate the subtle role played by the ordering  in the concept of sheaf exactness for sequences,
we give an example of a 
sequence which is sheaf exact in the category $\mathcal{C}_{(S\otimes_RS, \Omega_, G)}$ but not 
sheaf exact in the category $\mathcal{C}_{(S\otimes_RS, \Omega_1, G)}$. 

 Let us use the sequence of $S-S$ bimodules, (\ref{asterisk}),
given above. We have shown that it is not sheaf exact in the category $\mathcal{C}_{(S\otimes_RS, \Omega, G)}$.  We now consider the category obtained by switching the ordering on the indices 
of $G$. To avoid confusion with modules, we will relabel the indices. Let $G = \{\sigma_{x_1}, \sigma_{x_2}\}$, 
where $\sigma_{x_1} = \sigma_{g_2}$ and $\sigma_{x_2} = \sigma_{g_1}$. Now let $\Omega_1 = \{x_1, x_2\}$ be the poset with ordering $x_1 < x_2$. 
 We will show that the sequence of $S- S$ bimodules is sheaf exact in the category  $\mathcal{C}_{(S\otimes_RS, \Omega_1, G)}$. 

The category $\mathcal{C}_{(S\otimes_RS, \Omega_1, G)}$ consists of all $S-S$ bimodules, $M$,  with filtrations
$$M = M^{x_0} \supseteq M^{x_1} \supseteq M^{x_2} = 0,$$
where $M^{x_{i - 1}}/ M^{x_{i}}$ is finitely generated and projective as a right $S$ module, with left action given by $su = u\sigma_{x_i}(s)$ for all $s\in S$ and $u \in M^{x_{i - 1}}/ M^{x_i}$.  Consider the sequence $(\ref{asterisk})$ above , I claim  that the sequences
\begin{equation} \label{dagger}
0 \to (S\otimes_R S)_I^{x_i}  \to (S\otimes_R S)^{x_i}  \to {\Big[}(S\otimes_R S)/(S\otimes_R S)_I{\Big]}^{x_i} \to 0,  
\end{equation}
are exact as sequences of  $S-S$ bimodules, for $i = 0, 1$ and $2$. When $i = 0$, the sequence, \ref{dagger}, is the original sequence, \ref{asterisk}, which is obviously exact. When $i = 2$, the sequence, \ref{dagger}, reduces to $0\to 0 \to 0 \to 0 \to 0$, which is obviously exact. It remains to determine the sequence for $i = 1$.  

Letting $I = \{\sigma_{g_1}\}$ as in the previous exampe, we have 
 $(S\otimes_RS)_I = \{x \in S\otimes_RS | \phi_{g_2}(x) = 0\}$, and $\phi_{g_1} $ gives an isomorphism from $(S\otimes_RS)_I$ to a submodule of $S[\sigma_{g_1}] = S[\sigma_{x_2}]$.  Hence $(S\otimes_RS)_I = (S\otimes_RS)_I^{x_1}$.  Since $(S\otimes_RS)_I$ is the kernel 
of the bimodule homomorphism $\phi_{g_2}: S\otimes_RS \to S[\sigma_{g_2}] = S[\sigma_{x_2}]$, we see that $(S\otimes_RS)^{x_1} = (S\otimes_RS)_I$
and  ${\Big[}(S\otimes_R S)/(S\otimes_R S)_I{\Big]}^{x_1} = 0$.  Hence, when $i = 1$, the sequence, \ref{dagger},  above becomes:
\[0 \to (S\otimes_R S)_I  \to (S\otimes_R S)_I  \to 0 \to 0,\]
and is indeed exact. Thus the sequence, \ref{asterisk},   is sheaf exact in the category $\mathcal{C}_{(S\otimes_RS, \Omega_1, G)}$, but not in $\mathcal{C}_{(S\otimes_RS, \Omega, G)}$.

\section{ \bf Projectives in the category  $\mathcal{C}_{(S\otimes_RS, \Omega, G)}$}
Let $L, K, S, R, \phi, \phi_{g_k}$ and $ G = \{\sigma_{g_1}, \sigma_{g_2}, \sigma_{g_3} \dots \sigma_{g_n} \}$  be as defined in Section 2. Let $\Omega = \{{g_1}, {g_2}, \cdots ,{g_ n}\}$  with ${g_1} < {g_2} < \cdots < {g_n}$  be a total ordering on the indices of  $G$ and let $\mathcal{C}_{(S\otimes_RS, \Omega, G)}$ denote the resulting 
stratified category defined in Section 5. 
   From our definition of projective modules in Section 5, we have an object $P$ of $\mathcal{C}_{(S\otimes_RS, \Omega, G)}$  is projective if $Hom(P, \ )$ is an exact functor, i.e.
for every sheaf exact sequence $0\to M \to N \to Q \to 0$ in $\mathcal{C}_{(S\otimes_RS, \Omega, G)}$, the sequence
\[0 \to Hom(P, M) \to Hom(P, N) \to Hom(P, Q) \to 0\]
is exact in the category of Abelian groups.
We have verified  that $S\otimes_RS$ is in the category $\mathcal{C}_{(S\otimes_RS, \Omega, G)}$
in Example \ref{stensors}. In fact  $S\otimes_RS$ is  projective in the category 
$\mathcal{C}_{(S\otimes_RS, \Omega, G)}$. 

\begin{lemma} 
$S\otimes_R S$ is projective in the category $\mathcal{C}_{(S\otimes_RS, \Omega, G)}$
\end{lemma}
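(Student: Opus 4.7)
The plan is to exploit the fact that the ``free of rank one'' object $S\otimes_R S$ represents the forgetful functor from $S\otimes_R S$-mod to abelian groups, and then observe that sheaf exactness is, in particular, exactness in $S\otimes_R S$-mod.

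First, I would record that $S\otimes_R S$ actually lies in $\mathcal{C}_{(S\otimes_R S,\Omega,G)}$: this has already been verified in Example \ref{stensors}, where the filtration
\[S\otimes_R S = (S\otimes_R S)^{g_0}\supseteq (S\otimes_R S)^{g_1}\supseteq\dots\supseteq (S\otimes_R S)^{g_n}=\{0\}\]
was constructed via the embedding $\phi$, with successive quotients sitting inside $S[\sigma_{g_k}]$ and therefore finitely generated projective as right $S$-modules with the correct twisted left $S$-action. Hence the question of projectivity is meaningful.

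Next, I would invoke the standard natural isomorphism
\[\operatorname{Hom}_{S\otimes_R S}(S\otimes_R S,\,M)\;\cong\;M,\qquad f\mapsto f(1\otimes 1),\]
valid for every $S\otimes_R S$-module $M$. This isomorphism is natural in $M$, so $\operatorname{Hom}_{S\otimes_R S}(S\otimes_R S,-)$ is (naturally isomorphic to) the forgetful functor from $S\otimes_R S$-mod to the category of abelian groups. In particular it carries every short exact sequence of $S\otimes_R S$-modules to a short exact sequence of abelian groups.

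Finally, let $0\to M\to N\to Q\to 0$ be any sheaf exact sequence in $\mathcal{C}_{(S\otimes_R S,\Omega,G)}$. By the definition of sheaf exactness (taking $i=0$ in the family $0\to M^{g_i}\to N^{g_i}\to Q^{g_i}\to 0$, since $M^{g_0}=M$, $N^{g_0}=N$, $Q^{g_0}=Q$), the original sequence is exact as a sequence of $S\otimes_R S$-modules. Applying the forgetful functor therefore yields an exact sequence of abelian groups, which by the natural isomorphism above is exactly the sequence obtained by applying $\operatorname{Hom}_{S\otimes_R S}(S\otimes_R S,-)$. Thus $\operatorname{Hom}_{S\otimes_R S}(S\otimes_R S,-)$ is exact on sheaf exact sequences, which is precisely the definition of $S\otimes_R S$ being projective in $\mathcal{C}_{(S\otimes_R S,\Omega,G)}$. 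There is no real obstacle here; the only point requiring care is the bookkeeping that sheaf exactness subsumes ordinary exactness in the ambient module category, so that the representability of the forgetful functor does the work.
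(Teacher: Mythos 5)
Your proof is correct and is essentially the same argument the paper gives: sheaf exactness implies ordinary exactness in $S\otimes_R S$-mod, and $S\otimes_R S$ is projective (indeed free of rank one, so $\operatorname{Hom}_{S\otimes_R S}(S\otimes_R S,-)$ is the forgetful functor) there. The paper states it slightly more tersely, without spelling out the representability of the forgetful functor, but the substance is identical.
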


\begin{proof}
We note first  that $S\otimes_R S$ is projective in the 
category of 
$S\otimes_R S$-modules. So if $0\to M_1 \to M_2 \to M_3 \to 0$ 
is sheaf exact in $\mathcal{C}_{(S\otimes_RS, \Omega, G)}$, it is exact in the category of 
$S\otimes_R S$-modules  and projectivity of $S\otimes_R S$ in this category 
guarantees  exactness of 
\[0 \to Hom(S\otimes_RS, M_1) \to Hom(S\otimes_RS, M_2) \to Hom(S\otimes_RS, M_3) \to 0\]
in the category of abelian groups. This tells us that $S\otimes_RS$ is projective 
in $\mathcal{C}_{(S\otimes_RS, \Omega, G)}$.
\end{proof}
It is clear that the proof applies also to the module $S\otimes_RS_{\frak{Q}}$ in the category 
$\mathcal{C}_{(S\otimes_RS_{\frak{Q}}, \Omega, G)}$:

\begin{corollary}
$S\otimes_R S_{\frak{Q}}$ is projective in the category $\mathcal{C}_{(S\otimes_RS_{\frak{Q}}, \Omega, G)}$.

\end{corollary}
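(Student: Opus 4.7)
The plan is to mirror the proof of the preceding lemma verbatim, with $S_{\frak{Q}}$ in place of $S$ on the right tensor factor. Before invoking projectivity arguments, one should first note that $S\otimes_R S_{\frak{Q}}$ genuinely lies in the category $\mathcal{C}_{(S\otimes_RS_{\frak{Q}}, \Omega, G)}$: this is the analog of Example \ref{stensors}, where we pull back the natural filtration of $\frak{S}_{\frak{Q}} = (S_{\frak{Q}})_{g_1} \oplus \cdots \oplus (S_{\frak{Q}})_{g_n}$ along the monomorphism $\phi$ of Lemma \ref{phi} to obtain the filtration $(S\otimes_R S_{\frak{Q}})^{g_k} = \phi^{-1}(\frak{S}_{\frak{Q}}^{g_k})$, whose successive quotients embed into $S_{\frak{Q}}[\sigma_{g_k}]$ and therefore are finitely generated projective right $S_{\frak{Q}}$-modules (submodules of torsion-free modules over the Dedekind domain $S_{\frak{Q}}$, in fact over the principal ideal ring $S_{\frak{Q}}$).

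With this observation in hand, the projectivity argument is identical to the proof just given. First, $S\otimes_R S_{\frak{Q}}$ is projective in the ambient abelian category of $S\otimes_R S_{\frak{Q}}$-modules, since it is a free rank-one module over itself. Second, every sheaf exact sequence
\[0 \to M_1 \to M_2 \to M_3 \to 0\]
in $\mathcal{C}_{(S\otimes_RS_{\frak{Q}}, \Omega, G)}$ is by definition exact as a sequence of $S\otimes_R S_{\frak{Q}}$-modules. Applying $\operatorname{Hom}_{S\otimes_R S_{\frak{Q}}}(S\otimes_R S_{\frak{Q}}, -)$ therefore yields an exact sequence of abelian groups, which is exactly the condition for $S\otimes_R S_{\frak{Q}}$ to be projective in the stratified exact category.

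There is no real obstacle here; the only point worth double-checking is that the notion of ``sheaf exact'' in $\mathcal{C}_{(S\otimes_RS_{\frak{Q}}, \Omega, G)}$ does imply ordinary short exactness in the ambient module category, which is immediate from the definition (the $i=0$ case of the filtration gives back the original sequence). Thus the corollary follows by the same two-line argument as the preceding lemma, with every occurrence of $S$ on the right replaced by $S_{\frak{Q}}$.
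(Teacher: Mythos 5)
Your proposal is correct and takes essentially the same route as the paper: the paper proves the preceding lemma (projectivity of $S\otimes_R S$) by observing that sheaf exact sequences are exact in the ambient module category and that $S\otimes_R S$ is free over itself, and then states this corollary with the remark that the same proof applies with $S_{\frak{Q}}$ in place of $S$. Your additional preliminary check that $S\otimes_R S_{\frak{Q}}$ lies in $\mathcal{C}_{(S\otimes_RS_{\frak{Q}}, \Omega, G)}$, via pulling back the natural filtration of $\frak{S}_{\frak{Q}}$ along $\phi$, is a harmless and correct expansion of what the paper leaves implicit.
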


According to Theorem \ref{progen}, the category $\mathcal{C}_{(S\otimes_RS, \Omega, G)}$ has a 
projective generator, $P$, such that for any   $M$  in $\mathcal{C}_{(S\otimes_RS, \Omega, G)}$, there is a sheaf exact 
sequence $0 \to N \to P^n \to M \to 0$ for some $n \geq 0$. This projective generator is not  unique.  We can  construct such  a projective generator for $\mathcal{C}_{(S\otimes_RS, \Omega, G)}$ as a direct sum of 
quotients of $S\otimes_RS$. By \cite{Dyer} [Theorem 1.19],  if we construct n projectives
$P_1, P_2, \dots P_n$ with $P_i = P_i^{g_{i-1}}$, (that is  $P_i^{g_{j-1}}/P_i^{g_j} = P_i/P_i^{g_j} = (0)$
for $j < i$), and $P_i/P_i^{g_i} = P_i^{g_{i-1}}/P_i^{g_i} \cong S[\sigma_{g_i}]$, as S-S bimodules,
then $P = P_1 \oplus P_2 \oplus \dots \oplus P_n$
is a projective generator for $\mathcal{C}_{(S\otimes_RS, \Omega, G)}$.

Recall from section 2  that $I_{g_i} = \{\sigma_{g_k}\}_{k  \geq  i}$ and for $I \subseteq G$, we have $(S\otimes_RS)_I = 
\{x \in S\otimes_RS| \phi_{g_k}(x) = 0 \ \hbox{for} \  \sigma_{g_k} \not\in  I\}$.

\begin{definition}  \label{P}
Let $K, L, S$ and $R$ be as defined in section 2. 
We let 
$$(S\otimes_RS)_i =  (S\otimes_RS)_{I_{g_{i }}^c} = \{ x \in S\otimes_RS| \phi_{g_k}(x) = 0 \ \hbox{for} \  k \geq i\}   \ \  i = 1, 2, \dots , n.$$ 
and we let $P_i = (S\otimes_RS)/(S\otimes_RS)_{i}$.
\end{definition}

\begin{theorem}
\label{PlusP} Let $L, K, S, R, \frak{Q}, \frak{P}, G = \{\sigma_{g_1}, \sigma_{g_2}, \cdots , \sigma_{g_n}\}, \phi$ 
and $\phi_{g_k}$ be as defined in Section 2. 
 Let $P_i$ be as in Definition \ref{P} for $ 1\leq i \leq n$. Then $(P_i)^{g_{j - 1}}/P_i^{g_j} = 0$ if $j < i$
and $ P_i^{g_{i - 1}}/P_i^{g_i} = P_i/P_i^{g_i} \cong S[\sigma_{g_i}]$. Also $P_i$ is projective in the category $\mathcal{C}_{(S\otimes_RS, \Omega, G)}$. 
\end{theorem}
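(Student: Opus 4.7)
The first task is to identify the correct filtration on $P_i$. Since $(S\otimes_R S)_i \subseteq \ker \phi_{g_k}$ for every $k \geq i$, each $\phi_{g_k}$ descends to a map $\bar\phi_{g_k}\colon P_i \to S[\sigma_{g_k}]$ when $k \geq i$. I set $P_i^{g_j} := P_i$ for $j < i$, and for $j \geq i$ set $P_i^{g_j} := \{[x] \in P_i : \bar\phi_{g_k}([x]) = 0 \text{ for all } k \text{ with } i \leq k \leq j\}$. These are $S\otimes_R S$-submodules, and they form a descending chain $P_i = P_i^{g_0} \supseteq \cdots \supseteq P_i^{g_n} = 0$. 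The first two claims follow immediately: when $j < i$ we have $P_i^{g_{j-1}} = P_i^{g_j} = P_i$, so the quotient is zero; when $j = i$, the map $\bar\phi_{g_i}$ is surjective (since $\phi_{g_i}(1 \otimes s) = s$) with kernel $P_i^{g_i}$, and $P_i^{g_{i-1}} = P_i$ by definition, so $P_i^{g_{i-1}}/P_i^{g_i} = P_i/P_i^{g_i} \cong S[\sigma_{g_i}]$.

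Before turning to projectivity, I must verify that $P_i$ actually lies in $\mathcal{C}_{(S\otimes_R S, \Omega, G)}$, i.e.\ that each filtration quotient $P_i^{g_{j-1}}/P_i^{g_j}$ is finitely generated and projective as a right $S$-module with the required bimodule action $s u = u \sigma_{g_j}(s)$. For $j \leq i$ this is handled by the computations above. For $j > i$, the induced map $\bar\phi_{g_j}$ embeds $P_i^{g_{j-1}}/P_i^{g_j}$ into $S[\sigma_{g_j}]$ as a fractional ideal of $S$, which is automatically finitely generated and projective because $S$ is Dedekind, and which inherits the required bimodule action from $S[\sigma_{g_j}]$.

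The projectivity of $P_i$ will be established through Lemma \ref{extproj}: it suffices to show that, for each $j$, the four-term Hom/Ext sequence obtained from the short exact sequence $0 \to P_i^{g_{j-1}}/P_i^{g_j} \to P_i/P_i^{g_j} \to P_i/P_i^{g_{j-1}} \to 0$ with coefficients in $S[\sigma_{g_j}]$ is exact. For $j < i$ all terms vanish, and for $j = i$ the sequence reduces to $0 \to 0 \to S \to S \to 0 \to 0$ with the middle map the identity, so the only substantive case is $j > i$; this is the principal obstacle. The plan is to pass to the local setting, since projectivity of a finitely generated module over a Noetherian ring is a local property, and then to apply the structural results of Section~3. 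Corollary \ref{corstar} decomposes $S\otimes_R S_{\frak{Q}} = \bigoplus_l (S\otimes_R S_{\frak{Q}}) x_l$ with each factor ring-isomorphic to $S\otimes_{S_E} S_{\frak{Q}} = S_E[\Pi] \otimes_{S_E} S_{\frak{Q}}$ via Lemma \ref{structure}, and Lemma \ref{basisP}, applied to the Galois subgroup $E$, supplies an explicit right $S_{\frak{Q}}$-module basis and shows that the coordinate-vanishing ideals are principal, generated by explicit products of the elements $A_{e_s}(\Pi)$. In each local component, $(P_i)_{\frak{Q}}$ is thereby realized as a cyclic quotient whose basis and induced filtration can be written down directly, and the Hom/Ext exactness demanded by Lemma \ref{extproj} can then be verified by a component-by-component calculation.
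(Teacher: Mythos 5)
Your filtration setup (define $P_i^{g_j} := P_i$ for $j < i$ and $P_i^{g_j}$ as the common kernel of $\bar\phi_{g_k}$ for $i \leq k \leq j$ when $j \geq i$) and your computation of the quotients for $j \leq i$ match the paper exactly, as does the verification that the quotients for $j > i$ embed as submodules of $S[\sigma_{g_j}]$. Where you diverge is the projectivity argument, and there the paper does something quite different and cleverer. The paper never invokes Lemma~\ref{extproj}; instead it verifies the lifting property directly: given $f\colon P_i \to M_3$ for a sheaf-exact $0 \to M_1 \to M_2 \to M_3 \to 0$, the image lies in $M_3^{g_{i-1}}$ since $P_i = P_i^{g_{i-1}}$, and one lifts to $f^{!!}\colon S\otimes_R S \to M_2^{g_{i-1}}$ using ordinary projectivity of $S\otimes_R S$. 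The crux is then to show $f^{!!}$ kills $(S\otimes_R S)_i$, and the paper's trick is to pass to a reordered category $\mathcal{C}_{(S\otimes_RS, \Omega_2, G)}$ (with $g_i$ moved to the bottom of the total order): under $\Omega_2$, the ideal $(S\otimes_RS)_i$ becomes a filtration piece $(S\otimes_RS)^{x_{n-i+1}}$, while the corresponding piece of $M_2^{g_{i-1}}$ is zero, so Lemma~\ref{maps} forces $f^{!!}((S\otimes_RS)_i) = 0$. No localization, no Ext computations.

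Your route through Lemma~\ref{extproj} combined with localization and the Section~3 structure theory (Corollary~\ref{corstar}, Lemma~\ref{structure}, Lemma~\ref{basisP}) is legitimate in principle — it is in fact close in spirit to the paper's own proof of the duality theorem in Section~9 — and your disposal of the cases $j < i$ and $j = i$ is correct. But you then correctly flag $j > i$ as "the principal obstacle" and stop at the claim that it "can then be verified by a component-by-component calculation." That unperformed computation is the entire substance of the projectivity claim in this approach; one needs to show that the connecting map $Hom(P_i^{g_{j-1}}/P_i^{g_j}, S[\sigma_{g_j}]) \to Ext^1(P_i/P_i^{g_{j-1}}, S[\sigma_{g_j}])$ is surjective after localization at every prime, using the explicit basis from Lemma~\ref{basisP}. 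As it stands this is a genuine gap. It would be worth studying the paper's reordering argument precisely because it eliminates this step altogether: the filtration-functoriality of Lemma~\ref{maps}, applied to a cleverly chosen second ordering, carries the whole weight of the proof.
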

 \begin{proof}  We note that $P_i$ is isomorphic to the projection of $\phi(S\otimes_RS)$ onto the last $n - i + 1$ factors of $S_{g_1} \oplus S_{g_2} \oplus S_{g_3} \oplus \dots \oplus S_{g_n}$. We will again exploit the natural filtration on 
 $S_{g_1} \oplus S_{g_2} \oplus \dots \oplus S_{g_n}$ to get a filtration for $P_i$. 
 
 For any $i$ and $k$, with $1 \leq i \leq n$ and $k \geq i$,  we have  a lifting of $\phi_{g_k} : S\otimes_RS \to S[\sigma_{g_k}]$
 to $\phi_{g_k, i} : P_i \to S[\sigma_{g_k}]$, since $(S \otimes_R S)_i \subseteq \ker \phi_{g_k}$. For each $i$, with $1\leq i \leq n$, let 
 $$P_i^{g_j}  =  \Big\{\  \begin{matrix}
& P_i & \hbox{if}  & 0\leq j < i  \\
&   \{x \in P_i | \phi_{g_k, i}(x) = 0 \  \hbox{for} \  i \leq k \leq j\} &\mbox{if}  &\ j \geq i  &
\end{matrix}
$$
Then for each $i$, $1 \leq i \leq n$, we claim that 
\begin{equation} 
\label{filt} P_i^{g_0} = P_i \supseteq P_i^{g_1} \supseteq P_i^{g_2} \supseteq \dots \supseteq P_i^{g_n} = 0
\end{equation}
is a filtration for $P_i$ in the category $\mathcal{C}_{(S\otimes_RS, \Omega, G)}$.

For  $j\geq i$, we let   $\bar{\phi}_{{g_j}, i} :  P_i^{g_{j-1}} \to S[\sigma_{g_j}]$ denote the   restriction of the map $\phi_{{g_j}, i}: P_i \to S[\sigma_{g_j}]$ defined  above. 
By definition, we have $P_i^{g_j} \subseteq \ker \bar{\phi}_{{g_j}, i}$.  On the other hand,  if $x \in P_i^{g_{j - 1}}$ is in $\ker \bar{\phi}_{{g_j}, i}$, then $x \in P_i$ and  $\phi_{{g_k}, i}(x) = 0$ for $i \leq k \leq j$ since $x \in P_i^{g_{j - 1}}$ and in 
$\ker{\bar{\phi}_{{g_j}, i}}  \subseteq \ker \phi_{{g_j}, i}$. Hence $x \in P_i^{g_j}$ and $\ker \bar{\phi}_{{g_j}, i} = P_i^{g_j}$. 
Thus using the $S\otimes_RS$ module homomorphism $\bar{\phi}_{{g_j}, i}$, we see that 
 $$P_i^{g_{j - 1}}/P_i^{g_j}  \cong   \Big\{\  \begin{matrix}
& 0 & \hbox{if}  & 0\leq j < i  \\
&  \hbox{a submodule of} \ \ S[\sigma_{g_j}]  
&\mbox{if}  &\ j \geq i  &
\end{matrix}
$$
This shows that (\ref{filt})  is a filtration of $P_i$, since the quotients are finitely generated and projective as right $S$-modules and have the appropriate left action by $S$.  We can thus conclude 
that $P_i$ is in the category $\mathcal{C}_{(S\otimes_RS, \Omega, G)}$, for $1 \leq i \leq n$. 

We can see that $\bar{\phi}_{{g_i}, i} : P_i^{g_{i - 1}}/P_i^{g_i} \to S[\sigma_{g_i}]$ is an isomorphism, by examining   the image of the coset class  $[1\otimes s] \in P_i = P_i^{g_{i - 1}}$, for  $s\in S$. We have $\bar{\phi}_{{g_i}, i}([1\otimes s] + P_i^{g_i}) = \phi_{{g_i}, i}([1\otimes s]) = \phi_{{g_i}, i}(1\otimes s  + (S\otimes_RS)_i)  = \phi_{g_i}(1\otimes s) = s$, since $\phi(S\otimes_RS)_i = 0$. Hence $\bar{\phi}_{{g_i}, i} :  P_i^{g_{i - 1}}/P_i^{g_i} \to S[\sigma_{g_i}]$ is onto and since $P_i^{g_i} = \ker \bar{\phi}_{{g_i}, i} $, it is  an isomorphism.

It remains to show that $P_i$ is projective in the category $\mathcal{C}_{(S\otimes_RS, \Omega, G)}$ for each $i, \  1\leq i \leq n$. 
For each $i$, $1 \leq i \leq n$, we need only show that the map $Hom(P_i, M_2) \to Hom(P_i,
M_3)$ is onto whenever 
\begin{equation}\label{3ms}
0 \to M_1 \to M_2 \to M_3 \to 0
\end{equation}
is a sheaf  exact sequence in Category $\mathcal{C}_{(S\otimes_RS, \Omega, G)}$. Let $f:P_i \to M_3$ be a homomorphism of $S\otimes_R S$ modules. We know that 
$f$ maps $P_i$ into $M_3^{g_{i-1}}$ since $P_i = P_i^{g_{i-1}}$. Thus we can restrict our attention
to the sequence 
\[0 \to M_1^{g_{i-1}} \to M_2^{g_{i-1}} \to M_3^{g_{i-1}} \to 0,\]
which is also exact since \ref{3ms} is a sheaf  exact sequence in category $\mathcal{C}_{(S\otimes_RS, \Omega, G)}$.
 We can lift 
$f$ to a map $f^{!}: S\otimes_RS \to M_3^{g_{i-1}}$ by composing $f$ with the projection 
homomorphism. Now since $S\otimes_RS$ is projective in the category of $S\otimes_RS$-modules we can lift 
$f^!$ to a map $f^{!!}:S\otimes_RS \to M_2^{g_{i-1}}$. 

\[\xymatrix{
& & & {S\otimes_RS}\ar[ddl]_{f^{!!} }\ar[d]\ar@/^/[dd]^{f^{!}}
&\\
& & & {P_i}\ar[d]_f
&\\
0\ar[r] & {M_1^{g_{i - 1}}}\ar[r]  & {M_2^{g_{i - 1}}}\ar[r]  & {M_3^{g_{i - 1}}}\ar[r] &  0 }\]

Recall that $P_i = S\otimes S/(S\otimes_RS)_{i}$,
where \[(S\otimes_RS)_i = \{ x \in S\otimes_RS| \phi_k(x) = 0 \ \hbox{for} \   k \geq i \}.\]
We will show that $f^{!!}((S\otimes_RS)_i) = 0$, thus allowing us to lift $f^{!!}$ to the sought after element of 
$Hom(P_i, M_2)$. 

To achieve this, we shall consider $f^{!!}: S\otimes_RS \to M_2^{g_{i-1}}$ as a map in a stratified category constructed using a different ordering on the indices of $G$. With $i$ as in the above diagram, 
let $\sigma_{x_1} =  \sigma_{g_i}, \  \sigma_{x_2} = \sigma_{g_{i + 1}}, \   \cdots ,  \  \sigma_{x_{n - i + 1}} = \sigma_{g_n},   \  \sigma_{x_{n - i + 2}} = \sigma_{g_1}, \cdots  , \ \sigma_{x_n} = \sigma_{g_{i - 1}}$. 
Let $\Omega_2$ be the poset giving the total ordering on the indices of $G$ ;  $x_1 < x_2 <  \cdots
< x_n$. 
Let  $\mathcal{C}_{(S\otimes_RS, \Omega_2, G)}$ be the stratified exact category corresponding to the ordering $\Omega_2$. An  $S-S$ bimodule, $N$, may be an object in both categories, $\mathcal{C}_{(S\otimes_RS, \Omega, G)}$ and $\mathcal{C}_{(S\otimes_RS, \Omega_2, G)}$.  For such a module, $N$,  we let $N^{g_i}$ denote a submodule in its filtration in $\mathcal{C}_{(S\otimes_RS, \Omega, G)}$, and we let $N^{x_i}$ denote a submodule in its filtration in the category $\mathcal{C}_{(S\otimes_RS, \Omega_2, G)}$.

The module  $S\otimes_RS$ is  in the category $\mathcal{C}_{(S\otimes_RS, \Omega_2, G)}$, with a  filtration in that category given by
\[S\otimes_R S = (S\otimes_R S)^{x_0} \supseteq (S\otimes_RS)^{x_1} \supseteq \dots \supseteq (S\otimes_RS)^{x_n} =0\]
where $(S\otimes_RS)^{x_i} = \{x \in S\otimes_RS | \phi_{x_k}(x) = 0 \ \hbox{if} \ k \leq i \}$.  
This follows from Lemma \ref{stensors}, since the order chosen on the group did not play a part in that proof. We see that the $S-S$ bimodule, $(S\otimes_RS)_i$ coincides with the 
$S-S$ bimodule $(S\otimes_RS)^{x_{n - i + 1}}$  in the category $\mathcal{C}_{(S\otimes_RS, \Omega_2, G)}$  

We also have that the 
$S-S$ bimodule $M_2^{g_{i - 1}}$ is in the category $\mathcal{C}_{(S\otimes_RS, \Omega_2, G)}$, with filtration given by:
$$ M_2^{g_{i - 1}} = (M_2^{g_{i - 1}})^{x_0} \supseteq  (M_2^{g_{i - 1}})^{x_1}   \supseteq \dots \supseteq 
 (M_2^{g_{i - 1}})^{x_n} = 0
$$
where
 $$(M_2^{g_{i - 1}})^{x_k} =  \Big\{\  \begin{matrix}
& M_2^{g_{k + i - 1}} & \hbox{if}  & 0\leq k <  n - i + 1  \\
&   0  &\mbox{if}  &\  k \geq n - i + 1  &
\end{matrix}
$$

Since  $(M_2^{g_{i - 1}})^{x_{k - 1}}/ (M_2^{g_i})^{x_k} \cong M_2^{g_{k + i -2}}/M_2^{g_{k + i - 1}} , 1 \leq k \leq n-i + 1$, this module is isomorphic to a submodule of $S[\sigma_{g_{k + i - 1}}] = S[\sigma_{x_{k}}]$. Hence the quotients have the required structure.  
Now $f^{!!} \in Hom_{S\otimes_RS}(S\otimes_RS, M_2^{g_{i - 1}})$ and since $\mathcal{C}_{(S\otimes_RS, \Omega_2, G)}$ is a full subcategory of $S\otimes_RS$ modules $f^{!!}$ must map $(S\otimes_RS)^{x_{n - i + 1}}$ into 
$M_2^{x_{n - i + 1}} = 0$ by Lemma \ref{maps}.  Hence $f^{!!}$ factors through a map $f^{!!!} : P_i \to M_2^{g_{i - 1}}$ and it is clear from the construction of these maps that $f^{!!!}$ is a lifting of the original map $f \in Hom_{S\otimes_RS}(P_i, M_3)$. 

\[\xymatrix{
& & & {S\otimes_RS}\ar[ddl]_{f^{!!} }\ar[d]\ar@/^/[dd]^{f^{!}}
&\\
& & & {P}\ar[dl]^{f^{!!!}}\ar[d]_f
&\\
0\ar[r] & {M_1^{g_{i - 1}}}\ar[r]  & {M_2^{g_{i - 1}}}\ar[r]  & {M_3^{g_{i - 1}}}\ar[r] &  0 }\]

Since $Hom$ is a left exact functor, this suffices to prove that for  $1 \leq i \leq n$, the sequence 
\[0 \to Hom_{S \otimes_RS}(P_i, M_1) \to Hom_{S \otimes_RS}(P_i, M_2) \to Hom_{S \otimes_RS}(P_i, M_3) \to 0\]
is exact in the category of Abelian groups  for every exact sequence $0 \to M_1 \to M_2 \to M_3 \to 0$ in the category $\mathcal{C}_{(S\otimes_RS, \Omega, G)}$ and hence that $P_i$ is projective in this category for $1\leq i \leq n$. 

\end{proof}

\begin{definition}  \label{Q}
Let $K, L, S, R, \frak{Q}, \frak{P}, \phi,$  and $\phi_{g_k}$ be as defined in Section 2.  Recall that 
$I_{g_i} = \{\sigma_{g_{i }}, \sigma_{g_{i + 2}}, \cdots , \sigma_{g_n}\}$. 
We let 
$$(S\otimes_RS_{\frak{Q}})_i =  (S\otimes_RS_{\frak{Q}})_{I_{g_{i }}^c} = \{ x \in S\otimes_RS_{\frak{Q}}| \phi_{g_k}(x) = 0 \ \hbox{for} \  k \geq i\}   \ \  i = 1, 2, \dots , n.$$ 
and we let $P_{\frak{Q}, i}  = (S\otimes_RS_{\frak{Q}})/(S\otimes_RS_{\frak{Q}})_{i}$. 
\end{definition}

It is clear that the above proof applies when we replace $P_i$   by $P_{\frak{Q}, i}$, $1 \leq i \leq n$.  In particular we have :

\begin{corollary} \label{PlusQ} Let $L, K, S, R, \frak{Q}, \frak{P}, G = \{\sigma_{g_1}, \sigma_{g_2}, \cdots , \sigma_{g_n}\}, \phi$ 
and $\phi_{g_k}$ be as defined in Section 2. 
 Let $P_{\frak{Q}, i}$ be as in Definition \ref{Q} for $ 1\leq i \leq n$. Then $P_{\frak{Q}, i}^{g_{j - 1}}/P_{\frak{Q}, i}^{g_j} = 0$ if $j < i$
and $P_{\frak{Q}, i}^{g_{i - 1}}/P_{\frak{Q}, i}^{g_i} = P_{\frak{Q}, i}/P_{\frak{Q}, i}^{g_i} \cong S_{\frak{Q}}[\sigma_{g_i}]$. Also $P_{\frak{Q}, i}$ is projective in the category $\mathcal{C}_{(S\otimes_RS_{\frak{Q}}, \Omega, G)}$. 

\end{corollary}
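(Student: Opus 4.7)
The plan is to mimic the proof of Theorem \ref{PlusP} verbatim, with $S\otimes_RS$ replaced by $S\otimes_RS_{\frak{Q}}$ at every step; the point is that every ingredient of that argument has a direct localized counterpart, so the same strategy carries over.

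First, I would construct the filtration of $P_{\frak{Q},i}$. Since $(S\otimes_RS_{\frak{Q}})_i\subseteq\ker\phi_{g_k}$ for every $k\ge i$, the homomorphism $\phi_{g_k}:S\otimes_RS_{\frak{Q}}\to S_{\frak{Q}}[\sigma_{g_k}]$ descends to $\phi_{g_k,i}:P_{\frak{Q},i}\to S_{\frak{Q}}[\sigma_{g_k}]$. I would then define
\[
P_{\frak{Q},i}^{g_j}=\begin{cases}P_{\frak{Q},i}&0\le j<i,\\ \{x\in P_{\frak{Q},i}\mid \phi_{g_k,i}(x)=0,\ i\le k\le j\}&j\ge i,\end{cases}
\]
and check, exactly as in the proof of Theorem \ref{PlusP}, that $P_{\frak{Q},i}^{g_{j-1}}/P_{\frak{Q},i}^{g_j}$ embeds into $S_{\frak{Q}}[\sigma_{g_j}]$ via $\bar{\phi}_{g_j,i}$. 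Because $S_{\frak{Q}}$ is a Dedekind domain, each such submodule is finitely generated projective as a right $S_{\frak{Q}}$-module, and the left $S$-action is automatically twisted by $\sigma_{g_j}$, so this is a valid filtration in $\mathcal{C}_{(S\otimes_RS_{\frak{Q}},\Omega,G)}$. The identification $P_{\frak{Q},i}/P_{\frak{Q},i}^{g_i}\cong S_{\frak{Q}}[\sigma_{g_i}]$ follows by tracking the class $[1\otimes s]$ under $\bar{\phi}_{g_i,i}$, just as before.

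For projectivity, given a sheaf exact sequence $0\to M_1\to M_2\to M_3\to 0$ in $\mathcal{C}_{(S\otimes_RS_{\frak{Q}},\Omega,G)}$ and a map $f:P_{\frak{Q},i}\to M_3$, I first note that $f(P_{\frak{Q},i})\subseteq M_3^{g_{i-1}}$ since $P_{\frak{Q},i}=P_{\frak{Q},i}^{g_{i-1}}$. Composing with the projection and invoking the corollary that $S\otimes_RS_{\frak{Q}}$ is projective in $\mathcal{C}_{(S\otimes_RS_{\frak{Q}},\Omega,G)}$ (equivalently, projectivity as an $S\otimes_RS_{\frak{Q}}$-module combined with sheaf-exactness of the restricted sequence on the $g_{i-1}$-layer), I lift $f$ to $f^{!!}:S\otimes_RS_{\frak{Q}}\to M_2^{g_{i-1}}$.

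The main obstacle, as in Theorem \ref{PlusP}, is showing $f^{!!}$ annihilates $(S\otimes_RS_{\frak{Q}})_i$ so that it descends to $P_{\frak{Q},i}$. I would handle this by the same reordering trick: let $\sigma_{x_1}=\sigma_{g_i},\dots,\sigma_{x_{n-i+1}}=\sigma_{g_n},\sigma_{x_{n-i+2}}=\sigma_{g_1},\dots,\sigma_{x_n}=\sigma_{g_{i-1}}$ and form the associated stratified category $\mathcal{C}_{(S\otimes_RS_{\frak{Q}},\Omega_2,G)}$. Under this reordering, $(S\otimes_RS_{\frak{Q}})_i$ coincides with $(S\otimes_RS_{\frak{Q}})^{x_{n-i+1}}$ (by Lemma \ref{basisP}-type considerations, since the filtration of $S\otimes_RS_{\frak{Q}}$ in Example \ref{stensors} did not depend on the order), and $M_2^{g_{i-1}}$ has an $\Omega_2$-filtration whose top $x_{n-i+1}$-piece vanishes. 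Lemma \ref{maps} applied to $f^{!!}$ in this reordered category forces $f^{!!}((S\otimes_RS_{\frak{Q}})^{x_{n-i+1}})\subseteq (M_2^{g_{i-1}})^{x_{n-i+1}}=0$, so $f^{!!}$ factors through $P_{\frak{Q},i}$ and provides the required lift. The only substantive thing to verify is that Example \ref{stensors}, Lemma \ref{maps}, and the localized projectivity corollary all carry over verbatim to $S\otimes_RS_{\frak{Q}}$, and each does because the relevant ring-theoretic facts (injectivity of $\phi$ after localization, flatness of the quotient structure, projectivity of $S\otimes_RS_{\frak{Q}}$ as a module over itself) hold unchanged.
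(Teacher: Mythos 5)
Your proposal is correct and is essentially identical to what the paper intends: the paper's own justification for Corollary \ref{PlusQ} is simply the remark that the proof of Theorem \ref{PlusP} carries over verbatim under the substitution $S\otimes_R S\mapsto S\otimes_R S_{\frak{Q}}$, which is precisely the argument you have spelled out, including the filtration via the descended maps $\phi_{g_k,i}$, the lift $f^{!!}$ through the projective module $S\otimes_R S_{\frak{Q}}$, and the reordering trick with $\Omega_2$ combined with Lemma \ref{maps} to show $f^{!!}$ kills $(S\otimes_R S_{\frak{Q}})_i$.
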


We will also apply this result with the Galois group $G$ replaced by the Galois group $E = E(\frak{Q}|\frak{P})$ and the 
 ring $R$ replaced by $S_E$. To avoid confusion later, we will make the appropriate definitions here.

\begin{definition}  \label{EQ}
Let $K, L, S, R, \frak{Q}, \frak{P}, E = E(\frak{Q}|\frak{P}) = \{\sigma_{e_1}, \sigma_{e_2}, \cdots , 
\sigma_{e_{|E|}}\},       \phi'$  and $\phi_{e_k}', 1\leq k \leq |E|$ be as defined in Section 2.  Letting   
$I_{E, e_i} = \{\sigma_{e_{i }}, \sigma_{e_{i + 1}}, \cdots , \sigma_{e_{|E|}}\}$
and  
$$(S\otimes_{S_E}S_{\frak{Q}})_{E, i} =  (S\otimes_{S_E}S_{\frak{Q}})_{I_{E, e_i}^c} = \{ x \in S\otimes_{S_E}S_{\frak{Q}}|  \phi_{e_k}'(x) = 0 \ \hbox{for} \  k \geq i\}   \ \  i = 1, 2, \dots , n,$$ 
we define  $P_{E, \frak{Q}, i}  $ as   $P_{E, \frak{Q}, i}  = (S\otimes_{S_E}S_{\frak{Q}})/(S\otimes_{S_E}S_{\frak{Q}})_{I_{E, e_i}^c}$. 
\end{definition}

It is clear that the above proof applies when we replace $R$ by $S_E$, $G$ by $E$ and  $P_i$   by $P_{E, \frak{Q}, i} $, $1 \leq i \leq |E|$.  In particular we have :

\begin{corollary} \label{PlusEQ} Let $L, K, S, R, \frak{Q}, \frak{P}, E = E(\frak{Q}|\frak{P}) = \{\sigma_{e_1}, \sigma_{e_2}, \cdots , 
\sigma_{e_{|E|}}\},       \phi'$  and $\phi_{e_k}'$ be as defined in Section 2.   Let $P_{E, \frak{Q}, i}$ be as in Definition \ref{EQ} for $ 1\leq i \leq |E|$. 
Let $\Omega'$ be the poset $\{e_1, e_2, \cdots , e_{|E|}\}$ with ordering $e_1 < e_2 < \cdots < e_{|E|}$, and let 
$$P_{E, \frak{Q}, i} = P_{E, \frak{Q}, i}^{e_0} \supseteq P_{E, \frak{Q}, i}^{e_1} \supseteq \cdots 
\supseteq P_{E, \frak{Q}, i}^{e_{|E|}} = \{0\}$$
be a filtration for $P_{E, \frak{Q}, i}$ in the category  $\mathcal{C}_{(S\otimes_{S_E}S_{\frak{Q}}, \Omega', E)}$. 
Then $P_{E, \frak{Q}, i}^{e_{j - 1}} /P_{E, \frak{Q}, i}^{e_j} = 0$ if $j < i$
and $P_{E, \frak{Q}, i}^{e_{i - 1}} /P_{E, \frak{Q}, i}^{e_i}  = P_{E, \frak{Q}, i}/P_{E, \frak{Q}, i}^{e_i}  \cong S_{\frak{Q}}[\sigma_{e_i}]$. Also $P_{E, \frak{Q}, i}$ is projective in the category $\mathcal{C}_{(S\otimes_{S_E}S_{\frak{Q}}, \Omega', E)}$. 
\end{corollary}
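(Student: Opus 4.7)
The plan is to transcribe the proof of Theorem \ref{PlusP} (or more precisely its localized analogue yielding Corollary \ref{PlusQ}) verbatim under a systematic renaming of the data. The key observation is that $L/L_E$ is a Galois extension with group $E$, and $S, S_E$ are its rings of integers, so the quintuple $(S_E, S, L_E, L, E)$ plays in this setting exactly the role that $(R, S, K, L, G)$ played for Theorem \ref{PlusP}, with the maps $\phi'$ and $\phi'_{e_k}$ of Section~2 substituting for $\phi$ and $\phi_{g_k}$. Under this translation Definition \ref{EQ} is the literal analogue of Definition \ref{Q}, and the whole machinery carries over.

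First I would construct the filtration explicitly. For $k \geq i$ the inclusion $(S\otimes_{S_E}S_{\frak{Q}})_{E,i} \subseteq \ker \phi'_{e_k}$ lets one lift $\phi'_{e_k}$ to a map $\phi'_{e_k,i}: P_{E,\frak{Q},i} \to S_{\frak{Q}}[\sigma_{e_k}]$. Set $P_{E,\frak{Q},i}^{e_j} = P_{E,\frak{Q},i}$ for $j < i$ and $P_{E,\frak{Q},i}^{e_j} = \{x \in P_{E,\frak{Q},i} \mid \phi'_{e_k,i}(x) = 0 \text{ for } i \leq k \leq j\}$ for $j \geq i$. The restriction of $\phi'_{e_j,i}$ to $P_{E,\frak{Q},i}^{e_{j-1}}$ has kernel $P_{E,\frak{Q},i}^{e_j}$, so the quotient embeds in $S_{\frak{Q}}[\sigma_{e_j}]$, hence is finitely generated projective as a right $S_{\frak{Q}}$-module (as $S_{\frak{Q}}$ is a Dedekind domain) with the correct left $S$-action. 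It is zero for $j < i$; for $j = i$ the image $\phi'_{e_i,i}([1 \otimes s]) = s$ shows surjectivity onto $S_{\frak{Q}}[\sigma_{e_i}]$, giving the asserted isomorphism. This places $P_{E,\frak{Q},i}$ in $\mathcal{C}_{(S\otimes_{S_E}S_{\frak{Q}},\Omega',E)}$.

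For projectivity I would run the same reordering argument used in Theorem~\ref{PlusP}. Given a sheaf exact sequence $0 \to M_1 \to M_2 \to M_3 \to 0$ in $\mathcal{C}_{(S\otimes_{S_E}S_{\frak{Q}},\Omega',E)}$ and $f: P_{E,\frak{Q},i} \to M_3$, the image of $f$ lies in $M_3^{e_{i-1}}$. Composing with the canonical surjection produces $f^!: S\otimes_{S_E}S_{\frak{Q}} \to M_3^{e_{i-1}}$, which lifts to $f^{!!}: S\otimes_{S_E}S_{\frak{Q}} \to M_2^{e_{i-1}}$ by ordinary projectivity of $S\otimes_{S_E}S_{\frak{Q}}$ over itself. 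To finish one must show $f^{!!}$ annihilates $(S\otimes_{S_E}S_{\frak{Q}})_{E,i}$; this is done by passing to the alternative ordering $\Omega''$ on $E$ that begins $\sigma_{e_i},\sigma_{e_{i+1}},\dots,\sigma_{e_{|E|}},\sigma_{e_1},\dots,\sigma_{e_{i-1}}$, identifying $(S\otimes_{S_E}S_{\frak{Q}})_{E,i}$ as the $(|E|-i+1)$-st piece of the $\Omega''$-filtration of $S\otimes_{S_E}S_{\frak{Q}}$ and noting that $M_2^{e_{i-1}}$, regarded in the $\Omega''$-category, has its $(|E|-i+1)$-st filtration piece equal to zero. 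Lemma~\ref{maps} then forces $f^{!!}$ to kill $(S\otimes_{S_E}S_{\frak{Q}})_{E,i}$, providing the required factorization $f^{!!!}: P_{E,\frak{Q},i} \to M_2^{e_{i-1}}$ lifting $f$.

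The main obstacle, modest as it is, is the bookkeeping associated with working simultaneously with the two orderings $\Omega'$ and $\Omega''$ on $E$ and verifying that the filtration of $M_2^{e_{i-1}}$ in the $\Omega''$-category truly vanishes at the required level; but this is an exact replica of the corresponding verification in the proof of Theorem~\ref{PlusP} and requires no new input. All other ingredients — the ambient projectivity of the tensor algebra, finite generation of kernels, and the Dedekind property used to promote torsion-free to projective — are available in this setting.
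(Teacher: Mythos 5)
Your proposal is correct and follows exactly the approach the paper takes: the paper's own ``proof'' of this corollary is simply the observation that the proof of Theorem \ref{PlusP} (and its localized form, Corollary \ref{PlusQ}) applies verbatim under the substitutions $R \mapsto S_E$, $G \mapsto E$, $\phi \mapsto \phi'$, $\phi_{g_k} \mapsto \phi'_{e_k}$, and $P_i \mapsto P_{E,\frak{Q},i}$. You have merely spelled out the content of that substitution (filtration construction via the lifted maps $\phi'_{e_k,i}$, the isomorphism $P_{E,\frak{Q},i}/P_{E,\frak{Q},i}^{e_i} \cong S_{\frak{Q}}[\sigma_{e_i}]$, and the reordering argument for projectivity), which is exactly what the paper invokes implicitly.
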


\section{ The Algebra $\mathcal{A}_{S\otimes_RS}(P)$} 
Let $L, K, S, R, \phi, \phi_{g_k}$ and $ G = \{\sigma_{g_1}, \sigma_{g_2}, \sigma_{g_3} \dots \sigma_{g_n} \}$  be as defined in Section 2. Let us assume that the indices of $G$ form a poset, $\Omega = \{{g_1}, {g_2}, \cdots , {g_n}\}$,   with ordering, ${g_1} < {g_2} < \cdots < {g_n}$.  Let $\mathcal{C}_{(S\otimes_RS, \Omega, G)}$ denote the resulting 
stratified category defined in Section 5. 
Let  $P_i, \ 1\leq i \leq n$, be  the modules defined in Definition \ref{P}, and let   $P = P_1 \oplus P_2 \oplus P_3 \oplus \dots \oplus P_n$. By Theorem \ref{PlusP}, and \cite{Dyer}[Theorem 1.19], $P$ is  a projective generator for the category $\mathcal{C}_{(S\otimes_RS, \Omega, G)}$. 
Recall that  $\mathcal{A}_{S\otimes_RS}(P) = End_{S\otimes_RS}(P)^{op}$.
By Theorem \ref{progen}, 
we have a functor 
\[ F = Hom_{S \otimes_RS}(P, -):M \mapsto Hom_{S \otimes_RS}(P, M),\]
from $\mathcal{C}_{(S\otimes_RS, \Omega, G)}$  to the category of $\mathcal{A}_{S\otimes_RS}(P)$-modules. This functor is fully faithful and 
\[0 \to M \to N \to Q \to 0\] 
is sheaf exact in $\mathcal{C}_{(S\otimes_RS, \Omega, G)}$  if and only if 
\[0 \to FM \to FN \to FQ \to 0\] 
is exact in the category of $\mathcal{A}_{S\otimes_RS}(P)$-modules.  
The definition of $\mathcal{A}_{S\otimes_RS}(P)$ depends on the choice of projective generator from  the category $\mathcal{C}_{(S\otimes_RS, \Omega, G)}$. Although a different choice of projective generator, $P'$, 
gives a different algebra, $\mathcal{A}_{S\otimes_RS}(P')$, we have that the categories $\mathcal{A}_{S\otimes_RS}(P)-$mod and $\mathcal{A}_{S\otimes_RS}(P')-$mod  are Morita equivalent, see Lemma \ref{Newprogen} and the discussion prior to it. 

In this section, we determine the structure of $\mathcal{A}_{S\otimes_RS}(P)$ explicitly as a quotient of a matrix ring.   First we calculate $Hom_{S\otimes_RS}(P_i, P_j)$, where $P_i$ and $P_j$ are defined 
in the previous section. 

\begin{lemma} \label{ringmaps} Let $T$ be a commutative ring with identity and $I$ and $J$ ideals of $T$. 
Let 
\[ [J : I]  = \{x \in T | \ xI \subseteq J \}.\]
Let $f $ be an element of $ Hom_T(T/I, T/J)$, then $f(1 + I)  \in  [J : I]/J $
and the map 
$F:  Hom_T(T/I, T/J)  \to   [J : I]/J $ given by 
$$F(f) = f(1 + I)$$
is an isomorphism of T- modules with inverse $\theta: [J : I]/J \mapsto Hom(T/I, T/J)$ given by:
$$[\theta(x + J)](y + I) = xy + J$$
\end{lemma}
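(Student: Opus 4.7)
The plan is to verify the two directions of the bijection separately. The only substantive point is to check that $F$ actually lands in the subquotient $[J:I]/J$ rather than just in $T/J$; after that, everything is formal.

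First I would show $F$ is well-defined. Given $f \in \mathrm{Hom}_T(T/I, T/J)$, choose any $x \in T$ with $f(1+I) = x+J$. For every $i \in I$, $T$-linearity together with $i+I = 0+I$ gives
\[
0 + J \;=\; f(0+I) \;=\; f\bigl(i \cdot (1+I)\bigr) \;=\; i\cdot(x+J) \;=\; ix + J,
\]
so $ix \in J$. Hence $x \in [J:I]$. Since $J \subseteq [J:I]$, the coset $x+J$ represents a well-defined element of $[J:I]/J$ independent of the representative chosen for $f(1+I)$. $T$-linearity of $F$ is immediate from the fact that evaluation at a fixed element is $T$-linear.

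Next I would verify that $\theta$ is a well-defined inverse. To see that $[\theta(x+J)](y+I) := xy+J$ is unambiguous, note that if $y - y' \in I$, then $x(y-y') \in xI \subseteq J$, so $xy \equiv xy' \pmod J$; and if $x - x' \in J$, then $(x-x')y \in J$, so $xy \equiv x'y \pmod J$. Thus $\theta(x+J)$ is a genuine map $T/I \to T/J$, and it is $T$-linear in $y$ by construction. The assignment $x+J \mapsto \theta(x+J)$ is visibly $T$-linear.

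Finally I would check the two compositions. For any $x+J \in [J:I]/J$,
\[
F(\theta(x+J)) \;=\; [\theta(x+J)](1+I) \;=\; x\cdot 1 + J \;=\; x+J,
\]
so $F\circ\theta = \mathrm{id}$. Conversely, given $f \in \mathrm{Hom}_T(T/I, T/J)$, write $f(1+I) = x+J$; then for every $y \in T$, using $T$-linearity of $f$,
\[
[\theta(F(f))](y+I) \;=\; xy+J \;=\; y\cdot(x+J) \;=\; y \cdot f(1+I) \;=\; f(y+I),
\]
so $\theta \circ F = \mathrm{id}$. The only nontrivial step in the whole argument is the opening observation that $x$ must lie in $[J:I]$; the rest is straightforward bookkeeping in quotient modules, so I do not anticipate any real obstacle.
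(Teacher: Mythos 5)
Your proof is correct and follows essentially the same route as the paper: show $f(1+I)\in[J:I]/J$ by evaluating on $i\cdot(1+I)=0$, define $\theta$ by $[\theta(x+J)](y+I)=xy+J$, check well-definedness, and verify the two compositions. You are somewhat more explicit about the well-definedness checks, but the argument is the same.
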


\begin{proof} 
The T-module action on $Hom_T(T/I, T/J) $ is given by $(tf)(t_1) = f(tt_1)$, for $f \in Hom_T(T/I, T/J) $
and $t, t_1 \in T$.  It is easy to see that $F$ is a  homomorphism of $T$ modules, by checking that 
$F(tf) = tF(f)$ and 
$F(f_1 + f_2) = F(f_1) + F(f_2)$ for all $t \in T$ and $f, f_1, f_2 \in Hom_T(T/I, T/J) . $ Also, since 
$if(1 + I) = f(i + I) = 0 + J$ for all $i \in I$,  we must have $f(1 + I) \in  [J : I]/J$. 

To show that $F$ is an isomorphism we show that the inverse of $F$ is given by the   homomorphism of $T$ modules, $\theta : [J : I]/J \mapsto Hom(T/I, T/J)$, defined as 
\begin{equation}
\label{eqn} [\theta(x + J)](y + I) = xy + J.
\end{equation}
That $\theta$ is well defined, follows from the definition of $[J : I]/J$. That it is a T-homomorphism 
is also obvious. 
  Given any  homomorphism $f: T/I \mapsto T/J$ and $y \in T$, 
  $$\theta(F(f))(y + I) = \theta(f(1 + I))(y + I) =  yf(1 + I) = f(y + I)$$
  On the other hand, for $x \in T$, 
$$F(\theta(x + J))  = [\theta(x + J)](1 + I) = x + J.$$
Hence $F$ and $\theta$ are inverses and are isomorphisms of $T$ modules.
\end{proof}

Recall that if $I$ is a subset of 
$G  = \{\sigma_{g_1}, \sigma_{g_2}, \dots , \sigma_{g_n}\}$, then $(S\otimes_RS)_I = \{ x \in S\otimes_RS | \phi_{g_i}(x) = 0 \ \hbox{for all} \ x \ \hbox{such that} \ \sigma_{g_i} \not\in I\}$. 

\begin{theorem} \label{ringmapsstensors} Let $I_{g_i} = \{\sigma_{g_i}, \sigma_{g_{i + 1}}, \dots ,\sigma_{g_n}\}$ and $I_{g_j} = \{\sigma_{g_j}, \sigma_{g_{j + 1}}, \dots , \sigma_{g_n}\}$   for  $1 \leq i,  \leq n$.
 We have 
\[P_i = S\otimes_RS/(S\otimes_RS)_{I_{g_i}^c},\ \ \  P_j = S\otimes_RS/(S\otimes_RS)_{I_{g_j}^c}.\]
Then 
\[[(S \otimes_RS)_{I_{g_j}^c} : (S \otimes_RS)_{I_{g_i}^c}]  = (S \otimes_RS)_{I_{g_i} \cup I_{g_j}^c}\]
and 
\[ F:  Hom_{S\otimes_RS}(P_i, P_j) \to  (S\otimes_RS)_{I_{g_i}\cup I_{g_j}^c}/(S\otimes_RS)_{I_{g_j}^c},  1 \leq i, j \leq n\]
where $F(f) = f(1 \otimes 1 +  (S\otimes_RS)_{I_{g_i}^c})$,  for $f \in Hom_{S\otimes_RS}(P_i, P_j) $,   is an isomorphism of $S\otimes_RS$ modules. 
\end{theorem}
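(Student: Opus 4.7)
The plan is to reduce the theorem to a direct application of Lemma \ref{ringmaps} with $T = S\otimes_R S$, $I = (S\otimes_R S)_{I_{g_i}^c}$ and $J = (S\otimes_R S)_{I_{g_j}^c}$, noting that $P_i = T/I$ and $P_j = T/J$. That lemma immediately gives a $T$-module isomorphism
\[
F \colon \operatorname{Hom}_{S\otimes_R S}(P_i, P_j) \longrightarrow [J:I]/J, \qquad F(f) = f(1\otimes 1 + I),
\]
so the entire content of the theorem is the identification of the ideal quotient $[J:I]$ with $(S\otimes_R S)_{I_{g_i}\cup I_{g_j}^c}$.

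To carry this out, I would work componentwise using the ring embedding $\phi\colon S\otimes_R S \hookrightarrow \bigoplus_{k=1}^n S_{g_k}$ from Lemma \ref{phi}, under which multiplication becomes componentwise. For the inclusion $(S\otimes_R S)_{I_{g_i}\cup I_{g_j}^c}\subseteq [J:I]$: given $x$ with $\phi_{g_k}(x)=0$ for $j\le k\le i-1$ and $y$ with $\phi_{g_k}(y)=0$ for $k\ge i$, check that for every $k\ge j$ (the indices relevant to membership in $J$), either $\phi_{g_k}(x)=0$ (when $k<i$) or $\phi_{g_k}(y)=0$ (when $k\ge i$), so $\phi_{g_k}(xy)=0$. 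This is a short two-case check.

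The reverse inclusion is where the real work lies. Suppose $x\in S\otimes_R S$ satisfies $xI\subseteq J$; I need to deduce $\phi_{g_k}(x)=0$ for each $k$ with $j\le k\le i-1$. For such $k$ the strategy is to exhibit an element $y_k\in I=(S\otimes_R S)_{I_{g_i}^c}$ with $\phi_{g_k}(y_k)\ne 0$ in the domain $S$; then $xy_k\in J$ forces $\phi_{g_k}(x)\phi_{g_k}(y_k)=0$, and integrality of $S$ gives $\phi_{g_k}(x)=0$. To construct $y_k$, I would use that $\phi$ becomes an isomorphism after passing to $L\otimes_K L$ (Lemma \ref{phi}): pick the idempotent $y'_k\in L\otimes_K L$ with $\phi_{g_k}(y'_k)=1$ and $\phi_{g_m}(y'_k)=0$ for $m\ne k$, then clear denominators by multiplying by a suitable $c\in R$ (or $1\otimes c$) so that $cy'_k\in S\otimes_R S$; since $c\in K$ is fixed by every $\sigma_{g_m}$, we get $\phi_{g_k}(cy'_k)=c\ne 0$ and $\phi_{g_m}(cy'_k)=0$ for all $m\ne k$, in particular for all $m\ge i$, so $cy'_k\in I$.

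The main obstacle is precisely this construction of $y_k$: the ideal $I=(S\otimes_R S)_{I_{g_i}^c}$ is defined by vanishing conditions and one must produce an element witnessing non-vanishing at a designated coordinate. The clearing-denominators trick above handles it cleanly in the general case (without assuming the monogenic hypothesis of Lemma \ref{basis}); once $y_k$ is in hand, the rest of the argument is bookkeeping over the index set $\{j,j+1,\ldots,i-1\}$, and the theorem follows by combining the two inclusions with Lemma \ref{ringmaps}.
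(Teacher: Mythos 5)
Your proof is correct and follows the same route as the paper: reduce to Lemma \ref{ringmaps} with $T = S\otimes_R S$, $I = (S\otimes_R S)_{I_{g_i}^c}$, $J = (S\otimes_R S)_{I_{g_j}^c}$, and then identify the ideal quotient $[J:I]$ with $(S\otimes_R S)_{I_{g_i}\cup I_{g_j}^c}$ by examining components under $\phi$. The paper states the key equivalence ``$xI\subseteq J$ iff $\phi_{g_k}(x)=0$ for $\sigma_{g_k}\in I_{g_i}^c\cap I_{g_j}$'' without proving the forward direction, so your denominator-clearing construction of a witness $y_k\in I$ with $\phi_{g_k}(y_k)\neq 0$ is a genuine and correct filling-in of a detail the paper leaves implicit, not a departure from its method.
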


\begin{proof}
We see that $[(S \otimes_RS)_{I_{g_j}^c} : (S \otimes_RS)_{I_{g_i}^c}] = \{x \in S \otimes_RS | x(S \otimes_RS)_{I_{g_i}^c} \subseteq (S \otimes_RS)_{I_{g_j}^c}\}$. 
Now looking at the image of $S \otimes_RS$ under the map $\phi$ from section 2, we see that $x(S \otimes_RS)_{I_{g_i}^c}
\subseteq (S \otimes_RS)_{I_{g_j}^c}$ if and only if
$\phi_{g_k}(x) = 0$ when $\sigma_{g_k}  \in I_{g_i}^c \cap I_{g_j}$, that is if and only if $x \in (S \otimes_RS)_{I_{g_i} \cup I_{g_j}^c}$.
 Hence by the above lemma, the result is true. 
\end{proof}

It is not difficult to apply the same proof to $S\otimes_RS_{\frak{Q}}$:

\begin{corollary} \label{homQ}  Let $I_{g_i} = \{\sigma_{g_i}, \sigma_{g_{i + 1}}, \dots ,\sigma_{g_n}\}$ and $I_{g_j} = \{\sigma_{g_j}, \sigma_{g_{j + 1}}, \dots , \sigma_{g_n}\}$   for  $1 \leq i,  \leq n$. We have 
\[P_{\frak{Q}, i}  = S\otimes_RS_{\frak{Q}}/(S\otimes_RS_{\frak{Q}})_{I_{g_i}^c},\ \ \  P_{\frak{Q}, j} = S\otimes_RS_{\frak{Q}}/(S\otimes_RS_{\frak{Q}})_{I_{g_j}^c}.\]
Then the map
\[ F : Hom_{S\otimes_RS_{\frak{Q}}}(P_{\frak{Q}, i}, P_{\frak{Q}, j}) \to  (S\otimes_RS_{\frak{Q}})_{I_{g_i}\cup I_{g_j}^c}/(S\otimes_RS_{\frak{Q}})_{I_{g_j}^c},  1 \leq i, j \leq n\]
where $F(f) = f(1 \otimes 1 +  (S\otimes_RS_{\frak{Q}})_{I_i^c})$,   for $f \in
 Hom_{S\otimes_RS_{\frak{Q}}}(P_{\frak{Q}, i}, P_{\frak{Q}, j})$   is an isomorphism of $S\otimes_RS_{\frak{Q}}$ modules. 
\end{corollary}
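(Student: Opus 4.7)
The plan is to replicate essentially verbatim the two-step argument used for Theorem \ref{ringmapsstensors}, with the ring $S\otimes_R S$ replaced throughout by its localization $S\otimes_R S_{\frak{Q}}$. Since Lemma \ref{ringmaps} is stated for an arbitrary commutative ring $T$ with ideals $I, J$, it applies at once to $T = S\otimes_R S_{\frak{Q}}$ with $I = (S\otimes_R S_{\frak{Q}})_{I_{g_i}^c}$ and $J = (S\otimes_R S_{\frak{Q}})_{I_{g_j}^c}$, producing a $T$-module isomorphism
$$F \colon \mathrm{Hom}_{T}(T/I,\, T/J) \;\xrightarrow{\;\cong\;}\; [J:I]/J, \qquad F(f) = f(1\otimes 1 + I).$$
So the only remaining content is the identification of the colon ideal $[J:I]$ with $(S\otimes_R S_{\frak{Q}})_{I_{g_i}\cup I_{g_j}^c}$.

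First I would recall, from Lemma \ref{phi}, that $\phi$ embeds $S\otimes_R S_{\frak{Q}}$ inside $(S_{\frak{Q}})_{g_1}\oplus\cdots\oplus (S_{\frak{Q}})_{g_n}$, and that by construction the subset $(S\otimes_R S_{\frak{Q}})_I$ is precisely the pre-image under $\phi$ of the summands indexed by $I$. Since $\phi$ is a ring homomorphism, products in $S\otimes_R S_{\frak{Q}}$ are computed componentwise after applying $\phi$, so for $x\in S\otimes_R S_{\frak{Q}}$ we have $x\cdot (S\otimes_R S_{\frak{Q}})_{I_{g_i}^c}\subseteq (S\otimes_R S_{\frak{Q}})_{I_{g_j}^c}$ if and only if $\phi_{g_k}(x)\cdot \phi_{g_k}((S\otimes_R S_{\frak{Q}})_{I_{g_i}^c}) = 0$ for every $k$ with $\sigma_{g_k}\in I_{g_j}$. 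The relevant image $\phi_{g_k}((S\otimes_R S_{\frak{Q}})_{I_{g_i}^c})$ is nonzero (and in fact is all of $S_{\frak{Q}}$ in the appropriate component) exactly when $\sigma_{g_k}\in I_{g_i}^c$; so the condition collapses to $\phi_{g_k}(x)=0$ for every $\sigma_{g_k}\in I_{g_i}^c\cap I_{g_j} = (I_{g_i}\cup I_{g_j}^c)^c$. This is exactly the defining condition for $x\in (S\otimes_R S_{\frak{Q}})_{I_{g_i}\cup I_{g_j}^c}$, so $[J:I] = (S\otimes_R S_{\frak{Q}})_{I_{g_i}\cup I_{g_j}^c}$.

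Combining the two steps gives the required isomorphism, and inspection of $F$ shows it has the stated form $f\mapsto f(1\otimes 1 + (S\otimes_R S_{\frak{Q}})_{I_{g_i}^c})$. There is no serious obstacle: the only mildly delicate point is the colon-ideal computation, but this is purely a diagram-chase through $\phi$ and is structurally identical to the non-localized case, since $\phi$ restricted to $S\otimes_R S_{\frak{Q}}$ enjoys the same component-zero characterization of the ideals $(S\otimes_R S_{\frak{Q}})_I$ as in the global setting. For the same reason it is essentially costless to observe in passing that all the maps involved respect the $S\otimes_R S_{\frak{Q}}$-module structure, so $F$ is an isomorphism of $S\otimes_R S_{\frak{Q}}$-modules and not merely of abelian groups.
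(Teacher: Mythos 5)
Your proposal is correct and follows exactly the route the paper intends: apply Lemma \ref{ringmaps} with $T = S\otimes_R S_{\frak{Q}}$ and then compute the colon ideal $[J:I]$ componentwise via $\phi$, which is precisely the proof of Theorem \ref{ringmapsstensors} transported to the localized ring (the paper itself states the corollary with only the remark that the same proof applies). The only caveat is your parenthetical claim that $\phi_{g_k}((S\otimes_RS_{\frak{Q}})_{I_{g_i}^c})$ equals all of $S_{\frak{Q}}$, which is more than is needed and not obviously true; what the argument actually requires, and what does hold, is that this image is a nonzero ideal of the integral domain $S_{\frak{Q}}$, which already forces $\phi_{g_k}(x)=0$.
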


We can now find a concrete realization of $\mathcal{A}_{S\otimes_RS}(P)$  as a quotient of a subring of $M_{n\times n}(S\otimes_RS)$, 
the ring of n by n matrices over $S\otimes_RS$. As above, we let $I_{g_l} = \{\sigma_{g_l},  \sigma_{g_{l + 1}}, \dots \sigma_{g_n}\}$ for $1 \leq l \leq n$.
We let $\mathcal{S}_{S\otimes_RS}(P)$ be the subring of $M_{n\times n}(S\otimes_RS)$ 
consisting of all matrices with $(i, j)$ entry in $(S\otimes_RS)_{I_{g_i}\cup I_{g_j}^c}$. We represent this ring 
as an array:
\[\mathcal{R}_{S\otimes_RS}(P) = 
\begin{pmatrix}
S\otimes_RS & S\otimes_RS & S\otimes_RS & \dots \dots & \\
(S\otimes_RS)_{I_{g_2}} & S\otimes_RS & S\otimes_RS & \dots \dots &\\
(S\otimes_RS)_{I_{g_3}} & (S\otimes_RS)_{I_{g_3} \cup I_{g_2}^c} & S\otimes_RS & \dots \dots &\\
(S\otimes_RS)_{I_{g_4}} & (S\otimes_RS)_{I_{g_4} \cup I_{g_2}^c} & (S\otimes_RS)_{I_{g_4}\cup I_{g_3}^c} & \dots \dots &\\
\vdots & \vdots & \vdots & \ & \\
\ &\ &\ &\ &
\end{pmatrix}
\]

To show that $\mathcal{R}_{S\otimes_RS}(P)$ is indeed a subring of $M_{n\times n}(S\otimes_RS)$, we note  that, identifying
$S \otimes_RS$ with its image under the map $\phi$,  
$(S\otimes_RS)_A(S\otimes_RS)_B$ has zero components except perhaps in $A \cap B$. Now since 
$(I_{g_i} \cup I_{g_k}^c) \cap (I_{g_k} \cup I_{g_j}^c) \subseteq (I_{g_i} \cup I_{g_j}^c)$ we see that
\[(S\otimes_RS)_{I_{g_i} \cup I_{g_k}^c}
(S\otimes_RS)_{I_{g_k} \cup I_{g_j}^c} \subseteq (S\otimes_RS)_{I_{g_i} \cup I_{g_j}^c}.\]

We define $\mathcal{I}_{S\otimes_RS}(P)$ to be the set of matrices in $M_{n \times n}(S\otimes_RS)$ with $(i, j)$ entry
in $(S\otimes_RS)_{I_{g_j}^c}$. I claim that $\mathcal{I}_{S\otimes_RS}(P)$  is an ideal of $\mathcal{R}_{S\otimes_RS}(P)$. Clearly $\mathcal{I}_{S\otimes_RS}(P) \subset \mathcal{R}_{S\otimes_RS}(P)$ and if $
A_1, A_2 \in  \mathcal{I}_{S\otimes_RS}(P)$, then $A_1 - A_2 \in  \mathcal{I}_{S\otimes_RS}(P)$. 
 Since each $(S\otimes_RS)_{I_{g_j}^c}$ is an ideal of $(S\otimes_RS)$ it is clear that
 $(S\otimes_RS)_{I_{g_i}\cup I_{g_k}^c}(S\otimes_RS)_{I_{g_j}^c} \subseteq (S\otimes_RS)_{I_{g_j}^c}$ and $\mathcal{I}_{S\otimes_RS}(P)$ is a left 
ideal of $\mathcal{R}_{S\otimes_RS}(P)$. 
Since $I_{g_k}^c \cap (I_{g_k}\cup I_{g_j}^c) \subseteq I_{g_j}^c$, we have $(S\otimes_RS)_{I_{g_k}^c}(S\otimes_RS)_{I_{g_k} \cup I_{g_j}^c} \subseteq (S\otimes_RS)_{I_{g_j}^c}$.
Hence $\mathcal{I}_{S\otimes_RS}(P)$ is an ideal of $\mathcal{R}_{S\otimes_RS}(P)$. 

Now since $\mathcal{R}_{S\otimes_RS}(P)$  is a ring and $\mathcal{I}_{S\otimes_RS}(P)$ is an ideal of $\mathcal{R}_{S\otimes_RS}(P)$ we can form a quotient  ring, in fact an $S\otimes_RS$
algebra, $\mathcal{R}_{S\otimes_RS}(P)/\mathcal{I}_{S\otimes_RS}(P)$. We claim that
the algebra $\mathcal{A}_{S\otimes_RS}(P)$ is isomorphic to this algebra. 

\begin{theorem} \label{endpopp} Let $\mathcal{R}_{S\otimes_RS}(P)$, $\mathcal{I}_{S\otimes_RS}(P)$ and $\mathcal{A}_{S\otimes_RS}(P)$ be as above, then 
\[End_{S\otimes_RS}(P)^{op} = \mathcal{A}_{S\otimes_RS}(P)\cong \mathcal{R}_{S\otimes_RS}(P)/\mathcal{I}_{S\otimes_RS}(P).\]
We have under this isomorphism, that $\mathcal{A}_{S\otimes_RS}(P)$ is an $S\otimes_RS$ algebra, 
Furthermore the natural map $S\otimes_RS \to Cen(End_{S\otimes_RS}(P))$ given by 
$x \to (p \to xp)$, for $x \in S\otimes_RS$ and $p \in P$,  is an isomorphism, 
and  $S\otimes_RS \cong  Cen(\mathcal{A}_{S\otimes_RS}(P))$.
\end{theorem}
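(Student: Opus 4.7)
The approach is to unravel both sides using the decomposition $P = P_1 \oplus \cdots \oplus P_n$ and the Hom-computation of Theorem \ref{ringmapsstensors}. Letting $\iota_i\colon P_i \to P$ and $\pi_j\colon P \to P_j$ be the canonical injection and projection, I define
\[
\Psi\colon \mathcal{A}_{S\otimes_RS}(P) \to \mathcal{R}_{S\otimes_RS}(P)/\mathcal{I}_{S\otimes_RS}(P), \qquad \Psi(f)_{ij} = F(\pi_j \circ f \circ \iota_i),
\]
where $F$ is the isomorphism from Theorem \ref{ringmapsstensors}. Entry-wise $F$ is an isomorphism of $S\otimes_RS$-modules onto $(S\otimes_RS)_{I_{g_i} \cup I_{g_j}^c}/(S\otimes_RS)_{I_{g_j}^c}$, so $\Psi$ is at least an additive bijection onto $\mathcal{R}/\mathcal{I}$.

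To see that $\Psi$ respects multiplication, I use the formula for $F^{-1}$ from Lemma \ref{ringmaps}: the element of $Hom_{S\otimes_RS}(P_i, P_k)$ corresponding to $x_{ik}$ sends $1\otimes 1 + (S\otimes_RS)_{I_{g_i}^c}$ to $x_{ik} + (S\otimes_RS)_{I_{g_k}^c}$. Composing $\alpha\colon P_i \to P_k$ (with $F(\alpha) = x_{ik}$) and $\beta\colon P_k \to P_j$ (with $F(\beta) = y_{kj}$) therefore yields $F(\beta \circ \alpha) = y_{kj} x_{ik}$. Inserting the identity $\sum_k \iota_k \pi_k$ in $f \circ g$ then produces
\[
\Psi(f \circ g)_{ij} = \sum_k \Psi(f)_{kj}\,\Psi(g)_{ik},
\]
and commutativity of $S\otimes_RS$ lets me reorder each summand as $\Psi(g)_{ik}\Psi(f)_{kj}$, giving the ordinary matrix product $(\Psi(g)\Psi(f))_{ij}$. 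Hence $\Psi$ is an anti-homomorphism of $End_{S\otimes_RS}(P)$, equivalently a ring isomorphism $\mathcal{A}_{S\otimes_RS}(P) = End_{S\otimes_RS}(P)^{op} \to \mathcal{R}/\mathcal{I}$.

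For the algebra structure and the map into the center, consider $\eta\colon S\otimes_RS \to End_{S\otimes_RS}(P)$ sending $x$ to multiplication by $x$. Every endomorphism here is by definition $S\otimes_RS$-linear, so automatically commutes with each $\eta(x)$; thus $\eta$ factors through $Cen(End_{S\otimes_RS}(P))$, making $\mathcal{A}_{S\otimes_RS}(P)$ an $S\otimes_RS$-algebra. Injectivity of $\eta$ is immediate: if $xp = 0$ for all $p \in P$, apply this to $1\otimes 1 + (S\otimes_RS)_{I_{g_1}^c} \in P_1$; since $I_{g_1}^c = \emptyset$ and $\phi$ is injective by Lemma \ref{phi}, the ideal $(S\otimes_RS)_{I_{g_1}^c}$ is zero, forcing $x = 0$.

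Surjectivity of $\eta$ onto the center is the heart of the argument and the main obstacle. Transporting through $\Psi$, a central element becomes a coset $A + \mathcal{I}$ commuting with every matrix in $\mathcal{R}/\mathcal{I}$. First I would commute $A$ with the diagonal idempotents $e_{ll}$ (whose $(l,l)$-entry is $1 \in S\otimes_RS$), which forces every off-diagonal entry $A_{kl}$, $k \neq l$, into $(S\otimes_RS)_{I_{g_l}^c}$, so $A$ is diagonal modulo $\mathcal{I}$. Next I would observe that the elementary matrix $E_{ji}$ with $(j,i)$-entry $1$ belongs to $\mathcal{R}$ exactly when $1 \in (S\otimes_RS)_{I_{g_j} \cup I_{g_i}^c}$, i.e.\ when $I_{g_j} \cup I_{g_i}^c = G$, which holds precisely for $i \geq j$. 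Commuting $A$ with each such $E_{ji}$ forces $A_{jj} - A_{ii} \in (S\otimes_RS)_{I_{g_i}^c}$ for all $i \geq j$; specializing to $j = 1$ yields $A_{11} - A_{ii} \in (S\otimes_RS)_{I_{g_i}^c}$ for every $i$, so $A + \mathcal{I} = \Psi(\eta(A_{11}))$. This surjectivity, together with the injectivity already noted, proves $S\otimes_RS \cong Cen(\mathcal{A}_{S\otimes_RS}(P))$ and completes the theorem.
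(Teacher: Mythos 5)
Your proof is correct. The first half — constructing $\Psi$ directly by $\Psi(f)_{ij} = F(\pi_j f \iota_i)$ and verifying the anti-multiplicative identity $\Psi(f\circ g) = \Psi(g)\Psi(f)$ via $F(\beta\circ\alpha) = F(\beta)F(\alpha)$ — is essentially the paper's argument restated without the intermediate detour through the generalized matrix ring $\mathcal{M}_{S\otimes_RS}(P)$ and the transpose bookkeeping that reconciles $\mathcal{R}^{op}$ with $\mathcal{M}$. You fold that step in by building the anti-homomorphism directly, which is cleaner.

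Where you genuinely diverge from the paper is the computation of the center. The paper isolates this into a standalone observation (its Lemma~\ref{Lastone}) about $\mathrm{Cen}(\mathrm{End}_T(T\oplus M))$ for an arbitrary commutative ring $T$ and $T$-module $M$, exploiting only the $2\times 2$ block structure $T\oplus(P_2\oplus\cdots\oplus P_n)$ and commutation with the matrices $e_{1,1}$ and $E_m = \bigl(\begin{smallmatrix} 1_T & 0 \\ f_m & 1_M \end{smallmatrix}\bigr)$, $m\in M$. You instead work at full $n\times n$ granularity directly inside $\mathcal{R}/\mathcal{I}$: commutation with all diagonal idempotents $e_{ll}$ forces $A$ diagonal mod $\mathcal{I}$, and commutation with the elementary matrices $E_{ji}$ (which you correctly verify live in $\mathcal{R}$ precisely when $i\geq j$, i.e.\ when $I_{g_j}\cup I_{g_i}^c = G$) forces $A_{jj}-A_{ii}\in(S\otimes_RS)_{I_{g_i}^c}$; specializing to $j=1$ then pins $A+\mathcal{I}$ to $\Psi(\eta(A_{11}))$. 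Both arguments pivot on $P_1 = S\otimes_RS$ — you via $I_{g_1}^c=\emptyset$ making $E_{1i}\in\mathcal{R}$ for every $i$, the paper via taking $T=P_1$ in the lemma. The paper's abstraction is more modular and would transfer to other stratified categories whose projective generator has a free rank-one summand; your version buys a self-contained, entirely explicit computation that never leaves the concrete matrix presentation and exhibits the image of $\eta$ directly.
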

\begin{proof} We have $P = P_1 \oplus P_2 \oplus \cdots \oplus P_n$. We define the generalized matrix ring, see Hahn and O'Meara  \cite{HandOM}[Section 4.2A] ,   $\mathcal{M}_{S\otimes_RS}(P)$, as the set of $n\times n$ matrices with $i, j$ entry in $Hom_{S\otimes_RS}(P_j, P_i)$. Let $F = (f_{i, j})$ and 
$G = (g_{i, j})$ be elements of $\mathcal{M}_{S\otimes_RS}(P)$. 
$\mathcal{M}_{S\otimes_RS}(P)$ has the structure of an associative $S\otimes_RS$ algebra with identity. Multiplication is given by the matrix multiplication 
$$F\cdot G = \big(\sum f_{i,k}g_{k, j}\big)$$
and addition and scalar multiplication by,
$$F + G = (f_{i, j} + g_{i, j}), \ \ \ \ \ \mbox{and} \ \ \ \ \ xF = (xf_{i, j}),$$
for $x \in S\otimes_RS$. The identity of the ring is given by $I = (e_{i, j})$, where $e_{i, j}$ is the 
zero map if $i \not= j$ and $i_{i, i}$ is the identity map from $P_i$ to $P_i$.  

We have an $S\otimes_RS$ algebra homomorphism, see  \cite{HandOM}[Section 4.2A] ,  \\
$\Delta: End_{S\otimes_RS}(P) \to \mathcal{M}_{S\otimes_RS}(P)$ given by $f \to (f_{i, j})$ where $f_{i, j}$ is defined to be the composite map
\[\xymatrix{
P_j\ar[r]&P\ar[r]^{f}&P\ar[r]&P_i}\]
If $P = p_1 + p_2 + \cdots + p_n \in P$, we can write $p$ as a matrix column 
$$p = \begin{pmatrix} p_1\\ p_2 \\ \vdots  \\ p_n \end{pmatrix}.$$
For  $f \in End_{S\otimes_RS}(P)$, we have 
$$f(p) = \begin{pmatrix}f_{i, j}\end{pmatrix}\begin{pmatrix}p_1\\ p_2 \\ \vdots  \\ p_n \end{pmatrix}
= \begin{pmatrix}\sum f_{i, k}p_k\\ \vdots  \\ \sum f_{n,k}p_k \end{pmatrix}.$$
From this we see that the map $\Delta$ is one to one and onto. It is not difficult to see that $\Delta$ is an isomorphism of $S\otimes_RS$ algebras. 

From Theorem \ref{ringmapsstensors}, we have 
\[[(S \otimes_RS)_{I_{g_j}^c} : (S \otimes_RS)_{I_{g_i}^c}]  = (S \otimes_RS)_{I_{g_i} \cup I_{g_j}^c}.\]

By lemma \ref{ringmaps}, we have an $S\otimes_RS$ module homomorphism 
$$\theta_{i,j}: (S\otimes_RS)_{I_{g_i} \cup I_{g_j}^c} \to Hom(P_i, P_j)$$
given by 
$$\theta_{i, j}(x)(y + (S\otimes_RS)_{I_{g_i}^c}) = xy +  (S\otimes_RS)_{I_{g_j}^c},$$
with kernel
$$\ker \theta_{i, j} = (S\otimes_RS)_{I_{g_j}^c}.$$
Let  $ \mathcal{R}_{S\otimes_RS}(P)^{op}$ denote the opposite ring of 
$\mathcal{R}_{S\otimes_RS}(P)$. We can view  $ \mathcal{R}_{S\otimes_RS}(P)^{op}$
as the ring of transposes,  $ \mathcal{R}_{S\otimes_RS}(P)^{op} = \{S^t | S \in
 \mathcal{R}_{S\otimes_RS}(P)\}$ endowed with the usual matrix multiplication from $M_n(S\otimes_RS)$. 
we can construct a homomorphism 
$$\hat{\theta}: \mathcal{R}_{S\otimes_RS}(P)^{op} \to \mathcal{M}_{S\otimes_RS}(P),$$
where 
$\hat{\theta}((s_{i, j})^t) = (\theta_{i, j}(s_{i, j}))^t$, for $(s_{i, j}) \in  \mathcal{R}_{S\otimes_RS}(P)$. 
It is not difficult to verify that $\hat{\theta}$ is a homomorphism of $S\otimes_RS$ algebras with Kernel 
$$\ker\hat{\theta} = \{H^t | H \in  \mathcal{I}_{S\otimes_RS}(P)\}.$$
Hence we see that we have an $S\otimes_RS$  algebra isomorphism 
\[End_{S\otimes_RS}(P)^{op} \cong  \mathcal{A}_{S\otimes_RS}(P) \cong 
\frac{ \mathcal{R}_{S\otimes_RS}(P)}{ \mathcal{I}_{S\otimes_RS}(P)}.\]

That $S\otimes_RS \cong  Cen(\mathcal{A}_{S\otimes_RS}(P))$ follows from the lemma below; Lemma \ref{Lastone}, since 
$P_1 = S\otimes_RS$. 
\end{proof}

\begin{lemma} \label{Lastone} Let $T$ be a commutative ring and let $M$ be a left module over $T$. We have 
$$End_T(T\oplus M) \cong \mathcal{M}_T(T\oplus M),$$
where $\mathcal{M}_T(T\oplus M)$ is the generalized matrix ring defined in the proof of Theorem \ref{endpopp}. 
Moreover the map $T \to Cen(End_T(T\oplus M))$ given by $t \to l_t$, where $l_t (y)  =  ty$,  is an isomorphism. 
\end{lemma}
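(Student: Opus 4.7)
The plan is to treat the two assertions separately, since the first is the routine endomorphism/matrix-ring identification already used in the proof of Theorem \ref{endpopp}, and the real content is in the characterization of the center.

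First I would construct the isomorphism $\Delta\colon \mathrm{End}_T(T\oplus M)\to \mathcal{M}_T(T\oplus M)$ exactly as in the proof of Theorem \ref{endpopp}: send $f$ to $(f_{ij})$ where $f_{ij}$ is the composite of the inclusion of the $j$-th summand into $T\oplus M$, then $f$, then the projection to the $i$-th summand. Compatibility with addition, scalar multiplication, and composition is verified by the usual column-vector computation $f\!\left(\begin{smallmatrix}y\\ m\end{smallmatrix}\right)=\left(\begin{smallmatrix}f_{11}(y)+f_{12}(m)\\ f_{21}(y)+f_{22}(m)\end{smallmatrix}\right)$, so $\Delta$ is an isomorphism of $T$-algebras. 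This is routine and parallels the argument already given.

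Next I would observe that the map $t\mapsto l_t$ is a well-defined ring homomorphism $T\to \mathrm{End}_T(T\oplus M)$ landing in the center, because every $T$-linear endomorphism commutes with scalar multiplication by $t$ (here one uses that $T$ is commutative). Injectivity is immediate by evaluating at $(1,0)\in T\oplus M$: if $l_t=0$ then $t\cdot 1=0$, so $t=0$.

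The main step is surjectivity onto the center. Take $f$ in the center and view $\Delta(f)=(f_{ij})$ as a $2\times 2$ matrix. Let $e_1,e_2\in \mathrm{End}_T(T\oplus M)$ be the canonical orthogonal idempotents associated to the two summands, corresponding under $\Delta$ to the matrix idempotents $\bigl(\begin{smallmatrix}1&0\\0&0\end{smallmatrix}\bigr)$ and $\bigl(\begin{smallmatrix}0&0\\0&1\end{smallmatrix}\bigr)$. Computing $\Delta(f)e_1=e_1\Delta(f)$ (and similarly with $e_2$) forces the off-diagonal entries $f_{12}$ and $f_{21}$ to vanish, so $\Delta(f)$ is block diagonal. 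Set $t:=f_{11}(1)\in T$; then $f_{11}=l_t$ automatically on the $T$-summand. To handle $f_{22}$, for each $m\in M$ introduce the endomorphism $G_m\in \mathrm{End}_T(T\oplus M)$ corresponding to the matrix $\bigl(\begin{smallmatrix}0&0\\ g_m&0\end{smallmatrix}\bigr)$, where $g_m(y)=ym$. Then $\Delta(f)G_m=G_m\Delta(f)$ yields $f_{22}\circ g_m=g_m\circ f_{11}$; evaluating at $1\in T$ gives $f_{22}(m)=g_m(t)=tm$, so $f_{22}=l_t$ on $M$ as well. Therefore $f=l_t$, proving surjectivity.

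The only point requiring any care is identifying enough test endomorphisms to pin down the off-diagonal and $M$-components; the key observation is simply that $e_1,e_2$ force block-diagonality, and the ``translation'' maps $G_m$ transfer the scalar from the $T$-block to the $M$-block. No serious obstacle is expected beyond assembling these ingredients cleanly.
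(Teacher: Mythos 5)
Your proof is correct and follows essentially the same route as the paper: identify $\mathrm{End}_T(T\oplus M)$ with the generalized matrix ring, use the two diagonal idempotents to force a central element to be block-diagonal, and then use the test maps $T\to M$, $1\mapsto m$, to transfer the scalar $t=f_{11}(1)$ from the $T$-block to the $M$-block. The only cosmetic difference is that you commute against $G_m=\bigl(\begin{smallmatrix}0&0\\ g_m&0\end{smallmatrix}\bigr)$ while the paper uses $E_m=\bigl(\begin{smallmatrix}1&0\\ g_m&1\end{smallmatrix}\bigr)$; both yield the identity $f_{22}\circ g_m=g_m\circ f_{11}$.
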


\begin{proof} That  $End_T(T\oplus M) \cong \mathcal{M}_T(T\oplus M),$ follows as in the proof of Theorem \ref{endpopp}. 
In this case each element of  $\mathcal{M}_T(T\oplus M)$ has the form 
$$\left(\begin{array}{cc}f_{1, 1} & f_{1, 2} \\
f_{2, 1} & f_{2, 2}\end{array}\right),$$
where $  \ f_{1, 1} : T \to T, \ \ f_{1, 2}: M \to T, \ \ f_{2, 1}: T \to M, \ \ f_{2, 2}: M \to M. $ and 
as in the proof of Theorem  \ref{endpopp}, each $f\in End_T(T\oplus M) \cong \mathcal{M}_T(T\oplus M)$ maps to such a matrix, 
$(f_{i, j})$. 
It is easy to see that the map $ End_T(T\oplus M) \to \mathcal{M}_T(T\oplus M)$ described in the proof of Theorem  \ref{endpopp}
carries $l_t$ to the matrix
$$\left(\begin{array}{cc} t 1_T& 0_{M T} \\
0_{T M}& t1_M \end{array}\right),$$
for $t \in T$, where where $1_T:$ and $1_M$ are the identity maps on $T$ and $M$ respectively and $0_{M T} : M \to T$ and $0_{T M} : T \to M$ are the 
zero maps. 

Let $A =  \left(\begin{array}{cc}a_{1, 1} & a_{1, 2} \\
a_{2, 1} & a_{2, 2}\end{array}\right)$ be in the center of $\mathcal{M}_T(T\oplus M)$.  Then, for each $m \in M$,  $A$ commutes with 
$$e_{1 1} =  \left(\begin{array}{cc}1_T & 0_{M T} \\
0_{TM}  &  0_M \end{array}\right),$$
where $1_T, \ 0_{T M}$ and $0_{M T}$ are as above and  $0_T : T \to T$ is the zero map.  
The matrix  identity $Ae_{1, 1}  = e_{1, 1}A$ gives that $a_{2, 1} = 0_{TM}$ and $a_{1, 2} = 0_{MT}$. 
 Hence 
 $$A =  \left(\begin{array}{cc} a_{1 1} & 0_{M T}  \\
0_{T M} & a_{2, 2}  \end{array}\right),$$
For each $m \in M$, 
let $E_{m} =   \left(\begin{array}{cc}1_T & 0_{M T} \\
f_m  &  1_M \end{array}\right),$ where $f_m : T \to M$ is the T homomorphism with $f_m(1) = m$. 
The matrix equation $AE_m = E_mA$ gives that 
$a_{2, 2}f_m = f_ma_{1, 1}$ for all $m \in M$.  Hence $a_{2, 2}f_m(1) = a_{2, 2}(m) = f_m(a_{1, 1}(1))  = a_{1, 1}(1) m $. Letting  $t = a_{1, 1}(1)$, 
we have 
$$A =  \left(\begin{array}{cc}t 1_T & 0_{M T}  \\
0_{TM}  & t 1_M \end{array}\right),$$
proving the Lemma.
\end{proof}

The arguments from the proof of Theorem  \ref{endpopp}  also apply to the $S\otimes_RS_{\frak{Q}}$ module $P_{\frak{Q}} = \oplus_{i = 1}^nP_{\frak{Q}, i}$ to give
\begin{corollary} \label{AlgEQ} 
\[End_{S\otimes_{R}S_{\frak{Q}}}(P_{ \frak{Q}})^{op} = \mathcal{A}_{S\otimes_{R}S_{\frak{Q}}}(P_{ \frak{Q}})\cong \mathcal{R}_{S\otimes_{R}S_{\frak{Q}}}(P_{ \frak{Q}})/\mathcal{I}_{S\otimes_{R}S_{\frak{Q}}}(P_{ \frak{Q}}).\]
\end{corollary}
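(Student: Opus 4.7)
The plan is to mimic the proof of Theorem \ref{endpopp} verbatim, replacing $S\otimes_RS$ by $S\otimes_RS_{\frak{Q}}$ and $P_i$ by $P_{\frak{Q},i}$ throughout. The point is that every ingredient used in the non-localized argument has already been established in a localized form earlier in the paper, so the argument transports with no genuinely new idea needed.

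First I would define $\mathcal{R}_{S\otimes_RS_{\frak{Q}}}(P_{\frak{Q}})$ as the subring of $M_{n\times n}(S\otimes_RS_{\frak{Q}})$ whose $(i,j)$-entry lies in $(S\otimes_RS_{\frak{Q}})_{I_{g_i}\cup I_{g_j}^c}$, and $\mathcal{I}_{S\otimes_RS_{\frak{Q}}}(P_{\frak{Q}})$ as the subset whose $(i,j)$-entry lies in $(S\otimes_RS_{\frak{Q}})_{I_{g_j}^c}$. The verifications that $\mathcal{R}_{S\otimes_RS_{\frak{Q}}}(P_{\frak{Q}})$ is closed under multiplication and that $\mathcal{I}_{S\otimes_RS_{\frak{Q}}}(P_{\frak{Q}})$ is a two-sided ideal are identical to those for $\mathcal{R}_{S\otimes_RS}(P)$ and $\mathcal{I}_{S\otimes_RS}(P)$, using only the set-theoretic identities $(I_{g_i}\cup I_{g_k}^c)\cap(I_{g_k}\cup I_{g_j}^c)\subseteq I_{g_i}\cup I_{g_j}^c$, $I_{g_k}^c\cap(I_{g_k}\cup I_{g_j}^c)\subseteq I_{g_j}^c$, etc., combined with the embedding of $S\otimes_RS_{\frak{Q}}$ into $\bigoplus_i S_{\frak{Q},g_i}$ via $\phi$ (Lemma \ref{phi}).

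Next I would form the generalized matrix ring $\mathcal{M}_{S\otimes_RS_{\frak{Q}}}(P_{\frak{Q}})$ with $(i,j)$-entries in $Hom_{S\otimes_RS_{\frak{Q}}}(P_{\frak{Q},j},P_{\frak{Q},i})$, and construct the isomorphism $\Delta:End_{S\otimes_RS_{\frak{Q}}}(P_{\frak{Q}})\to\mathcal{M}_{S\otimes_RS_{\frak{Q}}}(P_{\frak{Q}})$ sending $f$ to the matrix of its component maps, exactly as in the proof of Theorem \ref{endpopp}. Then, using Corollary \ref{homQ} in place of Theorem \ref{ringmapsstensors}, each $Hom_{S\otimes_RS_{\frak{Q}}}(P_{\frak{Q},i},P_{\frak{Q},j})$ is identified with $(S\otimes_RS_{\frak{Q}})_{I_{g_i}\cup I_{g_j}^c}/(S\otimes_RS_{\frak{Q}})_{I_{g_j}^c}$. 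Assembling these entrywise gives the homomorphism
\[
\hat{\theta}:\mathcal{R}_{S\otimes_RS_{\frak{Q}}}(P_{\frak{Q}})^{op}\longrightarrow\mathcal{M}_{S\otimes_RS_{\frak{Q}}}(P_{\frak{Q}}),
\]
with kernel the transposes of $\mathcal{I}_{S\otimes_RS_{\frak{Q}}}(P_{\frak{Q}})$. Composing $\Delta^{-1}$ with the induced isomorphism from the quotient yields the stated isomorphism.

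There is really no serious obstacle, since the only two facts used beyond formal ring-theoretic manipulation are (i) the hom-computation in Corollary \ref{homQ} and (ii) the decomposition $P_{\frak{Q}}=\bigoplus_i P_{\frak{Q},i}$ into the projectives produced by Corollary \ref{PlusQ}. If I wanted a one-line argument instead, the cleanest route would be to observe that all the constructions (the $P_i$'s, the ideals $(S\otimes_RS)_{I_{g_i}\cup I_{g_j}^c}$, the matrix ring, and the ideal of matrices) commute with localization at $\frak{Q}$ (as a right $S$-module), so one can simply tensor the isomorphism of Theorem \ref{endpopp} with $S_{\frak{Q}}$ over $S$; the only thing to check is that $Hom_{S\otimes_RS}(P_i,P_j)\otimes_S S_{\frak{Q}}=Hom_{S\otimes_RS_{\frak{Q}}}(P_{\frak{Q},i},P_{\frak{Q},j})$, which follows from finite generation of $P_i$ and the localization isomorphism already quoted at the end of Section 4.
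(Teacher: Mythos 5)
Your proposal is correct and follows exactly the route the paper indicates: the paper's own justification is the one-sentence remark that the arguments in the proof of Theorem \ref{endpopp} also apply to $P_{\frak{Q}} = \oplus_{i=1}^n P_{\frak{Q},i}$, which is precisely the verbatim substitution you carry out using Corollary \ref{homQ} in place of Theorem \ref{ringmapsstensors}. Your alternative localization argument (tensoring the global isomorphism with $S_{\frak{Q}}$) is also sound given the finite generation noted at the end of Section 4, though the paper does not mention it.
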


Let $P_{E, \frak{Q}, i}, 1 \leq i \leq |E|$ be as in definition \ref{EQ} and let $P_{E, \frak{Q}} = \oplus_{i = 1}^{|E|} P_{E, \frak{Q}, i}$. The arguments 
given above apply with the appropriate substitutions to give the following:
\begin{corollary} \label{AlgEQ} 
\[End_{S\otimes_{S_E}S_{\frak{Q}}}(P_{E, \frak{Q}})^{op} = \mathcal{A}_{S\otimes_{S_E}S_{\frak{Q}}}(P_{E, \frak{Q}})\cong \mathcal{R}_{S\otimes_{S_E}S_{\frak{Q}}}(P_{E, \frak{Q}})/\mathcal{I}_{S\otimes_{S_E}S_{\frak{Q}}}(P_{E, \frak{Q}}).\]
\end{corollary}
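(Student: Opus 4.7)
The plan is to mimic the proof of Theorem \ref{endpopp} verbatim, with the substitutions $R \rightsquigarrow S_E$, $G \rightsquigarrow E = \{\sigma_{e_1},\dots,\sigma_{e_{|E|}}\}$, $\Omega \rightsquigarrow \Omega' = \{e_1 < \cdots < e_{|E|}\}$, and $P \rightsquigarrow P_{E,\frak{Q}}$. Since $L/L_E$ is Galois with group $E$, all of the structural ingredients used for the proof over $R$ carry over to the situation over $S_E$: the map $\phi'$ plays the role of $\phi$, the components $\phi'_{e_k}$ play the role of $\phi_{g_k}$, the ideals $(S\otimes_{S_E}S_\frak{Q})_{I_{E,e_i}^c}$ play the role of $(S\otimes_RS)_{I_{g_i}^c}$, and Corollary \ref{PlusEQ} furnishes the analogue of Theorem \ref{PlusP}, guaranteeing that $P_{E,\frak{Q}} = \oplus_{i=1}^{|E|}P_{E,\frak{Q},i}$ is a projective generator for $\mathcal{C}_{(S\otimes_{S_E}S_\frak{Q},\Omega',E)}$.

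First I would verify the analogue of Theorem \ref{ringmapsstensors}: applying Lemma \ref{ringmaps} with $T = S\otimes_{S_E}S_\frak{Q}$, $I = (S\otimes_{S_E}S_\frak{Q})_{I_{E,e_i}^c}$, and $J = (S\otimes_{S_E}S_\frak{Q})_{I_{E,e_j}^c}$, and computing the ideal quotient by examining images under $\phi'$ exactly as in the original argument, one obtains
$$[(S\otimes_{S_E}S_\frak{Q})_{I_{E,e_j}^c} : (S\otimes_{S_E}S_\frak{Q})_{I_{E,e_i}^c}] = (S\otimes_{S_E}S_\frak{Q})_{I_{E,e_i}\cup I_{E,e_j}^c},$$
and hence an isomorphism of $S\otimes_{S_E}S_\frak{Q}$-modules
$$F: \mathrm{Hom}_{S\otimes_{S_E}S_\frak{Q}}(P_{E,\frak{Q},i},P_{E,\frak{Q},j}) \xrightarrow{\ \sim\ } (S\otimes_{S_E}S_\frak{Q})_{I_{E,e_i}\cup I_{E,e_j}^c}/(S\otimes_{S_E}S_\frak{Q})_{I_{E,e_j}^c}.$$

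Next, I would define the subring $\mathcal{R}_{S\otimes_{S_E}S_\frak{Q}}(P_{E,\frak{Q}})$ of $M_{|E|\times|E|}(S\otimes_{S_E}S_\frak{Q})$ consisting of matrices with $(i,j)$ entry in $(S\otimes_{S_E}S_\frak{Q})_{I_{E,e_i}\cup I_{E,e_j}^c}$, and the ideal $\mathcal{I}_{S\otimes_{S_E}S_\frak{Q}}(P_{E,\frak{Q}})$ of matrices with $(i,j)$ entry in $(S\otimes_{S_E}S_\frak{Q})_{I_{E,e_j}^c}$. The containments $(I_{E,e_i}\cup I_{E,e_k}^c)\cap(I_{E,e_k}\cup I_{E,e_j}^c)\subseteq I_{E,e_i}\cup I_{E,e_j}^c$ and $I_{E,e_k}^c\cap(I_{E,e_k}\cup I_{E,e_j}^c)\subseteq I_{E,e_j}^c$, established using $\phi'$ in the identical way, confirm that $\mathcal{R}$ is a subring and $\mathcal{I}$ is a two-sided ideal.

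Finally, as in the proof of Theorem \ref{endpopp}, I would pass to the generalized matrix ring $\mathcal{M}_{S\otimes_{S_E}S_\frak{Q}}(P_{E,\frak{Q}})$ and note that $\mathrm{End}_{S\otimes_{S_E}S_\frak{Q}}(P_{E,\frak{Q}})\cong \mathcal{M}_{S\otimes_{S_E}S_\frak{Q}}(P_{E,\frak{Q}})$ via the standard matrix decomposition of an endomorphism of a finite direct sum. Assembling the $F_{i,j}$'s above into a single map $\hat{\theta}: \mathcal{R}_{S\otimes_{S_E}S_\frak{Q}}(P_{E,\frak{Q}})^{op} \to \mathcal{M}_{S\otimes_{S_E}S_\frak{Q}}(P_{E,\frak{Q}})$ via $\hat{\theta}((s_{i,j})^t) = (\theta_{i,j}(s_{i,j}))^t$ then yields the required isomorphism with kernel $\{H^t : H\in \mathcal{I}_{S\otimes_{S_E}S_\frak{Q}}(P_{E,\frak{Q}})\}$. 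Since the entire proof is a mechanical translation and no new phenomenon arises in passing from $R$ to $S_E$, there is no real obstacle; the only point requiring care is checking that Dyer's framework genuinely accommodates the replacement of the base ring (so that Corollary \ref{PlusEQ} applies), which is ensured because $L/L_E$ is itself a Galois extension and all the arguments of Sections 2--7 were written so as to depend only on this Galois structure.
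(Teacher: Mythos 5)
Your proposal is correct and is essentially identical to what the paper does: the paper's own "proof" is the one-line remark that "the arguments given above apply with the appropriate substitutions," and you have simply spelled out those substitutions ($R\rightsquigarrow S_E$, $G\rightsquigarrow E$, $\phi\rightsquigarrow\phi'$, $P_i\rightsquigarrow P_{E,\frak{Q},i}$, etc.) and confirmed that each step of Theorem \ref{endpopp} carries over, with Lemma \ref{ringmaps} being purely formal and Corollary \ref{PlusEQ} supplying the needed projectivity.
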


\section{Semisimplicity of $\mathcal{A}_{S\otimes_RS}(P) \otimes_SF_{\frak{Q}}$}

Let $L, K, S, R$, $\frak{Q}, \frak{P}$, $G = \{\sigma_{g_1}, \sigma_{g_2}, \cdots , \sigma_{g_3}\}$,    $E = E(\frak{Q}|\frak{P})$, $\phi$ and $\phi_{g_k}$  be as defined in Section 2. 
Let $\Omega$ be the poset $\{g_1,g_ 2, \cdots , g_n\}$ with the   total ordering: $g_1 < g_2 < \cdots < g_n$.
 Let $P_i$ be as defined in Definition \ref{P} and let $P = P_1 \oplus P_2 \oplus \cdots \oplus P_n$. We have that $P$ is a projective generator for the category $\mathcal{C}_{(S\otimes_RS, \Omega, G)}$.  In the previous section, we examined the structure of the algebra $\mathcal{A}_{S\otimes_RS}(P) = End_{S\otimes_RS}(P)^{op}$.  We let $F_{\frak{Q}}$ denote the 
 residue class field   $S/\frak{Q}$.  In this section we will consider the structure of the algebra $\mathcal{A}_{S\otimes_RS}(P) \otimes_SF_\frak{Q}$
 over the field $S/\frak{Q}$. 
Since the center of $\mathcal{A}_{S\otimes_RS}(P)$  contains $S\otimes_RS$, we have $1\otimes F_{\frak{Q}}$ is contained in the center of $\mathcal{A}_{S\otimes_RS}(P) \otimes_SF_\frak{Q}$ and $\mathcal{A}_{S\otimes_RS}(P)\otimes_SF_\frak{Q}$ is central over $F_\frak{Q}$. 

\begin{theorem}   The algebra $\mathcal{A}_{S\otimes_RS}(P) \otimes_S F_\frak{Q}$ is semisimple if and only if $\frak{Q}$ is unramified in $S$. 
\vskip .1in
\noindent
Note: Semisimplicity of  $\mathcal{A}_{S\otimes_RS}(P) \otimes_S F_\frak{Q}$ is independent of the choice of projective generator $P$, by Lemma \ref{Newprogen}  and the discussion prior to it. 
\end{theorem}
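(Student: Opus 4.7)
The plan is to reduce to the localized situation and then analyze the two directions using the structural results already established. By the discussion at the end of Section 4, localization and $\mathrm{End}$ commute, so
\[
\mathcal{A}_{S\otimes_RS}(P) \otimes_S F_{\frak{Q}} \;\cong\; \mathcal{A}_{S\otimes_RS_{\frak{Q}}}(P_{\frak{Q}}) \otimes_{S_{\frak{Q}}} F_{\frak{Q}},
\]
where $P_{\frak{Q}} = \bigoplus_{i=1}^n P_{\frak{Q},i}$ is the localization of $P$. Each $P_{\frak{Q},i}$ is finitely generated over $S_{\frak{Q}}$ by its filtration, so the right-hand side is a finite-dimensional $F_{\frak{Q}}$-algebra. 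Finite-dimensionality lets us detect semisimplicity via the center: by Wedderburn--Artin, a finite-dimensional semisimple algebra has a product of fields as its center, so it contains no nonzero central nilpotents.

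For the unramified direction ($|E|=1$), Lemma \ref{unram} provides idempotents $x_1,\dots,x_n$ with $\phi_{g_k}(x_i)=\delta_{ik}$, and makes $\phi: S\otimes_R S_{\frak{Q}} \xrightarrow{\sim} \bigoplus_{k=1}^n S_{\frak{Q},g_k}$ an isomorphism. Using these idempotents to split $P_{\frak{Q},i} = (S\otimes_R S_{\frak{Q}})/(S\otimes_R S_{\frak{Q}})_{I_{g_i}^c}$, one checks that
\[
P_{\frak{Q},i} \;\cong\; \bigoplus_{k \geq i} S_{\frak{Q}}[\sigma_{g_k}],
\qquad
P_{\frak{Q}} \;\cong\; \bigoplus_{k=1}^n S_{\frak{Q}}[\sigma_{g_k}]^{k}.
\]
Since the bimodules $S_{\frak{Q}}[\sigma_{g_k}]$ and $S_{\frak{Q}}[\sigma_{g_\ell}]$ have different left $S$-actions, Corollary \ref{homQ} (combined with Lemma \ref{unram}) gives $\mathrm{Hom}(S_{\frak{Q}}[\sigma_{g_k}], S_{\frak{Q}}[\sigma_{g_\ell}])=0$ for $k\neq\ell$ and $\mathrm{End}(S_{\frak{Q}}[\sigma_{g_k}])=S_{\frak{Q}}$. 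Hence $\mathcal{A}_{S\otimes_R S_{\frak{Q}}}(P_{\frak{Q}}) \cong \prod_{k=1}^n M_k(S_{\frak{Q}})^{\mathrm{op}}$, and tensoring with $F_{\frak{Q}}$ produces $\prod_{k=1}^n M_k(F_{\frak{Q}})$, which is semisimple.

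For the ramified direction ($|E|>1$), I exploit the nilpotents furnished by Lemma \ref{FQ}. By Corollary \ref{corstar} we have $(S\otimes_R S_{\frak{Q}})x_1 \cong S\otimes_{S_E} S_{\frak{Q}}$, and Lemma \ref{FQ} identifies $(S\otimes_R S_{\frak{Q}})x_1 \otimes_{S_{\frak{Q}}} F_{\frak{Q}}$ with $F_{\frak{Q}}[\alpha]$ where $\alpha \neq 0$ and $\alpha^{|E|}=0$. So $S\otimes_R S_{\frak{Q}} \otimes_{S_{\frak{Q}}} F_{\frak{Q}}$ contains a nonzero nilpotent. By Theorem \ref{endpopp} (and its localized form), $S\otimes_R S_{\frak{Q}} \cong \mathrm{Cen}(\mathcal{A}_{S\otimes_RS_{\frak{Q}}}(P_{\frak{Q}}))$. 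Since $P_{\frak{Q},1} = S\otimes_R S_{\frak{Q}}$ is a direct summand of $P_{\frak{Q}}$, this central embedding is split as a map of $S_{\frak{Q}}$-modules, hence remains injective after $\otimes_{S_{\frak{Q}}} F_{\frak{Q}}$. Therefore the nonzero nilpotent survives as a nonzero central nilpotent in $\mathcal{A}(P_{\frak{Q}})\otimes_{S_{\frak{Q}}}F_{\frak{Q}}$, contradicting semisimplicity by the opening remark.

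The main obstacle is confirming that the nilpotent $\alpha$ of Lemma \ref{FQ} is non-trivially detected in the algebra $\mathcal{A}(P_{\frak{Q}})\otimes_{S_{\frak{Q}}}F_{\frak{Q}}$ and still lies in its center. This is handled by the direct-summand splitting of the central inclusion noted above. The unramified direction is essentially bookkeeping once Lemma \ref{unram} is in hand; the subtlety lies entirely in the ramified direction.
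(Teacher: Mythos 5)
Your unramified direction is essentially the paper's argument, just organized by first decomposing the modules $P_{\frak{Q},i}$ into bimodule summands using the idempotents of Lemma \ref{unram}, rather than by writing out the full quotient of matrix rings as the paper does; both land on $\prod_k M_k(F_{\frak{Q}})$.

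Your ramified direction is a genuinely different and more conceptual argument. The paper constructs an explicit nonzero nilpotent \emph{ideal} $\mathcal{B}$ inside $\mathcal{A}_{S\otimes_RS}(P)\otimes_S F_{\frak{Q}}$ by direct matrix manipulation, verifying $\mathcal{B}^{|E|}=0$ by hand and concluding that the radical is nonzero. You instead invoke Theorem \ref{endpopp}, which identifies $S\otimes_RS_{\frak{Q}}$ with the \emph{center} of $\mathcal{A}_{S\otimes_RS_{\frak{Q}}}(P_{\frak{Q}})$, and then import the nilpotent $\bar\alpha$ from Lemma \ref{FQ} directly into that center; the Wedderburn--Artin constraint that a finite-dimensional semisimple algebra has a product of fields as its center then finishes the argument. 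This is shorter and avoids the matrix bookkeeping. The one step that needs care is the one you flag yourself: that the central embedding $S\otimes_RS_{\frak{Q}} \hookrightarrow \mathcal{A}(P_{\frak{Q}})$ stays injective after $\otimes_{S_{\frak{Q}}}F_{\frak{Q}}$. Your justification — that $P_{\frak{Q},1}=S\otimes_RS_{\frak{Q}}$ is a summand of $P_{\frak{Q}}$, so composing the central embedding with restriction to $\operatorname{End}(P_{\frak{Q},1})\cong S\otimes_RS_{\frak{Q}}$ gives an $S_{\frak{Q}}$-linear retraction — is correct; equivalently one can see it from the $(1,1)$ entry of the matrix presentation, since that entry of $\mathcal{I}_{S\otimes_RS_{\frak{Q}}}(P_{\frak{Q}})$ is zero. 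Worth spelling this out explicitly, and also noting (as is implicit in your use of Wedderburn--Artin) that the image of $(S\otimes_RS_{\frak{Q}})\otimes F_{\frak{Q}}$ is still central after the base change because the tensor is taken over the central subring $S_{\frak{Q}}$. With those clarifications in place, the proposal is sound.
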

\begin{proof} If $\frak{Q}$ is unramified in $S$, then $E = E(\frak{Q}|\frak{P})$ is the trivial subgroup of $G$, see Marcus 
\cite{Marcus}[Theorem 28, Chapter 4]. 
In Lemma \ref{unram}, we saw that $S\otimes_RS_{\frak{Q}}$ is isomorphic to $S_{{g_1}, \frak{Q}} \oplus S_{{g_2}, \frak{Q}} \oplus \dots \oplus S_{{g_n}, \frak{Q}}$ via the map $\phi$, where each $S_{{g_k}, \frak{Q}}$ is a copy of $S_{\frak{Q}}$, 
$1 \leq k \leq n$.  
Since localization is flat, we have $\mathcal{A}_{S\otimes_RS}(P)\otimes_S S_{\frak{Q}}  \cong (\mathcal{R}_{S\otimes_RS}(P)\otimes_SS_{\frak{Q}})/(\mathcal{I}_{S\otimes_RS}(P) \otimes_SS_{\frak{Q}})$. Using the fact that $((S\otimes_RS)_I )_{\frak{Q}} = (S\otimes_RS_{\frak{Q}} )_I$ for any subset $I$ of $G$, it is easy to see that
$\frac{\mathcal{R}_{S\otimes_RS}(P)\otimes_SS_{\frak{Q}}}{\mathcal{I}_{S\otimes_RS}(P)\otimes_SS_{\frak{Q}}} = $
$$\frac{
\begin{pmatrix}
S\otimes_RS_{\frak{Q} }& S\otimes_RS_{\frak{Q}} & S\otimes_RS_{\frak{Q}} &\cdots & S\otimes_RS_{\frak{Q}}   \\
(S\otimes_RS_{\frak{Q}})_{I_{g_2}} & S\otimes_RS_{\frak{Q}} & S\otimes_RS_{\frak{Q}} &\cdots & S\otimes_RS_{\frak{Q}} \\
(S\otimes_RS_{\frak{Q}})_{I_{g_3}} & (S\otimes_RS_{\frak{Q}})_{I_{g_3} \cup I_{g_2}^c} & S\otimes_RS_{\frak{Q}} & \cdots & S\otimes_RS_{\frak{Q}} \\
(S\otimes_RS_{\frak{Q}})_{I_{g_4}} & (S\otimes_RS_{\frak{Q}})_{I_{g_4} \cup I_{g_2}^c} & (S\otimes_RS_{\frak{Q}})_{I_{g_4}\cup I_{g_3}^c} &\cdots &S\otimes_RS_{\frak{Q}} \\
\vdots & \vdots & \vdots & \ & \\
(S\otimes_RS_{\frak{Q}})_{I_{g_n}}  & (S\otimes_RS_{\frak{Q}})_{I_{g_n} \cup I_{g_2}^c}  &  (S\otimes_RS_{\frak{Q}})_{I_{g_n} \cup I_{g_3}^c}  &\ & S\otimes_RS_{\frak{Q}}  
\end{pmatrix}}
{\begin{pmatrix}
0& (S\otimes_RS_{\frak{Q}})_{I_{g_2}^c}   & (S\otimes_RS_{\frak{Q}})_{I_{g_3}^c}  &\cdots & (S\otimes_RS_{\frak{Q}})_{I_{g_n}^c}  \\
0& (S\otimes_RS_{\frak{Q}})_{I_{g_2}^c}  & (S\otimes_RS_{\frak{Q}})_{I_{g_3}^c}  &\cdots& (S\otimes_RS_{\frak{Q}})_{I_{g_n}^c} \\
0& (S\otimes_RS_{\frak{Q}})_{I_{g_2}^c}  & (S\otimes_RS_{\frak{Q}})_{I_{g_3}^c}  & \cdots& (S\otimes_RS_{\frak{Q}})_{I_{g_n}^c} \\
0& (S\otimes_RS_{\frak{Q}})_{I_{g_2}^c}  & (S\otimes_RS_{\frak{Q}})_{I_{g_3}^c}   &\cdots& (S\otimes_RS_{\frak{Q}})_{I_{g_n}^c} \\
\vdots & \vdots & \vdots & \ & \\
0& (S\otimes_RS_{\frak{Q}})_{I_{g_2}^c}  & (S\otimes_RS_{\frak{Q}})_{I_{g_3}^c}   &\cdots& (S\otimes_RS_{\frak{Q}})_{I_{g_n}^c} \\
\end{pmatrix}}
$$
By applying the isomorphism $\phi$, we get that this is isomorphic to the quotient of matrix rings below, where 
$\frak{S}_\frak{Q} = S_{{g_1}, \frak{Q}} \oplus S_{{g_2}, \frak{Q}} \oplus \dots \oplus S_{{g_n}, \frak{Q}} $, the image of 
$S\otimes_RS_{\frak{Q}}$ under the map $\phi$. 
$$\ \frac{
\begin{pmatrix}
\frak{S}_\frak{Q} & \frak{S}_\frak{Q} &\cdots  &\frak{S}_\frak{Q}\\
 S_{{g_2}, \frak{Q}} \oplus \dots \oplus S_{{g_n}, \frak{Q}}& \frak{S}_\frak{Q} & \cdots &\frak{S}_\frak{Q}\\
 S_{{g_3}, \frak{Q}} \oplus \dots \oplus S_{{g_n}, \frak{Q}}& S_{{g_1}, \frak{Q}} \oplus S_{{g_3}, \frak{Q}} \oplus \dots \oplus S_{{g_n}, \frak{Q}}& \cdots &\frak{S}_\frak{Q}\\
 S_{{g_4}, \frak{Q}} \oplus \dots \oplus S_{{g_n}, \frak{Q}}&  S_{{g_1}, \frak{Q}} \oplus S_{{g_4}, \frak{Q}} \oplus \dots \oplus S_{{g_n}, \frak{Q}}&  \cdots &\frak{S}_\frak{Q}\\
\vdots & \vdots &  & \\
S_{{g_n}, \frak{Q}}& S_{{g_1}, \frak{Q}} \oplus S_{{g_n}, \frak{Q}}&\cdots &\frak{S}_\frak{Q}
\end{pmatrix}}
{\begin{pmatrix}
0&S_{{g_1}. \frak{Q}}   & S_{{g_1}, \frak{Q}} \oplus S_{{g_2}, \frak{Q}}   & \cdots &\frak{S}_\frak{Q} \\
0&S_{{g_1}. \frak{Q}}   & S_{{g_1}, \frak{Q}} \oplus S_{{g_2}, \frak{Q}}  & \cdots &\frak{S}_\frak{Q} \\
0&S_{{g_1}. \frak{Q}}   & S_{{g_1}, \frak{Q}} \oplus S_{{g_2}, \frak{Q}}   & \cdots &\frak{S}_\frak{Q} \\
0&S_{{g_1}. \frak{Q}}   & S_{{g_1}, \frak{Q}} \oplus S_{{g_2}, \frak{Q}}    & \cdots  &\frak{S}_\frak{Q} \\
\vdots & \vdots & \vdots & \ & \\
0& S_{{g_1}. \frak{Q}}    &S_{{g_1}, \frak{Q}} \oplus S_{{g_2}, \frak{Q}}   &\cdots  &\frak{S}_\frak{Q} 
\end{pmatrix}}
$$
\vskip .1in
\noindent
This ring is isomorphic to the subring of $M_{n \times n}(\frak{S}_\frak{Q})$ given below
\vskip .1in
\noindent
$$
\begin{pmatrix}
\frak{S}_\frak{Q}&  S_{{g_2}, \frak{Q}} \oplus \dots \oplus S_{{g_n}, \frak{Q}} &  S_{{g_3}, \frak{Q}} \oplus \dots \oplus S_{{g_n}, \frak{Q}} &  \dots & S_{{g_n}, \frak{Q}} \\
 S_{{g_2}, \frak{Q}} \oplus \dots \oplus S_{{g_n}, \frak{Q}}&  S_{{g_2}, \frak{Q}} \oplus \dots \oplus S_{{g_n}, \frak{Q}}&  S_{{g_3}, \frak{Q}} \oplus \dots \oplus S_{{g_n}, \frak{Q}}& \cdots &S_{{g_n}, \frak{Q}} \\
 S_{{g_3}, \frak{Q}} \oplus \dots \oplus S_{{g_n}, \frak{Q}}&  S_{{g_3}, \frak{Q}} \oplus \dots \oplus S_{{g_n}, \frak{Q}}&  S_{{g_3}, \frak{Q}} \oplus \dots \oplus S_{{g_n}, \frak{Q}} & \cdots &S_{{g_n}, \frak{Q}}\\
 S_{{g_4}, \frak{Q}} \oplus \dots \oplus S_{{g_n}, \frak{Q}}&   S_{{g_4}, \frak{Q}} \oplus \dots \oplus S_{{g_n}, \frak{Q}}& S_{{g_4}, \frak{Q}} \oplus \dots \oplus S_{{g_n}, \frak{Q}}& \cdots &S_{{g_n}, \frak{Q}}\\
\vdots & \vdots & \vdots & \ & \\
S_{{g_n}, \frak{Q}}&S_{{g_n}, \frak{Q}} &S_{{g_n}, \frak{Q}} &\cdots &S_{{g_n}, \frak{Q}}
\end{pmatrix}
$$
Since $S_{\frak{Q}}/S_{\frak{Q}}\frak{Q}  \cong S/{\frak{Q}}$, we will also denote $S_{\frak{Q}}/S_{\frak{Q}}\frak{Q} $ by $F_{\frak{Q}}$. 
 $\mathcal{A}_{S\otimes_RS}(P)  \otimes _SS/{\frak{Q}} =\mathcal{A}_{S\otimes_RS}(P)  \otimes _SS_{\frak{Q}}
 \otimes_{S_{\frak{Q}}}S_{\frak{Q}}/S_{\frak{Q}}\frak{Q} = \mathcal{A}_{S\otimes_RS}(P)  \otimes _SS_{\frak{Q}}
 \otimes_{S_{\frak{Q}}}F_{\frak{Q}}$.  Thus it is easy to see that if $\frak{Q}$ is unramified in $S$, we have 
 $$\mathcal{A}_{S\otimes_RS}(P) \otimes_SF_{\frak{Q}} \cong M_1((F_{\frak{Q}})_{g_1}) \oplus M_2((F_{\frak{Q}})_{g_2}) \oplus \dots \oplus 
 M_n((F_{\frak{Q}})_{g_n}) $$
 and is therefore semisimple.

 It remains to consider the case where   $\frak{Q}$ is ramified in $S$. In this case,  we will show that $\mathcal{A}_{S\otimes_RS}(P) \otimes_SF_{\frak{Q}}$ has a non-zero nilpotent ideal. Let $|E|  > 1$, and let $m = |G|/|E| = n/|E|$.  Let $\{\sigma_{x_1}, \sigma_{x_2}, \dots , \sigma_{x_{m}}\}$ be a set of coset representatives for $E$ in $G$, where $\sigma_{x_1}$ is  the identity element of $G$.   By Corollary \ref{corstar}, there exists a set of 
 orthogonal idempotents $\{x_1, x_2, \dots , x_{m}\}$  in $S\otimes_RS_{\frak{Q}}$ such that 
\[S\otimes_RS_{\frak{Q}} = (S\otimes_RS_{\frak{Q}})x_1 \oplus (S\otimes_RS_{\frak{Q}})x_2 \oplus  \dots \oplus (S\otimes_RS_{\frak{Q}})x_{m}\]
and
  $$\phi_{g_k}(x_i)  =  \Big\{\  \begin{matrix}
& 1 & \hbox{if}  & \sigma_{{g_k}} \in E\sigma_{x_i} \\
& 0&\mbox{otherwise}  &\   &
\end{matrix}
$$
From Lemma \ref{isomap}, we  have an  isoomorphism of rings  $\gamma : S\otimes_{S_E}S_{\frak{Q}} \to (S\otimes_{R}S_{\frak{Q}})x_1$ with $\gamma(s_1 \otimes s_2) = (s_1 \otimes s_2)x_1$, which extends to an isomorphism of vector spaces over $F_{\frak{Q}}$;
 $\gamma\otimes 1: S\otimes_{S_E}S_{\frak{Q}} \otimes_{S_{\frak{Q}}}F_{\frak{Q}}  \to
 (S\otimes_{S}S_{\frak{Q}})x_1 \otimes_{S_{\frak{Q}}}F_{\frak{Q}}$.
Also from   Lemma \ref{Pie} and Lemma \ref{structure}, we have 
that $S\otimes_{S_E}S_{\frak{Q}} =  S_E[\Pi]\otimes_{S_E}S_{\frak{Q}}$ for some $\Pi \in \frak{Q} \subset S$. 
Letting  $E = \{\sigma_{e_1}, \sigma_{e_2}, \dots , \sigma_{e_{|E|}}\}$ and   $A_{e_i}(\Pi) = \Pi \otimes 1 - 1\otimes \sigma_{e_i}(\Pi) \in S\otimes_{S_E}S_{\frak{Q}}, \  i = 1, 2, \dots , |E|$,  we  have $\phi_{e_i}(A_{e_i}(\Pi)) = 0, i = 1, 2, \dots , |E|$.  By Lemma \ref{basisP}, $\{1, A_{e_1}(\Pi), \prod_1^2A_{e_i}(\Pi), \cdots ,
\prod_1^{n-1}A_{e_i}(\Pi) \}$ is a basis for $S\otimes_{S_E}S_{\frak{Q}} $ as a right $S_{\frak{Q}}$ module.

We define  $\Psi: S\otimes_RS_{\frak{Q}} \to S\otimes_RS_{\frak{Q}}\otimes_{S_{\frak{Q}}}F_{\frak{Q}}$, by 
$\Psi(s_1\otimes s_2) = s_1\otimes s_2 \otimes 1$. Let  $\Psi':S\otimes_{S_E}S_{\frak{Q}} \to S\otimes_{S_E}S_{\frak{Q}}\otimes_{S_{\frak{Q}}}F_{\frak{Q}}$ be the map introduced in 
Lemma \ref{FQ}.  It is not difficult to see that 
\[\Psi(\gamma(s_1\otimes s_2)) = (\gamma \otimes 1)(\Psi'(s_1\otimes s_2)) , \ \ \ \ \mbox{for} \ \ \ \  s_1\otimes s_2 \in S\otimes_{S_E}S_{\frak{Q}}.\]
From Lemma \ref{FQ}, we have \ $\Psi'(A_{e_i}(\Pi)) = \Psi'(A_{e_j}(\Pi))$ for $1 \leq i, j \leq |E|$ \ and 
if $\alpha = \Psi'(A_{e_j}(\Pi))$, we have $\alpha^{|E|} = 0$.  Hence, if we let $\bar{\alpha} = (\gamma \otimes 1)\Psi'(A_{e_i}(\Pi)) = \Psi(\gamma (A_{e_i}(\Pi)))$ we must have $\bar{\alpha}^{|E|} = 0$.
Also since $\{1, \alpha, \alpha^2, \cdots , \alpha^{n - 1}\}$ is a basis for $S\otimes_{S_E}S_{\frak{Q}} \otimes F_{\frak{Q}}$ as a vector space over $F_{\frak{Q}}$, we have $\{1, \bar{\alpha}, \bar{\alpha}^2, \cdots , \bar{\alpha}^{n - 1}\}$ is a basis for $(S\otimes_{R}S_{\frak{Q}})x_1  \otimes F_{\frak{Q}}$ and $\bar{\alpha} \not= 0$. 

We are now ready to construct a non-zero, nilpotent ideal  of the algebra.  It is not difficult to see that $\mathcal{I}_{S\otimes_RS}(P), \mathcal{R}_{S\otimes_RS}(P)$ and $\mathcal{A}_{S\otimes_RS}(P)$ are finitely generated and  projective as right $S$-modules. Hence the short exact sequence 
$$0 \to \mathcal{I}_{S\otimes_RS}(P) \to \mathcal{R}_{S\otimes_RS}(P) \to \mathcal{A}_{S\otimes_RS}(P) \to 0$$
is a split exact sequence of right $S$ modules. Thus, applying the functor $ \otimes_SF_\frak{Q}$, we get 
$\mathcal{A}_{S\otimes_RS}(P) \otimes_SF_\frak{Q} \cong $
$$  \frac{
\begin{pmatrix}
\Psi(S\otimes_RS_{\frak{Q} })& \Psi(S\otimes_RS_{\frak{Q}}) & \cdots & \Psi(S\otimes_RS_{\frak{Q}} ) \\
\Psi(S\otimes_RS_{\frak{Q}})_{I_{g_2}} )& \Psi(S\otimes_RS_{\frak{Q}} )&  \cdots  &  \Psi(S\otimes_RS_{\frak{Q}} )\\
\Psi((S\otimes_RS_{\frak{Q}})_{I_{g_3}} )& \Psi((S\otimes_RS_{\frak{Q}})_{I_{g_3} \cup I_{g_2}^c} )&  \cdots  &  \Psi(S\otimes_RS_{\frak{Q}} )\\
\Psi((S\otimes_RS_{\frak{Q}})_{I_{g_4}}) &\Psi( (S\otimes_RS_{\frak{Q}})_{I_{g_4} \cup I_{g_2}^c} )&  \cdots & \Psi(S\otimes_RS_{\frak{Q}} ) \\
\vdots & \vdots & &\vdots  \\
\Psi((S\otimes_RS_{\frak{Q}})_{I_{g_n}}) &\Psi( (S\otimes_RS_{\frak{Q}})_{I_{g_n} \cup I_{g_2}^c} )& \cdots  & \Psi(S\otimes_RS_{\frak{Q}} )\\
\end{pmatrix}}
{\begin{pmatrix}
0& \Psi((S\otimes_RS_{\frak{Q}})_{I_{g_2}^c} )  & \Psi((S\otimes_RS_{\frak{Q}})_{I_{g_3}^c}  )& \cdots  &\Psi( (S\otimes_RS_{\frak{Q}})_{I_{g_n}^c} ) \\
0& \Psi((S\otimes_RS_{\frak{Q}})_{I_{g_2}^c})  &\Psi( (S\otimes_RS_{\frak{Q}})_{I_{g_3}^c} ) & \cdots & \Psi( (S\otimes_RS_{\frak{Q}})_{I_{g_n}^c} )\\
0&\Psi( (S\otimes_RS_{\frak{Q}})_{I_{g_2}^c} ) &\Psi( (S\otimes_RS_{\frak{Q}})_{I_{g_3}^c}  )& \cdots & \Psi( (S\otimes_RS_{\frak{Q}})_{I_{g_n}^c} )\\
0&\Psi( (S\otimes_RS_{\frak{Q}})_{I_{g_2}^c} ) & \Psi((S\otimes_RS_{\frak{Q}})_{I_{g_3}^c}  ) & \cdots  &
\Psi( (S\otimes_RS_{\frak{Q}})_{I_{g_n}^c} )\\
\vdots & \vdots & \vdots & \ & \\
0&\Psi( (S\otimes_RS_{\frak{Q}})_{I_{g_2}^c} ) & \Psi((S\otimes_RS_{\frak{Q}})_{I_{g_3}^c}  ) & \cdots  &
\Psi( (S\otimes_RS_{\frak{Q}})_{I_{g_n}^c} )\\
\end{pmatrix}}
$$
Consider the subset of $\mathcal{A}_{S\otimes_RS}(P)\otimes_SS_{\frak{Q}}\otimes_{S_{\frak{Q}}}S_{\frak{Q}}/\frak{Q}S_{\frak{Q}} $ given by 
$\mathcal{B} =$
$$  \frac{
{  \begin{pmatrix}
\bar{\alpha}\Psi((S\otimes_RS_{\frak{Q} })x_1)&0&0& \cdots &0 \\
\bar{\alpha}\Psi((S\otimes_RS_{\frak{Q}})_{I_{g_2}} )x_1)&0&0& \cdots &0\\
\bar{\alpha}\Psi(((S\otimes_RS_{\frak{Q}})_{I_{g_3}})x_1 )& 0& 0& \cdots &0\\
\bar{\alpha}\Psi(((S\otimes_RS_{\frak{Q}})_{I_{g_4}})x_1) &0&0& \cdots  &0\\
\vdots & \vdots & \vdots & \ &\vdots \\
\bar{\alpha}\Psi(((S\otimes_RS_{\frak{Q}})_{I_{g_n}})x_1)&\ 0&\ 0&\ &0
\end{pmatrix} } + {( \mathcal{I}_{S\otimes_RS}(P) \otimes_SF_\frak{Q} })}
{(\mathcal{I}_{S\otimes_RS}(P) \otimes_SF_\frak{Q} )}
$$
where $\mathcal{I}_{S\otimes_RS}(P) \otimes_SF_\frak{Q} $ = 
$$ \begin{pmatrix}
0& \Psi((S\otimes_RS_{\frak{Q}})_{I_{g_2}^c} )  & \Psi((S\otimes_RS_{\frak{Q}})_{I_{g_3}^c}  )& \cdots  &\Psi( (S\otimes_RS_{\frak{Q}})_{I_{g_n}^c} ) \\
0& \Psi((S\otimes_RS_{\frak{Q}})_{I_{g_2}^c})  &\Psi( (S\otimes_RS_{\frak{Q}})_{I_{g_3}^c} ) & \cdots & \Psi( (S\otimes_RS_{\frak{Q}})_{I_{g_n}^c} )\\
0&\Psi( (S\otimes_RS_{\frak{Q}})_{I_{g_2}^c} ) &\Psi( (S\otimes_RS_{\frak{Q}})_{I_{g_3}^c}  )& \cdots & \Psi( (S\otimes_RS_{\frak{Q}})_{I_{g_n}^c} )\\
0&\Psi( (S\otimes_RS_{\frak{Q}})_{I_{g_2}^c} ) & \Psi((S\otimes_RS_{\frak{Q}})_{I_{g_3}^c}  ) & \cdots  &
\Psi( (S\otimes_RS_{\frak{Q}})_{I_{g_n}^c} )\\
\vdots & \vdots & \vdots & \ & \\
0&\Psi( (S\otimes_RS_{\frak{Q}})_{I_{g_2}^c} ) & \Psi((S\otimes_RS_{\frak{Q}})_{I_{g_3}^c}  ) & \cdots  &
\Psi( (S\otimes_RS_{\frak{Q}})_{I_{g_n}^c} )\\
\end{pmatrix}. $$
It is clear that $\mathcal{B}$ is a non zero  ideal of $\mathcal{A}_{S\otimes_RS}(P) \otimes_SS_{\frak{Q}}\otimes_{S_{\frak{Q}}}F_{\frak{Q}} $.  First we show it is nilpotent. Each   product of $|E|$ matrices from the ideal  $\mathcal{B}$ has a representative matrix of  the form: 
\[
\begin{pmatrix}
a_1(1) & 0 & 0 & \cdots & 0 \\
a_2(1) & 0 & 0 & \cdots  &0 \\
a_3(1)& 0 & 0 & \cdots & 0 \\
\vdots & \vdots & \vdots & \ & \\
a_n(1)&0 & 0 & \cdots & 0 
\end{pmatrix}
\begin{pmatrix}
a_1(2) & 0 & 0 & \cdots & 0 \\
a_2(2) & 0 & 0 & \cdots  &0 \\
a_3(2)& 0 & 0 & \cdots & 0 \\
\vdots & \vdots & \vdots & \ & \\
a_n(2)&0 & 0 & \cdots & 0 
\end{pmatrix}
\ \ \ 
\cdots  \ \ \ 
\begin{pmatrix}
a_1({|E|}) & 0 & 0 & \cdots & 0 \\
a_2({|E|}) & 0 & 0 & \cdots  &0 \\
a_3({|E|})& 0 & 0 & \cdots & 0 \\
\vdots & \vdots & \vdots & \ & \\
a_n({|E|}) &0 & 0 & \cdots & 0 
\end{pmatrix}
\]

\[ = 
\begin{pmatrix}
a_1(1)a_1(2)a_1(3) \cdots a_1({|E|}) & 0 & 0 & \cdots & 0 \\
a_2(1) a_1(2)a_1(3) \cdots a_1({|E|})  & 0 & 0 & \cdots  &0 \\
a_3(1)a_1(2)a_1(3) \cdots a_1({|E|}) & 0 & 0 & \cdots & 0 \\
\vdots & \vdots & \vdots & \ & \\
a_n(1)a_1(2)a_1(3) \cdots  a_1({|E|})  &0 & 0 & \cdots & 0 
\end{pmatrix}
\]
Now we see that $0 =\bar{\alpha}^{|E|}$ divides each product of $|E|$ elements of the form $a_i(j)$, $1\leq i \leq n$, $1\leq j \leq |E|$. 
Hence $\mathcal{B}^{|E|} = 0$ and $\mathcal{B}$ is a non zero nilpotent ideal of 
 $\mathcal{A}_{S\otimes_RS}(P) \otimes_SS_{\frak{Q}}\otimes_{S_{\frak{Q}}}F_{\frak{Q}} $.
Therefore  $\mathcal{A}_{S\otimes_RS}(P) \otimes_SS_{\frak{Q}}\otimes_{S_{\frak{Q}}}F_{\frak{Q}} $ has a non zero radical and is not a semisimple algebra over $F_{\frak{Q}}$. 
\end{proof}

\section{Dualiy}
Let $L, K, S, R, \frak{Q}, \frak{P}, \phi$ and $\phi_{g_k}$  be as defined in Section 2.  Let  $G$ be  the Galois group of $L$ over $K$ such that $G = \{\sigma_{g_1}, \sigma_{g_2}, \cdots , \sigma_{g_n}\}$. Let $\Omega = \{g_i |g_i \in \Omega\}$ be a poset giving the  total ordering,  $g_1 < g_2 < \dots < g_n$ on the indices of $G$.  If $Y = \{\sigma_{y_1}, \sigma_{y_2}, \dots , \sigma_{y_k}\}$ is any subset of $G$, we let $\Omega|_Y$ denote the restriction of the ordering on $\Omega$ to the indices of the elements of  $Y$.  

Let $E = E(\frak{Q}|\frak{P}) = \{\sigma_{e_1}, \sigma_{e_2}, \cdots , \sigma_{e_{|E|}}\}$ denote the inertia group with respect to 
$\frak{Q}$.  
Let $\{\sigma_{x_1}, \sigma_{x_2}, \dots , \sigma_{x_m}\} $ be a set of right coset representatives for $E$ in $G$, where $m = n/|E|$. . 
Recall from Corollary \ref{corstar}  that there exist orthogonal idempotents $\{x_1, x_2, \cdots , x_m\}$ in $S\otimes_RS_{\frak{Q}}$
such that
$$S\otimes_RS_{\frak{Q}} = (S\otimes_RS_{\frak{Q}})x_1 \oplus (S\otimes_RS_{\frak{Q}})x_2 \oplus \dots \oplus
(S\otimes_RS_{\frak{Q}})x_m$$
where
 $$\phi_{g_t}(x_j) =  \Big\{\  \begin{matrix}
& 1 & \hbox{if}  & \sigma_{g_t} \in E\sigma_{x_j} \\
& 0 &  & \mbox{otherwise} &
\end{matrix}
$$ 
We let $\{x_1, x_2, \cdots , x_m\}$ denote such a set of idempotents in $S\otimes_RS_{\frak{Q}}$,
throughout this section.

Let $T$ be a commutative ring and let $A$ and $U$ be Noetherian commutative $T$-algebras. Let
 $H = \{\sigma_{h_i} \}_{i = 1}^{i = t}$ be a family of pairwise distinct $T$-algebra homomorphisms $\sigma_{h_i} : U \to A$,  indexed by
  $\Lambda = \{h_i\}_{i = 1}^{i = t}$, a  poset with  a total ordering $h_1 < h_2 < \cdots <  h_t$.  
From Definition \ref{Cat} we have a stratified  category  $\mathcal{C}_{(U\otimes_TA, \Lambda, H)}$.  In this section, we will deal with such categories defined by a variety of rings, groups and orderings.

We will show  that there is a Duality on the projective modules  in $\mathcal{C}_{(S\otimes_RS,  \Omega,  G)}$, namely the contravariant  functor $Hom_{S\otimes_RS}( - , 
S\otimes_RS)$ maps projectives in $\mathcal{C}_{(S\otimes_RS,  \Omega,  G)}$ to projectives in $\mathcal{C}_{(S\otimes_RS,  \Omega, G)}$.    First we verify that 
$Hom_{S\otimes_RS}( - , S\otimes_RS)$ does in fact map projective modules in 
$\mathcal{C}_{(S\otimes_RS,  \Omega,  G)}$ to modules in $\mathcal{C}_{(S\otimes_RS,  \Omega,  G)}$. 

\begin{lemma}  Let $S, R$ and $\frak{Q}$ be as defined in Section 2 and let $Q$ be a projective module in $\mathcal{C}_{(S\otimes_RS,  \Omega, G)}$.  Then $Hom_{S\otimes_RS}( Q, S\otimes_RS)$
is an $S\otimes_RS$ module in $\mathcal{C}_{(S\otimes_RS,  \Omega, G)}$.
\end{lemma}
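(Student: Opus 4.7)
My approach is to reduce the question to the finitely many indecomposables $P_i$ introduced in Definition~\ref{P} and identify their duals concretely as ideals of $S \otimes_R S$. First I would observe that $\mathcal{C} := \mathcal{C}_{(S\otimes_R S, \Omega, G)}$ is closed under finite direct sums (trivially) and under direct summands as $S\otimes_R S$-modules: if $M = A \oplus B \in \mathcal{C}$ with splitting idempotent $e : M \to M$, then Lemma~\ref{maps} applied to $e$ yields $e(M^{g_i}) \subseteq M^{g_i}$, so setting $A^{g_i} := A \cap M^{g_i} = e(M^{g_i})$ defines a filtration on $A$ whose quotients are direct summands of $M^{g_{i-1}}/M^{g_i}$, and therefore are finitely generated projective right $S$-modules with left action via $\sigma_{g_i}$. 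Since $P := \bigoplus_{i=1}^n P_i$ is a projective generator for $\mathcal{C}$ by Theorems~\ref{progen} and~\ref{PlusP}, any projective $Q$ splits off from $P^m$ for some $m$, so $Q^*$ is a direct summand of $(P^*)^m = \bigoplus_i (P_i^*)^m$ as an $S\otimes_R S$-module. It therefore suffices to show that $P_i^* \in \mathcal{C}$ for each $i$.

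Next I would identify $P_i^*$ explicitly. Since $P_i = (S\otimes_R S)/(S\otimes_R S)_i$ is cyclic, the map $f \mapsto f(1 \otimes 1 + (S\otimes_R S)_i)$ is an isomorphism of $S\otimes_R S$-modules from $P_i^*$ onto $\mathrm{Ann}_{S\otimes_R S}((S\otimes_R S)_i)$. The containment $(S\otimes_R S)^{g_{i-1}} \subseteq \mathrm{Ann}((S\otimes_R S)_i)$ is immediate because under $\phi$ the elements of $(S\otimes_R S)_i$ have support in $\{g_1,\ldots,g_{i-1}\}$ while those of $(S\otimes_R S)^{g_{i-1}}$ have support in $\{g_i,\ldots,g_n\}$. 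For the reverse containment, note that since $\phi$ is a ring isomorphism on $L \otimes_K L$ by Lemma~\ref{phi}, the image $\phi_{g_k}((L\otimes_K L)_i) = L_{g_k}$ for each $k \leq i-1$, so $\phi_{g_k}((S\otimes_R S)_i)$ is a non-zero ideal of the Dedekind domain $S$. If $x$ annihilates $(S\otimes_R S)_i$, then $\phi_{g_k}((S\otimes_R S)_i)\cdot\phi_{g_k}(x) = 0$, and $S$ being a domain forces $\phi_{g_k}(x) = 0$ for all $k \leq i-1$. Hence $P_i^* \cong (S\otimes_R S)^{g_{i-1}}$ as $S\otimes_R S$-modules.

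Finally, I would equip $(S\otimes_R S)^{g_{i-1}}$ with the inherited filtration $\bigl((S\otimes_R S)^{g_{i-1}}\bigr)^{g_j} := (S\otimes_R S)^{g_{\max(j,\,i-1)}}$. This is constant, and therefore has zero quotient, for $j < i$, while for $j \geq i$ the quotient $(S\otimes_R S)^{g_{j-1}}/(S\otimes_R S)^{g_j}$ is precisely one of the quotients appearing in the filtration of $S \otimes_R S$ in Example~\ref{stensors}, already known to embed into $S[\sigma_{g_j}]$ and to be finitely generated projective as a right $S$-module. This exhibits $P_i^* \in \mathcal{C}$, which by the reduction above gives $Q^* \in \mathcal{C}$. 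The main technical point is the annihilator computation in the middle paragraph: one must argue integrally rather than merely over $L$, and it is exactly the non-vanishing of the ideals $\phi_{g_k}((S\otimes_R S)_i)$ combined with $S$ being a domain that bridges the field-level statement to the integral one.
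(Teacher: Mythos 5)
Your argument is correct, but it follows a different route from the paper's proof. The paper proceeds directly: for a general projective $Q$, it sets $M^{g_i} := Hom_{S\otimes_RS}(Q, (S\otimes_RS)^{g_i})$, and uses the projectivity of $Q$ against the sheaf-exact sequence $0 \to (S\otimes_RS)^{g_{i+1}} \to (S\otimes_RS)^{g_i} \to (S\otimes_RS)^{g_i}/(S\otimes_RS)^{g_{i+1}} \to 0$ to identify $M^{g_i}/M^{g_{i+1}} \cong Hom(Q, (S\otimes_RS)^{g_i}/(S\otimes_RS)^{g_{i+1}})$, from which the required $S\otimes_RS$-action on quotients and their projectivity as right $S$-modules follow immediately. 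This is a uniform, one-pass construction of a filtration on $Q^*$ that never touches the particular projective generator $P$. You instead reduce to the indecomposables $P_i$ via a direct-summand argument, compute $P_i^* \cong \mathrm{Ann}_{S\otimes_RS}((S\otimes_RS)_i) = (S\otimes_RS)^{g_{i-1}}$ explicitly, and then observe that this ideal inherits a filtration from the one on $S\otimes_RS$ in Example~\ref{stensors}. The trade-off: your route requires establishing that $\mathcal{C}$ is closed under direct summands (which you rederive quickly from Lemma~\ref{maps}, though the paper itself cites Dyer for this fact in the proof of Lemma~\ref{form}) and uses the specific structure of $P = \bigoplus P_i$, so it is less self-contained; but it produces a concrete identification $P_i^* \cong (S\otimes_RS)^{g_{i-1}}$ that the paper's proof does not make explicit and which anticipates the computation $Hom(P_{E,\frak{Q},i}, S\otimes_{S_E}S_{\frak{Q}}) \cong (S\otimes_{S_E}S_{\frak{Q}})_{I_{E,e_i}}$ appearing later in the proof of Theorem~\ref{local}. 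Your annihilator computation is sound; the only point that deserves a sentence rather than a ``so'' is the passage from $\phi_{g_k}((L\otimes_K L)_i) = L_{g_k}$ to $\phi_{g_k}((S\otimes_RS)_i) \neq 0$, which follows because $(S\otimes_RS)_i \otimes_R K = (L\otimes_K L)_i$ (kernels of maps of $R$-lattices commute with the flat base change $\otimes_R K$), so any nonzero element of $(L\otimes_K L)_i$ can be cleared of denominators to land in $(S\otimes_RS)_i$ without killing its $\phi_{g_k}$-image.
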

\begin{proof} Let $M =  Hom_{S\otimes_RS}( Q, S\otimes_RS)$. M is an $S\otimes_RS$ module 
since $S\otimes_RS$ is commutative.  Letting $M^{g_i} = Hom_{S\otimes_RS}(Q, (S\otimes_RS)^{g_i})$,  $1 \leq i \leq n$, 
I claim that 
$$M = M^{g_0} \supseteq M^{g_1} \supseteq M^{g_2} \supseteq \cdots \supseteq M^{g_n} = {0}$$
is a filtration for $M$ with the required properties to ensure that $M \in \mathcal{C}_{(S\otimes_RS,  \Omega,  G)}$. 
Consider the quotient $M^{g_i}/M^{g_{i + 1}} = Hom(Q, (S\otimes_RS)^{g_i})/Hom(Q, (S\otimes_RS)^{g_{i + 1}})$. 
This is isomorphic to $M^{(g_i)}  = Hom(Q, (S\otimes_RS)^{g_i}/(S\otimes_RS)^{g_{i + 1}}) $, since $Q$ is projective in
$\mathcal{C}_{(S\otimes_RS,  \Omega, G)}$ and the sequence
\[0 \to (S\otimes_RS)^{g_{i + 1}} \to (S\otimes_RS)^{g_{i }} \to  (S\otimes_RS)^{g_i}/(S\otimes_RS)^{g_{i + 1}} \to 0,\]
is sheaf exact in the category $\mathcal{C}_{(S\otimes_RS,  \Omega, G)}$.

To see that the action of $S\otimes_RS$ on  $M^{(g_i)}$, is compatible with the 
requirements for a filtration, we must show that $(s_1 \otimes s_2)f = (1 \otimes \sigma_{g_i}(s_1)s_2)f$ for 
$f \in Hom(Q, (S\otimes_RS)^{g_i}/(S\otimes_RS)^{g_{i + 1}}) $. 
If  $q \in Q$, we have $((s_1 \otimes s_2)f)(q) = (s_1 \otimes s_2)(f(q)) = (1 \otimes \sigma_{g_i}(s_1)s_2)f(q)   = ((1\otimes \sigma_{g_i}(s_1)s_2)f)(q)$,  since 
$f(q) \in  (S\otimes_RS)^{g_i}/(S\otimes_RS)^{g_{i + 1}} $. Hence $S\otimes_RS$  acts appropriately on the quotients of our filtration for $M$. It remains to show that the quotients are finitely generated and projective as right $S$ modules. 

Since $(S\otimes_RS)^{g_i}/(S\otimes_RS)^{g_{i + 1}}$ is finitely generated and projective as a right $S$ module, and $Q$ is also finitely generated and projective as a right $S$ module, we have that $M^{(g_i)}$ is a submodule   of a finitely generated 
free right $S$ module and hence is finitely generated and projective as a right $S$-module, since $S$ is a Dedekind domain.

\end{proof}

It is not difficult to see that the proof also applies to projectives in $\mathcal{C}_{(S\otimes_RS_{\frak{Q}},  \Omega, G)}$.  

\begin{corollary}
Let $S, R$ and $\frak{Q}$ be as defined in Section 2 and let $Q$ be a projective module in $\mathcal{C}_{(S\otimes_RS_{\frak{Q}},  \Omega, G)}$.  Then $Hom_{S\otimes_RS_{\frak{Q}}}( Q, S\otimes_RS_{\frak{Q}})$
is an $S\otimes_RS_{\frak{Q}}$ module in $\mathcal{C}_{(S\otimes_RS_{\frak{Q}},  \Omega, G)}$. 
\end{corollary}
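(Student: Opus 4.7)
The plan is to transcribe the proof of the preceding lemma verbatim, with $S\otimes_R S$ replaced by $S\otimes_R S_{\frak{Q}}$ throughout, and then check the three small places where the argument used a property of $S$ rather than a formal consequence.

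First I would set $M = Hom_{S\otimes_R S_{\frak{Q}}}(Q, S\otimes_R S_{\frak{Q}})$, which is an $S\otimes_R S_{\frak{Q}}$-module because $S\otimes_R S_{\frak{Q}}$ is commutative. Using the filtration of $S\otimes_R S_{\frak{Q}}$ supplied in Example \ref{stensors} (which was noted to go through at the localization), I would define the candidate filtration
$$M^{g_i} := Hom_{S\otimes_R S_{\frak{Q}}}\bigl(Q, (S\otimes_R S_{\frak{Q}})^{g_i}\bigr), \qquad 0 \leq i \leq n,$$
so $M^{g_0} = M$ and $M^{g_n} = 0$.

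Next I would identify the successive quotients. Since the sequence
$$0 \to (S\otimes_R S_{\frak{Q}})^{g_i} \to (S\otimes_R S_{\frak{Q}})^{g_{i-1}} \to (S\otimes_R S_{\frak{Q}})^{g_{i-1}}/(S\otimes_R S_{\frak{Q}})^{g_i} \to 0$$
is sheaf exact in $\mathcal{C}_{(S\otimes_R S_{\frak{Q}},\Omega,G)}$ by construction of the filtration, and $Q$ is projective in this category, applying $Hom_{S\otimes_R S_{\frak{Q}}}(Q,-)$ produces a short exact sequence of $S\otimes_R S_{\frak{Q}}$-modules. This yields $M^{g_{i-1}}/M^{g_i} \cong Hom_{S\otimes_R S_{\frak{Q}}}\bigl(Q, (S\otimes_R S_{\frak{Q}})^{g_{i-1}}/(S\otimes_R S_{\frak{Q}})^{g_i}\bigr)$.

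For the left action, exactly as in the previous lemma: for $f$ in the Hom group above and $q\in Q$, the value $f(q)$ lies in the $(g_i)$-quotient, on which left multiplication by $s_1\otimes s_2$ agrees with multiplication by $1\otimes \sigma_{g_i}(s_1)s_2$, so the same identity passes to $f$ pointwise. Finally, for the finite generation and projectivity as a right $S_{\frak{Q}}$-module, $(S\otimes_R S_{\frak{Q}})^{g_{i-1}}/(S\otimes_R S_{\frak{Q}})^{g_i}$ is a finitely generated projective right $S_{\frak{Q}}$-module (as a submodule of $S_{\frak{Q}}[\sigma_{g_i}]$ via the lifted $\phi_{g_i}$) and $Q^{g_{i-1}}/Q^{g_i}$ is likewise finitely generated and projective on the right, so $Hom_{S\otimes_R S_{\frak{Q}}}$ of the former into the latter embeds into a finitely generated free $S_{\frak{Q}}$-module; since $S_{\frak{Q}}$ is a localization of the Dedekind domain $S$ at a prime (so in particular a PID), every submodule is itself finitely generated and projective. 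There is no real obstacle here: the only substantive input is the projectivity of $Q$ in the localized category, which turns the relevant $Hom$-functor into an exact functor, and the Dedekind/DVR property of $S_{\frak{Q}}$, which is strictly stronger than what was needed in the un-localized lemma.
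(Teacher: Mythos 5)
Your proposal is correct and essentially reproduces the paper's reasoning, which for this corollary simply asserts that the proof of the preceding lemma carries over verbatim to the localization $S\otimes_RS_{\frak{Q}}$. There is one small wording slip near the end: the relevant group is $Hom_{S\otimes_RS_{\frak{Q}}}\bigl(Q, (S\otimes_RS_{\frak{Q}})^{g_{i-1}}/(S\otimes_RS_{\frak{Q}})^{g_i}\bigr)$, not a $Hom$ between the two quotient modules you name, and the input actually used is that $Q$ itself is finitely generated projective as a right $S_{\frak{Q}}$-module (which holds because its filtration quotients are) — but the conclusion you draw, that this $Hom$ group is a submodule of a finitely generated free $S_{\frak{Q}}$-module and hence finitely generated projective since $S_{\frak{Q}}$ is a DVR, is exactly right.
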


We are left with the task of showing that $Hom_{S\otimes_RS}( Q, S\otimes_RS)$ is projective in 
$\mathcal{C}_{(S\otimes_RS,  \Omega, G)}$ when $Q$ is projective in $\mathcal{C}_{(S\otimes_RS,  \Omega, G)}$. We will use a local-global argument. 
We first   prove a series of  lemmas that allow us to reduce the local case to consideration of the category
$\mathcal{C}_{(S\otimes_{S_E}S_{\frak{Q}}, \Omega_1, E)}$, where $\Omega_1$ is a poset giving an ordering on the indices of $E$.

\begin{lemma} \label{mapsxi} Let $S, R, \frak{Q}$ be as defined in section 2,  and $x_1, x_2, \dots , x_m$ be the orthogonal idempotents in $S\otimes_RS_{\frak{Q}}$  defined above. 
If $M$ and $N$ are $S\otimes_RS_{\frak{Q}}$ modules, then 

$$Hom_{S\otimes_RS_{\frak{Q}}}(x_jM, x_kN) = 0, \ \hbox{if} \ j \not= k.$$ 

and $$Hom_{S\otimes_RS_{\frak{Q}}}(M, N)  = \oplus_{i = 1}^mHom_{S\otimes_RS_{\frak{Q}}}(x_iM, x_iN). $$
Also $Hom_{S\otimes_RS_{\frak{Q}}}(x_iM, x_iN)  = Hom_{(S\otimes_RS_{\frak{Q}})x_i}(x_iM, x_iM) $. 
\end{lemma}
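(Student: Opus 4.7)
The plan is to exploit three basic facts about the idempotents $x_1,\dots,x_m$: that they are pairwise orthogonal ($x_ix_j=0$ for $i\neq j$), that each $x_i$ acts as the identity on $x_iM$ and $x_iN$, and that $x_1+x_2+\cdots+x_m=1$ in $S\otimes_RS_{\frak{Q}}$. The last identity is immediate from Corollary \ref{corstar}, since the direct sum decomposition $S\otimes_RS_{\frak{Q}}=\oplus_{i=1}^m(S\otimes_RS_{\frak{Q}})x_i$ forces the unique expansion $1=\sum e_i$ with $e_i\in(S\otimes_RS_{\frak{Q}})x_i$, and multiplying by $x_j$ and using orthogonality yields $x_j=e_j$.

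For the vanishing statement $\mathrm{Hom}_{S\otimes_RS_{\frak{Q}}}(x_jM,x_kN)=0$ when $j\neq k$, I would take any $f$ in this Hom-set and any $m\in x_jM$ (so $m=x_jm$), then compute $f(m)=f(x_jm)=x_jf(m)$. Since the image lies in $x_kN$, I also have $f(m)=x_kf(m)$, and combining these with orthogonality gives $f(m)=x_jx_kf(m)=0$. For the decomposition of $\mathrm{Hom}_{S\otimes_RS_{\frak{Q}}}(M,N)$, I would use $1=\sum x_i$ to write $M=\oplus_i x_iM$ and $N=\oplus_i x_iN$ as $S\otimes_RS_{\frak{Q}}$-modules, and then the Hom splits as a double direct sum indexed by pairs $(i,j)$; the off-diagonal terms vanish by the first part, leaving only the diagonal pieces.

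For the last identification $\mathrm{Hom}_{S\otimes_RS_{\frak{Q}}}(x_iM,x_iN)=\mathrm{Hom}_{(S\otimes_RS_{\frak{Q}})x_i}(x_iM,x_iN)$ (the statement in the excerpt has an obvious typo, writing $x_iM$ for the second argument), the two Hom-sets agree automatically: since $x_i$ acts as the identity on both $x_iM$ and $x_iN$, any $S\otimes_RS_{\frak{Q}}$-linear map is automatically $(S\otimes_RS_{\frak{Q}})x_i$-linear, and conversely, for $r\in S\otimes_RS_{\frak{Q}}$ and $m\in x_iM$, we have $rm=(rx_i)m$ and $rf(m)=rx_if(m)$, so an $(S\otimes_RS_{\frak{Q}})x_i$-linear map is $S\otimes_RS_{\frak{Q}}$-linear.

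There is no significant obstacle in this argument; it is formal manipulation of idempotents and their action on modules, once the decomposition $1=\sum x_i$ is established. The slightly delicate point is verifying that identity, which rests squarely on the uniqueness of the decomposition from Corollary \ref{corstar}, but even this is entirely routine.
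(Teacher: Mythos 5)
Your proof is correct and follows essentially the same route as the paper: vanishing via orthogonality of idempotents, the Hom decomposition from $M=\oplus_i x_iM$ and $N=\oplus_i x_iN$, and checking both inclusions for the last identification. You are also right that $x_iM$ in the second argument of the final Hom is a typo for $x_iN$; the paper's own proof (which writes $Hom(x_iM, x_iM)$ throughout that step) carries the same slip. Your explicit justification of $1=\sum x_i$ from Corollary \ref{corstar} fills in a step the paper takes for granted, but adds nothing that changes the substance of the argument.
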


\begin{proof}   
Let $f \in Hom_{S\otimes_RS_{\frak{Q}}}(x_jM, x_kN) $ for $1 \leq i, j \leq n$.  Let    $m \in M$ with $f(x_jm) = x_kn$ for some $n \in N$.  Then $f(x_jm) = f(x_jx_jm) = x_jx_kn = 0$ if $j \not= k$. 
Hence $Hom_{S\otimes_RS_{\frak{Q}}}(x_jM, x_kN) = 0, \ \hbox{if} \ j \not= k$ and 
$Hom_{S\otimes_RS_{\frak{Q}}}(M, N)  = \oplus_{i = 1}^mHom_{S\otimes_RS_{\frak{Q}}}(x_iM, x_iN)$.  

If  $g \in Hom_{S\otimes_RS_{\frak{Q}}}(x_iM, x_iM)$, then  we have 
 $g \in 
  Hom_{(S\otimes_RS_{\frak{Q}})x_i}(x_iM, x_iM) $ automatically. On the other hand if 
 we start with  
  $g \in Hom_{(S\otimes_RS_{\frak{Q}})x_i}(x_iM, x_iM) )$, then if 
 $x \in S\otimes_RS_{\frak{Q}}$ and $x_im \in x_iM$, we have 
 $g(xx_im) = g(xx_ix_im) = xx_ig(x_im) = xg(x_im)$. Hence $g \in Hom_{S\otimes_RS_{\frak{Q}}}(x_iM, x_iM)$. 

\end{proof}

\begin{lemma} \label{subcat} Let Let $S, R, \frak{Q}, G = \{\sigma_{g_1}, \sigma_{g_2}, \dots , \sigma_{g_n}\}$ and $\Omega$  be as defined at the begining of this section.   Let $Y\subset G$, with $Y = \{\sigma_{y_1}, \sigma_{y_2}, \dots , \sigma_{y_k}\}$, where $\Omega|_Y$ gives
$y_1 < y_2 < \dots < y_k$. Then $\mathcal{C}_{(S\otimes_RS_{\frak{Q}}, \Omega|_Y,  Y)} $ is a full subcategory of 
$\mathcal{C}_{(S\otimes_RS_{\frak{Q}}, \Omega,  G)} $. If $0 \to N \to M \to P \to 0$ is a short exact sequence of $S\otimes_RS_{\frak{Q}}$-modules  in  
$\mathcal{C}_{(S\otimes_RS_{\frak{Q}}, \Omega|_Y,  Y)} $, then it is a sheaf exact sequence in 
$\mathcal{C}_{(S\otimes_RS_{\frak{Q}}, \Omega|_Y,  Y)} $ if and only if  it is sheaf exact in $\mathcal{C}_{(S\otimes_RS_{\frak{Q}}, \Omega,  G)} $. 
\end{lemma}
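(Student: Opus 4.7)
The plan is to unpack both categories as full subcategories of $S\otimes_RS_{\frak{Q}}$-mod and then compare their filtrations, handling the inclusion on objects, fullness on morphisms, and equivalence of sheaf exactness in turn.

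First I would handle the inclusion on objects. Given $M \in \mathcal{C}_{(S\otimes_RS_{\frak{Q}}, \Omega|_Y, Y)}$ with filtration $M = M^{y_0} \supseteq M^{y_1} \supseteq \cdots \supseteq M^{y_k} = 0$, write $y_i = g_{j_i}$ with $j_1 < j_2 < \cdots < j_k$ in $\Omega$, and extend to an $\Omega$-indexed filtration by repeating terms. Specifically, for $0 \leq j \leq n$ put $M^{g_j} = M^{y_{i(j)}}$ where $i(j) = |\{l : j_l \leq j\}|$. If $g_{j+1} \notin Y$ then $i(j+1) = i(j)$, so $M^{g_j}/M^{g_{j+1}} = 0$ trivially meets the filtration requirements. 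If $g_{j+1} = y_l \in Y$ then $i(j+1) = l$ and $i(j) = l-1$, so $M^{g_j}/M^{g_{j+1}} = M^{y_{l-1}}/M^{y_l}$ is finitely generated and projective as a right $S_{\frak{Q}}$-module with left $S$-action via $\sigma_{y_l} = \sigma_{g_{j+1}}$, exactly as demanded. Thus $M$ lies in $\mathcal{C}_{(S\otimes_RS_{\frak{Q}}, \Omega, G)}$.

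Fullness on morphisms is automatic, because both $\mathcal{C}_{(S\otimes_RS_{\frak{Q}}, \Omega|_Y, Y)}$ and $\mathcal{C}_{(S\otimes_RS_{\frak{Q}}, \Omega, G)}$ are by definition full subcategories of $S\otimes_RS_{\frak{Q}}$-mod, so in both settings a morphism is just an $S\otimes_RS_{\frak{Q}}$-module homomorphism.

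For the equivalence of sheaf exactness, I would invoke uniqueness of filtrations: Lemma \ref{maps} applied to the identity morphism shows that the $\Omega$-filtration of $M$ in the larger category is unique and therefore coincides with the filtration constructed above (and similarly for $N$ and $P$). It follows that the family $\{M^{g_j}\}_{j=0}^n$ of submodules appearing in the $\Omega$-filtration equals the family $\{M^{y_i}\}_{i=0}^k$ (each $M^{y_i}$ is merely repeated, with obvious conventions at the endpoints), and likewise for $N$ and $P$. The sheaf-exactness condition that each $0 \to N^{g_j} \to M^{g_j} \to P^{g_j} \to 0$ be exact therefore reduces, after removing duplicates, to the condition that each $0 \to N^{y_i} \to M^{y_i} \to P^{y_i} \to 0$ be exact, which is sheaf exactness in $\mathcal{C}_{(S\otimes_RS_{\frak{Q}}, \Omega|_Y, Y)}$. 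The lemma is essentially a bookkeeping statement; the one point requiring care is the appeal to uniqueness of the $\Omega$-filtration, which is what allows us to identify the two filtrations up to repetition rather than merely asserting compatibility.
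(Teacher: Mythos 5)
Your proposal matches the paper's argument: both extend the $\Omega|_Y$-filtration to an $\Omega$-filtration by repeating each term $M^{y_{i}}$ over the intervening indices of $G\setminus Y$, and both observe that the sheaf-exactness condition over $\Omega$ then collapses, after deleting duplicates, to the condition over $\Omega|_Y$. Your version is actually slightly more careful than the paper's, since you explicitly invoke Lemma~\ref{maps} to identify the constructed filtration with the unique $\Omega$-filtration (a step the paper leaves implicit), spell out the quotient verification, note fullness on morphisms, and address both directions of the iff, whereas the paper only explicitly treats one direction.
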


\begin{proof}  Let $M$ be a module in $\mathcal{C}_{(S\otimes_RS_{\frak{Q}}, \Omega|_Y, Y)} $ and let
$$M = M^{y_0} \supseteq M^{y_1} \supseteq M^{y_2} \supseteq \dots \supseteq M^{y_k} = 0 $$
be a filtration for $M$ in $\mathcal{C}_{(S\otimes_RS_{\frak{Q}}, \Omega|_Y, Y)} $.   If $g_t < y_1$, then we let $M^{g_t} = M$. 
 Otherwise for $t = 1, 2, \dots , n$, let  $M^{g_t} = M^{y_{i_t}}$, where $y_{i_t}$ is the largest element of $Y$ which is less than or equal to $g_t$. It is easy to see that this gives a suitable filtration on $M$ as a module in the category $\mathcal{C}_{(S\otimes_RS_{\frak{Q}}, \Omega, G)} $ satisfying the necessary conditions on quotients. It is also not difficult to see that if $0 \to N \to M \to P \to 0$ is a sheaf exact sequence in $\mathcal{C}_{(S\otimes_RS_{\frak{Q}}, \Omega|_Y, Y)} $, then it is sheaf exact in 
$\mathcal{C}_{(S\otimes_RS_{\frak{Q}}, \Omega, G)} $, since every subsequence $0 \to M^{g_t} \to N^{g_t} \to P^{g_t} \to 0$ is in fact a subsequence of the form  $0 \to M^{y_{i_t}} \to N^{y_{i_t}} \to P^{y_{i_t}} \to 0$ and hence is exact. 
\end{proof}

 \begin{lemma} Let $L, K, S,  R, \frak{Q}, E = E(\frak{Q} | \frak{P} )$ and $S_E$  be as defined in section 2. Let $\Omega_1$ be a poset ordering the indices of  $E = 
 \{\sigma_{e_1}, \sigma_{e_2}, \dots , \sigma_{e_{|E|}}\}$, such that $e_1 < e_2 < \dots < e_{|E|}$.  Then 
 $$\mathcal{C}_{(S\otimes_RS_{\frak{Q}}, \Omega_1, E)}  = \mathcal{C}_{(S\otimes_{S_E}S_{\frak{Q}}, \Omega_1, E)} .$$
 \end{lemma}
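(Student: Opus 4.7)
The plan is to prove the two inclusions separately, with the nontrivial direction relying on the idempotent decomposition from Corollary \ref{corstar}.

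The inclusion $\mathcal{C}_{(S\otimes_{S_E}S_{\frak{Q}}, \Omega_1, E)} \subseteq \mathcal{C}_{(S\otimes_RS_{\frak{Q}}, \Omega_1, E)}$ is straightforward. Since $R \subseteq S_E$, there is a canonical surjection of rings $S\otimes_R S_{\frak{Q}} \twoheadrightarrow S\otimes_{S_E} S_{\frak{Q}}$, so any $S\otimes_{S_E}S_{\frak{Q}}$-module is automatically an $S\otimes_R S_{\frak{Q}}$-module by restriction of scalars. The condition on the filtration in Definition \ref{Cat} involves only the action of $u\in S$ and the scalar action on the right by $S_{\frak{Q}}$, both of which are unchanged by the restriction, so objects and filtrations transfer directly. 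The morphism sets agree since any $S\otimes_R S_{\frak{Q}}$-linear map between such modules is tautologically $S\otimes_{S_E}S_{\frak{Q}}$-linear.

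For the reverse inclusion, I would take $M$ in $\mathcal{C}_{(S\otimes_RS_{\frak{Q}}, \Omega_1, E)}$ with filtration $M=M^{e_0}\supseteq\cdots\supseteq M^{e_{|E|}}=0$ and show that the $S\otimes_RS_{\frak{Q}}$-action on $M$ factors through the quotient $S\otimes_{S_E}S_{\frak{Q}}$. Invoke the orthogonal idempotents $\{x_1,\dots,x_m\}$ of Corollary \ref{corstar}, where $\sigma_{x_1}=\mathrm{id}$ so that $E\sigma_{x_1}=E$. The central point is to verify that $x_jM=0$ for each $j\neq 1$. On each quotient $M^{(e_i)}=M^{e_{i-1}}/M^{e_i}$, the left $S$-action is given by $sm=m\sigma_{e_i}(s)$, so the induced action of $s_1\otimes s_2\in S\otimes_RS_{\frak{Q}}$ coincides with multiplication by $\phi_{e_i}(s_1\otimes s_2)=\sigma_{e_i}(s_1)s_2$. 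Consequently $x_j$ acts on $M^{(e_i)}$ as the scalar $\phi_{e_i}(x_j)$, which by the defining property of $x_j$ equals $0$ whenever $\sigma_{e_i}\notin E\sigma_{x_j}$. Since $\sigma_{e_i}\in E=E\sigma_{x_1}$ and the cosets are disjoint, $\sigma_{e_i}\notin E\sigma_{x_j}$ for $j\neq 1$ and every $i$; thus $x_j$ kills every graded piece.

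This implies $x_jM^{e_{i-1}}\subseteq M^{e_i}$ for each $i$, so iterating gives $x_j^{|E|}M\subseteq M^{e_{|E|}}=0$. Here is the crucial observation and the main subtlety of the argument: because $x_j$ is idempotent, $x_j=x_j^{|E|}$, and therefore $x_jM=0$ after all. Summing over $j\neq 1$ and using $\sum_i x_i=1$ in $S\otimes_RS_{\frak{Q}}$ yields $M=x_1M$, so $M$ is a module over the subring $(S\otimes_RS_{\frak{Q}})x_1$. By Corollary \ref{corstar} this subring is canonically isomorphic to $S\otimes_{S_E}S_{\frak{Q}}$, exhibiting $M$ as an $S\otimes_{S_E}S_{\frak{Q}}$-module. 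The filtration and quotient conditions remain exactly as required by Definition \ref{Cat} for $\mathcal{C}_{(S\otimes_{S_E}S_{\frak{Q}}, \Omega_1, E)}$, and morphisms agree for the reason noted above.

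The step I expect to be trickiest to present cleanly is the idempotent-plus-nilpotency trick in the third paragraph: showing $x_jM=0$ from the facts that $x_j$ is idempotent and acts as zero on each successive quotient. A direct induction on the filtration only yields $x_j^{|E|}M=0$, and it is essential to exploit $x_j^2=x_j$ to upgrade nilpotency to outright annihilation.
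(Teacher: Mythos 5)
Your proof is correct, and it takes a genuinely different route from the paper's.

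The paper's own argument avoids the idempotent decomposition entirely. It embeds $M \hookrightarrow M\otimes_R L$ (using that $M$ is $R$-projective) and invokes a decomposition from \cite{Dyer}[Section~1.11]: $M\otimes_R L = \oplus_{\omega}M^{\omega}_L$ as $S\otimes_RL$-modules, with $sm = m\sigma_{\omega}(s)$ on each summand. Since every $\sigma_\omega$ involved lies in $E$ and hence fixes $S_E$, one reads off $sm = ms$ for all $s \in S_E$ directly on the embedded copy of $M$, so the action factors through $S\otimes_{S_E}S_{\frak{Q}}$. Your argument instead stays entirely within the paper's local machinery: you show that each idempotent $x_j$, $j\neq 1$, from Corollary~\ref{corstar} acts as zero on every successive quotient $M^{(e_i)}$ (because $\phi_{e_i}(x_j)=0$ when $\sigma_{e_i}\in E = E\sigma_{x_1}$ and the cosets are disjoint), hence drops the filtration by one step, hence is nilpotent of index $\le |E|$ on $M$, and then the idempotency $x_j^2 = x_j$ upgrades nilpotency to $x_jM=0$. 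Both proofs hit the same intermediate goal — that the $S\otimes_RS_{\frak{Q}}$-action on $M$ descends to $S\otimes_{S_E}S_{\frak{Q}}$ — but the paper's goes through a global tensor-up-to-$L$ lemma imported from Dyer, while yours is self-contained, using only Corollary~\ref{corstar} and the elementary observation that an idempotent cannot be a nonzero nilpotent. Your route is arguably preferable in this paper since the necessary idempotent infrastructure has already been built; the paper's route is shorter on the page because it outsources the key step to an external reference and does not need the $\frak{Q}$-local decomposition. One small thing worth making explicit in a polished write-up: the equality of the two categories also requires noting that the filtrations (and hence the sheaf-exact sequences) coincide, which you gesture at but do not spell out; both directions of this are immediate once the module structures are identified, as the paper notes.
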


\begin{proof} Let $M \in \mathcal{C}_{(S\otimes_RS_{\frak{Q}},  \Omega_1,  E)}$ with filtration
$$M = M^{e_0} \supseteq M^{e_1} \supseteq M^{e_2} \supseteq \dots \supseteq M^{e_{|E|}} = 0.$$
By \cite{ Dyer}[Section 1.11] , we have a one to one  inclusion $M \to M\otimes_RL$, since $M$ is projective as an $R$ module. 
Also  we have $M\otimes_RL$ has a direct sum decomposition as an
$S\otimes_RL$ module of the form $M\otimes_RL = \oplus_{\omega \in \Omega_1}M^{\omega}_L$
such that each $M^{\omega}_L$ is an $L$ vector space and $sm = m\sigma_{\omega}(s)$ for each $s \in S$.  
Since $\sigma_{\omega}(s) = s$ for each $\sigma_{\omega} \in E$ and each $s \in S_E$,  by considering  the 
inclusion  of $M$ into $M\otimes_RL$, we see that $sm = ms$ for all $s \in S_E$. Hence we can view $M$ as an $S\otimes_{S_E}S_{\frak{Q}}$ module. We also see that the quotients of the filtration given above  have the appropriate properties to give $M$ the structure of a module in $\mathcal{C}_{(S\otimes_{S_E}S_{\frak{Q}},  \Omega_1,  E)}$.  It is also obvious that a sheaf  exact sequence in $\mathcal{C}_{(S\otimes_{R}S_{\frak{Q}},  \Omega_1,  E)}$ is also sheaf exact in $\mathcal{C}_{(S\otimes_{S_E}S_{\frak{Q}},  \Omega_1,  E)}$.  On the other hand any module in $\mathcal{C}_{(S\otimes_{S_E}S_{\frak{Q}},  \Omega_1,  E)}$ is automatically an $S\otimes_RS$ module with a filtration having the appropriate properties and any sheaf exact sequence in $\mathcal{C}_{(S\otimes_{S_E}S_{\frak{Q}},  \Omega_1,  E)}$ is also sheaf exact in $\mathcal{C}_{(S\otimes_{R}S_{\frak{Q}},  \Omega_1,  E)}$.  Hence the categories are equal. 
\end{proof}

\begin{lemma} \label{form}  Let $L, K, S,  R, \frak{Q}, G = \{\sigma_{g_1}, \sigma_{g_2}, \dots , \sigma_{g_n}\}, $  and $E = E(\frak{Q} | \frak{P})$ be as defined in  section 2. Let $\Omega$  be as defined at the beginning of this section. Let $\{\sigma_{x_1}, \sigma_{x_2}, \cdots , \sigma_{x_m}\}$ be a set of right coset representatives of $E$ in $G$ and let  $x_1, x_2, \dots , x_m$ be the corresponding  orthogonal idempotents of $S\otimes_RS_{\frak{Q}}$ defined at the beginning of this section.     Let $Z_i = E\sigma_{x_i} = \{\sigma_{e_1}\sigma_{x_i} = \sigma_{z_{i_1}}, 
 \sigma_{e_2}\sigma_{x_i} = \sigma_{z_{i_2}},  \dots ,  \sigma_{e_{|E|}}\sigma_{x_i} = \sigma_{z_{i_{|E|}}}\}$.  Let us assume that the indices of $E$ are labelled so that 
 $\Omega|_{Z_i}$ give the total ordering  $z_{i_1}  <  z_{i_2}  <  \dots  <  z_{i_{|E|}}$ on the indices of $Z_i$.  The category $\mathcal{C}_{(S\otimes_RS_{\frak{Q}}, \Omega|_{Z_i}, Z_i)}$ is a full subcategory of $\mathcal{C}_{(S\otimes_RS_{\frak{Q}}, \Omega, G)}$. 
Let $N$ be an $S\otimes_RS_{\frak{Q}}$ module in $\mathcal{C}_{(S\otimes_RS_{\frak{Q}}, \Omega, G)}$, then $N$ is in $\mathcal{C}_{(S\otimes_RS_{\frak{Q}}, \Omega|_{Z_i}, Z_i)}$ if and only if $N = (S\otimes_RS_{\frak{Q}})x_iM$ for some module, $M$, in $\mathcal{C}_{(S\otimes_RS_{\frak{Q}}, \Omega, G)}$ or equivalently if and only if $x_in = n$ for every $n \in N$. 
\end{lemma}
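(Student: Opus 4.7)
The plan is as follows. The first assertion — that $\mathcal{C}_{(S\otimes_RS_{\frak{Q}}, \Omega|_{Z_i}, Z_i)}$ is a full subcategory of $\mathcal{C}_{(S\otimes_RS_{\frak{Q}}, \Omega, G)}$ — will follow immediately by applying Lemma \ref{subcat} to the subset $Y = Z_i$ of $G$, since $\Omega|_{Z_i}$ is by definition the restriction of $\Omega$ to the indices of $Z_i$.

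For the second assertion the key computation I would establish first is the following. On any $S\otimes_R S_{\frak{Q}}$-module $V$ whose right $S$-structure has left action of the form $sv = v\sigma_{g_t}(s)$, the action of any element $z \in S\otimes_R S_{\frak{Q}}$ is by scalar multiplication through $\phi_{g_t}$: indeed $(s_1\otimes s_2)v = v s_2\sigma_{g_t}(s_1) = v\,\phi_{g_t}(s_1\otimes s_2)$, and the claim extends by linearity. Specializing to $z = x_i$ and invoking the defining property of the idempotents in Corollary \ref{corstar}, we see that $x_i$ acts on such a $V$ as the identity when $\sigma_{g_t} \in E\sigma_{x_i} = Z_i$ and as zero when $\sigma_{g_t}\notin Z_i$.

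Next I would apply this computation inductively along the canonical filtration of an object $N$ of $\mathcal{C}_{(S\otimes_RS_{\frak{Q}}, \Omega, G)}$. If $N \in \mathcal{C}_{(S\otimes_RS_{\frak{Q}}, \Omega|_{Z_i}, Z_i)}$, each subquotient in the smaller filtration is of twist-type $\sigma_{g_t}$ with $\sigma_{g_t}\in Z_i$, so $x_i$ acts as the identity on every quotient and hence, by induction up the filtration, on all of $N$; thus $x_i n = n$ for every $n \in N$. Conversely, if $x_i n = n$ for all $n$, the same identity passes to every subquotient of the full $G$-filtration. For indices $g_t$ with $\sigma_{g_t}\notin Z_i$ the idempotent $x_i$ must then act both as the identity and, by the key computation, as zero on $N^{g_{t-1}}/N^{g_t}$, forcing $N^{g_{t-1}} = N^{g_t}$. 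Collapsing these trivial steps produces a filtration of $N$ whose nonzero successive quotients are indexed precisely by the elements of $Z_i$ in the order $\Omega|_{Z_i}$, placing $N$ in $\mathcal{C}_{(S\otimes_RS_{\frak{Q}}, \Omega|_{Z_i}, Z_i)}$.

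Finally, the equivalence of the two algebraic descriptions is elementary. If $x_in = n$ for every $n$, take $M = N$: then $N = x_iN \subseteq (S\otimes_RS_{\frak{Q}})x_iN \subseteq N$, so $N = (S\otimes_RS_{\frak{Q}})x_iM$. Conversely, using commutativity of $S\otimes_RS_{\frak{Q}}$ and the fact that $x_i^2 = x_i$, any $n = zx_im \in (S\otimes_RS_{\frak{Q}})x_iM$ satisfies $x_in = z x_i^2 m = zx_im = n$. The main obstacle I anticipate is the filtration-collapsing step, where one must check that simply dropping the trivial steps yields a bona fide filtration of the type demanded by Definition \ref{Cat} in the restricted category; once the identity $x_i\cdot v = \phi_{g_t}(x_i)v$ on twisted quotients is in hand, however, this becomes routine bookkeeping with the orderings.
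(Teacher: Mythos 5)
Your proposal is correct and follows essentially the same route as the paper: both identify the key observation that on a filtration quotient of twist type $\sigma_{g_t}$, an element $z\in S\otimes_R S_{\frak{Q}}$ acts by the scalar $\phi_{g_t}(z)$, and both exploit the defining property $\phi_{g_t}(x_i)\in\{0,1\}$ from Corollary \ref{corstar} to force the collapse of the $G$-filtration to a $Z_i$-filtration.

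The one genuine (though mild) difference: the paper starts from the direct sum decomposition $M = x_1 M\oplus\cdots\oplus x_m M$ and invokes Dyer \cite{Dyer}[Lemma 2.6] on direct summands to know in advance that $x_iM$ carries a $G$-filtration, then shows the superfluous steps of that filtration vanish. You avoid the appeal to Dyer by running the argument directly on the hypothesis that $N$ is already an object of $\mathcal{C}_{(S\otimes_R S_{\frak{Q}},\Omega,G)}$, which the lemma explicitly assumes, and characterizing membership in the small category via the condition $x_in = n$. This is slightly more self-contained and sufficient for the statement as written. The paper's form is better adapted to the way the result is used elsewhere (namely, it shows $x_iM\in\mathcal{C}_{(S\otimes_RS_{\frak{Q}},\Omega|_{Z_i},Z_i)}$ for \emph{any} $M$ in the big category, not just for an $N$ one already knows is a module there), but for proving the lemma itself the two arguments are interchangeable. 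Your inductive step passing the identity action up the filtration ($x_i(x_in - n)=0$ together with $x_in-n\in N^{g_t}$ forces $x_in=n$) is sound, and the reduction of the $M$-form to the $x_in=n$ form via $x_i^2=x_i$ and commutativity matches the paper's closing remarks.
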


\begin{proof} 
That $\mathcal{C}_{(S\otimes_RS_{\frak{Q}}, \Omega|_{Z_i}, Z_i)}$ is a full subcategory of $\mathcal{C}_{(S\otimes_RS_{\frak{Q}}, \Omega, G)}$, follows from Lemma \ref{subcat}. 
Let $M$ be a module in $\mathcal{C}_{(S\otimes_RS_{\frak{Q}}, \Omega, G)}  $. Then
$M = x_1M \oplus x_2M \oplus \dots \oplus x_mM$ since $x_1, x_2, \dots , x_m$ are orthogonal idempotents in
$S\otimes_RS_{\frak{Q}}$.   Now $M_i = (S\otimes_RS_{\frak{Q}})x_iM = x_iM $ is a direct summand 
of a module in  $\mathcal{C}_{(S\otimes_RS_{\frak{Q}}, \Omega, G)}$ 
and it is not difficult to see (see  \cite{Dyer}[Lemma 2.6] ) that 
it has a filtration
$$M_i^{g_0} = M_i \supseteq M_i^{g_1} \supseteq M_i^{g_2} \supseteq \dots \supseteq M_i^{g_n} = 0.$$
I claim that $M_i^{g_j} = M_i^{g_{j+1}}$ if  $\sigma_{g_{i + 1}} \not\in Z_i$.  We have  that $x_im = m$ for all 
$m \in M_i$, since $x_i^2 = x_i$, hence it is enough to show that $x_i(M_i^{g_j}/M_i^{g_{j+1}}) = 0$, if $\sigma_{g_{j + 1}} \not\in Z_i$. Since $M_i^{g_j}/M_i^{g_{j+1}}$ is finitely generated and projective as a right $S_{\frak{Q}}$ module, it  is free as a right $S_{\frak{Q}}$ module, since $S_{\frak{Q}}$ is a local ring.  Hence $M_i^{g_j}/M_i^{g_{j+1}} \cong \oplus_l S_{\frak{Q}}[\sigma_{g_{ j + 1}}]_l$ and it is enough  to show that $x_iS_{\frak{Q}}[\sigma_{g_{j + 1}}] = 0$.
Let $x_i = \sum_k\alpha_k\otimes \beta_k \in S\otimes_RS_{\frak{Q}}$. If $s\in S_{\frak{Q}}[\sigma_{g_{j + 1}}]$, 
then $x_is = s(\sum_k\sigma_{g_{j + 1}}(\alpha_k)\beta_k) = s\phi_{g_{j + 1}}(x_i) = 0$, since $\sigma_{g_{j + 1}} \not\in Z_i$.  Hence we can view the filtration as a filtration of $M_i$ in $\mathcal{C}_{(S\otimes_RS_{\frak{Q}}, \Omega|_{Z_i}, Z_i)} $ :
$$M_i = M_i^{{z_0}} \supseteq M_i^{{z_1}} \supseteq \dots \supseteq M_i^{{z_{|E|}}} = 0.$$
The quotients inherit the required properties from the filtration in $\mathcal{C}_{(S\otimes_RS_{\frak{Q}}, \Omega, G)}$.  Thus if $M$ is an $S\otimes_RS_{\frak{Q}}$ module in $\mathcal{C}_{(S\otimes_RS_{\frak{Q}}, \Omega, G)}$,  then $(S\otimes_RS_{\frak{Q}})x_iM$ is an
$S\otimes_RS_{\frak{Q}}$ module in $M \in \mathcal{C}_{(S\otimes_RS_{\frak{P}}, \Omega|_{Z_i}, Z_i)} $.

On the other hand. given any $S\otimes_RS_{\frak{Q}}$ module,  $N$,  in $\mathcal{C}_{(S\otimes_RS_{\frak{Q}}, \Omega|_{Z_i}, Z_i)} $, $N$ is also in $\mathcal{C}_{(S\otimes_RS_{\frak{Q}}, \Omega, G)}$, since $\mathcal{C}_{(S\otimes_RS_{\frak{Q}}, \Omega|_{Z_i}, Z_i)} $ is a full subcategory of $\mathcal{C}_{(S\otimes_RS_{\frak{Q}}, \Omega, G)}$, by Lemma \ref{subcat}.
It has a filtration
$$N^{z_{i_0}} = N \supseteq N^{z_{i_1}} \supseteq N^{z_{i_2}} \supseteq \dots \supseteq N^{z_{i_n}} = 0.$$
For such a module, $x_jN = 0$ for $j \not= i$, since $x_j(N^{z_{i_k}}/N^{z_{i_{k+1}}}) = 0$ for each quotient. 
Hence $N = x_1N + x_2N + \dots + x_mN = x_iN$. 

It is easy to see that an $S\otimes_RS_{\frak{Q}}$ module, N,  of the form $N = x_iM$, where 
$M$ is an $S\otimes_RS_{\frak{Q}}$ module has the property that $x_in = n$ for all $n \in N$, since 
$x_i^2 = x_i$. Also it is obvious that if $N$ is an $S\otimes_RS_{\frak{Q} }$ module with the property that $x_in = n$ for all $n \in N$, then $N = x_iN$. 
 This finishes the proof of our Lemma.
\end{proof}

\begin{lemma} \label{equivalence}  Let $S, L, R, \frak{Q},  \frak{P}, \phi, \phi_{g_k},  G  = \{\sigma_{g_1}, \sigma_{g_2}, \dots , \sigma_{g_n}\}, $ and $E = E(\frak{Q} | \frak{P} )$ be as defined in section 2. let $\Omega$ be as defined at the beginning of this section. 
Let $\{\sigma_{x_1}, \sigma_{x_2}, \cdots , \sigma_{x_m}\}$ be a set of right coset representatives of $E$ in $G$, where  $\sigma_{x_1}$ is the identity in $G$. Let  $x_1, x_2, \dots , x_m$ be the  corresponding  orthogonal idempotents of $S\otimes_RS_{\frak{Q}}$ defined at the beginning of this section. 
Let $E = \{\sigma_{e_1}, \sigma_{e_2}, \dots , \sigma_{e_{|E|}}\}$  and let $Z_i$ be the right coset  $ E\sigma_{x_i} = \{\sigma_{e_1}\sigma_{x_i} = \sigma_{z_{i_1}}, 
\sigma_{e_2}\sigma_{x_i} = \sigma_{z_{i_2}}, \dots , \sigma_{e_{|E|}}\sigma_{x_i} = \sigma_{z_{i_{|E|}}}\}$.  Let us fix $i$ and assume that the indices of the homomorphisms in $E$ are labelled  such that $\Omega|_{Z_i}$ gives the total ordering on the indices of $Z_i$:  $z_{i_1} < z_{i_2} < \dots < z_{i_{|E|}}$. 
  Let $\Omega_1$ be a poset giving  the ordering, $e_1 < e_2 < \dots < e_{|E|}$  on the indices of the elements of $E$. Let $f_i$ be the homomorphism, 
$f_i : S\otimes_{S_E}S_{\frak{Q}} \to (S\otimes_RS_{\frak{Q}})x_i$
given by 
$$f_i(s_1 \otimes s_2) = (\sigma_{x_i}^{-1}(s_1)\otimes s_2)x_i$$
Then $f_i$ is an isomorphism of rings which   induces an
isomorphism  of stratified  exact  categories 
$$f_i^*:    \mathcal{C}_{(S\otimes_RS_{\frak{Q}}, \Omega|_{Z_i}, Z_i)}  \to  \mathcal{C}_{(S\otimes_{S_E}S_{\frak{Q}}, \Omega_1, E)}  .$$
\end{lemma}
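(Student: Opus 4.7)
The plan is to exhibit $f_i$ as a composition of two ring isomorphisms that have already been constructed in Section 3, then use the structural description of $\mathcal{C}_{(S\otimes_RS_{\frak{Q}}, \Omega|_{Z_i}, Z_i)}$ obtained in Lemma \ref{form} to reduce the categorical claim to a bookkeeping exercise about how $f_i$ transports the bimodule action.

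First I would observe that $f_i$ factors as $(\sigma_{x_i}^{-1}\otimes 1)\circ\gamma$, where $\gamma: S\otimes_{S_E}S_{\frak{Q}}\to (S\otimes_RS_{\frak{Q}})x_1$ is the ring isomorphism $\gamma(s_1\otimes s_2)=(s_1\otimes s_2)x_1$ from Lemma \ref{isomap}, and $\sigma_{x_i}^{-1}\otimes 1:(S\otimes_RS_{\frak{Q}})x_1\to (S\otimes_RS_{\frak{Q}})x_i$ is the ring isomorphism from Lemma \ref{componentiso} (using that $(\sigma_{x_i}^{-1}\otimes 1)(x_1)=x_i$). Applying the second map to $(s_1\otimes s_2)x_1$ yields $(\sigma_{x_i}^{-1}(s_1)\otimes s_2)x_i$, matching the stated formula. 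Since both factors are ring isomorphisms, so is $f_i$; in particular $f_i$ is well-defined on the balanced tensor product over $S_E$.

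Next I would use the isomorphism $f_i$ to pull back module structures. By Lemma \ref{form}, a module $M$ lies in $\mathcal{C}_{(S\otimes_RS_{\frak{Q}}, \Omega|_{Z_i}, Z_i)}$ precisely when $x_im=m$ for all $m\in M$, i.e.\ when $M$ is naturally an $(S\otimes_RS_{\frak{Q}})x_i$-module. Under the preceding unnumbered lemma, $\mathcal{C}_{(S\otimes_RS_{\frak{Q}},\Omega_1,E)} = \mathcal{C}_{(S\otimes_{S_E}S_{\frak{Q}},\Omega_1,E)}$, so the target category is just $S\otimes_{S_E}S_{\frak{Q}}$-modules with the appropriate filtration. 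The functor $f_i^*$ sends such an $M$ to the same underlying abelian group with the transported action $(s_1\otimes s_2)\cdot_{\mathrm{new}} m := f_i(s_1\otimes s_2)m = (\sigma_{x_i}^{-1}(s_1)\otimes s_2)\cdot_{\mathrm{old}} m$; its inverse is transport along $f_i^{-1}$. To confirm that the filtrations correspond, set $N^{e_k}:=M^{z_{i_k}}$. The right $S_{\frak{Q}}$-action is unchanged, so each quotient $N^{e_{k-1}}/N^{e_k}$ remains finitely generated and projective over $S_{\frak{Q}}$. For the left action on $N^{e_{k-1}}/N^{e_k}$, using $\sigma_{z_{i_k}}=\sigma_{e_k}\sigma_{x_i}$ and the defining property of the filtration on $M$ in $\mathcal{C}_{(S\otimes_RS_{\frak{Q}}, \Omega|_{Z_i}, Z_i)}$, for $s\in S$ and $m\in N^{e_{k-1}}/N^{e_k}$ one computes
\[
s\cdot_{\mathrm{new}} m \;=\; \sigma_{x_i}^{-1}(s)\cdot_{\mathrm{old}} m \;=\; m\,\sigma_{z_{i_k}}(\sigma_{x_i}^{-1}(s)) \;=\; m\,\sigma_{e_k}(s),
\]
which is exactly the left action required for a filtration in $\mathcal{C}_{(S\otimes_{S_E}S_{\frak{Q}},\Omega_1,E)}$. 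The same computation applied to $f_i^{-1}$ shows the reverse direction, so $f_i^*$ and $(f_i^{-1})^*$ give mutually inverse bijections on objects and morphisms that respect the filtrations.

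Finally, since sheaf exactness in either category is defined by exactness of the restrictions of a sequence to each step of the (unique) filtration, and the filtrations correspond step-by-step under $f_i^*$ (with exactness of each $0\to N^{e_k}\to\cdot\to\cdot\to 0$ equivalent to exactness of $0\to M^{z_{i_k}}\to\cdot\to\cdot\to 0$ as underlying abelian groups), the functor $f_i^*$ is an isomorphism of exact categories. The main obstacle in carrying this out is the careful accounting in the displayed computation above: one must keep straight that $f_i$ twists the \emph{left} $S$-action by $\sigma_{x_i}^{-1}$ while leaving the right $S_{\frak{Q}}$-action untouched, and invoke the indexing convention $\sigma_{z_{i_k}}=\sigma_{e_k}\sigma_{x_i}$ so that the reordering of $Z_i$ induced by $\Omega|_{Z_i}$ matches the ordering $e_1<e_2<\cdots<e_{|E|}$ on $E$.
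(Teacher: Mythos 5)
Your proof is correct and follows essentially the same route as the paper: factor $f_i = (\sigma_{x_i}^{-1}\otimes 1)\circ\gamma$ using Lemmas \ref{isomap} and \ref{componentiso}, then pull the filtration through the ring isomorphism and verify that twisting the left $S$-action by $\sigma_{x_i}^{-1}$ converts the required relation $sm = m\sigma_{z_{i_k}}(s)$ into $sm = m\sigma_{e_k}(s)$ via $\sigma_{z_{i_k}}=\sigma_{e_k}\sigma_{x_i}$, while the right $S_{\frak{Q}}$-action (and hence projectivity of quotients and sheaf exactness) is untouched. The only cosmetic difference is that the paper also spells out the inverse map $h_i$ and runs the filtration check explicitly in both directions, whereas you note the reverse is the same computation along $f_i^{-1}$; this is a harmless compression.
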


\begin{proof}  From Lemma \ref{isomap}, we have an isomorphism $\gamma: S\otimes_{S_E}S_{\frak{Q}} \to (S\otimes_RS_{\frak{Q}})x_1$, with  $\gamma(s_1\otimes s_2) = 
(s_1 \otimes s_2)x_1$.  We also have a ring isomorphism   $\sigma_{x_i}^{-1} \otimes 1 : (S\otimes_{R}S_{\frak{Q}})x_1 \to (S\otimes_{R}S_{\frak{Q}})x_i$ by Lemma \ref{componentiso},
since letting $x_1 = \sum_l\alpha_l\otimes \beta_l$ we see 
  that $(\sigma_{x_i}^{-1} \otimes 1)(x_1) = x_i$ as follows; 
{\small   $$\phi_{g_k}((\sigma_{x_i}^{-1} \otimes 1)(x_1) ) =  \phi_{g_k}(\sum_l(\sigma_{x_i}^{-1}(\alpha_l)) \otimes \beta_l) )
= \Sigma_l\sigma_{g_k}(\sigma_{x_i}^{-1}(\alpha_l))\beta_l = 
\Big\{\  \begin{matrix}
&1& \hbox{if} \  \sigma_k \in E\sigma_{x_i} \\
&0&   \mbox{otherwise} 
\end{matrix}.$$}
This gives us that $f_i$ is the composition of the above  maps, $f_i = (\sigma_{x_i}^{-1}\otimes 1)\circ \gamma$,  and hence is an isomorphism of rings. 

We now show that the inverse of $f_i$, is given by   $f_i^{-1} = h_i :  (S\otimes_RS_{\frak{Q}})x_i \to S\otimes_{S_E}S_{\frak{Q}}$ where $h_i$ is the restriction of the homomorphism from $S\otimes_RS_{\frak{Q}}$ to $S\otimes_{S_E}S_{\frak{Q}}$ which sends $s_1\otimes s_2$ to 
$ \sigma_{x_i}(s_1) \otimes s_2$.  We have $h_i((f_i(s_1 \otimes s_2)) = h_i(\sigma_{x_i}^{-1}(s_1) \otimes s_2)x_i) = (s_1 \otimes s_2)h_i(x_i)$. Since $(\sigma_{x_i}^{-1} \otimes 1)(x_1) = x_i$ we have $(\sigma_{x_i} \otimes 1)(x_i) = x_1$. Now $h_i(x_i) = \psi((\sigma_{x_i} \otimes 1)(x_i)) = \psi(x_1) =  1\otimes 1$, where $\psi = \gamma^{-1}$ is the isomorphism defined in Lemma \ref{isomap}.  Thus $h_if_i = id_{S\otimes_{S_E}S_{\frak{Q}}}$.  It is easy to see that $f_ih_i = id_{S\otimes_{R}S_{\frak{Q}}}x_i$, using the fact that $x_i^2 = x_i$.

Since $f_i$ is an isomorphism of rings,  we have  that $f_i$ induces  an equivalence of categories $f_i^*: (S\otimes_RS_{\frak{Q}})x_i-\hbox{Mod} \to 
S\otimes_{S_E}S_{\frak{Q}}-\hbox{Mod}$ with inverse $h_i^*$. 
Now if $M$ is a module in  $\mathcal{C}_{(S\otimes_RS_{\frak{Q}}, \Omega|_{Z_i}, Z_i)} $, 
then, by Lemma \ref{form},  $M$  is an 
$(S\otimes_RS_{\frak{Q}})x_i$ module with a filtration 
$$M^{z_0} = M \supseteq M^{z_1} \supseteq M^{z_2} \supseteq \dots \supseteq M^{z_{|E|}} = 0.$$
Consider the filtration 
$$f_i^*(M^{z_0}) = f_i^*(M) \supseteq f_i^*(M^{z_1}) \supseteq f_i^*(M^{z_2}) \supseteq \dots \supseteq f_i^*(M^{z_{|E|}}) = 0.$$
Let $ [m] = m +  f_i^*(M^{z_k}) $ be an element of $ f_i^*(M^{z_{k- 1}}) / f_i^*(M^{z_k}) $ for some $k, 1\leq k \leq |E|$.
Then for $s\otimes1 \in S\otimes_{S_E}S_{\frak{Q}}$, we have $(s\otimes1)[m] = f_i(s\otimes 1)m + f_i^*(M^{z_k}) = 
(\sigma_{x_i}^{-1}(s)\otimes 1)m +  f_i^*(M^{z_k}) = m\sigma_{e_k}(\sigma_{x_i}(\sigma_{x_i}^{-1}(s))) +  f_i^*(M^{z_k})  = [m]\sigma_{e_k}(s) $.  The quotient  $ f_i^*(M^{z_{k - 1}}) / f_i^*(M^{z_k}) $ is finitely generated and projective as a right 
$S_{\frak{Q}}$ module since the right action of $S_{\frak{Q}}$ is that on the quotient $ M^{z_{k - 1}} / M^{z_k} $.
 Thus letting $(f_i^*(M))^{e_k} = f_i^*(M^{z_k})$, we get a filtration of $f_i^*(M)$ with the necessary  properties 
 to show that $f_i^*(M) \in  \mathcal{C}_{(S\otimes_{S_E}S_{\frak{Q}}, \Omega_1, E)} $. 

On the other hand let  $N\in  \mathcal{C}_{(S\otimes_{S_E}S_{\frak{Q}}, \Omega_1, E)}$, with filtration 
$$N^{e_0} = N \supseteq N^{e_1} \supseteq N^{e_2} \supseteq \dots \supseteq N^{e_{|E|}} = 0.$$
 We have $h_i^*(N)  \in 
(S\otimes_RS_{\frak{Q}})x_i-\hbox{Mod} $.  Consider the filtration 
$$h_i^*(N^{e_0}) = h_i^*(N) \supseteq h_i^*(N^{e_1}) \supseteq h_i^*(N^{e_2}) \supseteq \dots \supseteq h_i^*(N^{e_{|E|}})  = 0.$$
Let $[n] = n + h_i^*(N^{e_k}) \in  h_i^*(N^{e_{k - 1}}) / h_i^*(N^{e_k}) $.  For $s\otimes 1 \in S\otimes_RS_{\frak{Q}}$ we have $(s\otimes 1)[n] = h_i(s\otimes 1)n + h_i^*(N^{e_k}) =   (\sigma_{x_i}(s)\otimes 1)n + h_i^*(N^{e_k}) = n\sigma_{e_k}\sigma_{x_i}(s) + h_i^*(N^{e_k}) = n\sigma_{z_k} + h_i^*(N^{e_k})$. Since the action of $S_{\frak{Q}}$ on the quotient from the right does not change, the quotient is finitely generated and projective as a right $S_{\frak{Q}}$ module. Hence this gives the necessary filtration to show that $h_i^*(N)$ is in $\mathcal{C}_{(S\otimes_RS_{\frak{Q}}, \Omega|_{Z_i}, Z_i)} $. 

Now since $f_i^*h_i^* = Id_{\mathcal{C}_{(S\otimes_{S_E}S_{\frak{Q}}, \Omega_1, E)}}$ and 
$h_i^*f_i^* = id_{\mathcal{C}_{(S\otimes_RS_{\frak{Q}}, \Omega|_{Z_i}, Z_i)} }$, we have that 
both $f^*$ and $h^*$ preserve exact sequences of modules.  Since 
$f_i^*(M^{z_k}) = (f_i^*(M))^{e_k}$  and $h_i^*(N^{e_k}) = (f_i^*(N))^{z_k}$, for $M \in 
\mathcal{C}_{(S\otimes_RS_{\frak{Q}}, \Omega|_{Z_i}, Z_i)}$ and $N \in \mathcal{C}_{(S\otimes_{S_E}S_{\frak{Q}}, \Omega_1, E)}$, $1 \leq k \leq |E|$, we can easily see that 
$f_i^*$ and $h_i^*$ also take stratified exact sequences to stratified exact sequences. 
Hence $f^*$ is an equivalence of stratified exact categories. 
\end{proof}

\begin{lemma} 
\label{reduce} Let $L, K, S, R, \frak{Q}, \frak{P}, G, E = E(\frak{Q}|\frak{P})$ and $\Omega$ be as indicated at the beginning of this section.  
Let $\sigma_{x_i}$ be a right coset representative for the right coset $Z_i = E\sigma_{x_i}$ of $E$ in 
$G$.  Let $Q_i$ be a projective module in 
 $\mathcal{C}_{(S\otimes_R S, \Omega|_{Z_i}, Z_i)}$. Then $Q_i$  is projective in
$\mathcal{C}_{(S\otimes_R S, \Omega, G)}$.  
\end{lemma}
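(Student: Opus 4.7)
Given that the preceding lemmas of this section (Corollary \ref{corstar}, Lemmas \ref{form} and \ref{equivalence}) are all formulated in the localization $S\otimes_RS_{\mathfrak{Q}}$, and that the unlocalized version of the statement is already false in simple examples (for instance when $S = \mathbb{Z}[\sqrt{d}]$ with $\mathfrak{Q}$ unramified and $Z_1 = \{\sigma_1\}$: the module $S[\sigma_1]$ is projective in the one-weight subcategory but $\mathrm{Ext}^1_{S\otimes_RS}(S[\sigma_1],S[\sigma_2]) \cong S/2\sqrt{d}\,S$ is nonzero, which obstructs projectivity in the larger category via Lemma \ref{extproj}), I read the statement as concerning the localized categories $\mathcal{C}_{(S\otimes_RS_{\mathfrak{Q}}, \Omega|_{Z_i}, Z_i)}$ and $\mathcal{C}_{(S\otimes_RS_{\mathfrak{Q}}, \Omega, G)}$, and work with $S\otimes_RS_{\mathfrak{Q}}$ throughout.

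The plan exploits the central orthogonal idempotent decomposition $S\otimes_RS_{\mathfrak{Q}} = \bigoplus_{j=1}^{m}(S\otimes_RS_{\mathfrak{Q}})x_j$ provided by Corollary \ref{corstar}, where the idempotent $x_i$ corresponds to the coset $Z_i = E\sigma_{x_i}$. By Lemma \ref{form}, the hypothesis that $Q_i$ lies in $\mathcal{C}_{(S\otimes_RS_{\mathfrak{Q}}, \Omega|_{Z_i}, Z_i)}$ is equivalent to $Q_i = x_iQ_i$. Given any sheaf-exact sequence $0\to M_1\to M_2\to M_3\to 0$ in $\mathcal{C}_{(S\otimes_RS_{\mathfrak{Q}}, \Omega, G)}$ and a morphism $f\colon Q_i\to M_3$, I would multiply through by the central idempotent $x_i$ to produce $0\to x_iM_1\to x_iM_2\to x_iM_3\to 0$. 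Since $x_i$ is central and idempotent in the commutative ring $S\otimes_RS_{\mathfrak{Q}}$, multiplication by $x_i$ is exact, so the resulting sequence is short exact, and Lemma \ref{form} places each term in $\mathcal{C}_{(S\otimes_RS_{\mathfrak{Q}}, \Omega|_{Z_i}, Z_i)}$. Because $f(Q_i) = f(x_iQ_i) = x_if(Q_i) \subseteq x_iM_3$, the morphism $f$ factors through $x_iM_3$, and the assumed projectivity of $Q_i$ in the subcategory yields a lift $\tilde f\colon Q_i\to x_iM_2$; composing with the inclusion $x_iM_2\hookrightarrow M_2$ produces the required lift in the larger category.

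The sole technical verification is that the $x_i$-multiplied sequence is not merely short exact but sheaf exact in the subcategory. This is the main obstacle, and I would handle it by identifying the subcategory filtration: following the proof of Lemma \ref{form}, the step $(x_iM_j)^{z_{i_k}}$ in the $Z_i$-filtration equals $x_iM_j^{g_\ell}$ whenever $g_\ell = z_{i_k}$ in the $\Omega$-ordering, since the larger-category filtration of $x_iM_j$ collapses at every step $g_\ell$ with $\sigma_{g_\ell}\notin Z_i$ (the corresponding quotients being annihilated by $x_i$, as shown in the proof of Lemma \ref{form}). Multiplying each original sheaf-exact subsequence $0\to M_1^{g_\ell}\to M_2^{g_\ell}\to M_3^{g_\ell}\to 0$ by the central idempotent $x_i$ preserves exactness, and reindexing yields sheaf exactness at every step $z_{i_k}$ of the subcategory filtration, completing the verification.
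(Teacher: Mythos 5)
Your proposal is correct and follows essentially the same approach as the paper's proof: multiply the sheaf exact sequence by the central idempotent $x_i$, use Lemma \ref{form} to recognize the resulting sequence as sheaf exact in $\mathcal{C}_{(S\otimes_RS_{\frak{Q}}, \Omega|_{Z_i}, Z_i)}$, and then invoke projectivity of $Q_i$ there (the paper closes by citing Lemma \ref{mapsxi} to identify $Hom(Q_i,M)$ with $Hom(Q_i,x_iM)$, while you factor $f$ through $x_iM_3$ and lift, which is the same computation). Your reading of the statement over $S\otimes_RS_{\frak{Q}}$ rather than $S\otimes_RS$ is also the correct interpretation, as the idempotents $x_i$ exist only in the localization and the paper's own proof works with $S\otimes_RS_{\frak{Q}}$ throughout.
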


\begin{proof}  By Lemma \ref{form}  we have $Q_i = x_iQ_i$ and By Lemma \ref{mapsxi},  
$Hom_{S\otimes_RS_{\frak{Q}}}(Q_i, M) = Hom(Q_i, x_iM)$ for every $S\otimes_RS_{\frak{Q}}$ module in $\mathcal{C}_{(S\otimes_RS_{\frak{Q}}, \Omega, G)}$.  Now let 
$$0 \to M_1 \to M_2 \to M_3 \to 0$$
be a sheaf  exact sequence of modules in $\mathcal{C}_{(S\otimes_R S, \Omega, G)}$. 
It is not difficult to show that if $M_k, \ \ k = 1, 2, 3$ has a filtration:
$$M_k = M_k^{g_0} \supseteq M_k^{g_1} \supseteq M_k^{g_2} \supseteq \dots \supseteq M_k^{g_n} = \{0\}, $$
in $\mathcal{C}_{(S\otimes_R S, \Omega, G)}$, 
then
$$x_iM_k = x_iM_k^{g_0} \supseteq x_iM_k^{g_1} \supseteq x_iM_k^{g_2} \supseteq \dots \supseteq x_iM_k^{g_n} = \{0\}, $$
is a filtration for $x_iM_k$ in $\mathcal{C}_{(S\otimes_R S, \Omega, G)}$.   Using this filtration, 
 Lemma \ref{mapsxi} and Lemma \ref{form}, we see that 
$$0 \to x_iM_1 \to x_iM_2 \to x_iM_3 \to 0$$
is a sheaf  exact sequence of $S\otimes_RS_{\frak{Q}}$ modules, in
$\mathcal{C}_{(S\otimes_R S, \Omega, G)}$ and in 
$\mathcal{C}_{(S\otimes_R S, \Omega|_{Z_i}, Z_i)}$.
By Lemma \ref{mapsxi}, the sequence of Abelian groups 
$$0 \to Hom_{S\otimes_RS_{Q}}(Q_i, M_1) \to Hom_{S\otimes_RS_{Q}}(Q_i, M_2) \to Hom_{S\otimes_RS_{Q}}(Q_i, M_3) \to 0$$
is the same  as  the sequence
{\small $$0 \to Hom_{S\otimes_RS_{Q}}(Q_i, x_iM_1) \to Hom_{S\otimes_RS_{Q}}(Q_i, x_iM_2) \to Hom_{S\otimes_RS_{Q}}(Q_i, x_iM_3) \to 0.$$}
The latter sequence  is exact as a sequence of abelian groups  because $Q_i$ is projective in the category 
$\mathcal{C}_{(S\otimes_R S, \Omega|_{Z_i}, Z_i)}$. Hence $Q_i$ is projective in the category 
$\mathcal{C}_{(S\otimes_R S, \Omega, G)}$. 
\end{proof}

\begin{theorem} \label{local} Let $L, K, S, R, \frak{Q}, \frak{P}, G, E = E(\frak{Q}|\frak{P})$    be as defined in section 2.        
Let  $\Omega$ be as indicated at the beginning of this section.  
Let $\{\sigma_{x_i}\}_{i = 1}^{m}$ be a set  of  right coset representatives   for the right cosets $\{Z_i = E\sigma_{x_i}\}_{i = 1}^m$ of $E$ in 
$G$. Let $\{x_i\}_{i = 1}^m$ be the corresponding idempotents described at the beginning of this section.  Let $Q$ be a projective module in the category $\mathcal{C}_{(S\otimes_RS_{\frak{Q}}, \Omega, G)}$, then $Hom_{S\otimes_RS_{\frak{Q}}}(Q, S\otimes_RS_{\frak{Q}})$ is also projective in the category 
 $\mathcal{C}_{(S\otimes_RS_{\frak{Q}}, \Omega, G)}$.
\end{theorem}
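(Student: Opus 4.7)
The plan is to use the idempotent decomposition of Corollary~\ref{corstar} to decompose $Q$ into pieces living in the ``coset subcategories'' $\mathcal{C}_{(S\otimes_RS_{\frak{Q}}, \Omega|_{Z_i}, Z_i)}$, transfer each piece through the equivalence of Lemma~\ref{equivalence} into the smaller category $\mathcal{C}_{(S\otimes_{S_E}S_{\frak{Q}}, \Omega_1, E)}$, and then verify the duality there by a direct computation exploiting the explicit structure $S\otimes_{S_E}S_{\frak{Q}} = S_E[\Pi]\otimes_{S_E}S_{\frak{Q}}$ from Lemma~\ref{structure}. First, form a projective generator $\tilde{P} = \bigoplus_{i=1}^{m}\tilde{P}_i$ of $\mathcal{C}_{(S\otimes_RS_{\frak{Q}}, \Omega, G)}$ by setting $\tilde{P}_i = (f_i^*)^{-1}\bigl(\bigoplus_{j=1}^{|E|} P_{E,\frak{Q},j}\bigr)$; by Lemma~\ref{reduce} each $\tilde{P}_i$ is projective in the ambient category, and since $x_j\tilde{P}_i = 0$ for $j\neq i$ by Lemma~\ref{form}, writing $Q$ as a direct summand of $\tilde{P}^n$ presents each $x_iQ$ as a direct summand of $\tilde{P}_i^n$, hence projective in $\mathcal{C}_{(S\otimes_RS_{\frak{Q}}, \Omega|_{Z_i}, Z_i)}$.

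Applying Lemma~\ref{mapsxi},
\[Hom_{S\otimes_RS_{\frak{Q}}}(Q, S\otimes_RS_{\frak{Q}}) = \bigoplus_{i=1}^{m} Hom_{(S\otimes_RS_{\frak{Q}})x_i}(x_iQ, (S\otimes_RS_{\frak{Q}})x_i),\]
and the equivalence $f_i^*$ identifies the $i$-th summand with $Hom_{S\otimes_{S_E}S_{\frak{Q}}}(f_i^*(x_iQ), S\otimes_{S_E}S_{\frak{Q}})$, where $f_i^*(x_iQ)$ is projective in $\mathcal{C}_{(S\otimes_{S_E}S_{\frak{Q}}, \Omega_1, E)}$. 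Since Lemma~\ref{reduce} transfers projectivity in the reduced category back across the equivalence to projectivity in the big category, the task reduces to showing that $Hom(Q', S\otimes_{S_E}S_{\frak{Q}})$ is projective in $\mathcal{C}_{(S\otimes_{S_E}S_{\frak{Q}}, \Omega_1, E)}$ for every projective $Q'$ there. Any such $Q'$ is a direct summand of some $\bigoplus_j P_{E,\frak{Q},j}^{n_j}$ by Corollary~\ref{PlusEQ}, and the contravariant functor $Hom(-, S\otimes_{S_E}S_{\frak{Q}})$ carries finite direct sums to direct sums and direct summands to direct summands, so it suffices to treat $Q' = P_{E,\frak{Q},i}$ for each $i$.

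In the concrete setting $S\otimes_{S_E}S_{\frak{Q}} = S_E[\Pi]\otimes_{S_E}S_{\frak{Q}}$, Lemma~\ref{basisP} (applied with $R,G$ replaced by $S_E,E$, together with the reordering observation in the Special Case of Section~5) identifies $(S\otimes_{S_E}S_{\frak{Q}})_{I_{E,e_i}^c}$ with the principal ideal $\bigl(A_{e_i}(\Pi)\cdots A_{e_{|E|}}(\Pi)\bigr)(S\otimes_{S_E}S_{\frak{Q}})$, so Lemma~\ref{ringmaps} gives
\[Hom(P_{E,\frak{Q},i}, S\otimes_{S_E}S_{\frak{Q}}) \cong \operatorname{Ann}_{S\otimes_{S_E}S_{\frak{Q}}}\bigl(A_{e_i}(\Pi)\cdots A_{e_{|E|}}(\Pi)\bigr).\]
Since $\phi'$ is injective (Lemma~\ref{phi}) and $\phi'_{e_k}\bigl(A_{e_i}(\Pi)\cdots A_{e_{|E|}}(\Pi)\bigr) = \prod_{l=i}^{|E|}(\sigma_{e_k}(\Pi) - \sigma_{e_l}(\Pi))$ vanishes when $k \geq i$ and is a nonzero non-zero-divisor in the domain $S_{\frak{Q}}$ when $k<i$, this annihilator equals $(S\otimes_{S_E}S_{\frak{Q}})_{I_{E,e_i}} = \bigl(A_{e_1}(\Pi)\cdots A_{e_{i-1}}(\Pi)\bigr)(S\otimes_{S_E}S_{\frak{Q}}) = (S\otimes_{S_E}S_{\frak{Q}})^{e_{i-1}}$.

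By Lemma~\ref{alphan} multiplication by $A_{e_1}(\Pi)\cdots A_{e_{i-1}}(\Pi)$ annihilates $(S\otimes_{S_E}S_{\frak{Q}})_{I_{E,e_i}^c}$, so it factors through a surjective $S\otimes_{S_E}S_{\frak{Q}}$-module map $T: P_{E,\frak{Q},i} \to (S\otimes_{S_E}S_{\frak{Q}})^{e_{i-1}}$, which the same $\phi'$-argument shows is injective. The main obstacle is to verify that $T$ is an isomorphism \emph{in the category} $\mathcal{C}_{(S\otimes_{S_E}S_{\frak{Q}}, \Omega_1, E)}$, i.e.\ that it matches the filtration of $P_{E,\frak{Q},i}$ from Corollary~\ref{PlusEQ} with the submodule filtration of $(S\otimes_{S_E}S_{\frak{Q}})^{e_{i-1}}$. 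The key point is that for $k \geq i$ the factor $\phi'_{e_k}\bigl(A_{e_1}(\Pi)\cdots A_{e_{i-1}}(\Pi)\bigr)$ is a non-zero-divisor in $S_{\frak{Q}}$, so $\phi'_{e_k}(T([x])) = 0$ if and only if $\phi'_{e_k,i}([x]) = 0$---the defining condition of $P_{E,\frak{Q},i}^{e_k}$. Hence the dual is isomorphic in the category to the projective $P_{E,\frak{Q},i}$ and is itself projective; tracing back through the reductions, $Hom_{S\otimes_RS_{\frak{Q}}}(Q, S\otimes_RS_{\frak{Q}})$ is projective in $\mathcal{C}_{(S\otimes_RS_{\frak{Q}}, \Omega, G)}$.
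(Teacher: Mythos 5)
Your proposal is correct and follows essentially the same route as the paper's proof: reduce via the idempotent decomposition and the equivalence $f_i^*$ to the category $\mathcal{C}_{(S\otimes_{S_E}S_{\frak{Q}}, \Omega_1, E)}$, identify $Hom(P_{E,\frak{Q},i}, S\otimes_{S_E}S_{\frak{Q}})$ with $(S\otimes_{S_E}S_{\frak{Q}})_{I_{E,e_i}}$, and exhibit multiplication by $\prod_{k<i}A_{e_k}(\Pi)$ as an isomorphism $P_{E,\frak{Q},i} \to (S\otimes_{S_E}S_{\frak{Q}})_{I_{E,e_i}}$. The only cosmetic differences are that you build an explicit projective generator $\tilde{P}$ and reach the annihilator via Lemma~\ref{ringmaps} with $J=0$ rather than via Corollary~\ref{homQ}, and your closing filtration-compatibility check is in fact automatic from Lemma~\ref{maps}.
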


\begin{proof} 
We first reduce the problem  to the category $ \mathcal{C}_{(S\otimes_{S_E}S_{\frak{Q}}, \Omega_1, E)}$. 
By Lemma \ref{mapsxi} we have  $Hom_{S\otimes_RS_{\frak{Q}}}(Q, S\otimes_RS_{\frak{Q}}) 
  = \oplus_iHom_{S\otimes_RS_{\frak{Q}}}(x_iQ, S\otimes_RS_{\frak{Q}})$.  By Lemma \ref{reduce}, 
  we have that $Q$ is projective in the category  $\mathcal{C}_{(S\otimes_RS_{\frak{Q}}, \Omega, G)}$, 
  if and only if each $x_iQ$ is projective in the corresponding category 
  $\mathcal{C}_{(S\otimes_R S_{\frak{Q}}, \Omega|_{Z_i}, Z_i)}$, for $1\leq i \leq m$.  Hence it is enough to prove   the result for the category $\mathcal{C}_{(S\otimes_R S_{\frak{Q}}, \Omega|_{Z_i}, Z_i)}$.

  Now by lemma  \ref{equivalence}  there is an isomorphism of  categories $f_i^* : \mathcal{C}_{(S\otimes_R S_{\frak{Q}}, \Omega|_{Z_i}, Z_i)} \to  \mathcal{C}_{(S\otimes_{S_E}S_{\frak{Q}}, \Omega_1, E)}$, where $\Omega_1$ is a poset giving some total ordering on the indices of $E; \{e_1, e_2, \cdots , e_{|E|}\}$. It is not difficult to see that if $Q_i$ is a module in $\mathcal{C}_{(S\otimes_R S_{\frak{Q}}, \Omega|_{Z_i}, Z_i)}$, then $Q_i$ is projective in $\mathcal{C}_{(S\otimes_R S_{\frak{Q}}, \Omega|_{Z_i}, Z_i)} $ if and only if $f^*(Q_i)$ is projective in $\mathcal{C}_{(S\otimes_{S_E}S_{\frak{Q}}, \Omega_1, E)}$. Hence it suffices to prove the theorem for 
  the category $\mathcal{C}_{(S\otimes_{S_E}S_{\frak{Q}}, \Omega_1, E)}$.

   Let $E = \{\sigma_{e_1}, \sigma_{e_2}, \cdots , \sigma_{e_{|E|}}\}$ and let $\Omega_1 $ be the poset  $ \{e_1, e_2, 
   \cdots , e_{|E|}\}$ with ordering $e_1 < e_2 < \cdots < e_{|E|}$.  Let $\phi' :  L\otimes_{L_E}L \to L_1 \oplus L_2 \oplus \cdots \oplus L_{|E|}$ and 
   $\phi'_{e_k}$ be as defined in Section 2.    Let  $(S\otimes_{S_E}S_{\frak{Q}})_{E, i} $ and $P_{E, \frak{Q}, i}$ be as in Definition \ref{EQ}. 
     From Corollary \ref{PlusEQ} and \cite{Dyer}[Theorem 1.19], we have that 
    $P_{E, \frak{Q}}  = P_{E, \frak{Q}, 1} \oplus P_{E, \frak{Q}, 2} \oplus \cdots \oplus P_{E, \frak{Q}, |E|}$ is a projective generator in
    $\mathcal{C}_{(S\otimes_{S_E}S_{\frak{Q}}, \Omega_1, E)}$.

 Let $O$ be a projective module in  $ \mathcal{C}_{(S\otimes_{S_E}S_{\frak{Q}}, \Omega_1, E)}$.
    Since $P_{E, \frak{Q}}$ is a projective generator, $O$ is a direct summand of $P_{E, \frak{Q}}^n$ for some $n \geq 1$
    and hence 
    $Hom_{S\otimes_{S_E}S_{\frak{Q}}}(O, S\otimes_{S_E}S_{\frak{Q}}) $ is a direct summand of 
    $Hom_{S\otimes_{S_E}S_{\frak{Q}}}(P_{E, \frak{Q}}^n, S\otimes_{S_E}S_{\frak{Q}})$.
     This is projective  if $Hom_{S\otimes_{S_E}S_{\frak{Q}}}(P_{E, \frak{Q}}^n, S\otimes_{S_E}S_{\frak{Q}})$ 
    is projective, which in turn is projective if\\
     $Hom_{S\otimes_{S_E}S_{\frak{Q}}}(P_{E, \frak{Q}}, 
     S\otimes_{S_E}S_{\frak{Q}})$
    is projective, since direct summands and direct sums of projective modules are projective. 
       Now $Hom_{S\otimes_{S_E}S_{\frak{Q}}}(P_{E, \frak{Q}}, S\otimes_{S_E}S_{\frak{Q}}) = \oplus_{i= 
       1}^{
       |E|}Hom_{S\otimes_{S_E}S_{\frak{Q}}}(P_{E, \frak{Q}, i}, S\otimes_{S_E}S_{\frak{Q}})$, 
    hence it is projective if and only if each $Hom_{S\otimes_{S_E}S_{\frak{Q}}}(P_{E, \frak{Q}, i}, S
    \otimes_{S_E}S_{\frak{Q}})$ is projective in $ \mathcal{C}_{(S\otimes_{S_E}S_{\frak{Q}}, \Omega_1, 
    E)}$ for each $i,  1 \leq i \leq |E|$.

  Recall from Lemma \ref{Pie} and Lemma \ref{structure}   that $S\otimes_{S_E}S_{\frak{Q}} = S_E[\Pi ]\otimes_{S_E}S_{\frak{Q}}$, where $\Pi \in \frak{Q}$ and 
  $S\subseteq (S_E)_{{\frak{Q}_E}}[\Pi]$.  Note that this implies that 
  $L_E(\Pi) = L$.  Let $I_{E, e_i} = \{\sigma_{e_{i }}, \sigma_{e_{i + 1}}, \cdots , \sigma_{e_{|E|}}\}$ and 
    for a given $i , 1 \leq i \leq k$, let $(S\otimes_{S_E}S_{\frak{Q}})_{I_{E, e_i} } =  \{x \in S\otimes_{S_E}S_{\frak{Q}} | \phi'_{e_k}(x) = 0 \ \mbox{if} \ k <  i\}$. 
    By Corollary \ref{homQ}, with the appropriate substitutions,  we have that
    \[Hom_{S\otimes_{S_E}S_{\frak{Q}}}(P_{E, \frak{Q}, i},  S\otimes_{S_E}S_{\frak{Q}}) \cong  
    (S\otimes_{S_E}S_{\frak{Q}})_{I_{E, e_i} }
\]
Also from Corollary \ref{homQ}  the isomorphism is given by $G:
Hom_{S\otimes_{S_E}S_{\frak{Q}}}(P_{E, \frak{Q}, i},  S\otimes_{S_E}S_{\frak{Q}}) \to
    (S\otimes_{S_E}S_{\frak{Q}})_{I_{E, e_i} }$, where $G(f) = f(1\otimes 1 + (S\otimes_{S_E}S_{\frak{Q}})_{E, i})$ 
and is  an isonmorphism of  $S\otimes_{S_E}S_{\frak{Q}}$ modules. 

It remains  to show that $  (S\otimes_{S_E}S_{\frak{Q}})_{I_{E, e_i} }$ is projective in 
$ \mathcal{C}_{(S\otimes_{S_E}S_{\frak{Q}}, \Omega_1, E)}$. We will in fact show that 
$  (S\otimes_{S_E}S_{\frak{Q}})_{I_{E, e_i} }$ is isomorphic to $P_{E, \frak{Q}, i}$ as an $S\otimes_RS_{\frak{Q}}$ module. Recall that $A_{e_k}(\Pi) = \Pi \otimes 1 - 1 \otimes \sigma_{e_k}(\Pi) $ for $1\leq k \leq |E|$. From Lemma 
\ref{basisP}, we see that $(S\otimes_{S_E}S_{\frak{Q}})_{I_{E, e_i}}  = \prod_{k < i}A_{e_k}(\Pi)(S\otimes_{S_E}S_{\frak{Q}})$. Now consider the $S\otimes_{S_E}S_{\frak{Q}}$ module homomorphism  $F: S\otimes_{S_E}S_{\frak{Q}} \to 
 (S\otimes_{S_E}S_{\frak{Q}})_{I_{E, e_i} }$, where   $F(x) = 
 (\prod_{k < i}A_{e_k}(\Pi)  )x$. 
  It is easy to see that $(S\otimes_{S_E}S_{\frak{Q}})_{E, i}  \subseteq  \ker F$, since $\phi'_{e_l}(\prod_{k < i}A_{e_k}(\Pi)) x) = 0 $ for all $l, \ \  1\leq l \leq |E|$, when $x \in (S\otimes_{S_E}S_{\frak{Q}})_{E, i}$. 
  On the other hand, if $x \in \ker F$, we have $\phi'_{e_l}((\prod_{k < i}A_{e_k}(\Pi)) x) = 0 $ for each $l$. 
Hence for $l \geq i$, we have $\phi'_{e_l}((\prod_{k < i}A_{e_k}(\Pi)) x) = \phi'_{e_l}(\prod_{k < i}A_{e_k}(\Pi)) \phi'_{e_l}(x) = 0 $, and since $S_{\frak{Q}}$ is a domain, we must have either
$\phi'_{e_l}(\prod_{k < i}A_{e_k}(\Pi))  = 0$ or 
  $\phi'_{e_l}(x) = 0 $.  Now if $l \geq i$ and $k < i$, then $\phi'(A_{e_k}(\Pi)) = \sigma_{e_l}(\Pi) - \sigma_{e_k}(\Pi) \not= 0$ since $\sigma_{e_l} \not= \sigma_{e_k}$ and $L = L_E(\Pi)$.   
Therefore $x \in (S\otimes_{S_E}S_{\frak{Q}})_{E, i} $.  Hence $\ker F = (S\otimes_{S_E}S_{\frak{Q}})_{E, i} $ and 
$F$ lifts to an isomorphism of $S\otimes_{S_E}S_{\frak{Q}}$ modules 
$$\bar{F} : P_{E, \frak{Q}, i} = \frac{S\otimes_{S_E}S_{\frak{Q}}}{(S\otimes_{S_E}S_{\frak{Q}})_{E, i}} \to  (S\otimes_{S_E}S_{\frak{Q}})_{I_{E, e_i} } \cong  Hom_{S\otimes_{S_E}S_{\frak{Q}}}(P_{E, \frak{Q}, i},   S\otimes_{S_E}S_{\frak{Q}}). $$
Therefore $Hom_{S\otimes_{S_E}S_{\frak{Q}}}(P_{E, \frak{Q}, i},  S\otimes_{S_E}S_{\frak{Q}}) $ is projective in the category $ \mathcal{C}_{(S\otimes_{S_E}S_{\frak{Q}}, \Omega_1, E)}$ and this completes the proof of our theorem. 
   \end{proof}

We are now ready to prove a global theorem on Duality.

\begin{theorem} Let $L, K, S, R, \frak{Q},$ and $G = \{\sigma_{g_1}, \sigma_{g_2}, \cdots , \sigma_{g_n}\}$ be as defined in Section 2. Let $\Omega$ be a poset giving a total ordering on the indices of $G$. Let 
$Q$ be a projective module in $ \mathcal{C}_{(S\otimes_RS, \ \Omega, G)}$,   then $Hom(Q, 
S\otimes_RS)$ is also projective in $ \mathcal{C}_{(S\otimes_RS, \ \Omega, G)}$. 
\end{theorem}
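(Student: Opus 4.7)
The plan is a local-to-global argument reducing the claim to Theorem \ref{local}. The essential observation is that $Q$ is finitely presented as an $S\otimes_R S$-module: each successive quotient $Q^{g_{i-1}}/Q^{g_i}$ in the canonical filtration of $Q$ is finitely generated and projective as a right $S$-module, so by induction $Q$ is finitely generated as a right $S$-module, and hence finitely generated (and therefore finitely presented, since $S\otimes_R S$ is Noetherian) as an $S\otimes_R S$-module.

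First I would localize at an arbitrary prime $\frak{Q}$ of $S$. Localization at $\frak{Q}$ is an exact functor from $\mathcal{C}_{(S\otimes_R S, \Omega, G)}$ to $\mathcal{C}_{(S\otimes_R S_\frak{Q}, \Omega, G)}$ carrying projectives to projectives, as noted in the discussion at the end of Section 5, so $Q_\frak{Q}$ is projective in $\mathcal{C}_{(S\otimes_R S_\frak{Q}, \Omega, G)}$. The finite presentation of $Q$ gives the natural isomorphism
$$Hom_{S\otimes_R S}(Q, S\otimes_R S)_\frak{Q} \cong Hom_{S\otimes_R S_\frak{Q}}(Q_\frak{Q}, S\otimes_R S_\frak{Q}),$$
and Theorem \ref{local} then identifies the right-hand side as a projective object of $\mathcal{C}_{(S\otimes_R S_\frak{Q}, \Omega, G)}$ for every prime $\frak{Q}$ of $S$.

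To promote this local projectivity to global projectivity, I would apply Lemma \ref{extproj}: a module $M$ in $\mathcal{C}_{(S\otimes_R S, \Omega, G)}$ is projective precisely when, for each $i$, the six-term sequence obtained by applying $Hom(-, S[\sigma_{g_i}])$ to $0 \to M^{g_{i-1}}/M^{g_i} \to M/M^{g_i} \to M/M^{g_{i-1}} \to 0$ is exact through the $Ext^1$ term. Taking $M = Hom_{S\otimes_R S}(Q, S\otimes_R S)$, every term in this sequence is a finitely generated $S$-module (using that $Q$ and $S\otimes_R S$ are finitely presented), so the formation of each term commutes with localization at any prime of $S$, and exactness of a sequence of finitely generated $S$-modules is a local property. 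The local projectivity just established, combined with compatibility of the filtration on $Hom(Q, S\otimes_R S)$ (given by the first lemma of this section) with localization, shows that the criterion of Lemma \ref{extproj} holds after localizing at every $\frak{Q}$, and hence globally.

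The main obstacle will be justifying the interchange of $Hom$ and $Ext^1$ with localization and checking that the filtration subquotients of $Hom(Q, S\otimes_R S)$ localize compatibly with the filtration of $Hom(Q_\frak{Q}, S\otimes_R S_\frak{Q})$. This hinges on the finite presentation of $Q$ and the flatness of $S \to S_\frak{Q}$. Once this bookkeeping is in place, the reduction to Theorem \ref{local} is formal.
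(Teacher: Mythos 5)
Your proposal follows essentially the same local-to-global route as the paper: localize, apply Theorem \ref{local} to get local projectivity, commute $Hom$ with localization, and use Lemma \ref{extproj} to detect global projectivity from the surjectivity of the connecting maps $f_i$, which is a local condition on finitely generated $S$-modules. The paper handles the interchange of $Hom$, $Ext^1$, and localization by citing \cite{Reiner}[3.16(2), 2.39] — precisely the bookkeeping you flag as the main obstacle — so your plan is sound and matches the paper's argument.
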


\begin{proof} From Lemma \ref{extproj}, we have that $Q$ is projective in $ \mathcal{C}_{(S\otimes_RS, \ \Omega, G)}$ if and only if the short exact sequence 
$$0 \to Q^{g_{i - 1}}/Q^{g_i} \to Q/Q^{g_i} \to Q/Q^{g_{i - 1}} \to 0$$
gives rise to an exact sequence
$$0 \to Hom(Q/Q^{g_{i - 1}}, S[\sigma_{g_i}]) \to Hom(Q/Q^{g_i}, S[\sigma_{g_i}])  \to Hom(Q^{g_{i - 1}}/Q^{g_i}, S[\sigma_{g_i}])$$
$
\xrightarrow{f_i}  Ext^1(Q/Q^{g_{i - 1}}, S[\sigma_{g_i}]) \to 0$\\
for each $i, 1 \leq i \leq n$, where $Hom(M, N)$ denotes $Hom_{S\otimes_RS}(M, N)$. 
Thus $Q$ is projective in $ \mathcal{C}_{(S\otimes_RS, \ \Omega, G)}$ if and only if the maps $f_i$ are surjective  for $1 \leq i \leq n$, since 
$Hom( - , S[\sigma_{g_i}])$ is a  left exact contravariant functor on $S\otimes_RS$ modules. 

For any $i, 1 \leq i \leq n$,  $f_i$ is surjective if and only if the localization 
$$(f_{i})_\frak{Q} :  Hom(Q^{g_{i - 1}}/Q^{g_i}, S[\sigma_{g_i}]) \otimes_SS_{\frak{Q}}  \to  Ext^1(Q/Q^{g_{i - 1}}, S[\sigma_{g_i}])  \otimes_SS_{\frak{Q}}$$
is surjective for each maximal ideal $\frak{Q}$ of $S$, by \cite{Reiner}[3.16(2)]. This in turn is true if and only if the map $(f_{i})_\frak{Q}'$ in the sequence below is surjective for each maximal ideal 
$\frak{Q}$ of $S$ (by \cite{Reiner}[2.39]) : 

{\small  $$ 0 \to Hom(Q_{\frak{Q}}/Q_{\frak{Q}}^{g_{i - 1}}, S_{\frak{Q}}[\sigma_{g_i}]) \to Hom(Q_{\frak{Q}}/Q_{\frak{Q}}^{g_i}, S_{\frak{Q}}[\sigma_{g_i}])  \to Hom(Q_{\frak{Q}}^{g_{i - 1}}/Q_{\frak{Q}}^{g_i}, S_{\frak{Q}}[\sigma_{g_i}])$$
$\xrightarrow{(f_{i})_\frak{Q}'}  Ext^1(Q_{\frak{Q}}/Q_{\frak{Q}}^{g_{i - 1}}, S_{\frak{Q}}[\sigma_{g_i}]) \to 0.$}

By applying Lemma \ref{extproj} again, we see that this is true if and only if $Q_{\frak{Q}}$ is projective 
in $ \mathcal{C}_{(S\otimes_RS_{\frak{Q}}, \ \Omega, G)}$. Hence $Q$ is projective in $ \mathcal{C}_{(S\otimes_RS, \ \Omega, G)}$ if and only if $Q_{\frak{Q}}$ is projective in 
$ \mathcal{C}_{(S\otimes_RS_{\frak{Q}}, \ \Omega, G)}$ for each maximal ideal 
$\frak{Q}$ of $S$. 

Thus if $Q$ is projective in the category  $ \mathcal{C}_{(S\otimes_RS, \ \Omega, G)}$, 
then $Q_{\frak{Q}}$ is projective in $ \mathcal{C}_{(S\otimes_RS_{\frak{Q}}, \ \Omega, G)}$
for each maximal ideal, $\frak{Q}$, of $S$. Hence by Theorem \ref{local}, \\
$Hom_{S\otimes_RS_{\frak{Q}}}(Q_{\frak{Q}}, S\otimes_RS_{\frak{Q}})$ is projective for each maximal ideal $\frak{Q}$ of $S$. Since $Hom_{S\otimes_RS_{\frak{Q}}}(Q_{\frak{Q}}, S\otimes_RS_{\frak{Q}})$ 
is isomorphic to  $(Hom(Q, 
S\otimes_RS))_{\frak{Q}}$, we can conclude that $Hom(Q, S\otimes_RS)$ is projective in the category
$ \mathcal{C}_{(S\otimes_RS, \ \Omega, G)}$.

\end{proof}

\section{\bf BGG Reciprocity}

Let $L, K, S, R, G = \{\sigma_{g_1}, \sigma_{g_2},  \cdots  , \sigma_{g_n}\}  , \frak{Q}, \frak{P}$ and $E = E(\frak{Q} | \frak{P})$ be as defined in Section 2. 
Let $ \mathcal{C}_{(S\otimes_RS_{\frak{Q}}, \ \Omega,  G)}$ be the associated stratified exact category 
defined in section 4, where $\Omega$ is a poset giving a  a total ordering on the indices of the elements of $G$, $\{g_1, g_2, \cdots , g_n\}$.
Let $P_{\frak{Q}, i} \ 1\leq i \leq n $ be the projective modules from Definition  \ref{Q}. 
  From Corollary \ref{PlusQ} and \cite{Dyer}[Theorem 1.19], we have that 
    $P_{ \frak{Q}}  = P_{ \frak{Q}, 1} \oplus P_{\frak{Q}, 2} \oplus \cdots \oplus P_{ \frak{Q}, n}$ is a projective generator in
    $\mathcal{C}_{(S\otimes_{R}S_{\frak{Q}}, \Omega, G)}$.
Let $\mathcal{A}_{S\otimes_RS_{\frak{Q}}} (P_{\frak{Q}}) = End_{S\otimes_RS_{\frak{Q}}}(P_{\frak{Q}})^{op}$ be the associated algebra, the structure of which has been determined in Section 8.
Let $F_{\frak{Q}} = S/\frak{Q}$ be the residue class field of $\frak{Q}$ in $S$. 
In this section, we will consider the $F_{\frak{Q}} $ algebra $\mathcal{A}_{\frak{Q}} = \mathcal{A}_{S\otimes_RS_{\frak{Q}}}  (P_{\frak{Q}})\otimes_SF_{\frak{Q}} $  In particular we will demonstrate that $\mathcal{A}_{\frak{Q}}$  exhibits a reciprocity analogous to the BGG reciprocity explored in \cite{Irving} and \cite{CPS}.  This reciprocity 
holds in general for the classes of algebras defined in \cite{CPS} and  \cite{Dyer}, however the general proof relies on much deeper and more subtle arguments than those presented here. In our case the reciprocity can be seen fairly easily with the aid of the matrix representation of $\mathcal{A}_{\frak{Q}}$ and our arguments permit the  determination of the composition   factor multiplicities of simple modules in Verma modules  which are all either 
0 or 1 here. 

We will first define BGG reciprocity for this situation. For  background information on modules, see 
\cite{CnR}.  Let $\mathcal{A}$ be a finite dimensional algebra over a finite field $F_{\frak{Q}}$. 
Let $\Lambda = \{\lambda_1, \lambda_2, \cdots , \lambda_n\}$ be a finite poset,  with a total ordering $\lambda_1 < \lambda_2 < \cdots < \lambda_n$,  in bijective correspondence with the isomorphism classes of simple  modules in the category of finitely generated left modules over $\mathcal{A}$, $\mathcal{A}$-mod. Let $L(\lambda_i)$ be the simple module corresponding to $\lambda_i \in \Lambda$ and let $P(\lambda_i)$ denote a projective cover of $L(\lambda_i)$, that is 
 $P(\lambda_i)$ is  projective  with $P(\lambda_i)/Rad(P(\lambda_i)) = L(\lambda_i)$.  $P(\lambda_i)$ is automatically indecomposable, by Nakayama's lemma \cite{CnR}[30.2].  
Every projective module in $\mathcal{A}$-mod is a direct sum of indecomposable projective modules, each of which is isomorphic to one of the $P(\lambda_i)$'s. 
 If $M$ is an $\mathcal{A}$-mod, then  $M$ has  a finite composition series:
$$M^0 = \{0\} \subset M^1 \subset M^2 \subset \cdots \subset M^K = M,$$
where each quotient, $M^k/M^{k-1}$,  is isomorphic to one of the simple modules $L(\lambda_i)$. 
 we let $[M: L(\lambda_i)]$ denote  the multiplicity of $L(\lambda_i)$ in $M$,   the number of factors $M^k/M^{k-1}$ in the composition series which are 
 isomorphic to $L(\lambda_i)$.  We let $\overline{M} = M/Rad(M)$, 
 where $Rad(M)$ denotes the radical of $M$.  
 
 \begin{definition} A collection of modules $\{M(\lambda_i) | \lambda_i \in \Lambda \}$ is called a choice of 
 Verma modules for $\mathcal{A}$-mod if  the following  conditions hold for each $\lambda_i \in \Lambda$:
 
 $\overline{M(\lambda_i)} = L(\lambda_i)$,  
 $[M(\lambda_i) : L(\lambda_j)] = 0 $ unless $\lambda_j \leq \lambda_i$,
  $[M(\lambda_i): L(\lambda_i)] = 1$, $1 \leq i, j \leq n$, and any other $\mathcal{A}$ module with these properties is a quotient of 
  $M(\lambda_i)$. 
 \end{definition}

Let $Gr(\mathcal{A})$ be the Grothendeick group of $\mathcal{A}$-mod. For any module $M$ in
$\mathcal{A}$-mod  let $[M]$ denote the class of $M$ in $Gr(\mathcal{A})$.  We have that the set $\{[L(\lambda_i)] 
| \lambda \in \Lambda \}$ is a basis for the free abelian group $Gr(\mathcal{A})$, since every module 
in $\mathcal{A}$-mod has a composition series. Indeed 
$$[M] = \sum_{\lambda_i}[M: L(\lambda_i)][L(\lambda_i)].$$

\begin{lemma}  \label{Verma} Let $\mathcal{A}$ be a finite dimensional algebra over $F_{\frak{Q}}$, and  let 
$\{M(\lambda_i) | \lambda_i \in \Lambda \}$  be  a choice of 
 Verma modules for $\mathcal{A}$-mod, then the set $\{[M(\lambda_i)] 
| \lambda_i \in \Lambda \}$ is a basis for the free abelian group $Gr(\mathcal{A})$. 
\end{lemma}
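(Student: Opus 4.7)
The plan is to exploit the fact that $\{[L(\lambda_i)]\}_{i=1}^n$ is already known to be a $\mathbb{Z}$-basis of $Gr(\mathcal{A})$ and to show that the transition matrix from the classes of the Verma modules to the classes of the simples is unitriangular with respect to the total ordering on $\Lambda$, hence invertible over $\mathbb{Z}$.

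First I would use the general composition-series identity
\[
[M(\lambda_i)] = \sum_{j=1}^{n} [M(\lambda_i) : L(\lambda_j)]\,[L(\lambda_j)],
\]
together with the defining properties of the Verma modules: $[M(\lambda_i) : L(\lambda_j)] = 0$ unless $\lambda_j \leq \lambda_i$, and $[M(\lambda_i) : L(\lambda_i)] = 1$. With respect to the total order $\lambda_1 < \lambda_2 < \cdots < \lambda_n$, these two conditions together say precisely that
\[
[M(\lambda_i)] = [L(\lambda_i)] + \sum_{j < i} c_{ij}\,[L(\lambda_j)], \qquad c_{ij} = [M(\lambda_i):L(\lambda_j)] \in \mathbb{Z}_{\geq 0}.
\]

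Next I would encode this as a matrix statement. Writing the column vector $\mathbf{M} = ([M(\lambda_1)], \ldots, [M(\lambda_n)])^t$ and $\mathbf{L} = ([L(\lambda_1)], \ldots, [L(\lambda_n)])^t$ inside $Gr(\mathcal{A})$, we have $\mathbf{M} = C \mathbf{L}$, where $C = (c_{ij})$ is an $n \times n$ lower triangular matrix with ones on the diagonal. Such a matrix has determinant $1$, so $C \in GL_n(\mathbb{Z})$, and $C^{-1}$ is again an integer matrix (in fact lower unitriangular). Applying $C^{-1}$ expresses each $[L(\lambda_i)]$ as an integer linear combination of $[M(\lambda_1)], \ldots, [M(\lambda_i)]$.

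To conclude, since $\{[L(\lambda_i)]\}$ generates $Gr(\mathcal{A})$ as a free abelian group, so does $\{[M(\lambda_i)]\}$; and since the change-of-basis matrix has determinant $\pm 1$, $\mathbb{Z}$-linear independence of one set is equivalent to that of the other. Thus $\{[M(\lambda_i)] : \lambda_i \in \Lambda\}$ is a $\mathbb{Z}$-basis of $Gr(\mathcal{A})$. There is no real obstacle here; the only subtlety is to make sure one invokes the total ordering on $\Lambda$ when declaring the matrix to be triangular, which is exactly what the hypotheses in the definition of Verma modules guarantee.
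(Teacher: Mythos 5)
Your proof is correct and takes essentially the same approach as the paper: both use the composition-series identity to express each $[M(\lambda_i)]$ in terms of the $[L(\lambda_j)]$, observe that the resulting transition matrix is unitriangular (and hence invertible over $\mathbb{Z}$) by the defining properties of Verma modules, and conclude that the Verma classes form a $\mathbb{Z}$-basis. (A small remark: you call the matrix lower triangular while the paper says upper triangular; with the indexing $C_{ij}=[M(\lambda_i):L(\lambda_j)]$ and vanishing for $j>i$, your description is the accurate one, though this has no bearing on the argument.)
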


\begin{proof} We have  $[M(\lambda_i)] = \sum_{j = 1}^{n}[M(\lambda_i): L(\lambda_j)][L(\lambda_j)]$.
By the definition of Verma modules, the matrix 
$$((M(\lambda_i), l(\lambda_j))$$
is upper triangular with $1$'s on the diagonal and hence is invertible.  Hence the set $\{[M(\lambda_i)] 
| \lambda_i \in \Lambda \}$ is a basis for the free abelian group $Gr(\mathcal{A})$. 
\end{proof}

Letting $\mathcal{A}$ be a finite dimensional $F_{\frak{Q}}$ algebra as above, with a choice 
$\{M(\lambda_i) | \lambda_i \in \Lambda \}$ of Verma modules for $\mathcal{A}$-mod, we define a 
Verma flag for a module $M$ of $\mathcal{A}$-mod as a filtration
$$\{0\} = M_0 \subset M_1 \subset \cdots \subset M_R = M$$
with the property that $M_k/M_{k - 1} = M(\lambda_i)$ for some  $\lambda_i \in \Lambda$. 

If a module $M$ in $\mathcal{A}$-mod has such a Verma flag, we define $(M: M(\lambda_i))$ to be 
the number of quotients in that Verma flag which are equal to $M(\lambda_i)$. This is independent of the Verma flag chosen since the chosen  Verma modules form a basis for the Grothendeick group.
In fact if a module, $M$,  has a Verma flag, we have 
$$[M] = \sum_{\lambda_i}(M: M(\lambda_i)) [M(\lambda_i)].$$
In general, not every module has a Verma flag, however in our category $\mathcal{A}_{\frak{Q}}$-mod, with our choice of Verma modules,   every module will have a Verma flag, stemming from a filtration in the category $\mathcal{C}_{(S\otimes_{R}S_{\frak{Q}}, \Omega, G)}$. 

\begin{definition} Let $\mathcal{A}$ be a finite dimensional algebra over $F_{\frak{Q}}$, with the isomorphism classes of the simple modules indexed by the poset $\Lambda$. Let 
$\{L(\lambda_i) | \lambda_i \in \Lambda \}$  be  a set of representatives for the  isomorphism classes
of the simple modules in $\mathcal{A}$-mod, with projective covers $\{P(\lambda_i) | \lambda_i \in \Lambda \}$. We say that the algebra $\mathcal{A}$-mod has BGG reciprocity if we can make a choice 
$\{M(\lambda_i) | \lambda_i \in \Lambda \}$  of Verma modules  in $\mathcal{A}$-mod such that 
each $P(\lambda_i)$, $\lambda_i \in \Lambda$ has a Verma flag and 
$$(P(\lambda_i) : M(\lambda_j)) = [M(\lambda_j) : L(\lambda_i)]$$
for all $\lambda_i, \lambda_j \in \Lambda$. 
\end{definition}

It is not difficult to see, reasoning as in the proof of Lemma \ref{Verma} that the set $\{[P(\lambda_i)] | \lambda_i \in \Lambda \}$ forms a basis for $GR(\mathcal{A})$ if $\mathcal{A}$ has BGG reciprocity. 
In this case the Cartan matrix of $\mathcal{A}$ defined as the matrix $C = ([P(\lambda_i) : L(\lambda_j)])_{i, j}$ is a product:
$$C =  \Big((P(\lambda_i) : M(\lambda_k))\Big) \Big([M(\lambda_k) : L(\lambda_i)]\Big) = D^tD,$$
where $D =  \Big([M(\lambda_k) : L(\lambda_i)]\Big)_{k, i} $.  In particular the Cartan matrix is symmetric  when the category  $\mathcal{A}$-mod has BGG reciprocity.

We now return to the finite dimensional $F_{\frak{Q}}$ algebra, $A_{\frak{Q}} = \mathcal{A}_{S\otimes_RS_{\frak{Q}}}(P_{\frak{Q}})\otimes_{S_{\frak{Q}}}F_{\frak{Q}}$,  introduced at the beginning of this section. We will show that   $\mathcal{A}_{\frak{Q}}$ is a direct sum of subalgebras and  show that each subalgebra has BGG reciprocity.  This will  give  BGG reciprocity for  $\mathcal{A}_{\frak{Q}}$-mod. 

Let $\{E\sigma_{x_1}, E\sigma_{x_2}, \cdots , E\sigma_{x_m}\}$ be  the right cosets of $E$ in $G$. 
According to Corollary \ref{corstar},  for any given prime ideal $\frak{Q}$ of $S$, we have orthogonal idempotents $\{x_1, x_2, \cdots , x_m\}$,   in the ring $S\otimes_RS_{\frak{Q}}$,  such that 
$$\phi_k(x_i)  =  \Big\{\  \begin{matrix}
& 1 & \hbox{if}  & \sigma_k \in E\sigma_{x_i} \\
& 0 &   &\mbox{otherwise} &
\end{matrix}.
$$

 It is easy to  see that $ \mathcal{A}_{\frak{Q}}$ splits into a direct sum  of subrings since 
 $\mathcal{A}_{S\otimes_RS_{\frak{Q}}}(P_{\frak{Q}}) = 
 End_{S\otimes_RS_{\frak{Q}}}(P_{\frak{Q}})^{op}  \cong 
\oplus_{i = 1}^{m} End_{S\otimes_RS_{\frak{Q}}}(P_{\frak{Q}}x_i)^{op}$. 
   Let $\mathcal{A}_{\frak{Q}, i}  =  End_{S\otimes_RS_{\frak{Q}}}(P_{\frak{Q}}x_i)^{op}$. Since 
   $P_{\frak{Q}}$ is a projective generator for $\mathcal{C}_{(S\otimes_RS_{\frak{Q}}, \Omega, G)}$, 
we can see from Lemmas  \ref{mapsxi} and  \ref{form} that  $P_{\frak{Q}}x_i$ is a projective generator for 
$ \mathcal{C}_{(S\otimes_RS_{\frak{Q}}, \ \Omega|_{Z_i}, G)}$, where $Z_i = E\sigma_{x_i}$. 
The algebra associated to $ \mathcal{C}_{(S\otimes_RS_{\frak{Q}}, \ \Omega|_{Z_i}, G)}$ is 
$\mathcal{A}_{\frak{Q}, i}  =  End_{S\otimes_RS_{\frak{Q}}}(P_{\frak{Q}}x_i)^{op}$.
By Lemma \ref{equivalence},  we have and isomorphism of categories 
$f^*: \mathcal{C}_{(S\otimes_RS_{\frak{Q}}, \ \Omega|_{Z_i}, G)} \to  \mathcal{C}_{(S\otimes_{S_E}S_{\frak{Q}}, \ \Omega_i, E)}$, where $\Omega_i$ is a poset giving  an ordering on the indices of $E$, determined by the poset  $\Omega|_{Z_i}$. 
This isomorphism of categories induces an isomorphism of the algebras $\mathcal{A}_{\frak{Q}, i} $ and 
$\mathcal{A}_{S\otimes_{S_E}S_{\frak{Q}}}(f^*(P_{\frak{Q}}x_i))$. Hence we will focus our efforts on showing that the category 
$\mathcal{A}_{S\otimes_{S_E}S_{\frak{Q}}}(f^*(P_{\frak{Q}}x_i))$-mod exhibits BGG reciprocity. In fact we will show that a category which is Morita equivalent 
to $\mathcal{A}_{S\otimes_{S_E}S_{\frak{Q}}} (f^*(P_{\frak{Q}}x_i))$-mod exhibits BGG reciprocity. 

Let $\Omega'$ be the poset $\{e_1, e_2, \dots , e_{|E|}\}$, with  the ordering $e_1 < e_2 < \dots < e_{|E|}$. 
  Let $P_{E, \frak{Q}} = \oplus_{i = 1}^{|E|} P_{E, \frak{Q}, i}$, where $P_{E, \frak{Q}, i}$ is as defined in 
  Definition  \ref{EQ}.  Recall that we can show that $P_{E, \frak{Q}}$ is a projective generator for 
  $ \mathcal{C}_{(S\otimes_{S_E}S_{\frak{Q}}, \ \Omega', E)}$ using Corollary  \ref{PlusQ}  and \cite{Dyer}[Theorem 1.19]. 
By Lemma \ref{Newprogen} and the discussion prior to it, there is a Morita equivalence between the categories 
$\mathcal{A}_{S\otimes_{S_E}S_{\frak{Q}}}  (P_{E, \frak{Q}})$-mod  and $\mathcal{A}_{S\otimes_{S_E}S_{\frak{Q}}}  (f^*(P_{\frak{Q}}x_i))$-mod, when the indices of $E$ are labelled so that 
$\Omega' = \Omega_i$. Hence we shall show that the category $\mathcal{A}_{S\otimes_{S_E}S_{\frak{Q}}} (P_{E, \frak{Q}})$-mod has BGG reciprocity. 

Our first objective is to use the matrix structure of the algebra $\mathcal{A}_{S\otimes_{S_E}S_{\frak{Q}} } (P_{E, \frak{Q}}) $ developed in Section 7 to identify 
the simple modules $L(e_i), \ 1 \leq i \leq |E|$, in the category $\mathcal{A}_{S\otimes_{S_E}S_{\frak{Q}}} (P_{E, \frak{Q}})$-mod. 
Using Corollary \ref{AlgEQ} 
 we see that  $\mathcal{A}_{S\otimes_{S_E}S_{\frak{Q}}}(P_{E, \frak{Q}})  = $
 $$
  \frac{
\begin{pmatrix}
S\otimes_{S_E}S_{\frak{Q} }& S\otimes_{S_E}S_{\frak{Q}} & S\otimes_{S_E}S_{\frak{Q}} & \cdots& \\
(S\otimes_{S_E}S_{\frak{Q}})_{I_{E, e_2}} & S\otimes_{S_E}S_{\frak{Q}} & S\otimes_{S_E}S_{\frak{Q}} & \cdots  &\\
(S\otimes_{S_E}S_{\frak{Q}})_{I_{E, e_3}} & (S\otimes_{S_E}S_{\frak{Q}})_{I_{E, e_3} \cup I_{E, e_2}^c} & S\otimes_{S_E}S_{\frak{Q}} & \cdots &\\
(S\otimes_{S_E}S_{\frak{Q}})_{I_{E, e_4}} & (S\otimes_{S_E}S_{\frak{Q}})_{I_{E, e_4} \cup I_{E, e_2}^c} & (S\otimes_{S_E}S_{\frak{Q}})_{I_{E, e_4}\cup I_{E, e_3}^c} & \cdots  &\\
\vdots & \vdots & \vdots & \ & \\
\ &\ &\ &\ &
\end{pmatrix}}
{\begin{pmatrix}
0& (S\otimes_{S_E}S_{\frak{Q}})_{I_{E, e_2}^c}   & (S\otimes_{S_E}S_{\frak{Q}})_{I_{E, e_3}^c}  & \cdots & \\
0& (S\otimes_{S_E}S_{\frak{Q}})_{I_{E, e_2}^c}  & (S\otimes_{S_E}S_{\frak{Q}})_{I_{E, e_3}^c}  & \cdots  &\\
0& (S\otimes_{S_E}S_{\frak{Q}})_{I_{E, e_2}^c}  & (S\otimes_{S_E}S_{\frak{Q}})_{I_{E, e_3}^c}  & \cdots &\\
0& (S\otimes_{S_E}S_{\frak{Q}})_{I_{E, e_2}^c}  & (S\otimes_{S_E}S_{\frak{Q}})_{I_{E, e_3}^c}   & \cdots  &\\
\vdots & \vdots & \vdots & \ & \\
\ &\ &\ &\ &
\end{pmatrix}},
$$
where $I_{E, e_k}$  and $(S\otimes_{S_E}S_{\frak{Q}})_{I_{E, e_k}} , 1\leq k \leq |E|$ are as defined in Definition \ref{EQ}. 
By Lemma \ref{structure}, we have $S\otimes_{S_E}S_{\frak{Q}} = S_E[\Pi]\otimes_{S_E}S_{\frak{Q}}$, where $\Pi$ is an element of $S$ of order 1 in $\frak{Q}$.  Let $\Psi' : S\otimes_{S_E}S_{\frak{Q}} \to S\otimes_{S_E}S_{\frak{Q}}\otimes_{S_{\frak{Q}} }F_{\frak{Q}}$ be the map sending $s_1\otimes s_2$ to $s_1\otimes s_2 \otimes 1$.
Let $A_{e_i}(\Pi) = \Pi\otimes 1 - 1 \otimes \sigma_{e_i}(\Pi)$, then by Lemma \ref{FQ}, we have 
 $\Psi'(A_{e_i}(\Pi) )   = \Psi'(\Pi\otimes 1 - 1 \otimes \Pi) = \alpha$, for $1 \leq i \leq |E|$. 
Using  
Lemma \ref{basisP}, with $R$ replaced by $S_E$ and  $G$ replaced by $E$, we see 
 that if $I \subset E$, then $(S\otimes_{S_E}S_{\frak{Q}})_I = \prod_{\sigma_{e_k} \not\in I}A_{e_k}(\Pi)(S\otimes_{S_E}S_{\frak{Q}})$, where $(S\otimes_{S_E}S_{\frak{Q}})_I = \{ x \in S\otimes_{S_E}S_{\frak{Q}} | \phi_{e_k}'(x) = 0 \ \  \mbox{if} \ \ \sigma_{e_k}  \not\in I\}$.  
 For convenience, we let $\mathcal{A}'_{\frak{Q}} = \mathcal{A}_{S\otimes_{S_E}S_{\frak{Q}}}(P_{E, \frak{Q}})\otimes_{S_{\frak{Q}}}F_{\frak{Q}}$. We let $\mathcal{R}'_{\frak{Q}}$ denote the ring
$$\mathcal{R}'_{\frak{Q}}  = 
\begin{pmatrix}
F_{\frak{Q}}[\alpha]&F_{\frak{Q}}[\alpha]  & F_{\frak{Q}}[\alpha] &\cdots & F_{\frak{Q}}[\alpha] \\
\alpha F_{\frak{Q}}[\alpha] & F_{\frak{Q}}[\alpha] & F_{\frak{Q}}[\alpha] & \cdots  & F_{\frak{Q}}[\alpha]\\
\alpha^2 F_{\frak{Q}}[\alpha]&\alpha F_{\frak{Q}}[\alpha] & F_{\frak{Q}}[\alpha] & \dots \dots & F_{\frak{Q}}[\alpha]\\
\alpha^3 F_{\frak{Q}}[\alpha]& \alpha^2 F_{\frak{Q}}[\alpha]  & \alpha F_{\frak{Q}}[\alpha] & \dots \dots & F_{\frak{Q}}[\alpha]\\
\vdots & \vdots & \vdots & \ & \\
\alpha^{|E| -1} F_{\frak{Q}}[\alpha] &\alpha^{|E| - 2} F_{\frak{Q}}[\alpha] &\alpha^{|E| - 3} F_{\frak{Q}}[\alpha] &\ & F_{\frak{Q}}[\alpha] 
\end{pmatrix},
$$
and we let $\mathcal{I}'_{\frak{Q}}$ denote the ideal
$$\mathcal{I}'_{\frak{Q}}  = 
{\begin{pmatrix}
0&\alpha^{|E| - 1} F_{\frak{Q}}[\alpha] &\alpha^{|E| - 2} F_{\frak{Q}}[\alpha] & \dots \dots & \alpha F_{\frak{Q}}[\alpha]\\
0& \alpha^{|E| - 1} F_{\frak{Q}}[\alpha] &\alpha^{|E| - 2} F_{\frak{Q}}[\alpha] &\cdots  &\alpha F_{\frak{Q}}[\alpha]\\
0&\alpha^{|E| - 1} F_{\frak{Q}}[\alpha] &\alpha^{|E| - 2} F_{\frak{Q}}[\alpha] & \cdots &\alpha F_{\frak{Q}}[\alpha]\\
0&\alpha^{|E| - 1} F_{\frak{Q}}[\alpha] &\alpha^{|E| - 2} F_{\frak{Q}}[\alpha] & \cdots &\alpha F_{\frak{Q}}[\alpha]\\
\vdots & \vdots & \vdots & \ & \\
0&\alpha^{|E| - 1} F_{\frak{Q}}[\alpha] &\alpha^{|E| - 2} F_{\frak{Q}}[\alpha] & \cdots &\alpha F_{\frak{Q}}[\alpha]\\
\end{pmatrix}}.
$$
 By  identifying $1\otimes 1\otimes F_{\frak{Q}}$ with $F_{\frak{Q}}$, we get  
 $\mathcal{A}'_{\frak{Q }}  =    \mathcal{A}_{S\otimes_{S_E}S_{\frak{Q}}}(P_{E, \frak{Q}})\otimes_{S_{\frak{Q}}}F_{\frak{Q}}  \cong  \mathcal{R}'_{\frak{Q}}/\mathcal{I}'_{\frak{Q}} $.

By Lemma \ref{FQ} we have that   $F_{\frak{Q}}(\alpha) \cong  F_{\frak{Q}}[x]/(x^{|E|} )$, is a local ring with maximal ideal $(\alpha)$. Since $\mathcal{A}'_{\frak{Q}}$ is finite, it is Artinian and is a direct sum of the left ideals $\mathcal{A}'_{\frak{Q}}e_{i, i}, 1\leq i \leq |E|$, where $e_{i, i}$ is the $|E| \times |E|$
matrix with a $1$ in the $(i, i)$ position and zeroes elsewhere.  
 It is not difficult then to see that the Radical of the algebra $\mathcal{A}'_{\frak{Q}} $,   is the ideal 
$$
Rad(\mathcal{A}'_{\frak{Q}} ) = 
  \frac{
\begin{pmatrix}
\alpha F_{\frak{Q}}[\alpha]&F_{\frak{Q}}[\alpha]  & F_{\frak{Q}}[\alpha] &\cdots & F_{\frak{Q}}[\alpha] \\
\alpha F_{\frak{Q}}[\alpha] &\alpha F_{\frak{Q}}[\alpha] & F_{\frak{Q}}[\alpha] & \cdots  & F_{\frak{Q}}[\alpha]\\
\alpha^2 F_{\frak{Q}}[\alpha]&\alpha F_{\frak{Q}}(\alpha) &\alpha F_{\frak{Q}}[\alpha] & \dots \dots & F_{\frak{Q}}[\alpha]\\
\alpha^3 F_{\frak{Q}}[\alpha]& \alpha^2 F_{\frak{Q}}(\alpha]  & \alpha F_{\frak{Q}}[\alpha] & \dots \dots & F_{\frak{Q}}[\alpha]\\
\vdots & \vdots & \vdots & \ & \\
\alpha^{|E| -1} F_{\frak{Q}}[\alpha] &\alpha^{|E| - 2} F_{\frak{Q}}[\alpha] &\alpha^{|E| - 3} F_{\frak{Q}}[\alpha] &\ & \alpha F_{\frak{Q}}[\alpha]
\end{pmatrix}}
{\mathcal{I}'_{\frak{Q}} }$$

Hence we have 
$$\mathcal{A}'_{\frak{Q}}  /Rad(\mathcal{A}'_{\frak{Q}} ) \cong 
\begin{pmatrix}
F_{\frak{Q}} & 0 & 0 & \cdots & 0\\
0 & F_{\frak{Q}} & 0 & \cdots & 0\\
0 &0&  F_{\frak{Q}}  & \cdots & 0\\
\vdots & \vdots & \vdots  & & \vdots \\
0 & 0 & 0 & \cdots & F_{\frak{Q}}
\end{pmatrix},
$$
which is a direct sum of simple modules $L(e_i) = \mathcal{A}'_{\frak{Q}}e_{i, i}/Rad( \mathcal{A}'_{\frak{Q}}e_{i, i})$, $1\leq i \leq |E|$.  
Each $L(e_i)$    can be identified with the  copy of $F_{\frak{Q} }$ in the ith position on the diagonal of the matrix above. 
Since $L(e_i)$ is indecomposable for $1 \leq i \leq |E|$, we have $\mathcal{A}'_{\frak{Q}}e_{i, i}$ is 
also an  indecomposable $ \mathcal{A}'_{\frak{Q}}$ module, for $1 \leq i \leq |E|$, by Nakayama's lemma \cite{CnR}[(5.7)]. The set $\{L(e_i) | 1 \leq i \leq |E| \}$ is a full set of 
representatives of the isomorphism classes of the simple modules in
$\mathcal{A}'_{\frak{Q}}$-mod, \cite{CnR}[Sections 6A-6C]. 

Recall, from sections 7 and 8 that the module $Hom(P_{E, \frak{Q}}, P_{E, \frak{Q}, i})\otimes_{S_{\frak{Q}}}F_{\frak{Q}}$ in 
$End(P_{E, \frak{Q}})$-mod is identified with the  $i$ th column of the matrix ring  $\mathcal{A}'_{\frak{Q}}$, 
$ \mathcal{A}'_{\frak{Q}}e_{i, i}$.   Hence we have 
that 
$$L(e_i) \cong \frac{Hom(P_{E, \frak{Q}}, P_{E, \frak{Q}, i})\otimes_{S_{\frak{Q}}}F_{\frak{Q}} }{ Rad(Hom(P_{E, \frak{Q}}, P_{E, \frak{Q}, i})\otimes_{S_{\frak{Q}}}F_{\frak{Q}})} \cong \frac{ \mathcal{A}'_{\frak{Q}}e_{i, i}}{Rad( \mathcal{A}'_{\frak{Q}}e_{i, i})}.$$
Before we make our choice of Verma modules, we will show that under this identification, the module $Hom(P_{E, \frak{Q}}, P_{E, \frak{Q}, i}^{i + 1})\otimes_{S_{\frak{Q}}}F_{\frak{Q}}$ is identified with  the following submodule of $ \mathcal{A}'_{\frak{Q}}e_{i, i}$:
$$X_i = 
\frac{\begin{matrix}
1\ st \  row \rightarrow  \\
\\
\vdots\\
\\
  i- th \ \ row \rightarrow \\
  \\  
  \\
  \vdots\\
|E|\  th \ row \rightarrow \\
\end{matrix}
\begin{pmatrix}
0&\cdots&0& \alpha F_{\frak{Q}}[\alpha]&0&\cdots&0\\
0&\cdots&0&\alpha   F_{\frak{Q}}[\alpha]&0&\cdots&0\\
  \vdots \\
0&\cdots&0& \alpha  F_{\frak{Q}}[\alpha]&0&\cdots&0\\
 0&\cdots&0& \alpha  F_{\frak{Q}}[\alpha]&0&\cdots&0\\
0&\cdots&0&   \alpha F_{\frak{Q}}[\alpha]&0&\cdots&0\\  
0&\cdots&0&   \alpha^2 F_{\frak{Q}}[\alpha]&0&\cdots&0\\
   \vdots\\
0&\cdots&0&   \alpha^{|E| - i} F_{\frak{Q}}(\alpha)&0&\cdots&0
\end{pmatrix}  + \mathcal{I}'_{\frak{Q}}}
{\mathcal{I}'_{\frak{Q}}} .$$
\vskip .2in
\noindent
This follows from the following lemma  since $A_{e_k}(\Pi) \otimes_{S_{\frak{Q}}}F_{\frak{Q}}  = \alpha$:

\begin{lemma} Let $S, E, \Omega',  \frak{Q}$ and $ S_E$ be as defined in section 2.  Let $\Pi, P_{E, \frak{Q},}$
and $P_{E, \frak{Q}, i}$, $1 \leq i \leq |E|$ be as  above. 
Let 
$$P_{E, \frak{Q}, i} = P_{E, \frak{Q}, i}^{e_{i - 1}}  \supseteq P_{E, \frak{Q}, i}^{e_i } \supseteq \cdots \supseteq P_{E, \frak{Q}, i}^{e_{|E|}} = \{0\}$$
be a filtration for $P_{E, \frak{Q}, i}$ in the category $\mathcal{C}_{(S\otimes_{S_E}S_{\frak{Q}}, \Omega', E)}$ (as in Corollary  \ref{PlusEQ}) . 
Let $A_{e_k}(\Pi) = \Pi\otimes 1 - 1 \otimes\sigma_{e_k}(\Pi)  \in S\otimes_{S_E}S_{\frak{Q}}$. 
Then
$$Hom_{S\otimes_{S_E}S_{\frak{Q}}}(P_{E, \frak{Q}, k }, P_{E, \frak{Q}, i}^{e_i})  
= 
 \Big\{\  \begin{matrix}
& A_{e_k}(\Pi)Hom_{S\otimes_{S_E}S_{\frak{Q}}}(P_{E, \frak{Q}, k }, P_{E, \frak{Q}, i}^{e_{i - 1} })   & \hbox{if}  & k \leq i  \\
& Hom_{S\otimes_{S_E}S_{\frak{Q}}}(P_{E, \frak{Q}, k }, P_{E, \frak{Q}, i}^{e_{i - 1} })   & \hbox{if}  & k >  i
\end{matrix}.
$$

\end{lemma}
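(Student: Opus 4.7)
The approach is to apply $\operatorname{Hom}_{S\otimes_{S_E}S_{\frak{Q}}}(P_{E, \frak{Q}, k}, -)$, which is exact by projectivity of $P_{E, \frak{Q}, k}$ in the stratified category (Corollary \ref{PlusEQ}), to the sheaf exact sequence
\[
0 \to P_{E, \frak{Q}, i}^{e_i} \to P_{E, \frak{Q}, i} \to S_{\frak{Q}}[\sigma_{e_i}] \to 0,
\]
in which $P_{E, \frak{Q}, i}^{e_{i-1}} = P_{E, \frak{Q}, i}$ and the quotient is canonically $S_{\frak{Q}}[\sigma_{e_i}]$, both by Corollary \ref{PlusEQ}. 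Sheaf exactness is checked by inspecting the induced filtration on each term. The resulting long exact sequence reduces both cases of the lemma to a concrete description of $\operatorname{Hom}(P_{E, \frak{Q}, k}, S_{\frak{Q}}[\sigma_{e_i}])$ together with identification of the subobject of $\operatorname{Hom}(P_{E, \frak{Q}, k}, P_{E, \frak{Q}, i})$ corresponding to $P_{E, \frak{Q}, i}^{e_i}$.

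For the case $k > i$, Corollary \ref{PlusEQ} gives $P_{E, \frak{Q}, k}^{e_l} = P_{E, \frak{Q}, k}$ for $l \leq k-1$, so $P_{E, \frak{Q}, k}^{e_i} = P_{E, \frak{Q}, k}$. Since $S_{\frak{Q}}[\sigma_{e_i}]$ is concentrated at $e_i$, its filtration has $(S_{\frak{Q}}[\sigma_{e_i}])^{e_i} = 0$, so Lemma \ref{maps} forces every morphism into it to vanish; hence $\operatorname{Hom}(P_{E, \frak{Q}, k}, S_{\frak{Q}}[\sigma_{e_i}]) = 0$ and the sequence collapses to give the second claim. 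For the case $k \leq i$, writing $I_m := (S\otimes_{S_E}S_{\frak{Q}})_{I_{E, e_m}^c}$, the inclusion $I_k \subseteq I_i$ forces $[I_i : I_k] = S\otimes_{S_E}S_{\frak{Q}}$, so the $S_E$-analog of Theorem \ref{ringmapsstensors} identifies $\operatorname{Hom}(P_{E, \frak{Q}, k}, P_{E, \frak{Q}, i}) \cong P_{E, \frak{Q}, i}$ via $f \mapsto f(1 + I_k)$ and likewise $\operatorname{Hom}(P_{E, \frak{Q}, k}, S_{\frak{Q}}[\sigma_{e_i}]) \cong S_{\frak{Q}}[\sigma_{e_i}]$. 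Under these identifications the connecting map becomes $\phi'_{e_i}$, whose kernel is $P_{E, \frak{Q}, i}^{e_i}$ by definition, so $\operatorname{Hom}(P_{E, \frak{Q}, k}, P_{E, \frak{Q}, i}^{e_i})$ is canonically identified with $P_{E, \frak{Q}, i}^{e_i}$ as a submodule of $P_{E, \frak{Q}, i}$.

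The main obstacle is then to realize this kernel as the image of multiplication by $A_{e_k}(\Pi)$ on $P_{E, \frak{Q}, i}$. The key input is Lemma \ref{basisP} applied after reindexing $E$ so that the relevant Galois element appears first in the ordering; the basis hypothesis $L = L_E(\Pi)$ is supplied by Lemma \ref{structure} and is independent of ordering, so the principal-ideal conclusion of Lemma \ref{basisP} transfers. This realizes $\ker \phi'_{e_i}$ as a principal ideal of $S\otimes_{S_E}S_{\frak{Q}}$; since $I_i \subseteq \ker\phi'_{e_i}$, quotienting by $I_i$ yields
\[
A_{e_k}(\Pi) \cdot P_{E, \frak{Q}, i} \;=\; \ker \phi'_{e_i}/I_i \;=\; P_{E, \frak{Q}, i}^{e_i},
\]
which matches the first case of the lemma under the identification of Hom's established in the previous paragraph. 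The delicate point is tracking the index conventions during the reindexing; in particular, the hypothesis $k \leq i$ is exactly what is needed to ensure that $A_{e_k}(\Pi)$ generates the correct principal ideal with respect to the relevant filtration level $e_i$, so that the action of $A_{e_k}(\Pi)$ on $P_{E,\frak{Q},i}$ picks out precisely $P_{E, \frak{Q}, i}^{e_i}$ and no larger submodule.
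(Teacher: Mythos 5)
Your overall strategy — apply $\operatorname{Hom}(P_{E,\frak{Q},k},-)$ to the sheaf exact sequence $0\to P_{E,\frak{Q},i}^{e_i}\to P_{E,\frak{Q},i}\to S_{\frak{Q}}[\sigma_{e_i}]\to 0$, handle $k>i$ by showing every morphism already lands in $P_{E,\frak{Q},i}^{e_i}$, and for $k\le i$ identify $\operatorname{Hom}(P_{E,\frak{Q},k},P_{E,\frak{Q},i})$ with $P_{E,\frak{Q},i}$ via Lemma \ref{ringmaps}/Theorem \ref{ringmapsstensors} and describe the kernel of the induced map as a principal multiple — is the same route the paper takes, and the $k>i$ case is fine.

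The gap is in the final paragraph for $k\le i$. You correctly note that Lemma \ref{basisP}, applied after reindexing $E$ so that $\sigma_{e_i}$ comes first, realizes $\ker\phi'_{e_i}$ as a principal ideal of $S\otimes_{S_E}S_{\frak{Q}}$. But that reindexing shows the generator is $A_{e_i}(\Pi)$, not $A_{e_k}(\Pi)$: $\phi'_{e_i}(A_{e_i}(\Pi))=0$ by construction, whereas $\phi'_{e_i}(A_{e_k}(\Pi))=\sigma_{e_i}(\Pi)-\sigma_{e_k}(\Pi)$, which is a nonzero element of $\frak{Q}S_{\frak{Q}}$ whenever $k\ne i$ (since $L=L_E(\Pi)$ and the $\sigma_{e_l}$ are distinct automorphisms over $L_E$). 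Consequently $A_{e_k}(\Pi)\cdot P_{E,\frak{Q},i}$ has nonzero $\phi'_{e_i}$-component for $k<i$ and therefore does \emph{not} equal $P_{E,\frak{Q},i}^{e_i}=\ker\phi'_{e_i}/I_i$. Your closing claim that ``the hypothesis $k\le i$ is exactly what is needed to ensure that $A_{e_k}(\Pi)$ generates the correct principal ideal'' is false; the hypothesis $k\le i$ is only what makes $[I_i:I_k]=S\otimes_{S_E}S_{\frak{Q}}$ and hence $\operatorname{Hom}(P_{E,\frak{Q},k},P_{E,\frak{Q},i})\cong P_{E,\frak{Q},i}$ — it has nothing to do with which $A_{e_\ast}(\Pi)$ generates $\ker\phi'_{e_i}$. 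The paper's own proof of this lemma in fact works with $A_{e_i}(\Pi)$ throughout (proving $\ker F=A_{e_i}(\Pi)\operatorname{Hom}(\cdots)$ by showing any $f\in\ker F$ has $f(1\otimes1+\cdots)\in A_{e_i}(\Pi)(S\otimes_{S_E}S_{\frak{Q}})+I_i$), and the occurrence of $A_{e_k}(\Pi)$ in the statement appears to be a typo. It is harmless for the immediate application since Lemma \ref{FQ} gives $\Psi'(A_{e_k}(\Pi))=\Psi'(A_{e_i}(\Pi))=\alpha$ after tensoring with $F_{\frak{Q}}$, but your argument, which tries to prove the $A_{e_k}(\Pi)$ version on the nose over $S\otimes_{S_E}S_{\frak{Q}}$, does not go through, and the internal inconsistency (you identify $\ker\phi'_{e_i}$ as principal but then name a generator that does not lie in $\ker\phi'_{e_i}$) should have been a red flag.
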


\begin{proof} Throughout the proof $Hom(-, -)$ should be interpreted as $Hom_{S\otimes_{S_E}S_{\frak{Q}}}(-, -)$.
If $k > i $, then since 
$$P_{E, \frak{Q}, k } = \frac{S\otimes_{S_E}S_{\frak{Q}}}{(S\otimes_{S_E}S_{\frak{Q}})_{I_{E, e_k}^c}}
\ \ \ \mbox{and} \ \ \ \  P_{E, \frak{Q}, i } = \frac{S\otimes_{S_E}S_{\frak{Q}}}{(S\otimes_{S_E}S_{\frak{Q}})_{I_{E, e_i}^c}},$$
by Lemma \ref{ringmaps} and Theorem \ref{ringmapsstensors},   we have $Hom(P_{E, \frak{Q}, k }, P_{E, \frak{Q}, i }) \cong \frac{ (S\otimes_{S_E}S_{\frak{Q}})_{I_{E, e_i}^c\cup I_{E, e_k}}   }{(S\otimes_{S_E}S_{\frak{Q}})_{I_{E, e_i}^c}}$, with the isomorphism from $Hom(P_{E, \frak{Q}, k }, P_{E, \frak{Q}, i}) $ to 
$ \frac{ (S\otimes_{S_E}S_{\frak{Q}})_{I_{E, e_i}^c\cup I_{E, e_k}}   }{(S\otimes_{S_E}S_{\frak{Q}})_{I_{E, e_i}^c}}$
 given by $f \to f(1\otimes 1 + (S\otimes_{S_E}S_{\frak{Q}})_{I_{E, e_i}^c})$.    By looking at  the images of the modules under  the map $\phi'$,  and by uniqueness of the filtration of   $P_{E, \frak{Q}, i }$ in  $\mathcal{C}_{(S\otimes_{S_E}S_{\frak{Q}}, \Omega', E)}$ (using Lemma  \ref{maps}  applied to the identity map), 
 we can easily see that $P_{E, \frak{Q}, i }^{e_i} =
 \frac{ (S\otimes_{S_E}S_{\frak{Q}})_{I_{E, e_i}^c\cup I_{E, e_{i + 1}}}   }{(S\otimes_{S_E}S_{\frak{Q}})_{I_{E, e_i}^c}}$.
 Therefore, for $k \geq i + 1$, we have that $f(P_{E, \frak{Q}, k } ) \subseteq P_{E, \frak{Q}, i }^{e_i}$ for all $f \in Hom(P_{E, \frak{Q}, k }, P_{E, \frak{Q}, i })$. 
 Hence $Hom(P_{E, \frak{Q}, k }, P_{E, \frak{Q}, i }) = Hom(P_{E, \frak{Q}, k }, P_{E, \frak{Q}, i }^{e_i})$ for $k \geq i + 1$.

It remains to treat the case when $k \leq i$.  If $k \leq i$, then $I_{E, e_i}^c\cup I_{E, e_k} = E$, and  $Hom(P_{E, \frak{Q}, k}, P_{E, \frak{Q}, i}) \cong \frac{ (S\otimes_{S_E}S_{\frak{Q}})  }{(S\otimes_{S_E}S_{\frak{Q}})_{I_{E, e_i}^c}}$ where the isomorphism, from 
 Lemma \ref{ringmaps},  is given by $f \to f(1\otimes 1  + (S\otimes_{S_E}S_{\frak{Q}})_{I_{E, e_i}^c})$.
Now consider the 
  short exact sequence of $S\otimes_{S_E}S_{\frak{Q}}$ modules:
 $$0 \to P_{E, \frak{Q}, i}^{e_i} \to P_{E, \frak{Q}, i} \to P_{E, \frak{Q}, i}/P_{E, \frak{Q}, i}^{e_i} \to 0.$$
 It is not difficult to see that it is a sheaf exact sequence in the category $\mathcal{C}_{(S\otimes_{S_E}S_{\frak{Q}}, \Omega', E)}$.
 Because $P_{E, \frak{Q}}$ is a projective generator of $\mathcal{C}_{(S\otimes_{S_E}S_{\frak{Q}}, \Omega', E)}$,
  the sequence 
 $$0 \to Hom(P_{E, \frak{Q}}, P_{E, \frak{Q}, i}^{e_i}) \to Hom(P_{E, \frak{Q}}, P_{E, \frak{Q}, i} ) \to Hom(P_{E, \frak{Q}},P_{E, \frak{Q}, i}/P_{E, \frak{Q}, i}^{e_i} ) \to 0$$
 is an exact sequence of $\mathcal{A}'(P_{E, \frak{Q}})$ modules, by Theorem \ref{progen}, 
   and hence exact as a sequence of 
$S\otimes_{S_E}S_{\frak{Q}}$ modules. 
 Now if $M$ is an $S\otimes_{S_E}S_{\frak{Q}}$ module, we have 
 $Hom(P_{E, \frak{Q}}, M) \cong \oplus_j Hom(P_{E, \frak{Q}, j}, M)$ as $S\otimes_{S_E}S_{\frak{Q}}$ modules 
 and therefore 
{  \[
\xymatrix @=1pc {
0\ar[r]&Hom(P_{E,\frak{Q},k }, P_{E,\frak{Q},i}^{e_i})\ar[r]^{i} & Hom(P_{E,\frak{Q},k }, P_{E,\frak{Q},i})\ar[r]^{F}& Hom(P_{E, \frak{Q},k },\frac{P_{E,\frak{Q},i}}{P_{E,\frak{Q},i}^{e_i}})\ar[r]&0
}
\]}
is an exact sequence of $S\otimes_{S_E}S_{\frak{Q}}$ modules for each $k$. 
  Hence 
 $\ker F = Hom(P_{E, \frak{Q}, k }, P_{E, \frak{Q}, i}^{e_i})$ since $i$ is the inclusion map. Certainly $A_{e_i}(\Pi)Hom(P_{E, \frak{Q}, k }, P_{E, \frak{Q}, i })
 \subseteq \ker F$, since $A_{e_i}(\Pi)(S\otimes_{S_E}S_{\frak{Q}}) = (S\otimes_{S_E}S_{\frak{Q}})_{I_{E, e_i}^c\cup I_{E, e_{i + 1}}}$, and from our discussion above, $P_{E, \frak{Q}, i }^{e_i} =
 \frac{ (S\otimes_{S_E}S_{\frak{Q}})_{I_{E, e_i}^c\cup I_{E, e_{i + 1}}}   }{(S\otimes_{S_E}S_{\frak{Q}})_{I_{E, e_i}^c}}$.
 
   On the other hand, if 
 $f \in \ker F$, then $f(1\otimes 1 + (S\otimes_{S_E}S_{\frak{Q}})_{I_{E, e_k}^c}) = 
 A_{e_i}(\Pi)x + (S\otimes_{S_E}S_{\frak{Q}})_{I_{E, e_i}^c}$, for some $x \in S\otimes_{S_E}S_{\frak{Q}}$, since 
  $f(1\otimes 1 + (S\otimes_{S_E}S_{\frak{Q}})_{I_{E, e_k}^c}) \in  A_{e_i}(\Pi)(S\otimes_{S_E}S_{\frak{Q}}) + (S\otimes_{S_E}S_{\frak{Q}})_{I_{E, e_i}^c} = (S\otimes_{S_E}S_{\frak{Q}})_{I_{E, e_i}^c\cup I_{E, e_{i + 1}}} + (S\otimes_{S_E}S_{\frak{Q}})_{I_{E, e_i}^c}$.  Since $k \leq i$, we have $Hom(P_{E, \frak{Q}, k }, P_{E, \frak{Q}, i}) 
  = \frac{S\otimes_{S_E}S_{\frak{Q}}} {(S\otimes_{S_E}S_{\frak{Q}})_{I_{E, e_k}^c} }$, hence there is a 
  homomorphism $g \in Hom(P_{E, \frak{Q}, k }, P_{E, \frak{Q}, i})$  with $g(1\otimes 1 + (S\otimes_{S_E}S_{\frak{Q}})_{I_{E, e_k}^c}) 
  = x + (S\otimes_{S_E}S_{\frak{Q}})_{I_{E, e_i}^c}) $ and $f = A_{e_i}(\Pi)g$. This completes the result.
 
 \end{proof}

We now make a choice of Verma modules in the category $\mathcal{A}'_{\frak{Q}}$-mod. 
\begin{definition}
Let 
$M(e_k) = Hom_{S\otimes_{S_E}S_{\frak{Q}}}(P_{E, \frak{Q}}, S_{\frak{Q}}[\sigma_{e_k}])\otimes_{S_{\frak{Q}}}F_{\frak{Q}}$ for $1 \leq k \leq |E|$.  
\end{definition}
\begin{lemma}The set $\{M(e_k) | 1 \leq k \leq |E| \}$ constitute a choice of Verma modules for the category $\mathcal{A}'_{\frak{Q}}$-mod.
In particular $\overline{M(e_k)} = M(e_k)/Rad (M(e_k)) =  L(e_k)$ and 
 $$[M(e_k) : L(e_j)] =  \Big\{\  \begin{matrix}
& 1 & \hbox{if}  & k \geq j \\
& 0 &   &\mbox{otherwise} 
\end{matrix}.
$$

The modules also satisfy the universal property:\\
If  $\{M'(e_k) | 1 \leq k \leq |E| \}$  is another set of modules in the category  $\mathcal{A}'_{\frak{Q}}$-mod, with the following properties:\\
$\overline{M'(e_k)} = M'(e_k)/Rad (M'(e_k)) =  L(e_k)$, $[M'(e_k) : L(e_j)] = 1$ if $k = j$ and $[M'(e_k) : L(e_j)] = 0$ if $j >   k$, \\
then for each $k, \ 1\leq k \leq |E|$, we have a surjective homomorphism $M(e_k) \to M'(e_k)$.

\end{lemma}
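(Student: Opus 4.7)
The plan is to exploit the matrix presentation $\mathcal{A}'_\frak{Q} \cong \mathcal{R}'_\frak{Q}/\mathcal{I}'_\frak{Q}$ developed just before the lemma, together with the identification of $X_k$ with $Hom_{S\otimes_{S_E}S_\frak{Q}}(P_{E,\frak{Q}},P^{e_k}_{E,\frak{Q},k})\otimes_{S_\frak{Q}}F_\frak{Q}$ also recorded there. First I would identify $M(e_k)$ as a quotient inside the matrix model: the filtration of $P_{E,\frak{Q},k}$ from Corollary \ref{PlusEQ} gives a sheaf exact sequence
\[0 \to P^{e_k}_{E,\frak{Q},k} \to P_{E,\frak{Q},k} \to S_\frak{Q}[\sigma_{e_k}] \to 0\]
in $\mathcal{C}_{(S\otimes_{S_E}S_\frak{Q},\Omega',E)}$. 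Applying the exact functor $Hom_{S\otimes_{S_E}S_\frak{Q}}(P_{E,\frak{Q}},-)$ (Theorem \ref{progen}), tensoring with $F_\frak{Q}$ over $S_\frak{Q}$, and using the identification in the paragraph preceding the lemma, I obtain a short exact sequence
\[0 \to X_k \to \mathcal{A}'_\frak{Q}e_{k,k} \to M(e_k) \to 0.\]
Reading off the $k$-th column of $\mathcal{R}'_\frak{Q}/\mathcal{I}'_\frak{Q}$ and the display defining $X_k$, the quotient $M(e_k)$ is a $k$-dimensional $F_\frak{Q}$-vector space with $F_\frak{Q}[\alpha]/\alpha F_\frak{Q}[\alpha] \cong F_\frak{Q}$ sitting in each of rows $1,\ldots,k$ and zero in rows $k+1,\ldots,|E|$.

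Next, for the composition factors, let $N_j \subseteq M(e_k)$ denote the subspace supported in rows $1,\ldots,j$ (for $0 \leq j \leq k$). The subset $N_j$ is an $\mathcal{A}'_\frak{Q}$-submodule: for $(m_1,\ldots,m_j,0,\ldots,0)\in N_j$ and $(b_{rc})\in\mathcal{A}'_\frak{Q}$, the $r$-th entry of the product is $\sum_{c\le j}b_{rc}m_c$, and whenever $r > j \geq c$ the entry $b_{rc}$ lies in $\alpha^{r-c}F_\frak{Q}[\alpha]\subseteq\alpha F_\frak{Q}[\alpha]$, which vanishes under the reduction $F_\frak{Q}[\alpha]\to F_\frak{Q}[\alpha]/\alpha F_\frak{Q}[\alpha]$. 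On the one-dimensional quotient $N_j/N_{j-1}$, only $e_{j,j}$ acts as the identity and all other matrix units act as zero, so $N_j/N_{j-1}\cong L(e_j)$. This yields $[M(e_k):L(e_j)] = 1$ for $j\le k$ and $0$ otherwise. Since $M(e_k)$ is cyclic (generated by the class of $e_{k,k}$) and $M(e_k)/N_{k-1}\cong L(e_k)$ is simple, $N_{k-1}$ is the unique maximal submodule of $M(e_k)$, so $\overline{M(e_k)} = L(e_k)$.

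For the universal property, let $M'(e_k)$ satisfy the stated hypotheses. Because $\overline{M'(e_k)} = L(e_k)$, the module $M'(e_k)$ is cyclic with simple head $L(e_k)$, giving a surjection $\pi:\mathcal{A}'_\frak{Q}e_{k,k}\twoheadrightarrow M'(e_k)$ from its projective cover. I will show $X_k\subseteq\ker\pi$, which yields the required surjection $M(e_k)\twoheadrightarrow M'(e_k)$. When $k=|E|$ this is automatic since $X_{|E|}=0$. For $k<|E|$, the main step---and the principal obstacle---is to prove that $X_k$ is cyclic with simple head $L(e_{k+1})$. Let $v_{k+1}\in\mathcal{A}'_\frak{Q}e_{k,k}$ be the element with $\alpha$ in position $(k+1,k)$ and zero elsewhere. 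A direct matrix computation shows that as $b$ ranges over $\mathcal{A}'_\frak{Q}$, the $(r,k)$-entries of $b\cdot v_{k+1}$ range over $\alpha F_\frak{Q}[\alpha]$ for $r\leq k+1$ and over $\alpha^{r-k}F_\frak{Q}[\alpha]$ for $r>k+1$, matching the defining entries of $X_k$; hence $\mathcal{A}'_\frak{Q}\cdot v_{k+1}=X_k$. Since $e_{k+1,k+1}v_{k+1}=v_{k+1}$ and $e_{j,j}v_{k+1}=0$ for all $j\neq k+1$, the unique simple quotient of the cyclic module $X_k$ is $L(e_{k+1})$. As $[M'(e_k):L(e_{k+1})]=0$ by hypothesis, any non-zero image of $X_k$ in $M'(e_k)$ would contain $L(e_{k+1})$ as a composition factor, a contradiction. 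Therefore $\pi(X_k)=0$ and $\pi$ descends to a surjection $M(e_k)=\mathcal{A}'_\frak{Q}e_{k,k}/X_k\twoheadrightarrow M'(e_k)$.
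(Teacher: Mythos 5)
Your proposal is correct. For the identification of $M(e_k)$ inside the matrix model and the computation of its composition factors you proceed as the paper does, though you supply more detail for the composition factor count via the explicit filtration by row-support $N_0 \subset N_1 \subset \cdots \subset N_k = M(e_k)$; this fills in a step the paper handles with ``it is easy to see.''

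For the universal property your argument genuinely diverges from the paper's. The paper interpolates a sub-lemma (what it calls Lemma \ref{turkey}) establishing a Verma flag $P(e_k) = P(e_k)_{k-1} \supset P(e_k)_k \supset \cdots \supset P(e_k)_{|E|} = 0$ with successive quotients $M(e_s)$, and then shows $\tau(P(e_k)_k) = 0$ by descending induction along this flag, killing one layer $M(e_s)$ at a time using the hypothesis $[M'(e_k):L(e_s)] = 0$ for $s > k$. You instead go straight at the kernel: you identify the generator $v_{k+1}$ (the class of $\alpha \cdot e_{k+1,k}$) of $X_k$, compute $\mathcal{A}'_{\frak{Q}}v_{k+1} = X_k$ from the matrix entries, and observe that $e_{k+1,k+1}v_{k+1} = v_{k+1}$ while $e_{j,j}v_{k+1} = 0$ for $j \neq k+1$, so the cyclic module $X_k$ has unique simple quotient $L(e_{k+1})$. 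Since every nonzero quotient of a cyclic module surjects onto its head, any nonzero image of $X_k$ in $M'(e_k)$ would force $[M'(e_k):L(e_{k+1})] > 0$, contradicting the hypothesis; hence $\pi(X_k) = 0$ in one stroke. Your route is shorter and avoids the Verma flag machinery for this lemma (the paper still needs that flag for the reciprocity statement, so it loses nothing by proving it, but as a proof of the universal property alone yours is more economical). Both arguments rest on the same matrix presentation $\mathcal{R}'_{\frak{Q}}/\mathcal{I}'_{\frak{Q}}$, and your entry-by-entry verification that $\mathcal{A}'_{\frak{Q}}v_{k+1} = X_k$ checks out against the displayed form of $X_k$.
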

\begin{proof}
As discussed in the proof of the previous Lemma, 
for any $k$, $1\leq k \leq |E|$, we have an exact sequence of $End_{S\otimes_{S_E}S_{\frak{Q}}}(P_{E, \frak{Q}}) $ modules:
 $$0 \to Hom(P_{E, \frak{Q}}, P_{E, \frak{Q}, k }^{e_k}) \to Hom(P_{E, \frak{Q}}, P_{E, \frak{Q}, k }) \to Hom(P_{E, \frak{Q}},P_k/P_{E, \frak{Q}, k }^{e_k} ) \to 0,$$
 where $Hom(-,-)  = Hom_{S\otimes_{S_E}S_{\frak{Q}}}(-,-) $. By using Lemma \ref{ringmaps} and the isomorphism $\phi$, we see that 
 each module in this exact sequence is projective, and hence free,  as a right $S_{\frak{Q}}$ module. Hence the above sequence is 
 a split exact sequence of right $S_{\frak{Q}}$ modules, which remains exact when we apply the functor $\otimes_{S_{\frak{Q}} }F_{\frak{Q}}$. 
 Since $P_{E, \frak{Q}, k }/P_{E, \frak{Q}, k }^{e_k}  \cong S_{\frak{Q}}[\sigma_{e_k}]$ we have 
 $$M(e_k) \cong \frac{Hom_{S\otimes_{S_E}S_{\frak{Q}}}(P_{E, \frak{Q}}, P_{E, \frak{Q}, k })\otimes_{S_{\frak{Q}}}F_{\frak{Q}} }{Hom_{S\otimes_{S_E}S_{\frak{Q}}}(P_{E, \frak{Q}}, P_{E, \frak{Q}, k }^{e_k})\otimes_{S_{\frak{Q}}}F_{\frak{Q}} }. $$
Using the identification of $Hom_{S\otimes_{S_E}S_{\frak{Q}}}(P_{E, \frak{Q}}, P_{E, \frak{Q}, k })$ with the kth column of the matrix ring $\mathcal{R}_{\frak{Q}}' / \mathcal{I}_{\frak{Q}}' $ , we see that $M(e_k)$ is 
identified with 
 $$\mathcal{A}'_{\frak{Q}}e_{k, k}/X_k.$$
Now it is easy to  see that $\overline{M(e_k)} = L(e_k)$ and that 
 
 $$[M(e_k) : L(e_j)] =  \Big\{\  \begin{matrix}
& 1 & \hbox{if}  & k \geq j \\
& 0 &   &\mbox{otherwise} 
\end{matrix}.
$$
 Thus $\{M(e_k) | 1 \leq k \leq |E| \}$ constitutes a choice of Verma modules for the category $\mathcal{A}'_{\frak{Q}}$-mod.

 To prove the universal property for the set $\{M(e_k) | 1 \leq k \leq |E| \}$, we first show that each projective indecomposable has 
 a Verma flag for this choice of Verma modules. In the process we prove what we need to demonstrate BGG reciprocity in the category  $\mathcal{A}'_{\frak{Q}}$-mod.

Let $P(e_i) = Hom_{S\otimes_{S_E}S_{\frak{Q}}}(P_{E, \frak{Q}}, P_{E, \frak{Q}, i})\otimes_{S_{\frak{Q}}}F_{\frak{Q}}$ for $1\leq i \leq |E|$. 
Since $P_{E, \frak{Q}, i}$ is projective in  $ \mathcal{C}_{(S\otimes_{S_E}S_{\frak{Q}}, \ \Omega', E)}$, 
we have $Hom_{S\otimes_{S_E}S_{\frak{Q}}}(P_{E, \frak{Q}}, P_{E, \frak{Q}, i})$  is projective in $End_{S\otimes_{S_E}S_{\frak{Q}}}(P_{E, \frak{Q}})^{op}$ - mod. Hence $P(e_i) = Hom_{S\otimes_{S_E}S_{\frak{Q}}}(P_{E, \frak{Q}}, P_{E, \frak{Q}, i})\otimes_{S_{\frak{Q}}}F_{\frak{Q}}$ is projective and is    isomorphic to  
 $\mathcal{A}'_{\frak{Q}}e_{i, i}$ as an $\mathcal{A}'_{\frak{Q}}$ module.
 Furthermore, from the discussion above  we know  that 
$$\overline{P(e_i)} = P(e_i)/(Rad (P(e_i)) \cong \mathcal{A}'_{\frak{Q}}e_{i, i}/Rad(\mathcal{A}'_{\frak{Q}}e_{i, i})   \cong L(e_i)$$ 
 and $P(e_i)$ is an  projective cover of $L(e_i)$. Thus $\{P(e_i)|1 \leq i \leq |E|\}$ is a set of projective covers for the simple modules in
 $\mathcal{A}'_{\frak{Q}}$-mod. 
 
\begin{lemma} \label{turkey} Let $\{M(e_i)|1\leq i \leq |E|\}$ be the choice of Verma modules for the category $\mathcal{A}'_{\frak{Q}}$-mod, defined above.
Let $\{P(e_i)|1\leq i \leq |E|\}$ be the projective indecomposables described above. Then each $P(e_i)$, $1 \leq i \leq |E|$ has a Verma flag with multiplicities:
$$(P(e_i) : M(e_k) ) = 
 \Big\{\  \begin{matrix}
& 1 & \hbox{if}  & k \geq i  \\
& 0 &   &\mbox{otherwise} 
\end{matrix}.
$$
\end{lemma}
\begin{proof}
For a fixed $i$, with $1 \leq i \leq |E|$, consider the module $P_{E, \frak{Q}, i}$ in 
$\mathcal{C}_{(S\otimes_{S_E}S_{\frak{Q}}, \Omega', E)}$. 
By Corollary  \ref{PlusEQ}, we have a filtration
in $\mathcal{C}_{(S\otimes_{S_E}S_{\frak{Q}}, \Omega', E)}$  for 
$P_{E, \frak{Q}, i}$ given by 
\[(P_{E, \frak{Q}, i})^{e_0}  = P_{E, \frak{Q}, i}           = \frac{(S\otimes_{S_E}S_{\frak{Q}}) }{ (S\otimes_{S_E}S_{\frak{Q}})_{I_{E, e_i}^c}} 
 = (P_{E, \frak{Q}, i})^{e_{i - 1}} \ \ \mbox{ and} \ \  \]
\[ (P_{E, \frak{Q}, i})^{e_k} = \frac{(S\otimes_{S_E}S_{\frak{Q}})_{I_{E, e_i}^c\cup I_{E, e_{k + 1}}} }{ (S\otimes_{S_E}S_{\frak{Q}})_{I_{E, e_i}^c}}.\]
For $k \geq i$ we  have that $ (P_{E, \frak{Q}, i})^{e_{k - 1}} / (P_{E, \frak{Q}, i})^{e_k} \cong 
\phi'_k(S\otimes_{S_E}S_{\frak{Q}})_{I^c_{E, e_i} \cup I_{E, e_k}}  = s_kS_{\frak{Q}}[\sigma_k]$, for some $s_k \in S_{\frak{Q}}$. 
Hence 
$ (P_{E, \frak{Q}, i})^{e_{k - 1}} / (P_{E, \frak{Q}, i})^{e_k}   \cong S_{\frak{Q}}[\sigma_k]$ as $S\otimes_{S_E}S_{\frak{Q}}$ modules. 
It is not difficult to see that the sequence 
$$0 \to  (P_{E, \frak{Q}, i})^{e_k} \to  (P_{E, \frak{Q}, i})^{e_{k - 1}}  \to  (P_{E, \frak{Q}, i})^{e_{k - 1}}/ (P_{E, \frak{Q}, i})^{e_k} 
\to 0$$
is sheaf exact in  $\mathcal{C}_{(S\otimes_{S_E}S_{\frak{Q}}, \Omega', E)}$  for $1 \leq k \leq |E|$. 
Since $Hom_{S\otimes_{S_E}S_{\frak{Q}}}(P_{E, \frak{Q}}, -)$ takes sheaf exact sequences to exact sequences, 
we see that we have isomorphisms of $End_{S\otimes_{S_E}S_{\frak{Q}}}(P_{E, \frak{Q}})$ modules for $k\geq 1$:
$$\frac{ Hom_{S\otimes_{S_E}S_{\frak{Q}}}(P_{E, \frak{Q}}, (P_{E, \frak{Q}, i})^{e_{k - 1}} )}{       Hom_{S\otimes_{S_E}S_{\frak{Q}}}(P_{E, \frak{Q}}, (P_{E, \frak{Q}, i})^{e_k}  ) } \cong    
 Hom_{S\otimes_{S_E}S_{\frak{Q}}}\Bigg(P_{E, \frak{Q}},    \frac{  (P_{E, \frak{Q}, i})^{e_{k -1}}}{ (P_{E, \frak{Q}, i})^{e_k}}\Bigg) $$
 $$
 \cong  Hom_{S\otimes_{S_E}S_{\frak{Q}}}(P_{E, \frak{Q}}, S_{\frak{Q}}[\sigma_{e_k}])
 $$
Now as in the previous Lemma,  since the Hom-spaces above are projective as right $S_{\frak{Q}}$ modules, tensoring by $F_{\frak{Q}}$ is exact, giving  isomorphisms   of $\mathcal{A}'_{\frak{Q}}$  modules:
{\small $$
\frac{ Hom_{S\otimes_{S_E}S_{\frak{Q}}}(P_{E, \frak{Q}}, (P_{E, \frak{Q}, i})^{e_{k - 1}}) \otimes_{S_{\frak{Q}}}F_{\frak{Q}}  }{       Hom_{S\otimes_{S_E}S_{\frak{Q}}}(P_{E, \frak{Q}}, (P_{E, \frak{Q}, i})^{e_k}   )\otimes_{S_{\frak{Q}}}F_{\frak{Q}}  } 
\cong 
\frac{ Hom_{S\otimes_{S_E}S_{\frak{Q}}}(P_{E, \frak{Q}}, (P_{E, \frak{Q}, i})^{e_{k - 1}} )}{       Hom_{S\otimes_{S_E}S_{\frak{Q}}}(P_{E, \frak{Q}}, (P_{E, \frak{Q}, i})^{e_k}   )} \otimes_{S_{\frak{Q}}}F_{\frak{Q}}
$$}

$$\cong   Hom_{S\otimes_{S_E}S_{\frak{Q}}}(P_{E, \frak{Q}}, S_{\frak{Q}}[\sigma_{e_k}])  \otimes_{S_{\frak{Q}}}F_{\frak{Q}}  = M(e_k)
$$
if $k \geq i$.  Letting 
$P(e_i)_k = Hom_{S\otimes_{S_E}S_{\frak{Q}}}(P_{E, \frak{Q}}, (P_{E, \frak{Q}, i})^k )\otimes_{S_{\frak{Q}}}F_{\frak{Q}}$ for $k \geq i - 1$ , we get a Verma flag 
$$P(e_i) = P(e_i)_{i - 1}  \supset P(e_i)_{i } \supset \cdots \supset P(e_i)_{|E|} = 0$$
for $P(e_i)$, giving us the multiplicities 
$$(P(e_i) : M(e_k) ) = 
 \Big\{\  \begin{matrix}
& 1 & \hbox{if}  & k \geq i  \\
& 0 &   &\mbox{otherwise} 
\end{matrix}.
$$
\end{proof}
We are now ready to demonstrate the universal property of the set $\{M(e_k) | 1 \leq k \leq |E| \}$. Let 
$\{M'(e_k) | 1 \leq k \leq |E| \}$  is another set of modules in the category  $\mathcal{A}'_{\frak{Q}}$-mod, with the following properties:\\
$\overline{M'(e_k)} = M'(e_k)/Rad (M'(e_k)) =  L(e_k)$, $[M'(e_k) : L(e_j)] = 1$ if $k = j$ and $[M'(e_k) : L(e_j)] = 0$ if $j  >   k$. \\
Then for each $k, 1 \leq k \leq |E|$, we have an exact sequence
$$0 \to Rad(M'(e_k)) \to M'(e_k) \to L(e_k) \to 0.$$
Because $P(e_k)$ is projective, with projection   $\pi: P(e_k) \to L(e_k)$,  we have a map $\tau : P(e_k) \to M'(e_k)$ making the following diagram commute: 
\[\xymatrix{
&&
&P(e_k)\ar[d]^{\pi}\ar[dl]_{\tau}&\\
0\ar[r]&Rad(M'(e_k)) \ar[r] & M'(e_k) \ar[r]^p & L(e_k) \ar[r] & 0
}\]

Let $M_1 = \tau(P(e_k))$ denote the image of the map $\tau$. For any $m \in M'(e_k)$, we have $P(m) = P(\tau(p_1))$ for some $p_1 \in P(e_k)$, 
since $\pi$ is surjective.  Hence $M_1 + Rad(M'(e_k)) = M'(e_k)$. By Nakayama's lemma,  \cite{CnR}[30.2], we have $M_1 = M'(e_k)$ and the map $\tau$ is surjective.

Let $P(e_k) = P(e_k)_{k - 1} \supset P(e_k)_{k } \supset \cdots \supset P(e_k)_{|E|} = 0$ be the  Verma flag for $P(e_k)$ with respect to the set 
 $\{M(e_k) | 1 \leq k \leq |E| \}$ described  in the proof of Lemma \ref{turkey} above. We have 
 $$(P(e_k) : M(e_j) ) = 
 \Big\{\  \begin{matrix}
& 1 & \hbox{if}  & j \geq k  \\
& 0 &   &\mbox{otherwise} 
\end{matrix}
$$
and $P(e_k)_{k + i - 1}/P(e_k)_{k + i } \cong M(e_{k + i})$. 

We will show by induction that $\tau(P(e_k)_{k}) = 0$ and
this gives the desired surjection $\bar{\tau} : M(e_k) \cong P(e_k)/P(e_k)_k \to M'(e_k)$. 

If $k = |E|$, then $P(e_k)_k = P(e_k)_{|e|} = 0$ and the result is automatically true.

If $k < |E|$, then $P(e_k)_{|E| - 1} \cong M(e_{|E|})$. Let $\tau(P(e_k)_{|E| - 1}) = M_1 \subseteq  M'(e_k)$.
Since $P(e_k)_{|E| - 1}/Rad (P(e_k)_{|E| - 1}) \cong L(e_{|E|})$, we have $M_1/\tau(Rad (P(e_k)_{|E| - 1}) )$ is either isomorphic to 
$L_{|E|}$ or trivial. Since $L_{|E|}$ does not appear in the composition series of $M'(e_k)$, $M_1$ cannot have a quotient isomorphic to 
$L_{|E|}$.  Hence $M_1 = \tau(Rad (P(e_k)_{|E| - 1})  \subseteq Rad(M_1)$. Hence $Rad(M_1) = M_1$ and $M_1 = 0$ by Nakayama's lemma 
 \cite{CnR}[(5.7)]. 
 
By the same argument, we can show that if $\tau(P(e_k)_s) = 0$ and $k \leq s - 1$, then  $\tau(P(e_k)_{s - 1}) = 0$. Hence , by induction,
$\tau(P(e_k)_k) = 0$ giving us the universal property of $M(e_k)$ and our chosen set of Verma modules. 
\end{proof}

This gives us BGG reciprocity for each category $ \mathcal{C}_{(S\otimes_RS_{\frak{Q}}, \ \Omega|_{E\sigma_{x_i}}, 
G)}$, for each such category associated to the coset $E\sigma_{x_i}$ of $E$ in $G$. Now we can simply take the union of the representatives of the isomorphism classes of the simple modules, their projective covers and their choice of Verma modules for each category to get a set of representatives of the isomorphism classes of simple modules, their projective covers and a choice of Verma modules for the category $ \mathcal{C}_{(S\otimes_RS_{\frak{Q}}, \ \Omega, 
G)}$, since $ \mathcal{C}_{(S\otimes_RS_{\frak{Q}}, \ \Omega, 
G)}$ is the direct product of the categories  $ \mathcal{C}_{(S\otimes_RS_{\frak{Q}}, \ \Omega|_{E\sigma_{x_i}}, 
G)}$.  Putting the results together we get 

\begin{theorem}  Let $L, K, S, R, \frak{Q}, \frak{P}, G = \{\sigma_{g_1}, \sigma_{g_2}, \cdots , \sigma_{g_n}\}$, and  $E = E(\frak{Q}|\frak{P})$ be as in Section 2. 
Let   $\Omega$ be the poset giving the ordering $g_1 < g_2 < g_3 < \dots < g_n$ on the indices of $G$.  Let $P = P_1 \oplus P_2 \oplus \dots \oplus P_n$, where $P_i, 1 \leq i \leq n$ are as defined in Definition \ref{P}. 
Let $A_{S\otimes_R S}(P)$  be the algebra associated to 
$ \mathcal{C}_{(S\otimes_RS, \ \Omega,  G) }$. Let $A_{\frak{Q}} = A_{S\otimes_R S}(P)\otimes_S F_{\frak{Q}}$, where $F_{\frak{Q}}$ is the residue class field 
$S/{\frak{Q}}$. 
We can choose a set of representatives of the isomorphism classes of the simple modules in $A_{\frak{Q}}$ - mod, $\{L(\sigma_{g_i})\}$ which are in one to one correspondence with the elements of 
$G$. We can also have a set of corresponding projective covers $\{P(\sigma_{g_i})\}$, and a chice of Verma modules $\{M(\sigma_{g_i})\}$ such that the following reciprocity law holds:
$$[M(g_k): L(g_i)] = (P(g_i) : M(g_k) ) = 
 \Big\{\  \begin{matrix}
& 1 & \hbox{if}  & k \geq i \ \ \hbox{and} \ \ E\sigma_{g_i} = E\sigma_{g_k} \\
& 0 &   &\mbox{otherwise} 
\end{matrix}.
$$

\end{theorem}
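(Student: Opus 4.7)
The plan is to assemble the BGG reciprocity for $\mathcal{A}_{\frak{Q}}$ from a block decomposition indexed by the right cosets of $E$ in $G$, within each of which the reciprocity has already been established. First I would pass to the localized form: since $S \to S_{\frak{Q}}$ is flat and sends a projective generator of $\mathcal{C}_{(S\otimes_RS,\Omega,G)}$ to one of $\mathcal{C}_{(S\otimes_RS_{\frak{Q}},\Omega,G)}$, there is a canonical isomorphism $\mathcal{A}_{\frak{Q}} \cong \mathrm{End}_{S\otimes_RS_{\frak{Q}}}(P_{\frak{Q}})^{op}\otimes_{S_{\frak{Q}}}F_{\frak{Q}}$, so it suffices to work with the local algebra. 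By Lemma \ref{Newprogen} a different choice of projective generator is harmless, the resulting module categories being Morita equivalent.

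Next, using the orthogonal idempotents $\{x_1,\ldots,x_m\}$ from Corollary \ref{corstar} corresponding to the right cosets $\{E\sigma_{x_i}\}$ of $E$ in $G$, decompose $P_{\frak{Q}}=\bigoplus_i x_iP_{\frak{Q}}$. By Lemma \ref{mapsxi} this induces a ring direct product decomposition $\mathcal{A}_{\frak{Q}}\cong\bigoplus_{i=1}^m \mathcal{A}_{\frak{Q},i}$. Lemma \ref{form} identifies $x_iP_{\frak{Q}}$ as a projective generator of the full subcategory $\mathcal{C}_{(S\otimes_RS_{\frak{Q}},\Omega|_{E\sigma_{x_i}},E\sigma_{x_i})}$, and Lemma \ref{equivalence} transports this category isomorphically onto $\mathcal{C}_{(S\otimes_{S_E}S_{\frak{Q}},\Omega_i,E)}$ for the total ordering $\Omega_i$ on $E$ inherited from $\Omega|_{E\sigma_{x_i}}$ via the coset correspondence $\sigma_{e_k}\leftrightarrow \sigma_{e_k}\sigma_{x_i}$. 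After applying Lemma \ref{Newprogen} we may replace the transported generator by $P_{E,\frak{Q}}$, obtaining that each $\mathcal{A}_{\frak{Q},i}$-mod is Morita equivalent to $\mathcal{A}'_{\frak{Q}}$-mod.

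Within the block $\mathcal{A}'_{\frak{Q}}$-mod, the analysis preceding the theorem already provides a full set of simple modules $\{L(e_k)\}_{1\le k\le|E|}$, projective covers $\{P(e_k)\}$, and a choice of Verma modules $\{M(e_k)\}$ satisfying the universal property, with Lemma \ref{turkey} establishing
$$(P(e_k):M(e_j)) = [M(e_j):L(e_k)] = \begin{cases} 1 & j\geq k\\ 0 & \text{otherwise}\end{cases}$$
for indices in this single coset. Morita equivalence preserves simples, their projective covers, composition multiplicities, and Verma filtrations with their multiplicities, so the analogous modules and reciprocity hold in each $\mathcal{A}_{\frak{Q},i}$-mod.

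Finally I would translate the block data back to indexing by $G$. For each $g_i$, let $L(g_i)$, $P(g_i)$, $M(g_i)$ be the simple module, projective cover, and Verma module supplied by the block $\mathcal{A}_{\frak{Q},j}$ corresponding to the coset $E\sigma_{g_i}$ (uniquely determined by $g_i$), with indexing within that block matching the one inherited from $\Omega|_{E\sigma_{g_i}}$. Since $\mathcal{A}_{\frak{Q}}$ is a ring direct product, modules in distinct blocks have no composition factors or Verma quotients in common, so the two sides of the claimed formula both vanish when $E\sigma_{g_i}\neq E\sigma_{g_k}$. When $E\sigma_{g_i}=E\sigma_{g_k}$, the ordering condition ``$k\geq i$'' in $\Omega$ coincides with the analogous ordering in the transported $\Omega_i$ by construction, and the within-block reciprocity gives the value $1$ precisely when $k\geq i$. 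Combining the two cases yields the stated formula. The main bookkeeping obstacle is this last compatibility check: verifying that the total ordering on $E$ used in the equivalence $f_i^*$ is precisely the pullback of $\Omega|_{E\sigma_{g_i}}$, so that the condition ``$k\geq i$'' transports faithfully from the block to $G$.
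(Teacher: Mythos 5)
Your proposal follows essentially the same route as the paper: localize at $\frak{Q}$, split $\mathcal{A}_{\frak{Q}}$ into blocks via the orthogonal idempotents of Corollary \ref{corstar} indexed by right cosets of $E$, transport each block via the category isomorphism $f_i^*$ of Lemma \ref{equivalence} (together with Lemma \ref{Newprogen}) to $\mathcal{A}'_{\frak{Q}}$-mod, invoke the within-block reciprocity of Lemma \ref{turkey}, and reassemble using the vanishing of cross-block multiplicities. The ordering-compatibility point you flag as a remaining bookkeeping obstacle is indeed handled in the paper, by choosing the labelling of $E$ so that the poset $\Omega'$ coincides with the transported ordering $\Omega_i$ in Lemma \ref{equivalence}.
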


\end{document}